\newtheorem{theorem}{Theorem}[section]
\newtheorem{proposition}[theorem]{Proposition}
\newtheorem{lemma}[theorem]{Lemma}
\newtheorem{corollary}[theorem]{Corollary}
\theoremstyle{definition}
\newtheorem{definition}[theorem]{Definition}
\newtheorem{remark}[theorem]{Remark}
\newtheorem{example}[theorem]{Example}
\title[Weakly contractive iterated function systems...]{Weakly contractive iterated function systems 
		and beyond: A manual
}
\author{Krzysztof Le\'{s}niak, Nina Snigireva, Filip Strobin}
\address{K. Le\'sniak: 
Faculty of Mathematics and Computer Science, 
Nicolaus Copernicus University in Toru\'{n},
Chopina 12/18, 87-100 Toru\'{n}, Poland}
\email{much@mat.umk.pl}
\address{N. Snigireva: 
School of Mathematics and Statistics, University
College Dublin, Belfield, Dublin 4, Ireland}
\email{nina@maths.ucd.ie}
\address{F. Strobin: 
Institute of Mathematics, 
Lodz University of Technology, 
W\'{o}lcza\'{n}ska 215, 
90-924 \L\'{o}d\'{z}, Poland}
\email{filip.strobin@p.lodz.pl}
\subjclass[2010]{Primary: 28A80; Secondary: 47H09, 54H20}
\keywords{iterated function system;  
Rakotch contraction; 
point-fibred;
attractor;
invariant measure;
Radon measure;
random iteration;
Baire category.}
\date{}
\begin{document}

\begin{abstract}
We give a systematic account of iterated function systems (IFS) of weak contractions 
of different types (Browder, Rakotch, topological). 
We show that the existence of attractors and asymptotically stable invariant measures, 
and the validity of the random iteration algorithm (``chaos game"), can be 
obtained rather easily for weakly contractive systems. We show that the class 
of attractors of weakly contractive IFSs is essentially wider than the class 
of classical IFSs' fractals. On the other hand, we show that, in reasonable spaces, 
a typical compact set is not an attractor of any weakly contractive IFS.
We explore the possibilities and restrictions to break the contractivity barrier 
by employing several tools from fixed point theory: geometry of balls, 
average contractions, remetrization technique, ordered sets, and measures of noncompactness.
From these considerations it follows that while the existence of invariant sets 
and invariant measures can be assured rather easily for general iterated function systems
under mild conditions, to establish the existence of attractors 
and unique invariant measures is a substantially more difficult problem.
This explains the central role of contractive systems in the theory of IFSs.
\\ \\ \\
\end{abstract}
\maketitle

\tableofcontents

\section{Introduction}

The aim of the article is to show that 
invariant sets and invariant measures exist 
in iterated function systems (IFS) 
under fairly general conditions. 
Special attention is paid to systems 
of weakly contractive maps, because
invariant sets and invariant measures in such 
systems turn out to be unique and attracting.
A topological characterization of weakly contractive
systems is discussed which leads to the notion of
topologically contractive IFS. The prevalence of
topological fractals (attractors of topologically 
contractive IFSs) in the sense of Baire category 
is exhibited. A simple proof of a derandomized
(a.k.a. deterministic a.k.a. disjunctive) chaos game
for weakly contractive and topologically contractive
IFSs has been supplied. Limitations to various
generalizations of contractivity for IFSs, as well as
employment of metric fixed point theory methods,
have been highlighted.

Large part of this article, its core, can be 
summarized as follows: 
The sequences of successive iterates 
of sets and measures are convergent 
to invariant sets and invariant measures. 
While this premise comes as no surprise within 
the realm of contractive IFSs, it is less 
obvious for non-contractive IFSs, yet true 
if the convergence is understood in a suitably
weakened manner.

In this survey, we omit the theory of 
fractal dimension (cf. \cite{Falconer}), 
fractal interpolation (cf. \cite{Massopust}),
fractal compression (cf. \cite{Mendivil}) 
and analysis on fractals (cf. \cite{Strichartz}), 
and concentrate on fundamental questions of 
existence and uniqueness of attractors 
and invariant measure.
The presentation of the chosen aspects 
of fractal geometry is self-contained. 
Therefore, we elaborate on some 
aspects of topology, measure and order.

To facilitate proper understanding of the level 
of generality of the presented results, 
we employ the Vietoris topology instead of
the Hausdorff metric whenever needed (both
yield the same convergence of compacta)
and we use Radon measures instead of 
Borel measures (which is indifferent 
when measures live on 
a complete separable metric space).
The standard notions such as, 
Vietoris topology and Monge-Kantorovich metric, 
are recalled in the appendix 
for the convenience of a reader.

Our survey complements the existing surveys on IFSs 
such as \cite{Iosifescu}, \cite{Stenflo-average},  
\cite{BV-Developments}. Among many books devoted 
to fractal geometry we choose to cite  \cite{Edgar} 
and \cite{Mendivil} as they best cover the aspects 
of IFSs we are interested in in this survey.

Let us begin with some necessary definitions.

\begin{definition}
If $X$ is a set and $f:X\to X$, then we say 
that $x_*\in X$ is a \emph{fixed point} of 
$f$ provided $f(x_*)=x_*$.
If additionally $X$ is a metric (or topological)
space, then $x_*$ is called 
a \emph{contractive fixed point}, \emph{CFP} for short, 
if for every $x\in X$, the sequence of 
iterations $(f^k(x))$ converges to $x_*$.
\end{definition}

\begin{remark}\label{remark:CFP}
(1) A map having CFP is sometimes called 
a \emph{Picard operator}, cf. \cite{Petrusel}.

(2) A CFP is unique if the underlying 
space $X$ is at least Hausdorff. 
Notably, for the considerations of 
the weak convergence of the iterations 
of the Markov operator this may not be the case.

(3) If $x_*$ is a CFP of the $p$-fold composition 
$f^p$ of $f$, then $x_*$ is also a CFP of $f$. 
Indeed, let $x_*\in X$ be a CFP of $f^p$. 
Then for every $x\in X$ and 
$i=0,...,p-1$, we have 
$f^{pk+i}(x) = (f^p)^k(f^i(x)) 
\overset{k\to\infty}{\to} x_*$, 
which implies that $f^k(x)\to x_*$. 
(see also \cite{Granas} chap.I $\S$1.6 (A.1)
or \cite{Tarafdar} Remark 2.4 p.13.) 
\end{remark}

\begin{definition}
An \emph{iterated function system} 
${\mathcal{F}} = \{w_1,...,w_N\}$, \emph{IFS} in short, 
is a finite family of maps $w_i:X\to X$ 
acting on a (topological or metric) space $X$.
The \emph{Hutchinson operator} induced by 
an IFS ${\mathcal{F}}$, denoted without ambiguity 
again by ${\mathcal{F}}$, is the map ${\mathcal{F}}: 2^X\to 2^X$ 
acting on the power set $2^X$ of subsets 
of $X$ and given by the formula
\begin{equation*}
{\mathcal{F}}(S) = \overline{\bigcup_{i=1}^{N} w_i(S)}
\mbox{ for every } S\subseteq X.
\end{equation*}
A set $A_{*}\subseteq X$ is called 
\emph{$\mathcal{F}$-invariant}, if it is a fixed point 
of $\mathcal{F}$, i.e., $\mathcal{F}(A_{*})=A_{*}$. 
(When $\mathcal{F}$ is clear from the context we just speak of
invariant sets instead of $\mathcal{F}$-invariant sets.)
\end{definition}

\begin{remark}
(1) If not stated otherwise, for simplicity 
we will assume that if $\mathcal{F}$ is an IFS, 
then it consists of maps $w_1,...,w_N$. 
We also associate with $\mathcal{F}$ an alphabet
of symbols $I=\{1,...,N\}$ representing maps $w_i$.

(2) Closedness of an invariant set is built-in its definition.

(3) Invariant sets are often called \emph{self-similar},
especially when the IFS consists of similarities.
\end{remark}

Usually the Hutchinson operator is considered 
on some hyperspace of $X$. 
For most of the work we will consider 
${\mathcal{F}}:\mathcal{K}(X)\to \mathcal{K}(X)$ 
acting on the hyperspace $\mathcal{K}(X)$
of nonempty compact subsets of $X$, 
equipped either with the Hausdorff 
distance $d_{H}$ or the Vietoris topology. 
In case $X$ is a metric space,
the Hausdorff distance topology and 
the Vietoris topology
on ${\mathcal{K}}(X)$ coincide (see Appendix, 
Theorem \ref{th:HyperspaceCompleteCompact}). 
The restriction of $\mathcal{F}$ from $2^{X}$ to 
${\mathcal{K}}(X)$ is possible when each $w_i$ 
is continuous and then also 
$\mathcal{F}(K)=\bigcup_{i=1}^N w_i(K)$ for $K\in{\mathcal{K}}(X)$.

\begin{definition} 
The \emph{probabilistic IFS} $(\mathcal{F},\vec{p})$ 
is an IFS ${\mathcal{F}}=\{w_1,...,w_{N}\}$ of 
continuous maps acting on 
a Hausdorff topological space $X$, 
together with a vector 
$\vec{p}=(p_1,...,p_N)$ of positive weights, 
that is $\sum_{i=1}^{N} p_i=1$, $p_i>0$.
The \emph{Markov operator} associated with 
$(\mathcal{F},\vec{p})$, is a map 
$M:{\mathcal{M}}_{\pm}(X)\to{\mathcal{M}}_{\pm}(X)$ 
defined on the space ${\mathcal{M}}_{\pm}(X)$ of 
signed Radon measures on $X$, 
according to the formula
\begin{equation}\label{eq:operatorM}
M(\mu) = \sum_{i=1}^{N} 
p_i\cdot \mu\circ w_i^{-1}
\mbox{ for every } \mu\in{\mathcal{M}}_{\pm}(X), 
\end{equation}
where $\mu\circ w_i^{-1}$ is the push-forward 
of $\mu$ through $w_i$ 
(see Appendix, Section \ref{section:measures}). 
A measure $\mu_{*}\in{\mathcal{M}}_{\pm}(X)$ is called
\emph{invariant}, if it is a fixed
point of the Markov operator $M$ induced by the
probabilistic IFS $(\mathcal{F},\vec{p})$, i.e., 
$M\mu_{*}=\mu_{*}$.
\end{definition}

\begin{remark}\label{rem:federed}
(1) To underline that the Markov operator is induced 
by the probabilistic IFS $(\mathcal{F},\vec{p})$ we write  
$M_{(\mathcal{F},\vec{p})}$ in place of simple $M$.

(2) Invariant measures are also called 
\emph{self-similar}, or \emph{stationary}.

(3) For future use, it is worth to note here that, 
if $\mathcal{F}$ consists of continuous maps $w_i:X\to X$, 
then for all bounded Borel measurable 
functions $g:X\to {\mathbb{R}}$
and $p_i>0$, $i=1,...,N$,
the following holds:
\begin{equation}\label{feller}
\int_{X} g \;dM_{(\mathcal{F},\vec{p})}\mu=
\sum_{i=1}^{N} p_i\int_{X} g\circ w_i \;d\mu.
\end{equation}
(See Appendix, Proposition \ref{th:wtransport}.)
\end{remark}

The space of signed Radon measures is 
endowed with the weak topology. Usually 
the action of the Markov operator $M$ 
is considered on some subspace of ${\mathcal{M}}_{\pm}(X)$: 
${\mathcal{P}}(X)$ comprising Radon probability measures 
or  (if $X$ is a metric space) 
${\mathcal{P}}_1(X)$ comprising Radon probability measures with 
integrable distance. 
For more information see Appendix, 
Section \ref{section:measures}.

Given $I=\{1,...,N\}$, $N\geq 1$, 
the \emph{code space} $I^\infty$ is 
the Tikhonov product of 
countably many copies of $I$. 
We endow it with the Baire metric $d_B$
(see Section \ref{sec:code space} in Appendix 
for further information).

\begin{definition}\label{remark:canonical}
Consider the code space $I^\infty$, where $I=\{1,...,N\}$. 
For each $i\in I$, let $\tau_i:I^\infty\to I^\infty$ 
be defined by 
$\tau_i((\alpha_1,\alpha_2,...)):=(i,\alpha_1,\alpha_2,...)$. 
Then we call $\mathcal{T}= \{\tau_1,...,\tau_N\}$ 
the \emph{canonical IFS} on $I^\infty$. 
\end{definition}

\section{Metrically contractive iterated function systems}

\subsection{Taming a plethora of weak contractions}

In this section we make a short overview of generalizations 
of the Banach fixed point theorem for weak contractions. 

The classical Banach fixed point theorem from 1922 
states that if $X$ is a complete metric space and 
$f:X\to X$ is a \emph{Banach contraction} 
(that is, the Lipschitz constant $\operatorname{Lip}(f)<1$), 
then $f$ has the CFP. Since the beginning of 1960's 
there has been an effort to weaken the 
contractive assumptions in Banach's theorem. 

\begin{definition}
Given a function ${\varphi}:{\mathbb{R}}_+\to{\mathbb{R}}_+$, 
we say that $f$ is a \emph{${\varphi}$-contraction} 
(and that ${\varphi}$ is a~\emph{modulus of continuity} 
or a \emph{comparison function} for $f$), if
\begin{equation*}
d(f(x),f(y))\leq{\varphi}(d(x,y)) 
\mbox{ for } x,y\in X.
\end{equation*}
\end{definition}

Clearly, $f:X\to X$ is a Banach contraction 
iff $f$ is a ${\varphi}$-contraction for 
${\varphi}(t):=\lambda t$, where $\lambda\in(0,1)$. 
It turned out that much less can be 
assumed on ${\varphi}$.

\begin{definition}\label{def:contractions}
Given a metric space $X$, we say that 
$f:X\to X$ is
\begin{itemize}
\item[(i)] a \emph{Rakotch contraction}, 
provided $f$ is a ${\varphi}$-contraction 
for some comparison function ${\varphi}$ of the form 
${\varphi}(t):=\lambda(t)t$, where 
$\lambda:{\mathbb{R}}_+\to[0,1]$ is nonincreasing and 
$\lambda(t)<1$ for $t>0$;
\item[(ii)] a \emph{Browder contraction}, 
provided $f$ is a ${\varphi}$-contraction, where 
${\varphi}$ is nondecreasing right continuous 
and ${\varphi}(t)<t$ for $t>0$;
\item[(iii)] an \emph{Edelstein contraction}, 
provided $f$ satisfies
\begin{equation*}
d(f(x),f(y))<d(x,y) \mbox{ for all } 
x,y\in X,\; x\neq y.
\end{equation*}
\end{itemize}
\end{definition}

\begin{remark}
As we develop the theory of 
weakly contractive iterated function systems 
we will introduce some additional types of contractions 
which are broad generalizations of weak contractions. 
Namely, we will introduce Matkowski contractions 
in Remark \ref{remmatkowski}, 
eventual contractions and Tarafdar contractions 
in Section \ref{sec:EC}, 
as well as generalizations of weakly contractive IFSs: 
topologically contractive 
in Definition \ref{def:TIFS}, 
average contractive 
in Remark \ref{rem:average}, 
and average Rakotch contractive 
in Definition \ref{def:averageRakotch}.
\end{remark}

Clearly, each Banach contraction is 
a Rakotch contraction and each Browder 
contraction is Edelstein's. Also, as 
we show in Lemma \ref{prop1} below, each 
Rakotch contraction is a Browder contraction. 
The converse implications do not hold. 
For example, the map $f(x)=\sin x$ on $X={\mathbb{R}}$
is a Rakotch contraction but not Banach's.

F. Browder in 1968 proved that each Browder
contraction on a complete metric space 
has the CFP, generalizing an earlier 
result of E. Rakotch from 1962, who proved 
the CFP for Rakotch contractions; e.g., 
\cite{Granas} chap.I $\S$1.6 (B2).
On the other hand, M. Edelstein in 1962 
showed that each Edelstein contraction 
on a compact space $X$ has the CFP; e.g.
\cite{Granas} chap.I $\S$1.6 (A.7) and (B.7).
Although Edelstein's result fails on 
arbitrary complete metric spaces (e.g., 
for $f(x):=x+e^{-x}$ on $X=[0,\infty)$, see 
also \cite{Granas} chap.I $\S$1.6 (A.7) (c)),
it looks like a generalization of 
the Browder theorem on compact spaces. 
This impression breaks down according 
to the following folklore observation 
(the proof can be found 
in \cite{Jachymski3}; it relies on 
a characterization of Rakotch contractions 
given in Lemma \ref{prop1}).

\begin{proposition}\label{prop}
Assume that $X$ is compact and $f:X\to X$. 
Then $f$ is an Edelstein contraction 
iff $f$ is a Rakotch contraction.
\end{proposition}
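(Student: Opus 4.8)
The plan is to treat the two implications separately, observing that only one of them uses compactness.

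The easy direction, Rakotch $\Rightarrow$ Edelstein, holds on an arbitrary metric space and needs no topological hypothesis on $X$. If $f$ is a Rakotch contraction with comparison function ${\varphi}(t)=\lambda(t)t$, then for $x\neq y$ we have $d(x,y)>0$, so $\lambda(d(x,y))<1$ and hence $d(f(x),f(y))\leq\lambda(d(x,y))\,d(x,y)<d(x,y)$. Thus $f$ is an Edelstein contraction.

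The substance is the converse, and this is where compactness enters. First I would record that an Edelstein contraction is $1$-Lipschitz (indeed $d(f(x),f(y))\leq d(x,y)$ trivially for $x=y$ and by hypothesis otherwise), in particular continuous; consequently the ratio $g\colon (x,y)\mapsto d(f(x),f(y))/d(x,y)$ is continuous on any set where the denominator is bounded away from $0$. For each $t>0$ I set
\[
\lambda(t):=\sup\left\{\frac{d(f(x),f(y))}{d(x,y)}\ :\ x,y\in X,\ d(x,y)\geq t\right\},
\]
and $\lambda(0):=1$. Because enlarging $t$ shrinks the constraint set, $\lambda$ is nonincreasing, and since the Edelstein condition forces $g<1$ everywhere we get $\lambda(t)\leq 1$ for all $t$, so $\lambda\colon\mathbb{R}_+\to[0,1]$. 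The crux is to upgrade this to $\lambda(t)<1$ for every $t>0$: the set $D_t:=\{(x,y)\in X\times X: d(x,y)\geq t\}$ is a closed subset of the compact space $X\times X$, hence compact, and the continuous function $g$ attains its supremum on $D_t$ at some $(x_0,y_0)$; since $d(x_0,y_0)\geq t>0$ we have $x_0\neq y_0$, whence $\lambda(t)=g(x_0,y_0)<1$ by the Edelstein inequality.

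It then remains to check that this $\lambda$ witnesses the Rakotch property. For $x\neq y$, writing $s=d(x,y)>0$, the pair $(x,y)$ lies in $D_s$, so $g(x,y)\leq\lambda(s)=\lambda(d(x,y))$, i.e.\ $d(f(x),f(y))\leq\lambda(d(x,y))\,d(x,y)$, while the case $x=y$ is trivial; combined with the monotonicity and the strict inequality above, this exhibits $f$ as a Rakotch contraction. (This is precisely the characterization of Rakotch contractions recorded in Lemma~\ref{prop1}, so one may instead simply verify its hypothesis.) I expect the attainment of the supremum defining $\lambda(t)$ to be the only real obstacle: without compactness the supremum can equal $1$ even though $g<1$ pointwise, which is exactly why Edelstein's theorem — and this equivalence — break down on noncompact complete spaces such as $f(x)=x+e^{-x}$ on $[0,\infty)$.
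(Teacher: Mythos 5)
Your proof is correct and is essentially the argument the paper has in mind: the paper does not prove Proposition \ref{prop} itself but refers to \cite{Jachymski3} and notes that the proof rests on the characterization of Rakotch contractions in Lemma \ref{prop1}, and your compactness argument is precisely the verification of condition (iii) of that lemma (your $\lambda(t)$ is the optimal constant for $\delta=t$), after which the easy direction is as you state. The only cosmetic point is that $D_t$ is empty for $t>\operatorname{diam}(X)$, where the supremum defining $\lambda(t)$ is taken over the empty set; simply put $\lambda(t)=0$ there (the Rakotch inequality is vacuous for such $t$) and the argument goes through unchanged.
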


\begin{remark}\label{remmatkowski}
(1) It is worth to observe that if 
${\varphi}:{\mathbb{R}}_+\to{\mathbb{R}}_+$ is nondecreasing, right
continuous and ${\varphi}(t)<t$ for $t>0$, then
\begin{equation}\label{matkowski}
\lim_{k\to\infty}{\varphi}^k(t)=0
\mbox{ for } t>0;
\end{equation}
e.g., \cite{Granas} chap.I $\S$1.6 (B.2).
A map $f:X\to X$ is called a 
\emph{Matkowski contraction}, 
if $f$ is a ${\varphi}$-contraction for some
nondecreasing function ${\varphi}$ which 
satisfies \eqref{matkowski}.
In 1975 J. Matkowski proved that 
each Matkowski contraction on a complete 
metric space has the CFP, generalizing 
the result of Browder. However, as was 
proved by J. Jachymski (see \cite{JJ}), 
the second iteration of a Matkowski 
contraction is Browder's, so Matkowski 
theorem follows from Browder's (recall 
Remark \ref{remark:CFP}). In fact, the 
proof given in \cite[Theorem 3]{Jachymski2}
shows more: if $f,g:X\to X$ are Matkowski 
contractions, then their composition 
$f\circ g$ is a Browder contraction.

(2)
Monotonicity of ${\varphi}$ is 
an important ingredient in the definition 
of Matkowski's contraction.
Wicks in his definition of a \emph{reduction} 
(\cite{Wicks}) assumes only (\ref{matkowski}), 
without monotonicity, but in his proofs 
he actually uses monotonicity. 
An example of a ${\varphi}$-contraction, 
for ${\varphi}$ satisfying only (\ref{matkowski}), 
on a complete space without a fixed point 
can be found in \cite{Mat3}.
\end{remark}

In the literature we can find many other 
(and essentially weaker) contractive-type
conditions which guarantee the existence of 
the CFP of a map - we refer the interested
reader to the survey \cite{JJ} by J. Jachymski
and I. J\'o\'zwik, in which a detailed 
discussion on various types of contractive
conditions and mutual relationships between 
them is given (if not stated otherwise,
presented results in this section can also be found there). 
We restricted ourselves here to 
probably the most important ones, which in
addition can be defined in a simple and 
natural way. In fact, Proposition \ref{prop}
shows that for compact spaces $X$, all these 
weak contractions are Rakotch
contractions. Moreover, as was proved by 
J. Matkowski and R. W\c egrzyk \cite{Mat2}, 
if the underlying space $X$ is 
\emph{metrically convex} (in particular, 
if $X$ is a Banach space), then $f$ is 
a Rakotch contraction if and only if 
$f$ is a $\varphi$-contraction for 
a~comparison function that satisfies
$\varphi(t)<t$ for $t>0$. Thus, 
Rakotch contractions are sufficient 
to explain other fixed point theorems 
in many natural cases.

A bit surprisingly, allowing for a change of 
the underlying metric, most such 
generalizations can be deduced from 
the Banach theorem, as was shown, 
for example, by P. Meyers in 1967
(e.g., \cite{Granas} chap.I $\S$1.7 p.24; 
note that there is a misprint in the 
formulation of Meyers' theorem: 
neighbourhoods $V$ and $U$ are 
misplaced in condition (ii) below).
\begin{theorem}[Meyers' remetrization theorem]\label{theorem:meyers}
Let $f$ be a continuous selfmap of 
a completely metrizable space $X$. 
Assume that there exists $x_0\in X$ 
such that
\begin{itemize}
\item[(i)] for any $x\in X$, 
$\lim_{k\to\infty} f^k(x)=x_0$ 
(i.e., $x_0$ is the CFP of $f$);
\item[(ii)] there exists an open neighbourhood 
$U$ of $x_0$ such that for every open
neighbourhood $V$ of $x_0$, there is 
$n\in{\mathbb{N}}$ such that $f^k(U)\subset V$ 
for all $k\geq n$.
\end{itemize}
Then there is an admissible complete 
metric $\rho$ on $X$ such that $f$ 
is a Banach contraction with respect to $\rho$.
\end{theorem}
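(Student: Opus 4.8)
The plan is to manufacture, out of the two convergence hypotheses, a genuine geometric rate of contraction that the hypotheses themselves do not provide. Before anything else I would replace $d$ by $\min\{d,1\}$, so as to have a bounded, complete, admissible metric to work with. The naive attempt --- setting $\rho(x,y)=\sum_{k\ge0}\lambda^{-k}d(f^k(x),f^k(y))$ for some $\lambda\in(0,1)$, which formally forces $\rho(f(x),f(y))\le\lambda\rho(x,y)$ --- fails precisely because (i) guarantees only $f^k(x)\to x_0$ with no rate, so the series need not converge. This is the crux of the matter: the new metric must impose a geometric scale by fiat, and the device for doing so is a two-sided nested family of neighbourhoods on which $f$ advances by exactly one level.

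Concretely, I would first use (ii) to build a \emph{local} contracting family: a decreasing sequence of open sets $W_0\supseteq W_1\supseteq\cdots$ with $x_0\in W_m$, $\bigcap_m W_m=\{x_0\}$, forming a neighbourhood base at $x_0$, and $f(W_m)\subseteq W_{m+1}$ for every $m$. The inclusions encode that $f$ pushes points one level deeper; the uniform absorption in (ii), together with continuity of $f$ and $f(x_0)=x_0$, is exactly what lets the single-step inclusions be arranged. I would then extend the indexing to negative levels using (i): setting $G_{-k}=\{x:\exists\,j\ge k,\ f^j(x)\in W_0\}=\bigcup_{j\ge k}f^{-j}(W_0)$ gives open sets, increasing as the index increases, with $\bigcup_{k}G_{-k}=X$ (every orbit eventually enters $W_0$), and one checks directly that $f(G_{-k})\subseteq G_{-k+1}$ among the negatively indexed sets. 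Writing $G_m:=W_m$ for $m\ge0$, this yields a family $\{G_n\}_{n\in\mathbb Z}$ of open neighbourhoods of $x_0$, increasing in $n$, exhausting $X$, shrinking to $\{x_0\}$, with the one-step inclusions holding internally in each regime.

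From this family I would pass to symmetric neighbourhoods of the diagonal, i.e. entourages $E_n\subseteq X\times X$ with $(f\times f)(E_n)\subseteq E_{n+1}$, and then invoke a standard metrization lemma (Frink's lemma, applied to a countable nested uniformity base) to produce a metric $\rho$ satisfying $\rho(f(x),f(y))\le\tfrac12\,\rho(x,y)$; since $\tfrac12<1$, $f$ is a Banach contraction for $\rho$. It then remains to verify that $\rho$ is admissible, which follows because the levels form a neighbourhood base for the original topology, and that $\rho$ is complete, which one extracts from completeness of $d$ together with the fact that a $\rho$-Cauchy sequence is trapped in deeper and deeper levels. The hard part, and the step I expect to absorb most of the work, is the \emph{gluing}: arranging the single-step inclusion so that it survives \emph{across the junction} at level $0$, where the global absorption regime governed by (i) meets the local contraction regime governed by (ii) --- this forces a compatible choice of $W_0$ and of the base $\{W_m\}$ --- all while keeping the family fine enough to recover the topology and coarse enough to preserve completeness. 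Everything after that is the routine verification that the entourage-to-metric passage respects these three properties.
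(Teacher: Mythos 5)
The paper does not actually prove this theorem: it is imported from the literature (Meyers, 1967, via \cite{Granas} chap.I \S 1.7), so there is no in-paper argument to compare against. Measured against the cited proof, your outline reconstructs the standard argument correctly in its essentials: a $\mathbb{Z}$-indexed nested family of open neighbourhoods of $x_0$, exhausting $X$ and shrinking to $\{x_0\}$, on which $f$ advances the index by exactly one --- the nonnegative levels coming from (ii) together with continuity and $f(x_0)=x_0$, the negative levels from (i) via preimages of the base level --- followed by a chain-metrization turning the index shift into a geometric ratio. You have also correctly located the genuinely delicate points: the junction at level $0$ (with the ``some iterate lands in $W_0$'' definition of $G_{-k}$ one only gets $f(G_{-1})\subseteq\bigcup_{j\ge 0}f^{-j}(W_0)$, not $f(G_{-1})\subseteq W_0$, so $W_0$ must be chosen compatibly), and the tension between admissibility and completeness.

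One caution about the final step. Frink's lemma, used as a black box, does not deliver $\rho(f(x),f(y))\le\tfrac12\rho(x,y)$: its conclusion only sandwiches $\rho$ between constant multiples of the level function $\sigma(x,y)=\inf\{2^{-n}:(x,y)\in E_n\}$, and the sandwiching constants (a factor of $4$ in the usual statement) destroy any strict ratio --- from $(f\times f)(E_n)\subseteq E_{n+1}$ and $\tfrac14\sigma\le\rho\le\sigma$ one only extracts $\rho(f(x),f(y))\le 2\,\rho(x,y)$. What works, and is what Meyers actually does, is to define $\rho$ directly as the chain infimum $\rho(x,y)=\inf\sum_i\sigma(z_i,z_{i+1})$: applying $f$ to a chain from $x$ to $y$ produces a chain from $f(x)$ to $f(y)$ of $\sigma$-length at most half the original, so the infimum inherits the exact multiplicative ratio, while the Frink-type estimate $\rho\ge\tfrac14\sigma$ is invoked only to show that $\rho$ is nondegenerate and induces the original topology. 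With that repair (which is a reorganization of the same ingredients rather than a new idea), your proposal is the standard proof.
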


\begin{remark}
Each Browder contraction satisfies (i) and (ii)
in Theorem \ref{theorem:meyers}
(just take $U:=B(x_0,1)$ to be the open ball). 
Thus the Browder fixed point theorem essentially 
follows from the Banach fixed point principle.
\end{remark}

\begin{remark}
Conditions (i) and (ii) of 
Theorem \ref{theorem:meyers} are satisfied, 
if $X$ is compact and the intersection
$\bigcap_{k\in{\mathbb{N}}}f^k(X)$ is a singleton, 
i.e., when $f$ is Tarafdar's topological 
contraction (\cite{Tarafdar} chap.2.7.1 p.88);
see also Section \ref{sec:EC}.
\end{remark}

\begin{remark}
We note that Meyers’ remetrization theorem 
is only applicable to an iterated function system (IFS) 
consisting of a single map. 
In this case, we obtain an IFS 
consiting of a Banach contraction. 
In general, one cannot remetrize a given space 
so that several maps are Banach contractions 
with respect to the same metric 
(see \cite{Ka2} Section 1.4). 
To deal with the situation when an IFS 
consists of more than one map 
we will prove remetrization theorem 
for topologically contractive IFSs (TIFS)
which we will discuss in Section \ref{sec:TIFS}. 
In this case, we obtain an IFS 
consisting of weak contractions. 
Therefore, for an IFS consisting of a single map 
Meyers’ remetrization theorem gives a stronger result.
We also note that the remetrization theorem 
for TIFS is only used in Section \ref{sec:remetrizeTIFS}. 
In order to obtain results for TIFS from 
a weakly contractive case via 
the remetrization theorem for TIFS 
one needs to assume that the underlying
space $X$ is metrizable 
(cf. Remark \ref{rem:chgameRemetrize}). 
However, in general, TIFS can be defined 
on nonmetrizable spaces (Example \ref{ex:nonmetrizTIFS}). 
\end{remark}

We end this section with two lemmas which 
will be useful later, when dealing with 
iterated function systems consisting of 
weak contractions. The first one gives a nice
characterization of Rakotch contractions 
(see \cite[Theorem 1]{JJ} 
for more characterizations like this one).

\begin{lemma}\label{prop1}
Let $X$ is a complete metric space and 
$f:X\to X$. The following conditions are 
equivalent:
\begin{itemize}
\item[(i)] $f$ is a Rakotch contraction;
\item[(ii)] $f$ is a ${\varphi}$-contraction for 
some concave and strictly increasing ${\varphi}$ 
such that ${\varphi}(t)<t$ for $t>0$;
\item[(iii)] for every $\delta>0$, there 
exists $\lambda<1$ such that if $x,y\in X$ 
satisfy $d(x,y)\geq \delta$, then 
$d(f(x),f(y))\leq\lambda d(x,y)$.
\end{itemize}
\end{lemma}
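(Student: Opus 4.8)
The plan is to prove the three conditions equivalent through the cyclic chain (i)$\Rightarrow$(iii)$\Rightarrow$(ii)$\Rightarrow$(i), in which the first and last links are routine monotonicity arguments and the middle link carries all of the difficulty. For (i)$\Rightarrow$(iii): given the Rakotch modulus ${\varphi}(t)=\lambda(t)t$ with $\lambda$ nonincreasing and $\lambda(t)<1$ for $t>0$, I fix $\delta>0$ and take the constant $\lambda(\delta)<1$; since $d(x,y)\ge\delta$ forces $\lambda(d(x,y))\le\lambda(\delta)$ by monotonicity, I get $d(f(x),f(y))\le\lambda(\delta)\,d(x,y)$. For (ii)$\Rightarrow$(i): strict monotonicity together with ${\varphi}(t)<t$ first pins down ${\varphi}(0)=0$, and then concavity with ${\varphi}(0)=0$ makes $t\mapsto{\varphi}(t)/t$ nonincreasing; so setting $\lambda(t):={\varphi}(t)/t$ for $t>0$ (and $\lambda(0):=\lim_{t\to0^+}{\varphi}(t)/t\in[0,1]$) produces a nonincreasing $\lambda:\mathbb{R}_+\to[0,1]$ with $\lambda(t)<1$ for $t>0$, exhibiting $f$ as a Rakotch contraction.

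The substance is the implication (iii)$\Rightarrow$(ii), where a concave comparison function must be manufactured from scratch. I would start from the maximal modulus $\phi(t):=\sup\{d(f(x),f(y)):d(x,y)\le t\}$. Splitting the pairs at the threshold $\delta=t/2$, using (iii) on the far pairs (those with $d(x,y)\in[t/2,t]$) and the pointwise estimate $d(f(x),f(y))<d(x,y)$ on the near pairs (itself a consequence of (iii) applied with $\delta=d(x,y)$), I obtain that $\phi$ is finite, nondecreasing, $\phi(0)=0$, and crucially $\phi(t)\le t\max\{\lambda(t/2),\tfrac12\}<t$. By construction $\phi$ already satisfies $d(f(x),f(y))\le\phi(d(x,y))$, so it is a comparison function — merely neither concave nor strictly increasing. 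I would then pass to the least concave majorant $\hat\phi$ of $\phi$ and finally set ${\varphi}:=\tfrac12(\hat\phi+\mathrm{id})$. This ${\varphi}$ is concave, dominates $\phi$ (hence is still a comparison function), has slope at least $\tfrac12$ and is therefore strictly increasing, and satisfies ${\varphi}(t)<t$ precisely when $\hat\phi(t)<t$.

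The main obstacle is exactly this last point: showing $\hat\phi(t)<t$ for every $t>0$. Because $\mathrm{id}$ is itself a concave majorant of $\phi$, one has $\hat\phi\le\mathrm{id}$ for free, so the danger is equality, and it lurks near $0$, since a Rakotch contraction may have $\phi(t)/t\to1$ as $t\to0$ (think of $\sin$). The quantitative input that saves the day is that for each fixed $t_0>0$ the ratio stays bounded away from $1$ away from the origin: the same $\delta=s/2$ split gives $\rho_0:=\sup_{s\ge t_0}\phi(s)/s<1$. Writing $\hat\phi(t_0)$ as a supremum of chord values $L(t_0)=(1-\theta)\phi(a)+\theta\phi(b)$ over $a\le t_0\le b$ with $(1-\theta)a+\theta b=t_0$, I would estimate with $\phi(a)\le\phi(t_0)<t_0$ (valid since $a\le t_0$) and $\phi(b)\le\rho_0 b$ to get $L(t_0)\le(1-\theta)\phi(t_0)+\theta\rho_0 b$. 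If a maximizing sequence tried to push $L(t_0)$ up to $t_0$, this bound would force the weight $\theta\to0$ and $\theta b\to0$, whence $L(t_0)\to\phi(t_0)<t_0$, a contradiction.

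The delicate part, deserving the most care, is this uniformity argument near $0$: it is what guarantees that the concave hull never rises to touch the diagonal, and it is the only place where the structure of (iii) is genuinely used rather than bookkept. Once $\hat\phi<\mathrm{id}$ is in hand, the properties of ${\varphi}=\tfrac12(\hat\phi+\mathrm{id})$ claimed in (ii) follow immediately, closing the cycle and hence the equivalence.
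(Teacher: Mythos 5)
The paper itself does not prove Lemma \ref{prop1}; it records it as a known characterization and defers to Jachymski and J\'o\'zwik \cite[Theorem 1]{JJ}, so there is nothing in the text to compare you against line by line. Judged on its own terms, your plan is sound: the cycle (i)$\Rightarrow$(iii)$\Rightarrow$(ii)$\Rightarrow$(i) is the right decomposition; the two easy links are correct as written (in (ii)$\Rightarrow$(i), pinning down ${\varphi}(0)=0$ and using concavity to make $t\mapsto{\varphi}(t)/t$ nonincreasing is exactly what is needed); and in (iii)$\Rightarrow$(ii) the maximal modulus $\phi(t)=\sup\{d(f(x),f(y)):d(x,y)\le t\}$, the $\delta=t/2$ splitting giving $\phi(t)\le t\max\{\lambda(t/2),\tfrac12\}<t$ and $\rho_0:=\sup_{s\ge t_0}\phi(s)/s<1$, and the regularization ${\varphi}=\tfrac12(\hat\phi+\mathrm{id})$ (concave; strictly increasing because a nonnegative concave function on ${\mathbb{R}}_+$ is automatically nondecreasing; still a comparison function since $\hat\phi\ge\phi$ and $\mathrm{id}\ge\phi$; and ${\varphi}<\mathrm{id}$ iff $\hat\phi<\mathrm{id}$) all work.

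The one step that does not follow as written is the claim that your displayed bound forces a maximizing chord sequence to have $\theta\to0$. From $L\le(1-\theta)\phi(t_0)+\theta\rho_0 b$ together with $\theta b\le t_0$ you only get $L\le\phi(t_0)+\rho_0 t_0$, which need not be $<t_0$ and places no constraint on $\theta$ (take $\phi(t_0)=0.9\,t_0$ and $\rho_0=0.95$: the bound can exceed $t_0$ with $\theta$ close to $1$). The missing input is the complementary estimate $\phi(a)\le a$, which you have already established: it yields $L\le(1-\theta)a+\rho_0\theta b=t_0-(1-\rho_0)\theta b$, and it is this inequality that forces $\theta b\to0$ (hence $\theta\to0$, as $b\ge t_0$); only after that does your bound $L\le(1-\theta)\phi(t_0)+\rho_0\theta b\to\phi(t_0)<t_0$ close the contradiction. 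Alternatively, you can bypass maximizing sequences altogether by exhibiting the explicit concave majorant $h(t)=\min\{t,\;(1-\rho_0)\phi(t_0)+\rho_0 t\}$ of $\phi$ (check the three ranges $t\le\phi(t_0)$, $\phi(t_0)\le t\le t_0$, $t\ge t_0$) and concluding $\hat\phi(t_0)\le h(t_0)=(1-\rho_0)\phi(t_0)+\rho_0 t_0<t_0$. With either repair the argument is complete.
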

The second lemma is technical and states 
some relationship between different 
types of comparison functions ${\varphi}$ 
suitable for Rakotch contractions. 
It follows directly from \cite[Lemma 1]{JJ}.
\begin{lemma}\label{lemma:rakotch}
Let ${\varphi}:{\mathbb{R}}_+\to{\mathbb{R}}_+$ be defined by 
${\varphi}(t)=\lambda(t)t$ for some 
nonincreasing $\lambda:{\mathbb{R}}_+\to[0,1]$ 
with $\lambda(t)<1$ for $t>0$. Then 
there exists a concave strictly increasing 
function $\tilde{{\varphi}}:{\mathbb{R}}_+\to{\mathbb{R}}_+$ such 
that ${\varphi}(t)\leq\tilde{{\varphi}}(t)<t$ for $t>0$. 
\end{lemma}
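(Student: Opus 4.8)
The plan is to construct $\tilde{\varphi}$ explicitly as the antiderivative of a suitable nonincreasing positive function, rather than to pass to a least concave majorant of $\varphi$. The latter would force me to verify separately that the majorant stays strictly below the identity and is strictly increasing, which is delicate — note in particular that $\varphi$ itself need not be monotone (if $\lambda$ decays fast enough, $\varphi(t)=\lambda(t)t$ can eventually decrease), and it is certainly not concave in general. The guiding observation is the elementary fact that if $g:{\mathbb{R}}_+\to(0,\infty)$ is nonincreasing, then $t\mapsto\int_0^t g(u)\,du$ is automatically concave, strictly increasing, and vanishes at $0$. So the task reduces to choosing $g$ so that the resulting function simultaneously dominates $\varphi$ and stays below the identity.

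First I would record the majorization that comes for free from the monotonicity of $\lambda$: since $\lambda$ is nonincreasing, $\lambda(u)\ge\lambda(t)$ for every $u\in[0,t]$, whence
\[
\int_0^t\lambda(u)\,du\ \ge\ \int_0^t\lambda(t)\,du\ =\ \lambda(t)\,t\ =\ \varphi(t).
\]
This suggests taking $g=\lambda$. The only defect is that $\lambda$ is merely required to satisfy $\lambda(t)<1$ and may well vanish on a terminal interval, in which case $\int_0^t\lambda$ is eventually constant and fails to be strictly increasing.

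To repair this I would pad $\lambda$ from below by a fixed constant: fix any $c\in(0,1)$ (say $c=1/2$), set $g:=\max(\lambda,c)$, and define
\[
\tilde{\varphi}(t):=\int_0^t g(u)\,du=\int_0^t\max\big(\lambda(u),c\big)\,du.
\]
Here $g$ is nonincreasing (a maximum of two nonincreasing functions), bounded below by $c>0$, and strictly below $1$ at every $u>0$ (because $\lambda(u)<1$ and $c<1$). Consequently $\tilde{\varphi}$ maps ${\mathbb{R}}_+$ into ${\mathbb{R}}_+$, is concave and strictly increasing, and satisfies $\tilde{\varphi}(t)\ge\int_0^t\lambda\ge\varphi(t)$. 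The remaining upper bound follows since $1-g(u)>0$ for every $u\in(0,t]$, so the nonnegative integrand $1-g$ is positive on all of $(0,t]$ and
\[
t-\tilde{\varphi}(t)=\int_0^t\big(1-g(u)\big)\,du>0\qquad(t>0).
\]

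The main obstacle I anticipate is precisely the tension between the two one-sided requirements: strict monotonicity of $\tilde{\varphi}$ needs the integrand bounded away from $0$, while the strict bound $\tilde{\varphi}<\mathrm{id}$ needs the integrand strictly below $1$ on $(0,t]$. The crux is that these are compatible — one only needs $0<g<1$ on $(0,\infty)$ — and the constant padding secures strict positivity (hence strict increase) in the tail where $\lambda$ degenerates, without ever spoiling either the majorization $\int_0^t\lambda\ge\varphi$ or the bound below the diagonal.
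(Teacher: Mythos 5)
Your construction is correct. Every step checks out: $g=\max(\lambda,c)$ is nonincreasing, so $\tilde{\varphi}(t)=\int_0^t g$ is concave; $g\geq c>0$ gives strict increase; the comparison $\int_0^t\lambda(u)\,du\geq\lambda(t)t=\varphi(t)$ is exactly the right use of monotonicity of $\lambda$; and since $1-g>0$ everywhere on $(0,t]$ (a set of positive measure), the integral $t-\tilde{\varphi}(t)=\int_0^t(1-g)$ is strictly positive. (If you want to avoid even the small measure-theoretic remark, note that $1-g$ is nondecreasing, so $t-\tilde{\varphi}(t)\geq \tfrac{t}{2}\bigl(1-g(t/2)\bigr)>0$.) The comparison with the paper is that the paper does not prove this lemma at all: it simply states that the result "follows directly from Lemma 1 of Jachymski--J\'o\'zwik" and moves on. So your argument is not so much a different route as a self-contained replacement for an external citation, and what it buys is an explicit, elementary formula for $\tilde{\varphi}$ with all the required properties (concavity, strict monotonicity, the two-sided bound) read off directly from the integrand. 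Your opening remark about why a least concave majorant would be more delicate — $\varphi$ itself need not be monotone or stay below any fixed line near infinity in an obvious way without extra checking — is a fair justification for preferring the integral construction.
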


\subsection{Attractors for weakly contractive IFSs}

\begin{definition}
W say that an IFS $\mathcal{F}$ on a metric space $X$ 
is \emph{Banach} (\emph{Rakotch, Browder,
Edelstein} respectively) \emph{contractive} 
if it consists of Banach (Rakotch, Browder, 
Edelstein respectively) contractions. 
We will refer to all such IFSs as 
\emph{weakly contractive} IFSs.
\end{definition}

The classical Hutchinson-Barnsley theorem states that:
\begin{theorem}\label{HB1}
If $\mathcal{F}=\{w_1,...,w_N\}$ is a Banach contractive IFS 
on a complete metric space $X$, then 
the Hutchinson operator $\mathcal{F}:{\mathcal{K}}(X)\to{\mathcal{K}}(X)$ 
has the CFP, i.e., 
there is a unique set $A_{\mathcal{F}}\in{\mathcal{K}}(X)$ satisfying
two conditions
\begin{enumerate}
\item[(i)] invariance:
$A_{\mathcal{F}}=\mathcal{F}(A_{\mathcal{F}})=\bigcup_{i=1}^{N} w_i(A_{\mathcal{F}})$,
and
\item[(ii)] attractivity:
for every $K\in{\mathcal{K}}(X)$, the sequence of 
iterations $\mathcal{F}^k(K)$ converges to $A_{\mathcal{F}}$ 
(with respect to the Hausdorff metric).
\end{enumerate}
\end{theorem}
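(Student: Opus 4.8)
The plan is to reduce everything to the classical Banach fixed point theorem, applied not to $X$ but to the Hutchinson operator on the hyperspace $(\mathcal{K}(X), d_H)$. By Theorem \ref{th:HyperspaceCompleteCompact} in the appendix, completeness of $X$ guarantees that $(\mathcal{K}(X), d_H)$ is itself a complete metric space. Moreover, since each $w_i$ is a Banach contraction and hence continuous, the operator $\mathcal{F}$ genuinely maps $\mathcal{K}(X)$ into $\mathcal{K}(X)$, with $\mathcal{F}(K)=\bigcup_{i=1}^N w_i(K)$ realized as a finite union of compact sets (so that the closure in the definition is superfluous). Thus it suffices to prove that $\mathcal{F}$ is a Banach contraction on $(\mathcal{K}(X), d_H)$ and then quote Banach's theorem to obtain the CFP $A_{\mathcal{F}}$.

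The heart of the argument is two elementary estimates for the Hausdorff metric. First, I would check that a single map $w$ with $\operatorname{Lip}(w)=\lambda_i$ satisfies $d_H(w(K), w(L)) \le \lambda_i\, d_H(K,L)$ for all $K,L\in\mathcal{K}(X)$; this follows directly from the definition of $d_H$ through its two one-sided excess functions, using the pointwise bound $d(w(x),w(y))\le \lambda_i\, d(x,y)$. Second, I would establish the union inequality
\[
d_H\Bigl(\bigcup_{i=1}^N A_i, \bigcup_{i=1}^N B_i\Bigr) \le \max_{1\le i\le N} d_H(A_i, B_i),
\]
again by estimating each excess separately and observing that a point of $\bigcup_i A_i$ lies in some $A_i$ and is therefore close to $B_i\subseteq \bigcup_i B_i$. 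Combining the two with the choices $A_i=w_i(K)$ and $B_i=w_i(L)$ yields
\[
d_H(\mathcal{F}(K), \mathcal{F}(L)) \le \max_{1\le i\le N} \lambda_i \cdot d_H(K,L),
\]
so that $\operatorname{Lip}(\mathcal{F}) \le \lambda := \max_i \operatorname{Lip}(w_i) < 1$.

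With this in hand, the Banach fixed point theorem applied in $(\mathcal{K}(X), d_H)$ produces a unique $A_{\mathcal{F}}\in\mathcal{K}(X)$ with $\mathcal{F}(A_{\mathcal{F}})=A_{\mathcal{F}}$, which is exactly the invariance claimed in (i); the uniqueness of the fixed point together with the convergence $\mathcal{F}^k(K)\to A_{\mathcal{F}}$ for every starting compactum $K$ are precisely the CFP conclusion of Banach's theorem, giving (ii). The only genuinely technical point is the verification of the two Hausdorff-metric inequalities, and among these the union inequality is the step that most repays care, since it is what localizes the contraction of $\mathcal{F}$ to the separate contractions of the individual maps $w_i$; the remainder is a direct application of a result already cited.
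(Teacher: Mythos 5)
Your proposal is correct and follows essentially the same route as the paper: the paper proves (in Lemma \ref{ficont}) that each $\varphi$-contraction $w$ satisfies $d_H(w(K),w(S))\le\varphi(d_H(K,S))$ by estimating the two excess functionals separately, then uses the same union inequality $d_H(\bigcup_i w_i(K),\bigcup_i w_i(S))\le\max_i d_H(w_i(K),w_i(S))$ to conclude that the Hutchinson operator is a $\varphi$-contraction, which for $\varphi(t)=\lambda t$ with $\lambda=\max_i\operatorname{Lip}(w_i)<1$ reduces Theorem \ref{HB1} to the Banach fixed point theorem on the complete hyperspace $(\mathcal{K}(X),d_H)$. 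Your two Hausdorff-metric estimates and the final invocation of Banach's theorem are exactly the ingredients the paper relies on.
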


The above theorem suggests the following definition:

\begin{definition}\label{def:Attractor}
Given an IFS $\mathcal{F}$ on a Hausdorff topological space $X$
(in particular, a metric space $X$),
a nonempty compact set $A_{\mathcal{F}}$, 
which is the CFP (contractive fixed point) 
of the Hutchinson operator $\mathcal{F}:{\mathcal{K}}(X)\to{\mathcal{K}}(X)$, 
will be called an \emph{attractor} generated by $\mathcal{F}$.
In other words, $A_{\mathcal{F}}$ fulfills conditions
(i) and (ii) from Theorem \ref{HB1}.
The convergence of sets in (ii) has to be understood 
in the Vietoris topology on ${\mathcal{K}}(X)$. 
\end{definition}

\begin{remark}
If the Hutchinson operator is continuous 
(see Proposition \ref{th:ContinuityF}), then 
the invariance condition (i) follows from 
the attractivity condition (ii) 
in Definition \ref{def:Attractor}. 
\end{remark}

\begin{example}
Let $\mathcal{T}$ be the canonical IFS
on the code space $I^\infty$, 
cf. Definition \ref{remark:canonical}. 
It is easy to see that $\mathcal{T}$ 
is a Banach contractive IFS and 
$I^\infty$ is its attractor. 
\end{example}

Theorem \ref{HB1} follows easily from 
the Banach fixed point theorem, 
as the Hutchinson operator turns out 
to be a Banach contraction on the 
hyperspace $({\mathcal{K}}(X),d_{H})$, provided that 
the underlying IFS is Banach contractive. 
Many known generalizations of this result 
are based on the same idea, that is, 
they rely on proving that the Hutchinson 
operator satisfies the same contractive 
condition as maps from the IFS. 
In many papers we can find
particular cases of the following:

\begin{lemma}\label{ficont}
Let $\mathcal{F}$ be an IFS on a metric space $X$ 
that consists of ${\varphi}$-contractions 
for some nondecreasing function ${\varphi}$. 
Then the Hutchinson operator 
$\mathcal{F}:{\mathcal{K}}(X)\to{\mathcal{K}}(X)$ is a ${\varphi}$-contraction.
\end{lemma}
\begin{proof}
Assume first that $w:X\to X$ is 
a ${\varphi}$-contraction and choose $K,S\in{\mathcal{K}}(X)$ 
and $x_0\in K$. There is $y_0\in S$ such that 
$d(x_0,y_0)=\inf\{d(x_0,y):y\in S\}$. We have
\begin{equation*}
\inf_{y\in S} d(w(x_0),w(y)) \leq
d(w(x_0),w(y_0))\leq {\varphi}(d(x_0,y_0))= 
{\varphi}\left(\inf_{y\in S} d(x_0,y)\right)
\leq {\varphi}(d_H(K,S)).
\end{equation*}
Hence
\begin{equation*}
e(w(K),w(S))=
\sup_{x\in K} \inf_{y\in S} d(w(x),w(y)) 
\leq {\varphi}(d_H(K,S)).
\end{equation*}
By symmetry $e(w(S),w(K)) \leq {\varphi}(d_H(K,S))$,
which implies $d_H(w(K),w(S))\leq{\varphi}(d_H(K,S))$.

Now, if $\mathcal{F}=\{w_1,...,w_N\}$ consists of
${\varphi}$-contractions, then by the above and 
known properties of the Hausdorff metric, 
we have
\begin{eqnarray*}
d_H(\mathcal{F}(K),\mathcal{F}(S))= 
d_H\left(\bigcup_{i=1}^{N} w_i(K), 
\bigcup_{i=1}^{N} w_i(S)\right) \\
\leq \max_{i=1,...,N} d_H(w_i(K),w_i(S))
\leq {\varphi}(d_H(K,S)) 
\mbox{ for } K,S\in{\mathcal{K}}(X).
\end{eqnarray*}
\end{proof}

As an immediate corollary, we get the 
following extension of Theorem \ref{HB1}:

\begin{theorem}\label{th1}
If $\mathcal{F}$ is an IFS on a complete metric 
space $X$ that consists of Browder [Banach,
Rakotch, Matkowski, respectively] contractions,
then the Hutchinson operator $\mathcal{F}:{\mathcal{K}}(X)\to{\mathcal{K}}(X)$
is a Browder [Banach, Rakotch, Matkowski,
respectively] contraction and hence 
it has the CFP, i.e., $\mathcal{F}$ generates 
the unique compact invariant set 
which is an attractor.
\end{theorem}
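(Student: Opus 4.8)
The plan is to derive the theorem from Lemma \ref{ficont} together with the single-map fixed point theorems quoted earlier, applied to the Hutchinson operator viewed as a selfmap of the hyperspace. Two structural facts are needed at the outset. First, each $w_i$ is continuous (being a weak contraction), so $\mathcal{F}$ restricts to ${\mathcal{K}}(X)$ and satisfies $\mathcal{F}(K)=\bigcup_{i=1}^N w_i(K)$. Second, since $X$ is complete, $({\mathcal{K}}(X),d_H)$ is itself a complete metric space (Appendix, Theorem \ref{th:HyperspaceCompleteCompact}), which is what lets me invoke the Banach, Browder, Rakotch and Matkowski fixed point theorems on ${\mathcal{K}}(X)$. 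In each of the four cases the strategy is identical: produce a single comparison function ${\varphi}$, of the type in question, that simultaneously witnesses every $w_i$ as a ${\varphi}$-contraction; feed it into Lemma \ref{ficont} to conclude that $\mathcal{F}:{\mathcal{K}}(X)\to{\mathcal{K}}(X)$ is a ${\varphi}$-contraction of the same type; and then quote the corresponding single-map theorem to obtain the CFP.

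For the Banach, Browder and Matkowski cases I would take ${\varphi}:=\max_{i}{\varphi}_i$, where ${\varphi}_i$ is a comparison function for $w_i$. This maximum is nondecreasing, so Lemma \ref{ficont} applies, and it stays within the relevant class: for Banach it is again linear, for Browder it is again nondecreasing, right continuous and $<t$, and for Matkowski one checks that the finite maximum of comparison functions satisfying \eqref{matkowski} again satisfies \eqref{matkowski}. Lemma \ref{ficont} then makes $\mathcal{F}$ a Banach (respectively Browder, Matkowski) contraction, and the classical theorem of Banach (respectively Browder, Matkowski) on the complete space ${\mathcal{K}}(X)$ yields the CFP.

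The Rakotch case is where the natural comparison function fails to be monotone, and this is the main obstacle. Here ${\varphi}_i(t)=\lambda_i(t)t$ with $\lambda_i$ nonincreasing need not be nondecreasing, so Lemma \ref{ficont} does not apply to it directly. To circumvent this I would first use Lemma \ref{lemma:rakotch} to replace each ${\varphi}_i$ by a concave, strictly increasing $\tilde{{\varphi}}_i$ with ${\varphi}_i(t)\le\tilde{{\varphi}}_i(t)<t$ for $t>0$, and then set ${\varphi}:=\max_i\tilde{{\varphi}}_i$. This ${\varphi}$ is strictly increasing (hence eligible for Lemma \ref{ficont}), and, crucially, $t\mapsto{\varphi}(t)/t=\max_i\tilde{{\varphi}}_i(t)/t$ is nonincreasing: concavity of each $\tilde{{\varphi}}_i$ (with $\tilde{{\varphi}}_i(0)\ge 0$) makes $\tilde{{\varphi}}_i(t)/t$ nonincreasing, and a maximum of nonincreasing functions is nonincreasing. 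Thus ${\varphi}(t)=\lambda(t)t$ with $\lambda(t):={\varphi}(t)/t$ nonincreasing and $<1$ for $t>0$, which is exactly the Rakotch form; Lemma \ref{ficont} then certifies $\mathcal{F}$ as a Rakotch contraction, and Rakotch's theorem on ${\mathcal{K}}(X)$ gives the CFP. (Alternatively one can read off the Rakotch property of $\mathcal{F}$ from characterization (ii) or (iii) in Lemma \ref{prop1}.)

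Finally, in every case the CFP of $\mathcal{F}:{\mathcal{K}}(X)\to{\mathcal{K}}(X)$ is, by Definition \ref{def:Attractor}, precisely a nonempty compact attractor $A_{\mathcal{F}}$ — the invariance (i) and attractivity (ii) of Theorem \ref{HB1} being the two defining clauses of a CFP — and its uniqueness follows from the uniqueness of CFPs on the Hausdorff space ${\mathcal{K}}(X)$ (Remark \ref{remark:CFP}(2)). This completes the deduction.
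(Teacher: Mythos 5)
Your proposal is correct and follows essentially the same route as the paper: reduce to Lemma \ref{ficont} by producing a single comparison function ${\varphi}:=\max\{{\varphi}_1,...,{\varphi}_N\}$ of the appropriate type, then invoke the corresponding single-map fixed point theorem on the complete hyperspace $({\mathcal{K}}(X),d_H)$. Your extra care in the Rakotch case --- first upgrading each ${\varphi}_i$ to a concave strictly increasing majorant via Lemma \ref{lemma:rakotch} so that Lemma \ref{ficont} (which needs a nondecreasing ${\varphi}$) applies, and then checking that the maximum is again of Rakotch form --- makes explicit a detail the paper's one-line proof leaves implicit.
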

\begin{proof}
In view of Lemma \ref{ficont}, we only 
have to observe that if $\mathcal{F}=\{w_1,...,w_N\}$ 
consists of Browder [Banach, Rakotch, 
Matkowski, respectively] contractions $w_i$ 
with respective comparison functions ${\varphi}_i$, 
then all $w_i$'s are ${\varphi}$-contractions 
for the same function ${\varphi}$ with suitable
properties. Namely,
${\varphi}:=\max\{{\varphi}_1,...,{\varphi}_N\}$ 
meets the desired conditions.
\end{proof}

\begin{remark}\label{rem:generalization}
In the literature there are given many further
generalizations of Theorem \ref{HB1} in the spirit 
of Theorem \ref{th1}. 
Here we want to point out two further directions.

(1) We can extend the thesis of Theorem \ref{th1} 
by showing that for any nonempty closed 
and bounded set $B\subset X$ 
(denote by $\mathcal{CB}(X)$ the family 
of such sets; note that the Hausdorff metric 
on $\mathcal{CB}(X)$ is complete provided 
$X$ is complete - see Appendix), 
the sequence of iterations $\mathcal{F}^k(B)$ 
converges to $A_\mathcal{F}$. To see it, observe that 
we can easily extend Lemma \ref{ficont} 
by considering the Hutchinson operator 
$\mathcal{F}:\mathcal{CB}(X)\to\mathcal{CB}(X)$, 
under additional assumption that ${\varphi}$ 
is right continuous. 
Thus $\mathcal{F}:\mathcal{CB}(X)\to\mathcal{CB}(X)$ 
is a Browder contraction if the IFS $\mathcal{F}$ is 
(at least) Browder contractive and we are done. 
If $\mathcal{F}$ is  Matkowski contractive, 
then using the mentioned result 
from \cite{Jachymski2}, the second iteration 
$\mathcal{F}^2:\mathcal{CB}(X)\to\mathcal{CB}(X)$ 
is Browder's, so has the CFP and hence also
$\mathcal{F}:\mathcal{CB}(X)\to\mathcal{CB}(X)$ 
has the CFP.

(2) Let $\mathcal{F}$ be an IFS consisting of 
infinitely many weak contractions.
Moreover, let us assume that 
the weak contractivity of $\mathcal{F}$
is \emph{uniform} in the sense that 
all mappings from $\mathcal{F}$ are ${\varphi}$-contractions 
for a common comparison function ${\varphi}$ 
with appropriate properties.
Assume additionally that the induced 
Hutchinson operator $\mathcal{F}$ transforms compact sets 
onto compact sets, i.e., 
$\mathcal{F}({\mathcal{K}}(X))\subseteq {\mathcal{K}}(X)$.
Then the same reasoning as in 
Lemma \ref{ficont} shows that 
$\mathcal{F}$ is a ${\varphi}$-contraction 
and thus it has the CFP. 
Note that the assumption that all 
mappings are ${\varphi}$-contractions for 
the same function ${\varphi}$ is important.
For instance the IFS $\mathcal{F}:=\{w_t:t\in(0,1)\}$, 
where $w_t(x)=tx$, $x\in[0,1]$, does 
not generate a unique invariant set 
(each set $[0,a]$, where $a\in(0,1]$, 
is $\mathcal{F}$-invariant). 
See also \cite{Wicks}
and \cite{Mendivil} chap.2.6.4.1.

(3) Finally, we can mix the above two approaches 
and obtain a result for closed and bounded sets 
for infinite IFSs consisting of weak contractions.
\end{remark}

\begin{remark}
While a weakly contractive IFS induces 
a weakly contractive Hutchinson operator
(Theorems \ref{HB1} and \ref{th1}), 
an IFS comprising non-contractive maps 
can induce a Hutchinson operator with CFP, 
see Example \ref{ex:irrot}. This opens 
the gate to a whole world of non-contractive 
IFSs, e.g., \cite{BV-Developments}. 
\end{remark}

\subsection{Invariant measures for weakly contractive IFSs}

The existence of a unique invariant measure 
for weakly contractive IFSs 
has been established in 1996 
independently by A. Fan \cite{AihuaFan}, 
who employed ergodic theory techniques,
and A. Edalat \cite{Edalat}, 
who used order theory 
with the aim of studying some aspects 
of the computation theory for IFSs. 
Since then it has been rediscovered several times, 
e.g., \cite{Arbieto}, \cite{Okamura}.

We provide an elementary proof of this fact 
for Rakotch contractive IFSs. 
Our proof is much like the one given 
by K. Okamura in \cite{Okamura}. 
However, he restricted the discussion to 
measures supported on compact $\mathcal{F}$-invariant set, 
so our result is more general. 
Let us also note that a more straightforward 
proof for Banach contractive IFSs can be 
found in \cite{Mendivil}, but it seems
to be hard to adjust that approach 
to weakly contractive IFSs 
(see also the discussion in \cite{Okamura}).

Let $(X,d)$ be a complete metric space 
and let $\mathcal{P}_1(X)$ be the set of all 
Radon probability measures on $X$ 
with integrable distance $d$, 
i.e., measures $\mu$ such that for some 
(equivalently - for any) $x_0\in X$, 
the integral $\int_{X} d(x,x_0)\;d\mu(x)<\infty$.  
We endow $\mathcal{P}_1(X)$ with 
the Monge-Kantorovitch metric $d_{MK}$
which is complete and has the property 
that the convergence of measures 
with respect to $d_{MK}$ implies their weak convergence
(see Appendix, Lemma \ref{lemma:measuresonmetric}
(i) and (ii)).

\begin{theorem}\label{th:weakcontrMarkov}
Let $(\mathcal{F},\vec{p})$ be a probabilistic IFS
consisting of Rakotch contractions which
act on a~complete metric space $X$. 
Let $M:{\mathcal{M}}_{\pm}(X)\to {\mathcal{M}}_{\pm}(X)$ be 
the Markov operator induced by $(\mathcal{F},\vec{p})$.
Then
\begin{itemize}
\item[(a)] $M(\mathcal{P}_1(X)) \subseteq
\mathcal{P}_1(X)$;
\item[(b)] $M:\mathcal{P}_1(X)\to\mathcal{P}_1(X)$
is a Rakotch contraction with respect to 
the Monge-Kantorovitch metric;
\item[(c)] $M$ has the CFP in $\mathcal{P}_1(X)$,
i.e., there exists a unique 
$\mu_{*}\in\mathcal{P}_1(X)$ s.t.
$M(\mu_{*})=\mu_{*}$ and 
for every $\mu\in\mathcal{P}_1(X)$, 
the sequence of iterations $M^k(\mu)$ 
converges weakly to $\mu_{*}$.
\end{itemize}
\end{theorem}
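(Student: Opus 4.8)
The plan is to verify the three claims in order, treating (a) and (c) as quick consequences of standard facts and concentrating the real work on (b). For (a), I would first note that the push-forward of a probability measure is again a probability measure, so $M\mu=\sum_i p_i\,\mu\circ w_i^{-1}$ is a convex combination of probability measures and hence a probability measure. To control the first moment, fix a base point $x_0\in X$ and apply the transport identity \eqref{feller} with $g(x)=d(x,x_0)$ to get $\int_X d(x,x_0)\,dM\mu(x)=\sum_i p_i\int_X d(w_i(x),x_0)\,d\mu(x)$. Since every Rakotch contraction is in particular non-expansive (as $\varphi(t)\le t$, so $d(w_i(x),w_i(x_0))\le d(x,x_0)$), the triangle inequality yields $d(w_i(x),x_0)\le d(x,x_0)+d(w_i(x_0),x_0)$; integrating shows each summand is finite because $\mu\in\mathcal{P}_1(X)$, so $M\mu\in\mathcal{P}_1(X)$.

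The core is (b), and the key decision is to exploit the coupling (primal) description of $d_{MK}$ rather than its Lipschitz dual, because only the coupling formulation lets me pull the comparison function out of the integral. First I would produce a single comparison function serving all $w_i$ simultaneously: by Lemma \ref{prop1} and Lemma \ref{lemma:rakotch} each $w_i$ admits a concave, strictly increasing comparison function $\varphi_i$, and---exactly as in the proof of Theorem \ref{th1}---the pointwise maximum $\varphi:=\max_i\varphi_i$ is again a valid comparison function for every $w_i$; invoking Lemma \ref{lemma:rakotch} once more I may assume $\varphi$ itself is concave and strictly increasing with $\varphi(t)<t$ for $t>0$. Now, given $\mu,\nu\in\mathcal{P}_1(X)$ and a coupling $\gamma$ with marginals $\mu,\nu$, the measure $\sum_i p_i\,\gamma\circ(w_i\times w_i)^{-1}$ is a coupling of $M\mu$ and $M\nu$, so the Kantorovich representation of $d_{MK}$ gives $d_{MK}(M\mu,M\nu)\le\sum_i p_i\int_{X\times X} d(w_i(x),w_i(y))\,d\gamma\le\int_{X\times X}\varphi(d(x,y))\,d\gamma$, where I used $\sum_i p_i=1$. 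Applying Jensen's inequality to the concave $\varphi$ yields $\int_{X\times X}\varphi(d)\,d\gamma\le\varphi\bigl(\int_{X\times X} d\,d\gamma\bigr)$, and choosing $\gamma$ near-optimal so that $\int_{X\times X} d\,d\gamma\to d_{MK}(\mu,\nu)$, together with continuity of $\varphi$ on $(0,\infty)$, gives $d_{MK}(M\mu,M\nu)\le\varphi(d_{MK}(\mu,\nu))$. By Lemma \ref{prop1} this makes $M$ a Rakotch contraction.

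Claim (c) then follows formally: since $(\mathcal{P}_1(X),d_{MK})$ is complete (Appendix, Lemma \ref{lemma:measuresonmetric}(i)) and $M$ is a Rakotch contraction on it by (b), the Rakotch fixed point theorem---equivalently Browder's, via Lemma \ref{prop1} and Remark \ref{remark:CFP}---furnishes a unique $\mu_*\in\mathcal{P}_1(X)$ with $M\mu_*=\mu_*$ and $M^k\mu\to\mu_*$ in $d_{MK}$ for every $\mu$. Because $d_{MK}$-convergence implies weak convergence (Appendix, Lemma \ref{lemma:measuresonmetric}(ii)), the iterates $M^k\mu$ converge weakly to $\mu_*$, as required.

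I expect the only genuine obstacle to be the concavity step in (b): both the reduction to a common comparison function and the application of Jensen's inequality hinge on being able to choose $\varphi$ concave, which is precisely what Lemma \ref{prop1}(ii) and Lemma \ref{lemma:rakotch} supply; had $\varphi$ been merely nondecreasing, one could bound $\int\varphi(d)\,d\gamma$ but not cleanly by $\varphi(d_{MK}(\mu,\nu))$, which is exactly why the Lipschitz-dual description of $d_{MK}$ only yields non-expansiveness. A secondary technical point is the attainment of the infimum in the coupling formula; I would sidestep it by working with near-optimal couplings and using right-continuity of $\varphi$ to pass to the limit, avoiding any appeal to existence of optimal transport plans.
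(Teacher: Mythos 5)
Your proposal is correct and follows essentially the same route as the paper: the coupling (primal) representation of $d_{MK}$ from Lemma \ref{lemma:measuresonmetric}(iii), the push-forward coupling $\sum_i p_i\,\gamma\circ(w_i\times w_i)^{-1}$, the transport identity \eqref{feller}, and Jensen's inequality applied to concave comparison functions obtained via Lemma \ref{lemma:rakotch}. The only differences are cosmetic: you majorize all the $\varphi_i$ by a single concave $\varphi$ where the paper keeps the individual $\varphi_i$ and works with the weighted sum $\sum_i p_i\varphi_i$, and you use near-optimal couplings where the paper invokes the attained minimum; both variants are sound.
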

\begin{proof}
(a) 
Using equality \eqref{feller} from 
Remark \ref{rem:federed} and 
Proposition \ref{th:wtransport} 
we easily infer that since $\mathcal{F}$ consists 
of Lipschitz maps, then $M(\mu)\in{\mathcal{P}}_1(X)$ 
for every $\mu\in{\mathcal{P}}_1(X)$.

(b) 
Let $\mathcal{F}=\{w_1,...,w_{N}\}$ and
$\vec{p}=(p_1,...,p_N)$. 
By Lemma \ref{lemma:rakotch} we can assume 
that all maps $w_i$ are ${\varphi}_i$-contractions for
concave strictly increasing functions ${\varphi}_i$ 
which satisfy ${\varphi}_i(t)<t$ for $t>0$. 
It is easy to see that then  
${\varphi}(t):= \sum_{i=1}^{N} p_i{\varphi}_i(t)$, 
$t\geq 0$, is also strictly increasing 
and concave, and ${\varphi}(t)<t$ for $t>0$.

By Lemma \ref{lemma:measuresonmetric}, 
the Monge-Kantorovitch metric $d_{MK}$  
has the following description:
\begin{equation*}
d_{MK}(\mu,\eta)= \min\left\{\int_{X\times X} 
d(x,y)\;d\lambda:
\lambda\in \Lambda(\mu,\eta)\right\},
\end{equation*}
where $\Lambda(\mu,\eta)$ consists 
of all Radon probability measures $\lambda$ 
on $X\times X$ such that the projection 
of $\lambda$ on the first and the second 
coordinate equals $\mu$ and $\eta$, respectively.

Take $\lambda\in \Lambda(\mu,\eta)$ so that 
$d_{MK}(\mu,\eta)= 
\int_{X\times X} d(x,y)\;d\lambda(x,y)$ 
and let 
$\tilde{\lambda}:=\sum_{i=1}^{N} 
p_i\lambda\circ (w_i,w_i)^{-1}$. 
Then for every Borel $B\subset X$, we have
\begin{eqnarray*}
\tilde{\lambda}(B\times X) = \sum_{i=1}^{N} 
p_i \lambda(w_i^{-1}(B)\times w_i^{-1}(X)) =
\\
= \sum_{i=1}^{N} p_i\lambda(w_i^{-1}(B)\times X) = 
\sum_{i=1}^{N} p_i\mu(w_i^{-1}(B)) = M(\mu)(B).
\end{eqnarray*}
Similarly we can show that 
$\tilde{\lambda}(X\times B) = M(\eta)(B)$. 
Hence $\tilde{\lambda}\in \Lambda(M(\mu),M(\eta))$. 

Observe now that 
$\tilde{\lambda}= 
M_{(\tilde{\mathcal{F}},\vec{p})}(\lambda)$, where
$M_{(\tilde{\mathcal{F}},\vec{p})}$ is the Markov
operator induced by the probabilistic 
IFS $\tilde{\mathcal{F}} = \{(w_i,w_i):i=1,...,N\}$
with vector of weights $\vec{p}$.
Therefore we can employ equality \eqref{feller} 
from Remark \ref{rem:federed}
alongside the concavity of $\varphi_i$'s
to obtain
\begin{eqnarray*}
d_{MK}(M(\mu),M(\eta))\leq
\int_{X\times X} d(x,y) \;d\tilde{\lambda} =
\\
= \sum_{i=1}^{N} p_i\int_{X\times X} 
d(w_i(x),w_i(y))\;d\lambda \leq 
\sum_{i=1}^{N} p_i\int_{X\times X} 
\varphi_i(d(x,y))\;d\lambda \leq 
\\
\leq \sum_{i=1}^{N} p_i\cdot {\varphi_i} 
\left(\int_{X\times X} d(x,y)\;d\lambda\right) = 
\sum_{i=1}^{N} p_i 
{\varphi_i}\left(d_{MK}(\mu,\eta)\right) =
\varphi(d_{MK}(\mu,\eta)).
\end{eqnarray*}

Item (c) is immediate from (b). 
\end{proof}

\begin{remark}
It turns out that the support of the CFP $\mu_{*}$
equals to the attractor of $\mathcal{F}$; 
see Theorem \ref{theorem:invariant} (ii) and 
Theorem \ref{th:invariantsupp}.
\end{remark}

\section{Topologically contractive iterated function systems}\label{sec:TIFS}

\subsection{Notation and basic definitions}

Let $\mathcal{F}=\{w_1,...,w_N\}$ be an IFS on $X$.
We find the following notational conventions
very useful for the whole section. 
We fix the alphabet $I=\{1,...,N\}$.
Given a finite word 
$\alpha=(\alpha_1,...,\alpha_k)\in I^k$, 
$k\in{\mathbb{N}}$, we set
\begin{equation}\label{eq:fibercomposition}
w_{\alpha}= w_{\alpha_{1}}\circ ... 
\circ w_{\alpha_{k}}.
\end{equation}
(Note carefully the order of composition 
typical for symbolic dynamics.)
In particular, we can use that notation
for a composition of $w_i$'s along a finite
prefix $\alpha_{\vert k}$ of an infinite
word $\alpha\in I^{\infty}$ and write
$w_{\alpha_{\vert k}}$.

\begin{definition}\label{def:TIFS}
We say that an IFS $\mathcal{F}$ on a topological 
space $X$ is a \emph{topologically contractive
iterated function system} (TIFS), 
if it consists of continuous maps and the 
following two conditions hold:
\begin{itemize}
\item[(i)] $\mathcal{F}$ is 
\emph{compactly dominated}
in the sense that for every $K\in{\mathcal{K}}(X)$, 
there exists $C\in{\mathcal{K}}(X)$ such that 
$K\subseteq C$ and $\mathcal{F}(C)\subseteq C$;
\item[(ii)] for every $C\in{\mathcal{K}}(X)$ with 
$\mathcal{F}(C)\subseteq C$ and every sequence 
$\alpha\in I^\infty$, the set 
$\bigcap_{k\in{\mathbb{N}}} w_{\alpha_{\vert k}}(C)$ 
is a~singleton.
\end{itemize}
\end{definition}
\begin{remark}\label{remark:compactdominacy}
If the space $X$ is compact or if 
$\mathcal{F}$ has an attractor $A_\mathcal{F}$, then 
condition (i) in Definition \ref{def:TIFS} 
is fulfilled for free.
To see it in the second case, observe 
that for every $K\in{\mathcal{K}}(X)$, the set 
\[
C:= A_\mathcal{F}\cup K\cup\bigcup_{n\in{\mathbb{N}}}\mathcal{F}^n(K)
= \overline{\bigcup_{n=0}^{\infty} \mathcal{F}^n(K)}\]
is compact, $K\subseteq C$ and $\mathcal{F}(C)\subseteq C$.
\end{remark}

A careful reader will notice that 
the compact dominance property appears 
implicitely in many discussions about IFSs. 
It is a form of localizing
(or, in other words, trapping) an attractor. 
It allows to reduce the discussion about the dynamics 
of the IFS to a compact set.

\begin{remark}
Definition \ref{def:TIFS} was introduced 
by A. Mihail in 2012 (cf. \cite{Mih3}) 
under the name 
\emph{topologically iterated function system}
and, independently, by A.V. Tetenov in 2010 
(see \cite{Tetenov2017} and the references therein) 
under the name 
\emph{self-similar topological structure
satisfying condition (P)}. 
However, its particular versions were 
considered earlier. 
In the case when $X$ is a compact metric space, 
TIFSs were called \emph{weakly hyperbolic IFSs} 
by A. Edalat \cite{Edalat} and 
\emph{point fibred IFSs} 
by B. Kieninger \cite{Kieninger}.
A. Kameyama \cite{Ka2} called 
a compact topological space as 
a \emph{topological self similar set}, 
if there exists an IFS $\mathcal{F}$ on $X$ 
comprising continuous maps together with 
a continuous surjection
$\pi:I^\infty\to X$ such that 
$\pi\circ\tau_i=w_i\circ \pi$ 
for every $i\in I$. 
Propositions \ref{prop:TIFSvsKameyama} and
\ref{prop:TIFSvsKieninger} explain
how the notions of a~topological self 
similar set, point-fibred IFS and TIFS are
interrelated.
\end{remark}

Kameyama's definition of a topologically 
self similar set (\cite{Ka2}) involves 
a very important concept of the coding map. 
We restate it as a separate definition.

\begin{definition}
Let $\mathcal{F}=\{w_1,...,w_N\}$ be an IFS on 
a metric (or topological) space $X$
and let $\mathcal{T}=\{\tau_1,...,\tau_N\}$ 
be the canonical IFS on the code space 
$I^{\infty}$;
see Definition \ref{remark:canonical}.
The continuous map $\pi:I^\infty\to X$ 
satisfying $w_i\circ \pi=\pi\circ \tau_i$ 
for every $i\in I$, 
will be called the \emph{coding map}.  
\end{definition}

\begin{remark}
The name ``coding map" for $\pi$, used frequently 
in the literature  (e.g., \cite{MauldinUrbanski}),
has not been standardized so far. 
Some other names are 
the \emph{address map} (\cite{Mendivil}), 
the \emph{coordinate map} (\cite{Kieninger}),
and the \emph{projection from the code space} 
(e.g., J. Geronimo, Ch. Bandt 
--- personal communication).
\end{remark}

\begin{remark}\label{newremfs}
Assume that $X$ is a Hausdorff topological space. 
According to Kameyama's definition, $X$ is topological self similar set 
if there exists a surjective coding map for some IFS 
$\mathcal{F}$ consisting of continuous selfmaps of $X$ 
(as $\pi$ is continuous and $I^\infty$ is compact, 
it automatically implies compactness of $X$). 
On the other hand, if a coding map $\pi$ is not surjective, 
then its image $\pi(I^\infty)$ is a topological self similar set. 
Indeed, as $\pi(I^\infty)$ is $\mathcal{F}$-invariant:
\begin{equation*}
\pi\left(I^\infty\right)=
\pi\left(\bigcup_{i=1}^N\tau_i\left(I^\infty\right)\right)=
\bigcup_{i=1}^N\pi\circ\tau_i\left(I^\infty\right)=
\bigcup_{i=1}^Nw_i\circ\pi\left(I^\infty\right)=
\bigcup_{i=1}^Nw_i\left(\pi\left(I^\infty\right)\right),
\end{equation*}
we just have to consider the IFS 
$\tilde{\mathcal{F}}:=\{w_{i\vert \pi(I^\infty)}:i=1,...,N\}$ 
on $\pi(I^\infty)$ consisting of restrictions 
of $w_i$'s to $\pi(I^\infty)$.
\end{remark}

\subsection{Existence of attractors and the coding map}

The following result (essentially given
in \cite{Mih3}) shows that TIFSs 
generate attractors which are projections 
of the code space via coding maps.

\begin{theorem}\label{HB2}
Assume that $\mathcal{F}=\{w_1,...,w_N\}$ is 
a topologically contractive IFS on 
a Hausdorff topological space $X$.
\begin{itemize}
\item[(i)] (Symbolic conjugation) 
There exists a coding map $\pi:I^\infty\to X$, 
i.e., a continuous map for which 
$w_i\circ \pi=\pi\circ \tau_i$ 
for $i\in I=\{1,...,N\}$.
\item[(ii)] The set $\pi(I^\infty)$ is 
the attractor of $\mathcal{F}$ (i.e., the 
contractive fixed point of the operator $\mathcal{F}$).
\item[(iii)] For every $\alpha\in I^\infty$ 
and $C\in{\mathcal{K}}(X)$ with $\mathcal{F}(C)\subseteq C$, the value 
$\pi(\alpha)$ is the only element of the intersection 
$\bigcap_{k\in{\mathbb{N}}}w_{\alpha_{\vert k}}(C)$.
\item[(iv)] For every $K\in{\mathcal{K}}(X)$ and 
$\alpha\in I^\infty$, the sequence 
$(w_{\alpha_{\vert k}}(K))$ converges to
$\{\pi(\alpha)\}$ with respect to the Vietoris topology.
\item[(v)] (Williams' formula) The attractor 
$\pi(I^\infty)$ is the closure of the set of all 
fixed points of maps $w_\alpha$, 
$\alpha\in I^{<\infty}$.
\item[(vi)] (Metrizability) The attractor 
$\pi(I^\infty)$ is metrizable.
\end{itemize}
\end{theorem}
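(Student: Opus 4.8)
The plan is to construct the coding map $\pi$ directly from the nested intersections in Definition \ref{def:TIFS}(ii) and then read off the remaining assertions from its properties. First I would fix a compact $C$ with $\mathcal{F}(C)\subseteq C$; such a set exists by compact dominance (Definition \ref{def:TIFS}(i)) applied, say, to a singleton. Since $w_j(C)\subseteq\mathcal{F}(C)\subseteq C$ for each $j$, for any $\alpha\in I^\infty$ the sets $w_{\alpha_{\vert k}}(C)$ are nonempty, compact (continuous images of $C$), and decreasing in $k$; as $X$ is Hausdorff they are closed, so $\bigcap_k w_{\alpha_{\vert k}}(C)$ is nonempty, and by Definition \ref{def:TIFS}(ii) a singleton. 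I define $\pi(\alpha)$ to be its unique element. Independence of the choice of $C$ — hence assertion (iii) — follows by comparing any two admissible $C_1,C_2$ with $C_1\cup C_2$, which is again compact and $\mathcal{F}$-invariant: each singleton $\bigcap_k w_{\alpha_{\vert k}}(C_j)$ is contained in, hence equal to, $\bigcap_k w_{\alpha_{\vert k}}(C_1\cup C_2)$. The intertwining identity $w_i\circ\pi=\pi\circ\tau_i$ of (i) is then immediate, since $(\tau_i\alpha)_{\vert k}=(i,\alpha_{\vert k-1})$ gives $w_{(\tau_i\alpha)_{\vert k}}=w_i\circ w_{\alpha_{\vert k-1}}$, so $w_i(\pi(\alpha))\in\bigcap_k w_i(w_{\alpha_{\vert k-1}}(C))=\{\pi(\tau_i\alpha)\}$.

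Next I would prove continuity of $\pi$, completing (i). Fix $\alpha$ and an open $U\ni\pi(\alpha)$. The compact sets $w_{\alpha_{\vert k}}(C)\setminus U$ are decreasing with intersection $\{\pi(\alpha)\}\setminus U=\emptyset$, so by the finite intersection property some $w_{\alpha_{\vert n}}(C)\subseteq U$. Any $\beta$ agreeing with $\alpha$ on the first $n$ symbols then satisfies $\pi(\beta)\in w_{\beta_{\vert n}}(C)=w_{\alpha_{\vert n}}(C)\subseteq U$, giving continuity. The same estimate proves (iv): for $K\in\mathcal{K}(X)$ take admissible $C\supseteq K$, so $w_{\alpha_{\vert k}}(K)\subseteq w_{\alpha_{\vert k}}(C)$ eventually lies in any neighbourhood of $\pi(\alpha)$; as these sets are nonempty and the limit is a single point, upper convergence already forces Vietoris convergence to $\{\pi(\alpha)\}$.

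For (ii) I would note that $\pi(I^\infty)$ is nonempty compact ($I^\infty$ being compact), and that its invariance is exactly the computation of Remark \ref{newremfs} together with $\bigcup_i\tau_i(I^\infty)=I^\infty$. Attractivity is the real work. Upper convergence of $\mathcal{F}^k(K)=\bigcup_{\beta\in I^k}w_\beta(K)$ requires a \emph{uniform} version of the shrinking above: given open $U\supseteq\pi(I^\infty)$, for each $\alpha$ choose $n(\alpha)$ with $w_{\alpha_{\vert n(\alpha)}}(C)\subseteq U$; the cylinders of words agreeing with $\alpha$ up to length $n(\alpha)$ cover $I^\infty$, so by compactness of the code space finitely many suffice, and their maximal length $n$ yields $w_\beta(C)\subseteq U$ for all $\beta\in I^k$ with $k\ge n$, whence $\mathcal{F}^k(K)\subseteq U$. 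I expect this passage from pointwise to uniform shrinking — equivalently an application of compactness of $I^\infty$ (or König's lemma) — to be the main obstacle. Lower convergence is easier: any open $V$ meeting $\pi(I^\infty)$ contains some $\pi(\alpha)$, and by (iv) the sets $w_{\alpha_{\vert k}}(K)\subseteq\mathcal{F}^k(K)$ eventually lie in $V$. Uniqueness of the attractor holds since the CFP is unique in the Hausdorff hyperspace $\mathcal{K}(X)$ (Remark \ref{remark:CFP}(2)).

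Finally, for (v) I would attach to each finite word $\alpha\in I^{<\infty}$ the periodic point $p_\alpha:=\pi(\overline{\alpha})$, where $\overline{\alpha}=\alpha\alpha\cdots$; the iterated intertwining $w_\alpha\circ\pi=\pi\circ\tau_\alpha$ (with $\tau_\alpha=\tau_{\alpha_1}\circ\cdots\circ\tau_{\alpha_k}$) and $\tau_\alpha(\overline\alpha)=\overline\alpha$ give $w_\alpha(p_\alpha)=p_\alpha$, while any fixed point $x$ of $w_\alpha$ lies in $\bigcap_n w_{\overline{\alpha}_{\vert nk}}(C)=\{p_\alpha\}$, so $p_\alpha$ is the unique fixed point and it lies in $\pi(I^\infty)$. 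Density of $\{p_\alpha\}$ in $\pi(I^\infty)$ follows from continuity of $\pi$, since $\overline{\alpha_{\vert k}}\to\alpha$ in $I^\infty$ forces $p_{\alpha_{\vert k}}\to\pi(\alpha)$. For (vi) I would use that $\pi(I^\infty)$ is a continuous image of the compact metrizable space $I^\infty$ inside a Hausdorff space; such an image is metrizable, as one sees by checking that $g\mapsto g\circ\pi$ embeds $C(\pi(I^\infty))$ isometrically into the separable space $C(I^\infty)$, so that $\pi(I^\infty)$ is compact Hausdorff with separable function space, hence second countable and metrizable by Urysohn.
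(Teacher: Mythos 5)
Your proposal is correct and follows essentially the same route as the paper's proof: $\pi$ is defined via the nested singleton intersections, continuity and (iv) come from the shrinking of the compact sets $w_{\alpha_{\vert k}}(C)$, attractivity is obtained by passing from pointwise to uniform shrinking via compactness of $I^\infty$ (the paper's sets $P_n$ are exactly your cylinder cover), and (v) uses periodic codes. Your minor variations --- using $C_1\cup C_2$ directly for independence of $C$, and the separability-of-$C(\pi(I^\infty))$ argument for metrizability --- are harmless substitutes for the paper's appeals to compact dominance and to a cited theorem.
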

\begin{proof}
We first prove (i) and (iii).
Take $C\in{\mathcal{K}}(X)$ so that $\mathcal{F}(C)\subseteq C$ 
and $\alpha\in I^\infty$. By definition, the set
$\bigcap_{k\in{\mathbb{N}}}w_{\alpha_{\vert k}}(C)$ 
is singleton. Its unique element, denoted by
$\pi(\alpha)$, does not depend on 
the choice of $C$, because if $C_1,C_2\in {\mathcal{K}}(X)$
satisfy $\mathcal{F}(C_i)\subseteq C_i$ for $i=1,2$, 
then by the compact dominance 
we can find $\tilde{C}\in{\mathcal{K}}(X)$ 
such that $C_1,C_2\subseteq \tilde{C}$ 
and $\mathcal{F}(\tilde{C})\subseteq \tilde{C}$.

Now we show that the constructed map 
$\pi:I^\infty\to X$ is continuous. 
Fix $\alpha\in I^\infty$ and 
take any open neighborhood $U$ of $\pi(\alpha)$. 
Since the sequence $(w_{\alpha_{\vert k}}(C))$ 
is a decreasing sequence of compact sets 
whose intersection contains just $\pi(\alpha)$, 
we see that there exists $k\in{\mathbb{N}}$ such that
$w_{\alpha_{\vert k}}(C)\subseteq U$. 
Then if  $\beta\in I^\infty$ agrees with 
$\alpha$ on the first $k$ coordinates, we get
\begin{equation*}
\pi(\beta)\in w_{\alpha_{\vert k}}(C)\subseteq U
\end{equation*}
and $\pi$ is continuous. 

Now take any $\alpha\in I^\infty$ and observe that
\begin{equation*}
\{w_i(\pi(\alpha))\}= 
w_i\left(\bigcap_{k\in{\mathbb{N}}} 
w_{\alpha_{\vert k}}(C)\right) \subseteq 
\bigcap_{k\in{\mathbb{N}}} w_i\circ w_{\alpha_{\vert k}}(C) = 
\{\pi(i\hat{\;}\alpha)\} =
\{\pi(\tau_i(\alpha))\},
\end{equation*}
so we get $w_i(\pi(\alpha))=\pi(\tau_i(\alpha))$ 
and the proofs of (i) and (iii) are finished.

Now we move to (ii). 
Similarly, as in Remark \ref{newremfs}, 
we can show that $\pi\left(I^\infty\right)$ 
is $\mathcal{F}$-invariant and compact. 

We will show that $A:= \pi\left(I^\infty\right)$ 
is the CFP of $\mathcal{F}$. Choose any nonempty 
and compact set $K$ and let $C\in{\mathcal{K}}(X)$ 
be such that $\mathcal{F}(C)\subseteq C$ and $K\subset C$.
Fix any nonempty and open sets 
$U_0,U_1,...,U_k\subseteq X$ with 
$A\subseteq U_0$ 
and $A\cap U_i\neq\emptyset$ for $i=1,...,k$ 
(that is, we establish a neighbourhood of $A$ 
in the Vietoris topology). 
We have to find $n_0$ such that for 
$n\geq n_0$, $\mathcal{F}^n(K)\subseteq U_0$ and
$\mathcal{F}^n(K)\cap U_i\neq\emptyset $ for $i=1,...,k$.
Given $n\in{\mathbb{N}}$, set
\[
P_n:=\{\alpha\in I^\infty: 
w_{\alpha_{\vert n}}(C) \subseteq U_0\}.
\]
Clearly, $(P_n)$ is an increasing sequence of 
open sets and (by the inclusion $A\subset U_0$ 
and definition of a~TIFS) we have that 
$I^\infty=\bigcup_{n\in{\mathbb{N}}} P_n$. 
Hence by compactness of $I^\infty$, 
there is $n_0\in{\mathbb{N}}$ such that 
$I^\infty=P_{n_0}$ and thus for $j\geq n_0$,
\begin{equation*}
\mathcal{F}^j(K)\subseteq \mathcal{F}^{n_0}(C) \subseteq
\bigcup_{\alpha\in I^{n_0}} w_\alpha(C) 
\subseteq U_0.
\end{equation*}  
Now choose $i=1,...,k$ and 
$\alpha\in I^\infty$ with 
$\pi(\alpha)\in U_i$. 
There exists $n_i\in{\mathbb{N}}$ such that 
$w_{\alpha_{\vert n}}(C)\subseteq U_i$ 
for $n\geq n_i$ and hence 
$w_{\alpha_{\vert n}}(K)\subseteq \mathcal{F}^n(K)\cap U_i$.
All in all, it is enough to take 
$k_0:= \max\{n_0,n_1,...,n_k\}$.

To see (iv), choose any 
$\alpha\in I^\infty$ and $K\in{\mathcal{K}}(X)$, 
and find $C\in{\mathcal{K}}(X)$ with $K\subseteq C$ 
and $\mathcal{F}(C)\subseteq C$. By (iii), 
$w_{\alpha_{\vert k}}(C)\to \{\pi(\alpha)\}$. 
As for any $k\in{\mathbb{N}}$, 
$w_{\alpha_{\vert k}}(K)\subseteq 
w_{\alpha_{\vert k}}(C)$, it also holds 
$w_{\alpha_{\vert k}}(K)\to \{\pi(\alpha)\}$.

Now we observe (v). 
Choose $\beta\in I^{<\infty}$ and 
let $x_\beta$ be the unique fixed point 
of $w_\beta$. Using (iv) for a compact set
$\{x_\beta\}$ and a sequence 
${\alpha}:=\beta\hat{\;}\beta\hat{\;}\beta\hat{\;}...$, 
we have that 
$w_{{\alpha}_{\vert k}}(x_\beta)\to \pi({\alpha})$. 
But the latter sequence has a constant 
subsequence whose unique element equals 
$x_\beta$, and therefore 
$x_\beta=\pi(\alpha)\in A_\mathcal{F}$. 
Now choose any $\beta\in I^\infty$ and 
any open neighbourhood $U$ of $\pi(\beta)$. 
By the continuity of $\pi$, there is $k$ 
such that if $\alpha\in I^\infty$ agrees 
with $\beta$ on the first $k$ coordinates, 
then $\pi(\alpha)\in U$. Hence taking 
$\alpha:= (\beta_{\vert k})\hat{\;}
(\beta_{\vert k})\hat{\;}...$, 
we get $\pi(\alpha)\in U$. 
By the earlier observation, $\pi(\alpha)$ 
is the fixed point of $w_{\beta_{\vert k}}$ 
and the result follows.

Finally,  the attractor $\pi(I^\infty)$ is 
metrizable as a continuous image of 
a compact metric space 
(cf. \cite{engelking} chap.3.7 p.182 
and Theorem 4.4.17) , so we get (vi).
\end{proof}

By Theorem \ref{HB2}, attractors of 
topologically contractive IFS are always 
metrizable. On the other hand, the underlying 
space $X$ for a TIFS can be nonmetrizable. 

\begin{example}[Nonmetrizable TIFS]\label{ex:nonmetrizTIFS}
Let us endow $[0,1]$ with the Euclidean topology 
and $(1,2]$ with any nonmetrizable topology.
Let $X$ be the disjoint union of 
$[0,1]$ and $(1,2]$. 
Define $w_1(x)=\frac{1}{2}\cdot\min\{1,x\}$ and
$w_2(x)=w_1(x)+\frac{1}{2}$. 
Then $w_1,w_2$ are continuous 
(as they are so on each of disjoint 
subspaces $[0,1]$ and $(1,2]$). 
For every compact $K\subseteq X$, the set 
$C:=K\cup[0,1]$ is compact, contains $K$ and 
$w_1(C)\cup w_2(C)\subseteq C$. This verifies 
point (i) of Definition \ref{def:TIFS}.
Furthermore, for every
$\alpha\in\{1,2\}^\infty$, 
the intersection
$\bigcap_{k\in{\mathbb{N}}} w_{\alpha_{\vert k}}(X)
=\bigcap_{k\in{\mathbb{N}}} w_{\alpha_{\vert k}}([0,1])$ 
is a singleton. This verifies 
point (ii) of Definition \ref{def:TIFS}.
Thus $\mathcal{F}=\{w_1,w_2\}$ is a TIFS.
\end{example}

The next two results show that 
Kameyama's topological self similar sets 
and Kieninger's point-fibred attractors 
are nothing else but attractors of TIFSs.

\begin{proposition}\label{prop:TIFSvsKameyama}
Let $X$ be a compact Hausdorff topological space and
$\mathcal{F}$ be an IFS on $X$ consisting of continuous maps.
The following conditions are equivalent:
\begin{itemize}
\item[(i)] there exists a surjective coding map 
$\pi:I^\infty\to X$ adjusted to $\mathcal{F}$ (i.e., 
$X$ is Kameyama's topological self similar set);
\item[(ii)] $\mathcal{F}$ is a topologically contractive
IFS and $X$ is its attractor.
\end{itemize}
\end{proposition}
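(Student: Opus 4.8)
The plan is to establish the two implications separately: the direction (ii) $\Rightarrow$ (i) is essentially a reading of Theorem \ref{HB2}, whereas (i) $\Rightarrow$ (ii) requires a short symbolic computation together with one topological lemma. For (ii) $\Rightarrow$ (i), assuming $\mathcal{F}$ is a TIFS with attractor $X$, Theorem \ref{HB2}(i) produces a coding map $\pi:I^\infty\to X$ and Theorem \ref{HB2}(ii) identifies $\pi(I^\infty)$ as the attractor. Since the attractor is $X$ by hypothesis, $\pi(I^\infty)=X$, so $\pi$ is the desired surjective coding map; this direction is immediate.

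For (i) $\Rightarrow$ (ii) we are handed a surjective coding map $\pi$. Because $X$ is compact, the compact dominance condition (i) of Definition \ref{def:TIFS} holds for free with $C:=X$ (cf. Remark \ref{remark:compactdominacy}), so it remains to verify the singleton condition (ii) of Definition \ref{def:TIFS} and then to identify $X$ as the attractor. First I would iterate the coding relation $w_i\circ\pi=\pi\circ\tau_i$ to obtain $w_{\alpha_{\vert k}}\circ\pi=\pi\circ\tau_{\alpha_{\vert k}}$, where $\tau_{\alpha_{\vert k}}:=\tau_{\alpha_1}\circ\cdots\circ\tau_{\alpha_k}$. Since $\pi$ is surjective and $\tau_{\alpha_{\vert k}}(I^\infty)$ is the cylinder of sequences beginning with $\alpha_1,\dots,\alpha_k$, this yields the key identity
\[
w_{\alpha_{\vert k}}(X)=w_{\alpha_{\vert k}}(\pi(I^\infty))=\pi\bigl(\tau_{\alpha_{\vert k}}(I^\infty)\bigr).
\]

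The technical heart is the elementary fact that a continuous map $\pi$ from a compact space into a Hausdorff space commutes with intersections of decreasing sequences of compacta: for compact $K_{k}\subseteq I^\infty$ with $K_{k+1}\subseteq K_k$ one has $\pi\bigl(\bigcap_k K_k\bigr)=\bigcap_k\pi(K_k)$, the nonobvious inclusion following by applying the finite intersection property to the nonempty compact sets $\pi^{-1}(y)\cap K_k$. Applying this to the nested cylinders $K_k:=\tau_{\alpha_{\vert k}}(I^\infty)$, whose intersection is the singleton $\{\alpha\}$, gives
\[
\bigcap_{k\in{\mathbb{N}}} w_{\alpha_{\vert k}}(X)=\bigcap_{k\in{\mathbb{N}}}\pi\bigl(\tau_{\alpha_{\vert k}}(I^\infty)\bigr)=\pi\bigl(\{\alpha\}\bigr)=\{\pi(\alpha)\}.
\]
For an arbitrary compact $C$ with $\mathcal{F}(C)\subseteq C$ I would then note $w_{\alpha_{\vert k}}(C)\subseteq w_{\alpha_{\vert k}}(X)$, so the intersection over $k$ is contained in $\{\pi(\alpha)\}$; on the other hand $\bigl(w_{\alpha_{\vert k}}(C)\bigr)_k$ is a decreasing sequence of nonempty compacta (invariance $\mathcal{F}(C)\subseteq C$ forces $w_{\alpha_{k+1}}(C)\subseteq C$ and hence the nesting), so its intersection is nonempty. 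Thus it equals $\{\pi(\alpha)\}$, and $\mathcal{F}$ is a TIFS. Finally, Theorem \ref{HB2} applied to this TIFS yields a coding map whose value at each $\alpha$ is the unique point of $\bigcap_k w_{\alpha_{\vert k}}(X)$, that is $\pi(\alpha)$; hence it coincides with the given $\pi$, and Theorem \ref{HB2}(ii) identifies the attractor as $\pi(I^\infty)=X$.

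The step I expect to be the main obstacle is the interchange $\bigcap_k\pi(K_k)=\pi\bigl(\bigcap_k K_k\bigr)$: this is precisely where compactness of the code space and the Hausdorff property of $X$ are indispensable, and where surjectivity of $\pi$ enters, to rewrite $\pi(\tau_{\alpha_{\vert k}}(I^\infty))$ as $w_{\alpha_{\vert k}}(X)$. The remaining arguments are routine manipulations of the coding relation and of nested compacta.
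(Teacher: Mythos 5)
Your proof is correct and follows essentially the same route as the paper: both hinge on iterating the coding relation to get $w_{\alpha_{\vert k}}(X)=\pi(\tau_{\alpha_{\vert k}}(I^\infty))$ and then pushing $\pi$ through the decreasing intersection of cylinders. You merely supply details the paper leaves implicit (the finite-intersection-property justification of the interchange, the passage from $C=X$ to an arbitrary invariant compact $C$, and the identification of $X$ as the attractor), all of which are handled correctly.
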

\begin{proof}
Implication (ii)$\Rightarrow$(i) follows 
directly from Theorem \ref{HB2}. 
To see (i)$\Rightarrow$(ii), 
assume that $\mathcal{F}$ is an IFS on $X$ for which 
there exists a surjective coding map 
and choose $\alpha\in I^\infty$. 
By induction we can show that 
for every $k\in{\mathbb{N}}$, 
$w_{\alpha_{\vert k}}\circ \pi =
\pi\circ \tau_{\alpha_{\vert k}}$. 
Hence
\begin{equation*}
\bigcap_{k\in{\mathbb{N}}} w_{\alpha_{\vert k}}(X)=
\bigcap_{k\in{\mathbb{N}}} w_{\alpha_{\vert k}}(\pi(I^\infty))=
\bigcap_{k\in{\mathbb{N}}} \pi(\tau_{\alpha_{\vert k}}(I^\infty))=
\pi\left(\bigcap_{k\in{\mathbb{N}}}\tau_{\alpha_{\vert k}}(I^\infty)\right)
=\pi(\{\alpha\}),
\end{equation*}
where the penultimate equality holds as
$(\tau_{\alpha_{\vert k}}(I^\infty))$ is 
a decreasing sequence of compact sets. 
The result follows.
\end{proof}

\begin{proposition}\label{prop:TIFSvsKieninger}
Let $\mathcal{F}$ be an IFS on a Hausdorff topological 
space $X$ consistsing of continuous maps. 
The following conditions are equivalent:
\begin{itemize}
\item[(a)] $\mathcal{F}$ is a TIFS;
\item[(b)] $\mathcal{F}$ has the attractor $A_\mathcal{F}$ which is 
point-fibred in the sense of Kieninger, that
is point (ii) of Definition \ref{def:TIFS}
holds true for $C=A_\mathcal{F}$.
\end{itemize}
\end{proposition}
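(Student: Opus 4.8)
The plan is to prove the two implications separately; the implication (a)$\Rightarrow$(b) is immediate from what is already established, while (b)$\Rightarrow$(a) carries all the content. For (a)$\Rightarrow$(b), I would invoke Theorem \ref{HB2}: a TIFS possesses the attractor $A_\mathcal{F}=\pi(I^\infty)$, which is compact and $\mathcal{F}$-invariant, so in particular $\mathcal{F}(A_\mathcal{F})\subseteq A_\mathcal{F}$. Feeding $C:=A_\mathcal{F}$ into condition (ii) of Definition \ref{def:TIFS} gives at once that $\bigcap_{k\in{\mathbb{N}}}w_{\alpha_{\vert k}}(A_\mathcal{F})$ is a singleton for every $\alpha\in I^\infty$, which is precisely point-fibredness of $A_\mathcal{F}$.

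For (b)$\Rightarrow$(a), condition (i) of Definition \ref{def:TIFS} holds for free by Remark \ref{remark:compactdominacy}, since $\mathcal{F}$ is assumed to possess an attractor. The real task is to promote the point-fibred property from the single set $A_\mathcal{F}$ to \emph{every} trapping region. So I would fix $C\in{\mathcal{K}}(X)$ with $\mathcal{F}(C)\subseteq C$ and $\alpha\in I^\infty$, and write $\pi(\alpha)$ for the unique point of $\bigcap_k w_{\alpha_{\vert k}}(A_\mathcal{F})$ granted by (b). Since $w_{\alpha_{k+1}}(C)\subseteq\mathcal{F}(C)\subseteq C$, the compacta $w_{\alpha_{\vert k}}(C)$ decrease with $k$, so their intersection is nonempty by the finite intersection property; it remains to see that it reduces to $\{\pi(\alpha)\}$.

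The crux is a two-step argument. First I would show that \emph{every} $y\in\bigcap_k w_{\alpha_{\vert k}}(C)$ already lies in $A_\mathcal{F}$: because $w_{\alpha_{\vert k}}(C)\subseteq\mathcal{F}^k(C)=\bigcup_{\beta\in I^k}w_\beta(C)$ and the attractivity clause of Definition \ref{def:Attractor} gives $\mathcal{F}^k(C)\to A_\mathcal{F}$ in the Vietoris topology, such a $y$ cannot be separated from $A_\mathcal{F}$; indeed, were there disjoint open $U\supseteq A_\mathcal{F}$ and $V\ni y$ (available since $X$ is Hausdorff and $A_\mathcal{F}$ is compact), then $\mathcal{F}^k(C)\subseteq U$ for large $k$ would contradict $y\in\mathcal{F}^k(C)\cap V$. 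This reasoning applies verbatim to any infinite word, in particular to each shift $\sigma^{m}\alpha=(\alpha_{m+1},\alpha_{m+2},\dots)$, so $\bigcap_j w_{(\sigma^{m}\alpha)_{\vert j}}(C)\subseteq A_\mathcal{F}$ for every $m$. Second, I would exploit the factorization $w_{\alpha_{\vert k}}=w_{\alpha_{\vert m}}\circ w_{(\sigma^{m}\alpha)_{\vert (k-m)}}$ together with the fact that a continuous map into a Hausdorff space commutes with intersections of decreasing compacta (the preimage of a point is closed, so one applies the finite intersection property to the sets $w_{\alpha_{\vert m}}^{-1}(y)\cap w_{(\sigma^{m}\alpha)_{\vert (k-m)}}(C)$). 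This yields $y\in w_{\alpha_{\vert m}}\left(\bigcap_j w_{(\sigma^{m}\alpha)_{\vert j}}(C)\right)\subseteq w_{\alpha_{\vert m}}(A_\mathcal{F})$ for every $m$, whence $y\in\bigcap_m w_{\alpha_{\vert m}}(A_\mathcal{F})=\{\pi(\alpha)\}$ and $y=\pi(\alpha)$.

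I expect the second step to be the main obstacle. Because $X$ (and hence $C$) may be nonmetrizable, as Example \ref{ex:nonmetrizTIFS} shows, one cannot fall back on Hausdorff-metric estimates and must run the whole argument with purely topological tools (point-from-compact separation and the finite intersection property). Moreover the bookkeeping must apply the ``lands in $A_\mathcal{F}$'' observation at \emph{every} truncation level $m$, not merely at $m=0$, which is exactly what forces the passage through the shifted words $\sigma^{m}\alpha$ and the commutation of the continuous image $w_{\alpha_{\vert m}}$ with the infinite decreasing intersection.
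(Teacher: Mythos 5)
Your proof is correct and follows essentially the same route as the paper's: compact dominance comes for free from the existence of the attractor, and the core of (b)$\Rightarrow$(a) is to factor $w_{\alpha_{\vert k}}$ at each level $m$, commute the continuous map $w_{\alpha_{\vert m}}$ with a decreasing intersection of nonempty compacta via the finite intersection property, and use $\mathcal{F}^k(C)\to A_\mathcal{F}$ to squeeze $\bigcap_k w_{\alpha_{\vert k}}(C)$ into $\bigcap_m w_{\alpha_{\vert m}}(A_\mathcal{F})=\{\pi(\alpha)\}$. The only cosmetic difference is that you carry the tail through the shifted words $\sigma^m\alpha$ (and spell out the Hausdorff separation argument showing the tail intersection lands in $A_\mathcal{F}$), whereas the paper envelopes $w_{(\alpha_{m+1},\dots,\alpha_k)}(C)$ by $\mathcal{F}^{k-m}(C)$ and intersects those, using $A_\mathcal{F}=\bigcap_k\mathcal{F}^k(C)$ directly.
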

\begin{proof}
Implication (a)$\Rightarrow$(b) follows from 
Theorem \ref{HB2}. We will prove the opposite one.

First note that, by Remark \ref{remark:compactdominacy}, 
$\mathcal{F}$ is compactly dominated, because it has the attractor.

Now, let $\mathcal{F}=\{w_1,...,w_N\}$. Choose any
$C\in{\mathcal{K}}(X)$ so that $\mathcal{F}(C)\subseteq C$ and 
$\alpha=(\alpha_i)_{i=1}^{\infty}\in I^\infty$. 
Since $A_\mathcal{F}$ is the attractor of $\mathcal{F}$, 
we have $\mathcal{F}^k(C)\to A_\mathcal{F}$ and, in turn, 
$A_\mathcal{F}=\bigcap_{k\in{\mathbb{N}}}\mathcal{F}^k(C)$. 
Now if $k>j>1$, then 
\begin{equation*}
w_{\alpha_{\vert k}}(C)=
w_{\alpha_{\vert j}}(w_{(\alpha_{j+1},...,\alpha_k)}(C))
\subseteq w_{\alpha_{\vert j}}(\mathcal{F}^{k-j}(C)). 
\end{equation*}
Hence
\begin{equation*}
\bigcap_{k\in{\mathbb{N}}} w_{\alpha_{\vert k}}(C)=
\bigcap_{k>j} w_{\alpha_{\vert k}}(C) \subseteq
\bigcap_{k>j} w_{\alpha_{\vert j}}(\mathcal{F}^{k-j}(C))=
w_{\alpha_{\vert j}}\left(\bigcap_{k>j}\mathcal{F}^{k-j}(C)\right)=
w_{\alpha_{\vert j}}(A_\mathcal{F}).
\end{equation*}
Since $j$ was taken arbitrarily and 
$A_\mathcal{F}\subseteq C$, we get 
$\bigcap_{k\in{\mathbb{N}}}w_{\alpha_{\vert k}}(C)= 
\bigcap_{k\in{\mathbb{N}}}w_{\alpha_{\vert k}}(A_\mathcal{F})$.
As $A_\mathcal{F}$ is a point fibred attractor, 
the proof is finished.
\end{proof}

Finally, we show that Browder contractive IFSs 
on complete spaces are topologically contractive, 
whence the thesis of Theorem \ref{HB2} 
holds for such IFSs. 

\begin{corollary}\label{cor:tifs}
Let $\mathcal{F}$ be a Browder contractive IFS on a complete
metric space $X$. Then $\mathcal{F}$ is a~TIFS.
\end{corollary}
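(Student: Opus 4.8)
The plan is to verify the two defining conditions of a TIFS in Definition \ref{def:TIFS} directly, leaning on the attractor that is already guaranteed for Browder contractive IFSs.

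First I would record that every Browder contraction is continuous: if $w$ is a $\varphi$-contraction with $\varphi(t)<t$ for $t>0$, then $0\le\varphi(t)<t$ forces $\varphi(t)\to0$ as $t\to0^{+}$, so $d(w(x),w(x_0))\le\varphi(d(x,x_0))\to0$. Hence $\mathcal{F}$ consists of continuous maps, as required of a TIFS. Next, since $\mathcal{F}$ is Browder contractive on a complete metric space, Theorem \ref{th1} tells us that $\mathcal{F}$ has an attractor $A_\mathcal{F}$. Condition (i) (compact dominance) then comes for free: by Remark \ref{remark:compactdominacy}, the existence of an attractor already supplies, for each $K\in\mathcal{K}(X)$, a compact $C\supseteq K$ with $\mathcal{F}(C)\subseteq C$.

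The substance of the proof is condition (ii). Here I would first pass to a common comparison function. Writing each $w_i$ as a $\varphi_i$-contraction with $\varphi_i$ nondecreasing, right continuous and $\varphi_i(t)<t$ for $t>0$, the function $\varphi:=\max\{\varphi_1,\dots,\varphi_N\}$ inherits all three properties, and every $w_i$ is a $\varphi$-contraction. Using monotonicity of $\varphi$, I would push the contraction inequality through a composition along a finite prefix to obtain
\[
d\bigl(w_{\alpha_{\vert k}}(x),w_{\alpha_{\vert k}}(y)\bigr)\le\varphi^{k}(d(x,y))\quad\text{for all }x,y\in X.
\]
Now fix $C\in\mathcal{K}(X)$ with $\mathcal{F}(C)\subseteq C$ and $\alpha\in I^\infty$. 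Since $w_{\alpha_{k+1}}(C)\subseteq\mathcal{F}(C)\subseteq C$, the sets $w_{\alpha_{\vert k}}(C)$ form a decreasing sequence of nonempty compacta, and the displayed estimate gives $\operatorname{diam}\bigl(w_{\alpha_{\vert k}}(C)\bigr)\le\varphi^{k}(\operatorname{diam}C)$. By the Matkowski property \eqref{matkowski}, $\varphi^{k}(\operatorname{diam}C)\to0$, so Cantor's nested intersection theorem for compact sets yields a nonempty intersection whose vanishing diameter forces it to be a single point. This verifies (ii), and together with (i) and continuity it shows that $\mathcal{F}$ is a TIFS.

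The main obstacle, such as it is, is the reduction to a single \emph{nondecreasing} comparison function: monotonicity is exactly what lets me iterate the contraction bound along the composition $w_{\alpha_{\vert k}}=w_{\alpha_1}\circ\cdots\circ w_{\alpha_k}$ to reach $\varphi^{k}$, and it is the combination ``nondecreasing, right continuous, $\varphi(t)<t$'' that guarantees $\varphi^{k}(t)\to0$ through \eqref{matkowski}. Everything else — the nestedness of the $w_{\alpha_{\vert k}}(C)$ and the passage from diameter $\to 0$ to a singleton — is routine compactness bookkeeping.
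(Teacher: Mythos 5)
Your proof is correct and follows essentially the same route as the paper: compact dominance via the attractor from Theorem \ref{th1} and Remark \ref{remark:compactdominacy}, then the diameter estimate $\operatorname{diam}(w_{\alpha_{\vert k}}(C))\leq\varphi^{k}(\operatorname{diam}(C))\to 0$ for a common comparison function, followed by Cantor's intersection theorem. You merely spell out in more detail the steps the paper leaves implicit (the construction of the common $\varphi$ as a maximum and the iteration of the contraction bound along a composition).
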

\begin{proof}
Condition (i) of Definition \ref{def:TIFS} 
follows, due to Remark \ref{remark:compactdominacy}, 
from the existence of an attractor of $\mathcal{F}$.

Now, if $C\in{\mathcal{K}}(X)$ satisfies $\mathcal{F}(C)\subseteq C$ 
and $\alpha\in I^\infty$, then the sequence
$(w_{\alpha_{\vert k}}(C))$ is a decreasing 
sequence of compact sets such that 
$\operatorname{diam}(w_{\alpha_{\vert k}}(C))\leq
\varphi^k(\operatorname{diam}(C))$ for all $k\in{\mathbb{N}}$, 
where $\varphi$ is a~comparison function 
common for all maps comprising $\mathcal{F}$. 
Since $\varphi^k(\operatorname{diam}(C))\to 0$, 
the Cantor theorem assures that the intersection
$\bigcap_{k\in{\mathbb{N}}}w_{\alpha_{\vert k}}(C)$ 
is a singleton. Thus condition (ii) 
of Definition \ref{def:TIFS} is satisfied.
\end{proof}

\begin{remark}
It is worth pointing out that although 
Theorem \ref{HB2} implies Theorem \ref{th1}
(point (ii) states that $\pi(I^\infty)$ 
is the CFP of $\mathcal{F}$), we needed 
to establish Theorem \ref{th1} directly. 
Theorem \ref{th1} is employed to justify 
the compact dominance of $\mathcal{F}$ in the course
of proving Theorem \ref{HB2}. 
If the space $X$ was already compact,
we could have omitted such a detour.
\end{remark}

\subsection{Existence of an invariant measure}

The next result extends Theorem \ref{th:weakcontrMarkov}
to the setting of TIFSs. 
Let $(\mathcal{F},\vec{p})$ be a probabilistic IFS 
consisting of continuous maps acting
on a Hausdorff topological space $X$.
Let $M_{(\mathcal{F},\vec{p})}$ be the Markov operator
corresponding to $(\mathcal{F},\vec{p})$.
Let $\mathcal{P}(X)$ be the set of 
all Radon probability measures on $X$. 
By Proposition \ref{th:wtransport},
$M_{(\mathcal{F},\vec{p})}(\mu)\in\mathcal{P}(X)$
for every $\mu\in\mathcal{P}(X)$.
Thus we can consider the Markov operator
restricted to $\mathcal{P}(X)$.

\begin{theorem}\label{theorem:invariant}
Let $(\mathcal{F},\vec{p})$ be a probabilistic 
topologically contractive IFS on a 
Hausdorff topological space $X$. 
Then 
\begin{enumerate}
\item[(i)] the Markov operator 
$M:\mathcal{P}(X)\to\mathcal{P}(X)$ 
corresponding to $(\mathcal{F},\vec{p})$
has the (not necessarily unique!) 
CFP $\mu_{*}$, i.e.,
$M(\mu_{*})=\mu_{*}$ is an invariant
Radon probability measure and
$M^n(\mu)$ converges weakly to $\mu_{*}$
for all $\mu\in\mathcal{P}(X)$;
\item[(ii)] $\mu_{*}$ supports the attractor
$A_{\mathcal{F}}$ of $\mathcal{F}$, i.e., 
$\operatorname{supp}(\mu_{*})=A_{\mathcal{F}}$;
\item[(iii)] $\mu_{*}$ is unique up to 
Radon probability measures with 
compact supports and, 
if additionally $X$ is normal, 
$\mu_{*}$ is unique up to all 
Radon probability measures.
\end{enumerate}
\end{theorem}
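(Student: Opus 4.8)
The plan is to transport the whole problem to the code space $I^\infty$ via the coding map $\pi$ supplied by Theorem \ref{HB2}, where the canonical IFS $\mathcal{T}$ is Banach (hence Rakotch) contractive and Theorem \ref{th:weakcontrMarkov} already applies; recall that $A_\mathcal{F}=\pi(I^\infty)$. First I would invoke Theorem \ref{th:weakcontrMarkov} for the probabilistic IFS $(\mathcal{T},\vec p)$ on the complete metric space $I^\infty$, which is compact (so $\mathcal{P}_1(I^\infty)=\mathcal{P}(I^\infty)$), to obtain a unique $M_{(\mathcal{T},\vec p)}$-invariant measure $\nu_*\in\mathcal{P}(I^\infty)$ attracting every $\nu\in\mathcal{P}(I^\infty)$. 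Setting $\mu_*:=\pi_*\nu_*=\nu_*\circ\pi^{-1}$, which is Radon as the image of the tight measure $\nu_*$ under the continuous $\pi$, the relation $w_i\circ\pi=\pi\circ\tau_i$ yields the intertwining $(\pi_*\nu)\circ w_i^{-1}=\pi_*(\nu\circ\tau_i^{-1})$, whence $M_{(\mathcal{F},\vec p)}\circ\pi_*=\pi_*\circ M_{(\mathcal{T},\vec p)}$; in particular $M\mu_*=\pi_*(M_{(\mathcal{T},\vec p)}\nu_*)=\pi_*\nu_*=\mu_*$, so $\mu_*$ is an invariant Radon probability measure.

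The heart of the argument, and the step I expect to be the main obstacle, is the attractivity in (i): that $M^n\mu\to\mu_*$ weakly for every $\mu\in\mathcal{P}(X)$ over a possibly non-metrizable $X$. Iterating the Feller relation \eqref{feller} gives, for $g\in C_b(X)$, the expansion $\int g\,dM^n\mu=\sum_{\alpha\in I^n}p_\alpha\int g\circ w_\alpha\,d\mu$, while invariance of $\nu_*$ rewrites $\int g\,d\mu_*=\int g\circ\pi\,d\nu_*=\sum_{\alpha\in I^n}p_\alpha\int g\circ w_\alpha\circ\pi\,d\nu_*$. Fix $\varepsilon>0$; by inner regularity choose compact $K_0$ with $\mu(X\setminus K_0)<\varepsilon$, and by compact dominance a compact $C\supseteq K_0$ with $\mathcal{F}(C)\subseteq C$ (note $A_\mathcal{F}=\pi(I^\infty)\subseteq C$ for free). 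The crucial ingredient is a uniform shrinking estimate: using Theorem \ref{HB2}(iv), each $\beta\in I^\infty$ admits $k_\beta$ and an open $U_\beta\ni\pi(\beta)$ on which $g$ has oscillation $<\varepsilon$ and with $w_{\beta_{\vert k_\beta}}(C)\subseteq U_\beta$; since this inclusion depends only on the prefix $\beta_{\vert k_\beta}$ it persists on the whole cylinder of $\beta$, so a finite subcover of the compact $I^\infty$ produces $n_0$ such that for every word $\alpha$ of length $\ge n_0$ the set $w_\alpha(C)$ lies in some $U_\beta$ (factor $w_\alpha=w_{\alpha_{\vert k_\beta}}\circ(\cdots)$ and use $\mathcal{F}(C)\subseteq C$). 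Consequently, for $n\ge n_0$ both $g\circ w_\alpha$ on $K_0\subseteq C$ and $g\circ w_\alpha\circ\pi$ on $I^\infty$ stay within $\varepsilon$ of a common constant, while the tail $\mu(X\setminus K_0)<\varepsilon$ is controlled by $\|g\|_\infty$; summing against the probabilities $p_\alpha$ yields $|\int g\,dM^n\mu-\int g\,d\mu_*|\le\varepsilon(1+2\|g\|_\infty)$, and letting $\varepsilon\to0$ proves the weak convergence. That $\mu_*$ is ``not necessarily unique'' as a CFP merely reflects that the weak topology need not be Hausdorff, cf.\ Remark \ref{remark:CFP}(2).

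For (ii) I would use that for continuous $\pi$ one has $\operatorname{supp}(\pi_*\nu_*)=\pi(\operatorname{supp}\nu_*)$: the inclusion $\supseteq$ is immediate, and for $\subseteq$ note that $\pi(\operatorname{supp}\nu_*)$ is compact, hence closed in the Hausdorff space $X$, so any point outside it has a neighbourhood of zero $\mu_*$-mass. It then remains to see $\operatorname{supp}\nu_*=I^\infty$: the support of an invariant measure of a positively weighted IFS is a nonempty compact fixed point of the Hutchinson operator of $\mathcal{T}$, which by uniqueness of the attractor must equal $I^\infty$. Hence $\operatorname{supp}\mu_*=\pi(I^\infty)=A_\mathcal{F}$.

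Finally, (iii) is obtained by upgrading weak equality to genuine equality. Any invariant $\mu\in\mathcal{P}(X)$ satisfies $\mu=M^n\mu\to\mu_*$ weakly by (i), hence $\int g\,d\mu=\int g\,d\mu_*$ for all $g\in C_b(X)$. If $X$ is normal, Radon measures are separated by $C_b(X)$ (Urysohn's lemma together with inner/outer regularity), so $\mu=\mu_*$, giving full uniqueness. In general, if $\mu$ has compact support I would enclose it, via compact dominance, in a compact $\mathcal{F}$-invariant set $C$; then $\mathcal{F}|_C$ is a TIFS on the compact, hence normal, Hausdorff space $C$ with the same coding map and the same invariant measure $\mu_*$, and running the attractivity argument of (i) intrinsically on $C$ yields $\int h\,d\mu=\int h\,d\mu_*$ for all $h\in C(C)$, which forces $\mu=\mu_*$ on the compact Hausdorff $C$. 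The only delicate point is precisely this passage from weak to strong equality, which is why unconditional uniqueness is claimed merely ``up to compact supports'' while the full statement requires normality.
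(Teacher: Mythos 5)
Your proposal is correct and follows essentially the same route as the paper: push the invariant measure of the canonical IFS forward through the coding map, prove attractivity via Radon tightness, compact dominance, a finite small-oscillation cover of a trapping compact set and compactness of $I^\infty$, and obtain uniqueness from the Hausdorffness of the weak topology on normal spaces, restricting to a compact invariant set in the compactly-supported case. The only (harmless) deviation is that you obtain the invariant measure on the code space from Theorem \ref{th:weakcontrMarkov} applied to the Banach contractive canonical IFS, whereas the paper constructs the Bernoulli measure explicitly and checks its invariance on cylinders.
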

\begin{proof}
\textbf{Step 1.}
Existence of an invariant measure 
for the canonical IFS on a code space.

Set $I:=\{1,...,N\}$ and let $\mu_{b}$ 
be the Bernoulli measure on $I^\infty$
corresponding to the vector of weights
$\vec{p}= (p_1,...,p_{N})$;
cf. Appendix, Section \ref{section:measures}).
This means that $\mu_{b}$ is 
the unique Borel measure such that 
for every cylinder 
$A_{(\alpha_1,...,\alpha_k)} := 
\{(\alpha_1,...,\alpha_k)\}\times I^\infty$, 
$\alpha_1,...,\alpha_k\in I$, the following holds
\begin{equation*}
\mu_{b}(A_{(\alpha_1,...,\alpha_k)}) 
= p_{\alpha_1}\cdots p_{\alpha_k}.
\end{equation*}
Letting $M_{(\mathcal{T},\vec{p})}$ to be 
the Markov operator induced by
a probabilistic canonical IFS 
$(\mathcal{T},\vec{p})$ on $I^\infty$, 
for every 
$(\alpha_1,...,\alpha_k)\in I^k$, $k\geq 2$, 
we have
\begin{eqnarray*}
M_{(\mathcal{T},\vec{p})} (\mu_{b}(A_{(\alpha_1,...,\alpha_k)})) = 
\sum_{i=1}^{N} p_i\cdot \mu_{b}(\tau_i^{-1}(A_{(\alpha_1,...,\alpha_k)}) =
\\
= p_{\alpha_1}\cdot \mu_{b}(A_{(\alpha_2,...,\alpha_k)})=
\mu_{b}(A_{(\alpha_1,...,\alpha_k)}).
\end{eqnarray*}
This implies that 
$\mu_{b} = M_{(\mathcal{T},\vec{p})}(\mu_{b})$. 

\textbf{Step 2.} Existence of an invariant measure
for arbitrary TIFS.

Fix a probabilistic TIFS $(\mathcal{F},\vec{p})$, 
where $\mathcal{F}=\{w_1,...,w_N\}$.
Define the measure 
$\mu_{*}:= \mu_{b}\circ \pi^{-1}$, 
where $\pi:I^{\infty}\to X$ is the coding map
and $\mu_{b}$ is the Bernoulli measure 
on the code space. 
As $\mu_{b}$ is Radon, so is $\mu_{*}$ 
(see Proposition \ref{th:wtransport}). 
By the invariance of $\mu_{b}$ established 
in Step 1 and the definition of $\pi$, 
for every Borel $B\subseteq X$ the following holds 
\begin{eqnarray*}
M(\mu_{*})(B)=
\sum_{i=1}^{N} p_i\cdot\mu_{*}(w_i^{-1}(B))=
\sum_{i=1}^{N} p_i\cdot 
\mu_{b}(\pi^{-1}(w_i^{-1}(B)))=
\\
= \sum_{i=1}^{N} p_i\cdot 
\mu_{b}(\tau_i^{-1}(\pi^{-1}(B))) =
\mu_{b}(\pi^{-1}(B))=\mu_{*}(B).
\end{eqnarray*}
Thus $\mu_{*}$ is an invariant Radon 
probability measure for $(\mathcal{F},\vec{p})$. 
Moreover, Proposition \ref{prop:support}(c) 
together with Theorem \ref{HB2}(ii) 
imply that 
\[
\operatorname{supp}(\mu_{*}) =
\overline{\pi(\operatorname{supp}{(\mu_{b})})} = 
\pi(I^\infty)= A_{\mathcal{F}}.
\]

\textbf{Step 3.} The measure $\mu_{*}$ is CFP.

We show that the invariant measure $\mu_{*}$ 
defined in Step 2 is the CFP of the Markov 
operator $M:\mathcal{P}(X)\to \mathcal{P}(X)$ 
corresponding to $(\mathcal{F},\vec{p})$. 
Take any measures $\mu,\eta\in\mathcal{P}(X)$ 
and a continuous and bounded function $g:X\to{\mathbb{R}}$. 
It is enough to show that
\begin{equation}\label{inv1}
\lim_{n\to\infty}
\left|\int_{X} g \;dM^n(\mu) -
\int_{X} g \;dM^n(\eta)\right| =0.
\end{equation}
Indeed, choosing $\mu_{*}$ for $\eta$, 
and using its invariance, 
we see that \eqref{inv1} implies that 
$M^n(\mu)\to \mu_{*}$ weakly. 

Fix $\varepsilon>0$. 
Since $\mu,\eta$ are Radon, 
there is a compact set $C$ such that 
$\mu(X\setminus C), 
\eta(X\setminus C) <
\frac{\varepsilon}{8 P}$, 
where $P:=\sup\{|g(x)|:x\in X\}+1$. 
Switching if needed to some bigger compact set, 
we can assume that $\mathcal{F}(C)\subseteq C$.
As $g$ is continuous at each point of $C$, 
we can find a finite open cover $U_1,...,U_m$ of $C$ 
such that for every $i=1,...,m$ and $x,y\in U_i$, 
we have $|g(x)-g(y)|<\frac{\varepsilon}{4}$. 
Now if $k\in{\mathbb{N}}$, then we set 
\begin{equation*}
A_k:= \{\alpha\in I^\infty: 
w_{\alpha_{\vert{k}}}(C)\subseteq U_i 
\mbox{ for some } i=1,...,m\}.
\end{equation*}
Clearly, each $A_k$ is open and, by properties 
of a TIFS, the family $A_k$, $k\in{\mathbb{N}}$, 
is a cover of $I^\infty$. 
By compactness of $I^{\infty}$ and the fact that 
the sequence $(A_k)$ is increasing, 
we can find $k_0\in{\mathbb{N}}$ such that $I^\infty=A_{k}$ 
for $k\geq k_0$.  
Now let $\alpha\in I^k$ and find $i=1,...,m$ 
so that $w_i(C)\subseteq U_i$. 
Choosing any $x_0\in U_i$, we have
\begin{eqnarray*}
\left|\int_{X} g\circ w_\alpha\;d\mu -
\int_{X} g\circ w_\alpha\;d\eta\right| \leq 
\left|\int_{X} g\circ w_\alpha\;d\mu-g(x_0)\right| +
\left|g(x_0)-\int_{X} g\circ w_\alpha\;d\eta\right| =
\\
= \left|\int_{X}g\circ w_\alpha\;d\mu - 
\int_{X} g(x_0)\;d\mu\right| + 
\left|\int_{X} g(x_0)\;d\eta - 
\int_{X} g\circ w_\alpha\;d\eta\right| \leq 
\\
\leq \int_{X} |g\circ w_\alpha-g(x_0)| \;d\mu +
\int_{X} |g(x_0)-g\circ w_\alpha|\;d\eta\leq 
\\
\leq \int_{X\setminus C} 2{ P}\;d\mu +
\int_{C} \frac{\varepsilon}{4}\;d\mu +
\int_{X\setminus C} 2{ P}\;d\eta +
\int_{C} \frac{\varepsilon}{4}\;d\eta\leq \varepsilon.
\end{eqnarray*}
Hence by formula \eqref{feller} in Remark 
\ref{rem:federed} and an easy inductive argument:
\begin{eqnarray*}
\left|\int_{X} g\;dM^n(\mu) - 
\int_{X} g\;dM^n(\eta)\right|=
\left|\sum_{\alpha\in I^k} p_\alpha
\int_{X} g\circ w_\alpha\;d\mu - 
\sum_{\alpha\in I^k} p_\alpha 
\int_{X} g\circ w_\alpha\;d\eta\right| \leq 
\\
\leq \sum_{\alpha\in I^k} p_\alpha 
\left|\int_{X} g\circ w_\alpha\;d\mu - 
\int_{X} g\circ w_\alpha\;d\eta\right| <\varepsilon,
\end{eqnarray*}
where $p_\alpha=p_{\alpha_1}\cdots p_{\alpha_k}$, 
$\alpha=(\alpha_1,...,\alpha_k)\in I^k$.

\textbf{Step 4.} Uniqueness of ${ \mu_*}$.

If $X$ is normal, then the weak topology 
on $\mathcal{P}(X)$ is Hausdorff (see Appendix,
Proposition \ref{prop:WeakTopIsHausdorff}).
Therefore, the CFP $\mu_{*}$ of 
$M_{(\mathcal{F},\vec{p})}$ is necessarily unique. 

Now let us consider the case of a general, 
not necessarily normal space $X$. 
Assume that $\mu\neq\eta$ are two invariant measures
with compact supports. 
By the compact dominance of $\mathcal{F}$, 
we can find a~compact set $C\subseteq X$ 
so that $\mathcal{F}(C)\subset C$ and 
\begin{equation}\label{eq:supportsinC}
\operatorname{supp}(\mu) \cup \operatorname{supp}(\eta)\subseteq C.
\end{equation}
Let $\mu_{C},\eta_{C}$ be the restrictions 
of $\mu,\eta$, respectively, 
to the Borel $\sigma$-algebra $\mathcal{B}(C)$. 
Consider the restriction of $\mathcal{F}$ to $C$,
$\mathcal{F}\vert C:=\{w_{1}\vert C,...,w_{N}\vert C\}$
and the induced Markov operator 
$M_{(\mathcal{F}{\vert C},\vec{p})}$.
Then for every $B\in \mathcal{B}(C)$,
\begin{eqnarray*}
M_{(\mathcal{F}{\vert C},\vec{p})}(\mu_{C})(B) =
\sum_{i=1}^{N} p_i\cdot 
\mu_C({(w_{i}\vert C)}^{-1}(B)) = 
\sum_{i=1}^{N} p_i\cdot 
\mu(C\cap w_{i}^{-1}(B)) =
\\
= \sum_{i=1}^{N} p_i\cdot 
\mu(w_{i}^{-1}(B)) =\mu(B)=\mu_C(B),
\end{eqnarray*}
because of Lemma \ref{lem:Radonsupport}(ii),
inclusion \eqref{eq:supportsinC} 
and simple set-algebra
\[
(w_{i}\vert C)^{-1}(B) = C\cap w_{i}^{-1}(B).
\]
This means that $\mu_C$ is (clearly Radon) 
invariant measure for $\mathcal{F}\vert C$. 
By symmetry, $\eta_C$ is also 
an invariant measure for $\mathcal{F}\vert C$. 
It remains to show that $\mu_C\neq \eta_C$ 
as this will give a contradiction 
with the ``normal'' case. 
By our assumption that $\mu\neq\eta$, 
there is a Borel set $B\subseteq X$ 
such that $\mu(B)\neq\eta(B)$.  
Thanks to Lemma \ref{lem:Radonsupport}(ii) 
and \eqref{eq:supportsinC}, we have
\begin{equation*}
\mu_C(B\cap C) = \mu(B\cap C) = 
\mu(B\cap C)+\mu(B\setminus C) = 
\mu(B)\neq \eta(B)=\eta_C(B\cap C).
\end{equation*} 
Hence $\mu_C\neq \eta_C$ and the result follows.
\end{proof}

\begin{remark}
For a metric version of Theorem 
\ref{theorem:invariant} see 
\cite{Mendivil} Theorem 2.67.
\end{remark}

\subsection{Remetrization of topologically contractive IFSs}\label{sec:remetrizeTIFS}

Turning back to remetrization Theorem 
\ref{theorem:meyers}, it is natural 
to ask whether, having a TIFS $\mathcal{F}$ on 
a metrizable space $X$, there exists an 
admissisble complete metric $d$ in $X$ 
making $\mathcal{F}$ a Banach contractive IFS. 
Such a question for topological self similar 
sets was considered by Kameyama in \cite{Ka2}. 
He proved that 
given a topological self similar 
set $X$ and an appropriate IFS $\mathcal{F}$,
we can always define 
a family of pseudometrics $d_\mathcal{F}^a$, 
for $a=(a_1,...,a_N)\in(0,1)^N$, with the 
property that $\operatorname{Lip}_{d_\mathcal{F}^a}(w_i)\leq a_i$ 
for $i=1,...,N$, and the following conditions 
are equivalent:
\begin{itemize}
\item[(K-1)] $d_\mathcal{F}^a$ is admissible metric 
for some $a\in(0,1)^N$;
\item[(K-2)] there is an admissible metric 
$d$ on $X$ such that $\operatorname{Lip}_d(w_i)<1$ 
for every $i=1,...,N$.
\end{itemize}

Kameyama also gave an example of a topological 
self similar system for which condition (K-1) 
does not hold (\cite{Ka2} Section 1.4). 
Hence the answer to his question is negative.
However, unexpectedly, it turns out 
that things are different if we do not 
insist that the remetrized IFS has 
to comprise Banach contractions 
and we agree to use weak contractions in place
of Banach ones 
(below $w_\varnothing=\operatorname{id}_X$ 
stands for the identity function).

\begin{theorem}\label{metrizability}
Let $\mathcal{F}=\{w_1,...,w_N\}$ be a 
topologically contractive IFS on 
a metric space $(X,d)$. Let
$(a_n)_{n=0}^{\infty}$ be a strictly increasing 
sequence of reals such that $1\leq a_n\leq 2$ 
for all $n\geq 0$. Define
\begin{equation}\label{rem}
\hat{d}(x,y) := \max\{a_k d(w_\alpha(x), 
w_\alpha(y)):k=0,1,2,...,\;\alpha\in I^k\}
\mbox{ for } x,y\in X.
\end{equation}
Then
\begin{itemize}
\item[(i)] $\hat{d}$ is admissible metric on $X$;
\item[(ii)] $\hat{d}$ is complete provided $d$ is complete;
\item[(iii)] $\mathcal{F}$ is Edelstein contractive with respect to $\hat{d}$;
\item[(iv)] $\mathcal{F}$ is Rakotch contractive with respect to
$\hat{d}$ provided that, additionally, 
for every $\alpha\in I^\infty$, the set 
$\bigcap_{k\in{\mathbb{N}}} w_{\alpha_{\vert k}}(X)$ 
is a singleton. 
\end{itemize}
\end{theorem}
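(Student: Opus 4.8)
The plan is to derive (i)--(iii) from a single auxiliary shrinking lemma and to isolate the real difficulty in (iv). The lemma is that $\hat d$ is well defined, i.e. the maximum is attained: fix $x,y$ and use compact dominance (Definition \ref{def:TIFS}(i)) to pick $C\in{\mathcal K}(X)$ with $x,y\in C$ and $\mathcal F(C)\subseteq C$, so that $w_\alpha(x),w_\alpha(y)\in w_\alpha(C)\subseteq C$ for every finite word $\alpha$. I would then prove $\delta_k:=\max_{\alpha\in I^{k}}\operatorname{diam}(w_\alpha(C))\to 0$ by a König's lemma argument: diameters only shrink along extensions of words, so if $\delta_k\ge\varepsilon$ infinitely often the finitely branching tree of words $\alpha$ with $\operatorname{diam}(w_\alpha(C))\ge\varepsilon$ is infinite and yields a branch $\alpha\in I^\infty$ with $\operatorname{diam}(w_{\alpha_{\vert k}}(C))\ge\varepsilon$ for all $k$, contradicting Definition \ref{def:TIFS}(ii) (a decreasing sequence of compacta with singleton intersection has vanishing diameter). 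Since $a_k\le 2$, the terms $a_k\,d(w_\alpha(x),w_\alpha(y))$ tend to $0$ and the supremum is attained. The metric axioms in (i) are then routine (symmetry and the triangle inequality pass through the $\max$, using $\max(A+B)\le\max A+\max B$, and $a_0\ge 1$ gives $\hat d\ge d$, hence definiteness), and admissibility follows by comparing convergent sequences: $\hat d\ge d$ gives one implication, while for $x_n\to x$ in $d$ the shrinking lemma applied to a trapping $C\supseteq\{x\}\cup\{x_n\}$ controls the tail uniformly and continuity of the finitely many $w_\alpha$ with $|\alpha|\le K$ controls the head.

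Part (ii) is immediate: an $\hat d$-Cauchy sequence is $d$-Cauchy since $\hat d\ge d$, hence $d$-convergent by completeness, hence $\hat d$-convergent by the admissibility just proved. For (iii) I would rewrite $\hat d(w_i x,w_i y)=\max_{k,\alpha}a_k\,d\big(w_{\alpha\hat{\;}i}(x),w_{\alpha\hat{\;}i}(y)\big)$, a maximum over words $\gamma=\alpha\hat{\;}i$ of length $k+1$ in which $\gamma$ now carries the coefficient $a_k$ instead of $a_{k+1}$. Since $a_{k+1}\,d(w_\gamma x,w_\gamma y)$ is one of the terms of $\hat d(x,y)$, each term of $\hat d(w_i x,w_i y)$ is at most $\frac{a_k}{a_{k+1}}\hat d(x,y)$; evaluating at the \emph{attained} maximizer $k^{*}$ and using strict monotonicity $a_{k^{*}}<a_{k^{*}+1}$ gives $\hat d(w_i x,w_i y)\le\frac{a_{k^{*}}}{a_{k^{*}+1}}\hat d(x,y)<\hat d(x,y)$ for $x\ne y$ (note $\hat d(x,y)\ge d(x,y)>0$). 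Attainment of the maximum and strict monotonicity of $(a_n)$ are exactly what upgrade ``$\le$'' to the strict Edelstein inequality.

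For (iv) I would verify the Rakotch characterization of Lemma \ref{prop1}(iii): given $\delta>0$, produce $\lambda_\delta<1$ with $\hat d(w_i x,w_i y)\le\lambda_\delta\hat d(x,y)$ whenever $\hat d(x,y)\ge\delta$. The estimate from (iii) reduces this to bounding the maximizing level $k^{*}$. I would split into cases: if $\hat d(w_i x,w_i y)\le\frac{1}{2}\hat d(x,y)$ the ratio is already controlled; otherwise $\hat d(w_i x,w_i y)>\frac{1}{2}\hat d(x,y)\ge\delta/2$, and since $a_{k^{*}}\le 2$ the maximizing term forces $\operatorname{diam}(w_\gamma(X))\ge d(w_\gamma x,w_\gamma y)\ge\delta/4$ for a word $\gamma$ of length $k^{*}+1$. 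Hence, \emph{provided} the uniform shrinking $g(k):=\sup_{|\gamma|=k}\operatorname{diam}(w_\gamma(X))\to 0$ holds, one gets $k^{*}+1\le K(\delta)$ and may take $\lambda_\delta:=\max\{\frac{1}{2},\max_{0\le k<K(\delta)}a_k/a_{k+1}\}<1$, the desired uniform factor.

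The main obstacle is precisely the uniform shrinking $g(k)\to 0$, and this is the only step that uses the extra hypothesis that $\bigcap_k w_{\alpha_{\vert k}}(X)$ is a singleton for \emph{every} $\alpha\in I^\infty$. I would try to repeat the König's lemma argument of the first paragraph with $X$ in place of the trapping compactum $C$: if $g(k)\not\to 0$ one extracts $\alpha\in I^\infty$ with $\operatorname{diam}(w_{\alpha_{\vert k}}(X))\ge\varepsilon$ for all $k$, to be contradicted against the singleton fibre $\bigcap_k w_{\alpha_{\vert k}}(X)$. The delicate point, and where this differs from the compact case, is that the sets $w_{\alpha_{\vert k}}(X)$ need not be compact, so passing from a singleton intersection to vanishing diameter is not automatic; closing this gap is exactly what the added fibre hypothesis (together with the compactness latent in the TIFS structure) must supply, and I expect it to be the crux of the whole argument.
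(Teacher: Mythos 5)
Your parts (i)--(iii) are correct. For (i) and (ii) you follow essentially the paper's route: well-definedness of $\hat d$ via a trapping compactum $C$ together with $\max_{\alpha\in I^k}\operatorname{diam}(w_\alpha(C))\to 0$ (your K\"onig's-lemma argument on the finitely branching tree of bad words is the same compactness of $I^\infty$ that the paper exploits through the increasing clopen cover $A_k$), then $\hat d\geq d$ plus a tail/head split for admissibility, and (ii) for free. Your proof of (iii), however, is genuinely different from the paper's and arguably cleaner: the paper obtains (iii) by restricting $\mathcal{F}$ to a compact trapping set $C\ni x,y$ and applying (iv) to $\mathcal{F}\vert C$ (where the extra hypothesis of (iv) holds automatically because $C$ is compact), whereas you argue directly that the maximum defining $\hat d(w_i(x),w_i(y))$ is attained at some word $\gamma=\alpha\hat{\;}i$ of length $k^*+1$ carrying the coefficient $a_{k^*}$, compare with the term $a_{k^*+1}\,d(w_\gamma(x),w_\gamma(y))\leq\hat d(x,y)$ of $\hat d(x,y)$, and invoke the strict inequality $a_{k^*}<a_{k^*+1}$. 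That argument is self-contained and does not route through (iv) at all; it buys you (iii) even in situations where (iv) is unavailable.

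For (iv) you reduced the claim correctly to the uniform shrinking $g(k)=\sup_{|\gamma|=k}\operatorname{diam}(w_\gamma(X))\to 0$, and your suspicion about that step is justified: it does not follow from the stated hypotheses, so your proof cannot be completed as the theorem is written. The paper's own proof simply ``chooses $k_0$ so that $\operatorname{diam}(w_\alpha(X))<\frac14\delta$ for all $\alpha\in I^k$, $k\geq k_0$'' without justification, and the implication it needs --- that singleton fibres $\bigcap_{k}w_{\alpha_{\vert k}}(X)$ over the whole (generally non-compact) space force the diameters to decay --- is false. Concretely, take $X=\{p\}\cup\{x_{n,k}:n\geq 1,\ 0\leq k\leq n\}$ with the uniformly discrete metric ($d\equiv 1$ off the diagonal) and the single map $w(x_{n,k})=x_{n,k+1}$ for $k<n$, $w(x_{n,n})=w(p)=p$. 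This is a TIFS (compacta are finite and every forward orbit terminates at $p$), and $\bigcap_k w^k(X)=\{p\}$, yet $\operatorname{diam}(w^k(X))=1$ for every $k$; moreover $\hat d(x_{n,0},p)=a_n$ while $\hat d(w(x_{n,0}),p)=a_{n-1}$, and $a_{n-1}/a_n\to 1$ because any increasing bounded sequence converges, so $w$ is Edelstein but not Rakotch for $\hat d$. Thus the gap you flagged sits in the paper, not merely in your write-up: to make (iv) true one must strengthen the additional hypothesis, for instance to the uniform decay $g(k)\to 0$ itself, or to a compactness or total-boundedness assumption under which your K\"onig's-lemma extraction of a non-shrinking branch actually contradicts the singleton-fibre condition.
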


Before we give a proof, let us observe 
that the above result yields a natural
characterization of topological self 
similar sets. 

\begin{corollary}\label{cor:selfsimilar}
A topological space $X$ is a topological 
self similar set if and only if $X$ is 
homeomorphic to the attractor of 
some weakly contractive IFS.
\end{corollary}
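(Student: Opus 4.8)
The plan is to prove the two implications separately. The substantive direction, which is exactly what the newly established remetrization Theorem \ref{metrizability} is designed for, is showing that every topological self similar set is (homeomorphic to, indeed equal to) the attractor of a weakly contractive IFS; the reverse direction is a short reduction to the compact case.

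For the forward implication, suppose $X$ is a topological self similar set, so by definition there is an IFS $\mathcal{F}=\{w_1,\dots,w_N\}$ of continuous self-maps together with a surjective coding map $\pi:I^\infty\to X$. First I would invoke Proposition \ref{prop:TIFSvsKameyama} to conclude that $\mathcal{F}$ is a TIFS and that $X$ is its attractor; in particular $X$ is compact (being $\pi(I^\infty)$) and, by Theorem \ref{HB2}(vi), metrizable, so I fix an admissible metric $d$, which is complete since $X$ is compact. The key observation is that the extra hypothesis of Theorem \ref{metrizability}(iv) comes for free here: because $X$ is the attractor we have $\mathcal{F}(X)=X\subseteq X$, so condition (ii) in the definition of a TIFS applied with $C=X$ says precisely that $\bigcap_{k} w_{\alpha_{\vert k}}(X)$ is a singleton for every $\alpha\in I^\infty$. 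I would then apply Theorem \ref{metrizability}(i), (ii), (iv) to obtain an admissible complete metric $\hat d$ with respect to which $\mathcal{F}$ is Rakotch contractive. By Theorem \ref{th1} the IFS $\mathcal{F}$ has a unique attractor on $(X,\hat d)$, and this attractor must coincide with $X$, since $X$ is a compact $\mathcal{F}$-invariant set and $\hat d$ is admissible (hence changes neither the topology, nor the invariance, nor the Vietoris convergence on $\mathcal{K}(X)$). Thus $X$ is literally the attractor of the Rakotch contractive IFS $\mathcal{F}$ on $(X,\hat d)$.

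For the reverse implication, suppose $X$ is homeomorphic to the attractor $A$ of some weakly contractive IFS $\mathcal{G}=\{v_1,\dots,v_N\}$. Being a topological self similar set is a homeomorphism-invariant property (one transports the maps and the coding map through the homeomorphism, as in Remark \ref{newremfs}), so it suffices to show that $A$ itself is a topological self similar set. Here I would pass to the compact attractor and restrict: $A$ is compact and $\mathcal{G}$-invariant, so $\tilde{\mathcal{G}}:=\{v_i\vert_A\}$ is an IFS of continuous self-maps of the compact space $A$. Every weak contraction is in particular an Edelstein contraction, so on the compact space $A$ Proposition \ref{prop} upgrades each $v_i\vert_A$ to a Rakotch contraction; hence $\tilde{\mathcal{G}}$ is Rakotch contractive on the (compact, thus complete) space $A$. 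Corollary \ref{cor:tifs} then makes $\tilde{\mathcal{G}}$ a TIFS, whose attractor is the unique compact invariant set, namely $A$ itself, and Proposition \ref{prop:TIFSvsKameyama} finally yields a surjective coding map onto $A$. Thus $A$, and therefore $X$, is a topological self similar set.

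The step I expect to carry the real weight is the forward direction, and within it the recognition that the singleton hypothesis of Theorem \ref{metrizability}(iv) is automatically satisfied with the choice $C=X$; everything else is bookkeeping. The one minor subtlety worth checking carefully is that $\hat d$ being admissible guarantees that ``attractor with respect to $\hat d$'' and ``attractor with respect to $d$'' denote the same object, since attractivity is formulated via the Vietoris topology on $\mathcal{K}(X)$, which depends only on the topology of $X$ and not on the particular admissible metric.
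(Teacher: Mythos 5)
Your proof is correct and follows exactly the route the paper intends: the corollary is stated as an immediate consequence of the remetrization Theorem \ref{metrizability}, and your forward direction (Proposition \ref{prop:TIFSvsKameyama} to get a TIFS whose attractor is $X$, then Theorem \ref{metrizability}(iv) with the singleton hypothesis supplied by Definition \ref{def:TIFS}(ii) applied to $C=X$) together with the reverse reduction (restrict to the compact attractor, upgrade Edelstein to Rakotch via Proposition \ref{prop}, then Corollary \ref{cor:tifs} and Proposition \ref{prop:TIFSvsKameyama}) is precisely the elaboration the authors leave implicit. No gaps.
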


\begin{proof}(of Theorem \ref{metrizability})
We first observe that $\hat{d}$ is well defined. 
Take distinct points $x,y\in X$. By definition, 
we can find a compact set $C$ containing $x,y$ 
and such that $\mathcal{F}(C)\subseteq C$. 
Then, using similar reasonings to those 
in the proof of Theorem \ref{theorem:invariant}, 
we can find $k_0$ such that for every $k\geq k_0$ 
and $\alpha\in I^k$, it holds 
$\operatorname{diam}(w_{\alpha}(C))<\frac{1}{3}d(x,y)$. 
(Indeed, we just have to put 
$A_k:=\{\alpha\in I^\infty: 
\operatorname{diam}(w_{\alpha_{\vert k}}(C)) 
< \frac{1}{3}d(x,y)\}$). 
Hence for every $k\geq k_0$ and $\alpha\in I^k$, 
it holds 
\begin{equation*}
a_k d(w_\alpha(x),w_\alpha(y))\leq 
a_k \operatorname{diam}(w_\alpha(C))<\frac{2}{3}d(x,y)\leq 
\frac{2}{3} \hat{d}(x,y)<\hat{d}(x,y).
\end{equation*}
Thus $\hat{d}$ is well defined.

Now we will prove (i). It is easy to see that 
$\hat{d}$ is a metric. Symmetry and triangle 
inequality are immediate. A less trivial implication 
$\hat{d}(x,y)=0\;\Rightarrow\;x=y$ 
follows from the inequality $d\leq\hat{d}$. 
From the same inequality we see that 
to prove admissibility of $\hat{d}$, 
it is enough to show that if 
$d(x_n,x)\to 0$, then $\hat{d}(x_n,x)\to 0$. 
Fix any $\varepsilon>0$, and choose $k_0$ 
such that for any $k\geq k_0$, $n\in{\mathbb{N}}$ and 
$\alpha\in I^k$, we have 
$d(w_\alpha(x_n),w_\alpha(x))<\frac{1}{2}\varepsilon$ 
(this can be done in a similar way as earlier; 
observe that $\{x_n:n\in{\mathbb{N}}\}\cup\{x\}$ is compact). 
Now since the family 
$\{w_\alpha:\alpha\in I^k,\;k\leq k_0\}$ 
is finite and consists of continuous functions, 
we can find $k_1\in{\mathbb{N}}$ with the property that
$d(w_\alpha(x_n),w_\alpha(x))<\frac{1}{2}\varepsilon$ 
for all $\alpha\in I^k$, $k\leq k_0$ and $n\geq k_1$. 
Hence for every $n\geq k_1$, 
$\hat{d}(x_n,x)< \varepsilon$ which proves 
that $\hat{d}(x_n,x)\to 0$. 

Now we move to (ii). Assume that $d$ is complete 
and let $(x_n)$ be a $\hat{d}$-Cauchy sequence. 
As $d\leq\hat{d}$, it is also $d$-Cauchy, 
hence $d$-convergent. Since $d$ and $\hat{d}$ 
are equivalent, the sequence $(x_n)$ 
is also $\hat{d}$-convergent.

We postpone (iii) for a moment and jump to prove (iv). 
We will show that each $w_i$ satisfies 
condition (iii) from Lemma \ref{prop1}. 
Take $\delta>0$ and choose $k_0\in{\mathbb{N}}$ 
such that for all $k\geq k_0$ and $\alpha\in I^k$, 
the following inequality holds: 
$\operatorname{diam}(w_\alpha(X))<\frac{1}{4}\delta$. 
Set 
\begin{equation*}
\lambda:= 
\max\left\{\frac{a_{k}}{a_{k+1}}: 
k\leq k_0\right\} <1.
\end{equation*}
Now choose $x,y\in X$ with $\hat{d}(x,y)\geq \delta$ 
and $i=1,...,N$. For $\alpha\in I^k$, 
we see that if $k\geq k_0$, then
\begin{equation*}
a_{k} d(w_\alpha(w_i(x)),w_\alpha(w_i(x))) \leq 
2\cdot\frac{1}{4}\delta \leq \frac{1}{2}\hat{d}(x,y),
\end{equation*}
and if $k\leq k_0$, then 
\begin{equation*}
a_{k} d(w_\alpha(w_i(x)),w_\alpha(w_i(y)))=
\frac{a_k}{a_{k+1}}\cdot a_{k+1} 
d(w_\alpha(w_i(x)),w_\alpha(w_i(y))) \leq
\frac{a_k}{a_{k+1}} \hat{d}(x,y)\leq 
\lambda \hat{d}(x,y).
\end{equation*}
Therefore
\begin{equation*}
\hat{d}(w_i(x),w_i(y))\leq 
\max\left\{\frac{1}{2},\lambda\right\} 
\hat{d}(x,y).
\end{equation*}
Overall $w_i$ is a Rakotch contraction with respect 
to $\hat{d}$. This gives (iv).

Finally we show (iii). Let $x,y\in X$, $x\neq y$. 
By compact dominance of $\mathcal{F}$ we can find 
a compact set $C$ such that $x,y\in C$ 
and $\mathcal{F}(C)\subseteq C$. 
Then using (iv) for the IFS 
$\mathcal{F}\vert C=\{w_{1}\vert C,...,w_{N}\vert C\}$ 
we deduce that 
$\hat{d}(w_i(x),w_i(y))<\hat{d}(x,y)$.
The result follows.
\end{proof}

\begin{remark}
The above remetrization theorem shows that 
Theorem \ref{HB2} for completely metrizable spaces 
can be deduced from its version for 
Rakotch contractive IFSs 
(which is given for example in \cite{Ha}).
Indeed, although IFSs consisting of Edelstein
contractions may fail to generate attractors, 
we can always switch to some suitably 
large compact set $C$ with $\mathcal{F}(C)\subseteq C$ 
and use the fact that an Edelstein contaction 
on a compact space is always Rakotch. 
\end{remark}

Formula \eqref{rem} was conceived by 
R. Miculescu and A. Mihail in \cite{MM3}.
They employed it to metrize Kameyama's
topological self similar sets. Their 
technique of proof required some additional 
properties of the sequence $(a_n)$. 
In \cite{BKNNS}, Banakh et al. considered 
also several more restrictive 
topological-type contractive conditions 
on $\mathcal{F}$ and proved that $\hat{d}$ has 
better properties in such cases. 
For example, IFSs which satisfy 
the sufficient condition in point (iv) 
of Theorem \ref{metrizability} 
are called \emph{globally contractive}. 
Moreover, they extended the whole discussion 
to Tikhonov (a.k.a completely regular) spaces. 
It is known that the Tikhonov topology 
is generated by appropriate family of pseudometrics, 
say $\mathcal{D}$, and Banakh et.al. obtained 
counterpart of Theorem \ref{th1} 
for IFSs consisting of weakly contractive maps with respect to 
pseudometrics from $\mathcal{D}$. 
Additionally they proved that the formula \eqref{rem} 
can be successfully adjusted to that general 
pseudometric setting and hence Theorem \ref{HB2} 
(at least the existance of the CFP) can be explained 
by its pseudometric version.
Other remetrization results in this spirit can be found 
for example in \cite{MM4}.

\section{Baire genericity of attractors}

A natural question arises whether 
the class of attractors of weakly contractive IFSs 
is essentially wider than the class of attractors 
of Banach contractive ones. 
Since attractors of Banach contractive IFSs 
have necessarily finite Hausdorff dimension, 
it is enough to find a weakly contractive IFS 
which admits an infinite dimensional attractor. 
As shown by D. Dumitru \cite{Du}, 
each Peano continuum $X$ with a free arc 
(i.e., with an arc which is open) is 
the attractor of some TIFS. 
Hence, by Corollary \ref{cor:selfsimilar}, 
$X$ is homeomorphic to the attractor 
of some weakly contractive IFS. 
On the other hand, if $X$ has also 
infinite topological dimension, 
then it has infinite Hausdorff dimension 
with respect to any admissible metric, 
so it is not homeomorphic to an attractor of 
a Banach contractive IFS.

Interestingly, one can look for suitable examples 
in the finite dimension as we do below.

\begin{example}[\cite{NFM}]
An arc $S$ on the (complex) plane, 
called the \emph{snake} and defined as follows
\begin{equation*}
S:= \{0\}\cup
\left\{\frac{1}{n}e^{i\alpha}: n\in{\mathbb{N}}, \;\alpha\in 
\left(\frac{\pi}{2},2\pi\right)\right\} \cup 
\left\{re^{i\alpha}: n\in{\mathbb{N}}, 
\;r\in\left[\frac{1}{n+1},\frac{1}{n}\right],
\;\alpha=\frac{\pi}{2}(n\operatorname{mod}2)\right\},
\end{equation*}
is the attractor of some weakly contractive IFS 
but it is not an attractor of any Banach 
contractive IFS. (The latter observation follows 
from a nice criterion given in \cite{San}.) 
\end{example}

Although many continua turn out to be attractors 
of IFSs if we allow to employ weak contractions 
instead of restricting ourselves to 
Banach contractions, not all compact sets 
are attractors of weakly contractive IFSs. 
Indeed, every connected attractor must 
be locally connected (see 
Theorem \ref{th:HataConnectedness} or \cite{Ha}). 
Even more, attractors of weakly contractive IFSs
are exceptional sets among compacta.

\begin{theorem}\label{theorem:Baire}
Let $X$ be a complete separable metric space 
without isolated points (a.k.a. perfect Polish space).
Typical nonempty and compact subset 
of $X$ is not an attractor of any weakly
contractive iterated function system. Formally,
the collection of attractors of weakly contractive
IFSs on $X$ is of the first category 
in the hyperspace ${\mathcal{K}}(X)$ of compacta.
\end{theorem}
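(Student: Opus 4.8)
The plan is to present the family of all such attractors as a countable union of nowhere dense subsets of ${\mathcal{K}}(X)$, so that first category follows from the Baire decomposition. First I would split according to the number of maps: writing $\mathcal{A}_N$ for the set of attractors of weakly contractive IFSs consisting of exactly $N$ maps, the whole collection is $\bigcup_{N\ge 1}\mathcal{A}_N$, and it suffices to treat each $N$ separately. The case $N=1$ is immediate, since such an attractor is a singleton and the singletons form a closed nowhere dense subset of ${\mathcal{K}}(X)$ (here one uses that $X$ has no isolated points, so every singleton is $d_H$-approximated by genuine two-point compacta, whence the singletons have empty interior).

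For the structural input I would first normalise the contractions. Given an attractor $A=\bigcup_{i=1}^N w_i(A)$, each $w_i$ is at least an Edelstein contraction, hence injective and non-expansive on $X$; restricted to the compact set $A$ it is, by Proposition \ref{prop}, a Rakotch contraction. Using Lemma \ref{lemma:rakotch} and passing to the maximum (as in the proof of Theorem \ref{th1}), the maps $w_i|_A$ share a single concave strictly increasing comparison function ${\varphi}$ with ${\varphi}(t)<t$ for $t>0$, so that $\operatorname{diam}(w_\alpha(A))\le{\varphi}^{|\alpha|}(\operatorname{diam}(A))$, and hence $\max_{\alpha\in I^k}\operatorname{diam}(w_\alpha(A))\to 0$ by \eqref{matkowski}. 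Thus, for $A$ with $\operatorname{diam}(A)>0$ there is a level $k$ at which $A$ is covered by the $N^k$ non-expansive self-images $w_\alpha(A)$, each of diameter at most $\tfrac12\operatorname{diam}(A)$. This motivates the test sets
\[
\mathcal{B}_{N,k}:=\Bigl\{K\in{\mathcal{K}}(X): K=\bigcup_{i=1}^N f_i(K)\text{ for some }1\text{-Lipschitz }f_i\colon X\to X\text{ with }\max_{\alpha\in I^k}\operatorname{diam}\bigl(f_\alpha(K)\bigr)\le\tfrac12\operatorname{diam}(K)\Bigr\},
\]
where $f_\alpha$ denotes the composition along $\alpha$ as in \eqref{eq:fibercomposition}. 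The discussion above shows $\mathcal{A}_N\subseteq\bigcup_{k\ge 1}\mathcal{B}_{N,k}$ (the degenerate case $\operatorname{diam}(K)=0$ landing in the singleton set), so it is enough to prove that each $\mathcal{B}_{N,k}$ is nowhere dense.

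Next I would verify that $\mathcal{B}_{N,k}$ is closed. If $K_m\to K$ in $d_H$, the sets $K_m$ lie in a common compact set, on a neighbourhood of which the $1$-Lipschitz maps witnessing $K_m\in\mathcal{B}_{N,k}$ are equicontinuous and uniformly bounded; by Arzel\`a--Ascoli a subsequence converges uniformly to $1$-Lipschitz maps $f_i$, and both the covering identity $K=\bigcup_i f_i(K)$ and the diameter bound pass to the limit. The explicit gap factor $\tfrac12$ is essential: it excludes the degenerate limit in which the $f_i$ collapse to the identity, which is precisely the way a sequence of strict contractions can fail to remain strict, and hence the obstruction to $\mathcal{B}_{N,k}$ being closed. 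With closedness established, nowhere density is equivalent to empty interior, i.e. to the $d_H$-density of compacta admitting no such structured self-covering.

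The main obstacle is exactly this last step: showing that arbitrarily $d_H$-close to any compact $K$ there is a compactum $K'$ that cannot be written as a union of $N$ non-expansive images of itself whose level-$k$ pieces have diameter $\le\tfrac12\operatorname{diam}(K')$. A naive diameter or covering-number count is unavailable, since in constrained spaces such as $[0,1]$ one cannot place many points pairwise near the diameter scale and covering numbers at half-diameter scale stay bounded; the obstruction must instead exploit that each piece $f_\alpha(K')$ is a non-expansive image of the \emph{whole} of $K'$, which forces the fine, all-scales metric structure of every tiny piece to reproduce that of $K'$. The plan is therefore to construct $K'$ with a deliberately ``aperiodic'' fine structure, in which no small portion of $K'$ is a non-expansive copy of all of $K'$, and to check that this property is both $d_H$-robust and available uniformly in every perfect Polish space. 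This metric rigidity, rather than the category bookkeeping, is where the real difficulty lies.
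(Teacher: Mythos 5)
Your reduction to the sets $\mathcal{B}_{N,k}$ and the closedness claim are plausible in outline (though the Arzel\`a--Ascoli step needs care: the witnesses are defined on the varying domains $K_m$ and a general Polish space is not locally compact, so one should work with $1$-Lipschitz maps $K_m\to C$ into a fixed compact superset and extract limits by a diagonal argument --- the paper avoids this entirely by only ever quantifying over maps $f\colon K\to X$). The genuine gap is the step you yourself flag: you never prove that $\mathcal{B}_{N,k}$ has empty interior, and the ``aperiodic fine structure'' you propose is precisely the hard content. Note that finite approximants cannot supply it: the two-point set $\{0,1\}$ is the attractor of the IFS of two constant maps, so every finite set with at most $N$ points lies in every $\mathcal{B}_{N,k}$, and your dense family of non-members would have to consist of infinite compacta with metric rigidity at all scales (essentially the Balka--M\'{a}th\'{e} ``balanced sets'' route the paper cites as an alternative proof). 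Without that construction the argument is incomplete.

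The paper sidesteps the difficulty, and your remark that ``a naive covering-number count is unavailable'' is too pessimistic: a counting argument does work once it is localized and applied to a single map rather than to the level-$k$ covering. For each basic open $D$ the paper forms $C_D$, the set of compacta $K$ with $\emptyset\neq K\cap D\subseteq f(K\setminus D)$ for some $1$-Lipschitz $f\colon K\to X$, and shows $C_D$ is nowhere dense by approximating $K$ with a finite set having $k$ well-separated points inside $D$ but only $j<k$ points outside: a $1$-Lipschitz image of $j$ tiny clusters can meet at most $j$ of the $k$ separated points. Intersecting the complements of the countably many $C_D$ with the residual family $\operatorname{Perf}(X)$ of perfect compacta yields a residual set on which every Edelstein contraction $f\colon K\to K$ has nowhere dense image in $K$ (local shrinking of diameter produces a basic $D$ with $K\cap D$ missed by $f$), so no finite union $\bigcup_{i}w_i(K)$ can equal $K$, by Baire's theorem applied inside the compact space $K$. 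This replaces your global, all-scales rigidity requirement by a one-scale pigeonhole about a single map, which is why the paper's proof closes and yours, as it stands, does not.
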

\begin{proof}
\textbf{Step 1.} Construction of nowhere dense sets.

For any nonempty open set $D\subset X$, define
\begin{equation*}
C_{D}:=\{K\in{\mathcal{K}}(X):\emptyset\neq K\cap D 
\subseteq f(K\setminus D)\; \mbox{ for some } 
f:K\to X\; \mbox{ with }\operatorname{Lip}(f)\leq 1\}.
\end{equation*}
We will show that $C_{D}$ is 
nowhere dense in ${\mathcal{K}}(X)$. 

Take $r>0$ and $K\in C_{D}$. 
Since $X$ has no isolated points and $D$ is open, 
we can find a finite set $K'\subseteq X$ 
such that $d_H(K',K)<\frac{r}{2}$, 
$D\cap K'=\{x_1,...,x_k\}$ and 
$K'\setminus D=\{y_1,...,y_j\}$ for some distinct 
points $x_1,...,x_k,y_1,...,y_j$ with $k>j$. 
Set 
\begin{eqnarray*}
\varepsilon:= 
\min\left\{r,\delta,\min\{d(x_i,x_l):
i\neq l\}\right\}>0,
\\
\varepsilon':=\frac{\varepsilon}{4},
\end{eqnarray*}
where $\delta>0$ is chosen so that 
$B(x_i,\delta)\subseteq D$ for $i=1,...,k$. 
It remains to prove that 
\begin{equation*}
B_H(K',{\varepsilon}') \subseteq 
B_H(K,r)\setminus C_{D},
\end{equation*} 
where $B_H(\cdot,\cdot)$ stands for an open ball 
in the hyperspace $({\mathcal{K}}(X),d_{H})$.

Clearly, if $S\in {\mathcal{K}}(X)$ satisfies 
$d_H(S,K')<{\varepsilon}'$,
then $d_H(S,K)\leq d_H(S,K')+d_H(K',K)<r$. 
Further we will see that also $S\notin C_{D}$. 

First observe that 
$S\setminus D\subseteq 
\bigcup_{i=1}^{j} B(y_i,\varepsilon')$. 
Indeed, take $x\in S\setminus D$. 
If $x\in B(x_i,\varepsilon')$ for some $i=1,...,k$, 
then $x\in D$ by the choice of $\varepsilon'$ 
and $\delta$, which is a contradiction. 
Thus $x\in B(y_i,\varepsilon')$ for some $i$ 
and we are done. 

Next, take any $f:S\to X$ with $\operatorname{Lip}(f)\leq 1$ 
and for each $i=1,...,k$, find a point 
$\tilde{x}_i\in S$ such that 
$d(x_i,\tilde{x}_i)<{\varepsilon}'$. 
In particular $\tilde{x}_1,...,\tilde{x}_k\in D$.
Moreover, for every $i\neq l$
\begin{equation*}
d(\tilde{x}_i,\tilde{x}_l) > 
d(x_i,x_l) - 2{\varepsilon}'\geq 2{\varepsilon'}.
\end{equation*}
Since $\{\tilde{x}_1,...,\tilde{x}_k\}\subseteq S\cap D$,
$S\setminus D\subseteq \bigcup_{i=1}^{j} 
B(y_i,\varepsilon')$, and $k>j$, 
it is enough to show that each 
$f(B(y_i,\varepsilon')\cap S)$ 
can contain at most one $\tilde{x}_l$. 
Supposing this is not the case, 
we would get 
\begin{equation*}
2\varepsilon'< d(\tilde{x}_i,\tilde{x}_l)\leq 
\operatorname{diam}(f(B(y_n,{\varepsilon}')\cap S))\leq
\operatorname{diam}(B(y_n,{\varepsilon}'))\leq 2{\varepsilon}'
\end{equation*}
for some $i\neq l$ and $n$, which is a contradiction. 
All in all, $S\cap D$ is not contained in 
$f(S\setminus D)$ and $S\notin C_{D}$. 
Thus we have proved that $C_{D}$ is nowhere dense in ${\mathcal{K}}(X)$.
  
\textbf{Step 2.} Construction of a residual set $\mathcal{A}$.

Let us fix a countable basis $\mathcal{B}$ 
of the topology of the space $X$ and 
consider the collection of sets 
$\{C_D: D\in\mathcal{B}\}$ as defined in Step 1.
Define 
\begin{equation*}
\mathcal{A}:= \operatorname{Perf}(X)\setminus 
\bigcup_{D\in \mathcal{B}} C_D,
\end{equation*}
where $\operatorname{Perf}(X)$ is the collection of 
all nonempty compact subsets of $X$
which are perfect (i.e., without isolated points). 
By Lemma \ref{lemma:cantor} and the fact that 
each Cantor space has no isolated points, 
the set $\operatorname{Perf}(X)$ is residual in ${\mathcal{K}}(X)$. 
Hence $\mathcal{A}$ is also residual.

\textbf{Step 3.} Nonattractors.

In the final part of the proof we establish 
that every element of the collection $\mathcal{A}$ 
defined in Step 2, is not an attractor 
of any weakly contractive IFS.  
To do so, we will prove that for any 
$K\in\mathcal{A}$ and for any Edelstein 
(hence Rakotch) contraction $f:K\to K$, 
the image $f(K)$ is nowhere dense in $K$. 
Actually, it suffices to show that $f(K)$,
being closed in $K$, has empty interior in $K$. 

Choose any open set $U\subset X$ 
so that $U\cap K\neq \emptyset$. 
Then $\operatorname{diam}(U\cap K)>0$.
By Lemma \ref{prop1}, we can find 
$\lambda<1$ so that
$d(f(x),f(y))\leq \lambda d(x,y)$ whenever 
$d(x,y)\geq \delta:=\frac{1}{2}\operatorname{diam}(U\cap K)$. 
Hence
\begin{equation*}
\operatorname{diam}(f(K\cap U))\leq 
\max\{\delta,\lambda\operatorname{diam}(K\cap U)\} 
<\operatorname{diam}(K\cap U).
\end{equation*}
Thus there exists $D\in\mathcal{B}$ such that
$\emptyset \neq D\cap K\subseteq 
(K\cap U)\setminus f(K\cap U)$. 
By definition, $K\notin C_{D}$, so we have that
$(K\cap D)\nsubseteq f(K\setminus D)$. 
Therefore
\begin{eqnarray*}
(K\cap U)\setminus f(K)= [(K\cap U)\setminus
f(K\cap U)]\cap[(K\cap U)\setminus 
f(K\setminus U)]\supseteq 
\\
\supseteq (K\cap D)\cap [(K\cap D)
\setminus f(K\setminus D)]=
(K\cap D)\setminus f(K\setminus D)
\neq\emptyset
\end{eqnarray*}
and hence $f(K)$ has empty interior. 
The result follows.
\end{proof}

\begin{remark}
The proof of Theorem \ref{theorem:Baire} 
given above extends some ideas from 
D'Aniello and Steele's paper\cite{DS2}, 
where it was assumed that $X$ is 
a finite dimensional unit cube.
\end{remark}

\begin{remark}
An alternative proof of 
Theorem \ref{theorem:Baire} was given 
by Balka and M\'{a}th\'{e} in \cite{BaM}. 
They defined the so-called \emph{balanced} 
sets and proved that typical sets 
are balanced while  a balanced set cannot 
be an attractor of a weakly contractive IFS. 
Moreover, they extended the thesis of 
Theorem \ref{theorem:Baire} by showing 
that if $X$ is separable and complete, 
then typical nonempty and compact set 
is either finite or it is not an attractor 
of any weakly contractive IFS. 
Finally, let us note that to learn 
how much prevalent are fractals 
(attractors with possibly fractional dimension), 
one is led to study typical dimension 
of sets and measures, 
e.g.,  \cite{BaM}, \cite{DS2}, \cite{DS3} and \cite{Myjak}.
\end{remark}

Theorem \ref{theorem:Baire} stated that it is rare
for a set to be an attractor. Quite opposite
is true when we demand only that the set at hand
is homeomorphic to an attractor. 
Lemma \ref{lemma:cantor} implies that 
typical subset of a complete metric space 
without isolated point is homeomorphic 
to the Cantor ternary set, a~classical example 
of an attractor of a Banach contractive IFS. 
We state this observation as a~separate theorem.

\begin{theorem}
Typical nonempty and compact
subset of a complete perfect metric space 
is homeomorphic to the attractor of 
a Banach contractive iterated function system. 
\end{theorem}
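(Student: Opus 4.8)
The plan is to reduce the statement to two facts that are already available, so that almost no new work is required. First, Lemma \ref{lemma:cantor} guarantees that a typical nonempty compact subset of a complete perfect metric space is homeomorphic to the Cantor ternary set; this is precisely the genericity fact that was already invoked in Step 2 of the proof of Theorem \ref{theorem:Baire}. Second, the Cantor ternary set is itself the attractor of an explicit Banach contractive IFS. Combining the two, the family of compacta that are homeomorphic to an attractor of a Banach contractive IFS contains the residual family of Cantor-type compacta, hence is residual, which is exactly the assertion of the theorem.

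Concretely, I would first recall the standard realization of the Cantor set as an attractor. Let $C\subset[0,1]$ be the ternary Cantor set and consider the IFS $\{w_1,w_2\}$ on the complete metric space $[0,1]$, where $w_1(x)=\frac{1}{3}x$ and $w_2(x)=\frac{1}{3}x+\frac{2}{3}$. Both maps are Banach contractions with Lipschitz constant $\frac{1}{3}<1$, so by Theorem \ref{HB1} this IFS generates a unique attractor, and that attractor is precisely $C$. Consequently, every topological space homeomorphic to $C$ is homeomorphic to the attractor of a Banach contractive IFS.

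Then I would invoke Lemma \ref{lemma:cantor} directly: the family of $K\in{\mathcal{K}}(X)$ that are homeomorphic to the Cantor set is residual in ${\mathcal{K}}(X)$. Since each such $K$ is, by the previous paragraph, homeomorphic to the attractor of a Banach contractive IFS, the collection
\[
\{K\in{\mathcal{K}}(X): K\text{ is homeomorphic to the attractor of some Banach contractive IFS}\}
\]
contains a residual set and is therefore itself residual; equivalently, its complement is of the first category in ${\mathcal{K}}(X)$. This is exactly the claim that a typical nonempty compact subset of $X$ is homeomorphic to the attractor of a Banach contractive IFS.

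The main point is that there is essentially no obstacle to overcome here: all the substance has been front-loaded into Lemma \ref{lemma:cantor}, whose proof establishes Cantor-set genericity through Brouwer's topological characterization of the Cantor set as the unique nonempty, perfect, compact, totally disconnected metrizable space. The only subtlety worth flagging is that the theorem asks merely for \emph{homeomorphism} to an attractor rather than for $K$ itself to be an attractor; this weaker demand is what makes the statement the exact counterpoint to Theorem \ref{theorem:Baire}, and it is satisfied automatically, since only the homeomorphism type is transported from $C$ to the typical $K$.
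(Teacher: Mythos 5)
Your proposal is correct and follows exactly the route the paper intends: the theorem is stated as an immediate consequence of Lemma \ref{lemma:cantor} (typical compacta are Cantor spaces) together with the classical realization of the Cantor ternary set as the attractor of a Banach contractive IFS. The explicit system $w_1(x)=\frac{1}{3}x$, $w_2(x)=\frac{1}{3}x+\frac{2}{3}$ you supply is the standard witness, and no further argument is needed.
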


For a list of examples that distinguish 
different classes of IFS attractors 
we refer the reader to the survey paper \cite{NFM} 
on counterexamples in the IFS theory.

\section{Deterministic chaos game for weakly contractive IFSs}\label{sec:ChaosGame}

In the present section we exhibit a simple 
proof of the deterministic version of 
the chaos game algorithm.
The main tool is the coding map. 
Derandomization of the algorithm is due to 
contractivity and employment of algorithmically
random sequences --- disjunctive sequences.

\begin{definition}(\cite{CaludeStaiger}).
Let $I$ be a finite set of symbols (alphabet).
A sequence 
$\sigma= (\sigma_n)_{n=1}^{\infty}\in I^{\infty}$ 
is called \emph{disjunctive}, whenever 
each finite word $\alpha\in I^k$, $k\geq 1$, 
appears in $\sigma$, i.e.,
$(\sigma_{n},...,\sigma_{n+k-1}) = \alpha$ 
for some $n\geq 1$.
\end{definition} 

\begin{example}(Champernowne sequence).
Let $I=\{1,2,...,N\}$, $1<2<...<N$, $N\geq 1$.
The following sequence is disjunctive:
\[
1,2,...,N,\;1,1,1,2,...,1,N,\,2,1,2,2,...,2,N,
...,N,1,N,2,...,N,N,...
\]
It is created by writing in the lexicographic order 
first all symbols from $I$, 
then all $2$-letter words over $I$, 
then $3$-letter words etc.
This simple sequence is very regular in 
a probabilistic manner --- it is Borel normal 
(in base $N$),
cf. \cite{Knill} Corollary 2.9.2.
\end{example}

A disjunctive sequence is random in the qualitative manner:
it contains all possible finite sequences.
A disjunctive sequence is also chaotic, because:
(i) it is not almost periodic (a.k.a. uniformly recurrent), 
cf. \cite{AlmostPeriodic};
(ii) the Bernoulli shift 
$\vartheta: I^{\infty}\to I^{\infty}$,
$\vartheta(\sigma_{1},\sigma_{2},\sigma_{3}...) 
= (\sigma_{2},\sigma_{3}...)$, 
generates a dense orbit in the code space
$I^{\infty}$ when the orbit 
$\{\vartheta^{k}(\sigma)\}_{k=0}^{\infty}$ 
starts at a disjunctive sequence 
$\sigma=(\sigma_{1},\sigma_{2}...)$,
cf. \cite{BV-Developments}.

Disjunctive sequences are prevalent in both 
topological and measure-theoretic sense.
The set of sequences which are not disjunctive
is: (i) $\sigma$-porous, in particular 
it is of the first Baire category,
in the code space $(I^{\infty},d_{B})$
endowed with the Baire metric;
(ii) null with respect to the Bernoulli measure $\mu_{b}$ 
in $I^{\infty}$; e.g., \cite{CaludeStaiger}.
Moreover, many discrete stochastic processes, 
like nondegenerate Bernoulli and Markov processes, 
generate disjunctive outcomes with probability $1$. 
A sufficient condition for a~general chain to 
yield disjunctive sequences almost surely provides

\begin{theorem}[\cite{Les2014BAustMS} Theorem 3.4, Lemma 3.5]\label{th:DisjunctiveProcessViaMinorant}
Let $(Z_n)_{n=1}^{\infty}$ be a (not necessarily stationary) 
stochastic process with states in a finite set $I$.
Suppose that $Z_n$ satisfies 
\begin{equation*}
\Pr(Z_n = \sigma_n \,|\, Z_{n-1}=\sigma_{n-1},
{\ldots},Z_1=\sigma_1) \geq {p_n}>0
\end{equation*}
for all ${{\sigma}_1,{\ldots},{\sigma}_n\in I}$ 
and $n\geq 1$.
Let the minorant $(p_n)_{n=1}^{\infty}$ satisfy
\begin{equation}\label{eq:minorant}
\lim_{n\to\infty} 
\frac{{p}_{n}^{-1}}{n^c} =0 
\mbox{ for every } c>0.
\end{equation}
Then $Z_n$ generates a disjunctive sequence
$\sigma_n$ with probability $1$.
\end{theorem}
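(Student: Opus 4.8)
The plan is to establish disjunctivity word by word. Since the alphabet $I$ is finite, the set $I^{<\infty}=\bigcup_{k\geq 1}I^k$ of all finite words is countable, so it suffices to prove that for every fixed $\alpha=(\alpha_1,\dots,\alpha_k)\in I^k$ one has $\Pr(\alpha\text{ never occurs in }(Z_n))=0$; a countable union of null sets is null, and the complement of the disjunctive sequences is exactly the union over $\alpha$ of the events ``$\alpha$ never occurs''.

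To bound this probability I would cut ${\mathbb N}$ into consecutive disjoint blocks of length $k$, namely $B_m=\{mk+1,\dots,(m+1)k\}$ for $m=0,1,2,\dots$, and let $E_m$ be the event that $(Z_{mk+1},\dots,Z_{(m+1)k})=\alpha$. Writing $\mathcal{F}_m=\sigma(Z_1,\dots,Z_{(m+1)k})$, the event $E_m$ is $\mathcal{F}_m$-measurable while each $E_l$ with $l<m$ is $\mathcal{F}_{m-1}$-measurable. Decomposing the conditional probability of $E_m$ by the chain rule and applying the hypothesis $\Pr(Z_n=\sigma_n\mid Z_{n-1},\dots,Z_1)\geq p_n$ to each of the $k$ successive factors (the bound holds regardless of the conditioning values) yields the deterministic estimate
\begin{equation*}
\Pr(E_m\mid\mathcal{F}_{m-1})\geq q_m:=\prod_{j=1}^{k}p_{mk+j}\qquad\text{a.s.}
\end{equation*}

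The heart of the argument is the claim that $\sum_m q_m=\infty$, and this is precisely where the minorant condition is used. Since $\lim_n p_n^{-1}/n^c=0$ for every $c>0$, for each fixed $c$ we have $p_n\geq n^{-c}$ for all large $n$. Choosing the exponent small enough that $ck<1$ (for instance $c=1/(2k)$), one gets, for all large $m$,
\begin{equation*}
q_m=\prod_{j=1}^{k}p_{mk+j}\geq\prod_{j=1}^{k}\big((m+1)k\big)^{-c}=\big((m+1)k\big)^{-ck},
\end{equation*}
where I used $mk+j\leq(m+1)k$ and the monotonicity of $x\mapsto x^{-c}$. Because $ck<1$, the series $\sum_m\big((m+1)k\big)^{-ck}$ diverges, hence $\sum_m q_m=\infty$. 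I expect this to be the main technical point: one must calibrate $c$ against the block length $k$ so that even the $k$-fold product over a block still leaves a subharmonic (divergent) series.

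Finally I would turn the divergence of $\sum q_m$, together with the conditional lower bound, into a nonoccurrence estimate, handling the dependence across blocks by conditioning rather than assuming independence. Since $\bigcap_{l<m}E_l^c\in\mathcal{F}_{m-1}$, taking conditional expectations gives $\Pr\big(E_m^c\cap\bigcap_{l<m}E_l^c\big)\leq(1-q_m)\Pr\big(\bigcap_{l<m}E_l^c\big)$, and by induction $\Pr\big(\bigcap_{m=0}^{M-1}E_m^c\big)\leq\prod_{m=0}^{M-1}(1-q_m)\leq\exp\big(-\sum_{m=0}^{M-1}q_m\big)$. As $M\to\infty$ the right-hand side tends to $0$ because $\sum_m q_m=\infty$, so $\Pr(\alpha\text{ never occurs})\leq\Pr\big(\bigcap_m E_m^c\big)=0$. (This is just the conditional, or L\'evy, form of the second Borel--Cantelli lemma applied to the dependent events $E_m$.) Taking the union over all $\alpha\in I^{<\infty}$ shows that $(Z_n)$ is disjunctive with probability $1$.
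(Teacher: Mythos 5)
Your proof is correct. Note that the survey itself does not prove this theorem: it is imported verbatim from the cited source \cite{Les2014BAustMS} (Theorem 3.4 and Lemma 3.5), so there is no in-paper argument to compare against. Your reconstruction — reduce to a fixed word $\alpha\in I^k$, partition time into aligned blocks of length $k$, lower-bound the conditional probability of spelling $\alpha$ in a block by $q_m=\prod_{j=1}^k p_{mk+j}$ via the chain rule, and conclude by the conditional (L\'evy) form of the second Borel--Cantelli lemma once $\sum_m q_m=\infty$ — is exactly the standard argument, and it mirrors the division of labour in the citation: the block/Borel--Cantelli criterion corresponds to Theorem 3.4 there, while your calibration $c=1/(2k)$ showing that the polynomial minorant condition \eqref{eq:minorant} forces $\sum_m q_m=\infty$ plays the role of Lemma 3.5.
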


(In the above 
$\Pr(Z_n=\sigma_n\,|\, 
Z_{n-1}=\sigma_{n-1}, {\ldots},Z_1=\sigma_1)$,
understood as $\Pr(Z_1=\sigma_1)$ when $n=1$,
denotes the conditional probability 
that $Z_n=\sigma_n$ occurs if earlier 
have occured already
$Z_{n-1}=\sigma_{n-1}, ..., Z_1=\sigma_1$;
cf. \cite{Knill}.)

\begin{example}[\cite{BV-Developments} Definition 4.1; \cite{Les2014BAustMS} Examples 3.2 and 3.6]
The following sequences fulfill \eqref{eq:minorant}: 
$p_n\equiv \operatorname{const.} >0$,
$p_n= (\log n)^{-b}$ for some $b>0$.
The following sequences do not fulfill \eqref{eq:minorant}:
$p_n= n^{-b}$,
$p_n= \sin(n^{-b})$, where $b>0$.
\end{example}

Let ${\mathcal{F}}= \{w_1,...,w_N\}$ be an IFS 
on a Hausdorff topological space $X$. 
The sequence $(x_n)_{n=0}^{\infty}$ given by iteration
\begin{equation}\label{eq:orbit}
\left\{\begin{array}{l}
x_0 \in X,\\
x_n = w_{\sigma_n}(x_{n-1}), n\geq 1,\\
\end{array}\right.
\end{equation}
is called an \emph{orbit} starting at $x_0$ and driven by 
$(\sigma_n)_{n=1}^{\infty}\in I^{\infty}$. 
The \emph{$\omega$-limit set} of the orbit $x_n$ is 
defined to be
\begin{equation*}
\omega((x_n)) := \bigcap_{m=0}^{\infty} 
\overline{\{x_n: n\geq m\}}.
\end{equation*}

\begin{theorem}[Deterministic chaos game]\label{th:ChaosGame}
Let ${\mathcal{F}}= \{w_1,...,w_N\}$ be an IFS consisting of Browder contractions on a complete metric space $(X,d)$. 
Let $A$ be an attractor of ${\mathcal{F}}$. Then for every $x_0\in X$ and 
any disjunctive driver 
$\sigma\in\{1,...N\}^{\infty}$
the orbit $x_n$ starting at $x_0$ and 
driven by $\sigma$ (according to \eqref{eq:orbit}) 
recovers $A$, i.e., $\omega((x_n))=A$.  
\end{theorem}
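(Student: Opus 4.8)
The plan is to establish the two inclusions $\omega((x_n))\subseteq A$ and $A\subseteq\omega((x_n))$ separately, relying on the coding-map apparatus of Theorem \ref{HB2}, which is available because a Browder contractive IFS on a complete metric space is a TIFS (Corollary \ref{cor:tifs}) and, by Theorem \ref{th1}, generates the attractor, so that $A=\pi(I^\infty)$. First I would fix, via compact dominance (Remark \ref{remark:compactdominacy}), a compact set $C$ with $x_0\in C$ and $\mathcal{F}(C)\subseteq C$, for instance $C:=\overline{\bigcup_{n\geq 0}\mathcal{F}^n(\{x_0\})}$. Since every $w_i$ maps $C$ into itself, an induction on \eqref{eq:orbit} gives $x_n\in C$ for all $n$; thus the orbit lives in a compact set and $\omega((x_n))$ is nonempty.

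For the inclusion $\omega((x_n))\subseteq A$, I would note that $x_n=w_{\sigma_n}\circ\cdots\circ w_{\sigma_1}(x_0)\in\mathcal{F}^n(\{x_0\})$. By the attractivity of $A$ (Definition \ref{def:Attractor}) one has $\mathcal{F}^n(\{x_0\})\to A$ in the Hausdorff distance, whence $d(x_n,A)\to 0$; as $A$ is closed, every cluster point of the orbit lies in $A$.

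The reverse inclusion $A\subseteq\omega((x_n))$ is the substantive part, and is where disjunctivity enters. Given $a\in A$, write $a=\pi(\alpha)$ for some $\alpha\in I^\infty$. By Theorem \ref{HB2}(iii)--(iv) the descending compacta $w_{\alpha_{\vert k}}(C)$ shrink to $\{a\}$, so any open neighbourhood $U$ of $a$ satisfies $w_{\alpha_{\vert k}}(C)\subseteq U$ for all large $k$. The crucial observation is the reversal forced by the composition convention \eqref{eq:fibercomposition}: in the orbit $w_{\sigma_n}$ is applied last, so the outermost $k$ factors of $x_n$ are $w_{\sigma_n}\circ\cdots\circ w_{\sigma_{n-k+1}}$, and this coincides with $w_{\alpha_{\vert k}}=w_{\alpha_1}\circ\cdots\circ w_{\alpha_k}$ exactly when the block $(\sigma_{n-k+1},\ldots,\sigma_n)$ equals the reversed word $(\alpha_k,\ldots,\alpha_1)$. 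Since the reverse of a word is again a word, disjunctivity of $\sigma$ guarantees this block occurs, and in fact occurs infinitely often: applying disjunctivity to the $M$-fold repetitions of $(\alpha_k,\ldots,\alpha_1)$, each of which contains $M$ consecutive copies of the block, forces at least $M$ occurrences for every $M$. For every such position $n$ one has $x_n=w_{\alpha_{\vert k}}(x_{n-k})$ with $x_{n-k}\in C$, hence $x_n\in w_{\alpha_{\vert k}}(C)\subseteq U$. Consequently $U$ contains $x_n$ for arbitrarily large $n$, so $a\in\bigcap_m\overline{\{x_n:n\geq m\}}=\omega((x_n))$.

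I expect the main obstacle to be purely the bookkeeping in this last step: keeping track of the order reversal between the driving sequence and the composition convention, and upgrading a single occurrence of the reversed block (which bare disjunctivity provides) to the infinitely many occurrences that membership in the $\omega$-limit set requires. Once these points are handled, combining the two inclusions yields $\omega((x_n))=A$.
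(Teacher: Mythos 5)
Your proof is correct, but it follows a genuinely different route from the paper's. The paper first \emph{synchronizes} the orbit started at $x_0$ with an orbit started at some $a_0\in A$ under the same driver, via the uniform Browder estimate $d(x_n,a_n)\leq\varphi^n(d(x_0,a_0))\to 0$, which gives $\omega((x_n))=\omega((a_n))$; it then transports the remaining computation entirely into the code space, writing $a_n=\pi(\varsigma_n)$ with $\varsigma_n=(\sigma_n,\dots,\sigma_1)\widehat{\;}\alpha$ and concluding $\omega((a_n))=\bigcap_m\pi(\overline{\{\varsigma_n:n\geq m\}})=\pi(I^\infty)=A$ from the density of these reversed-prefix addresses (Lemma \ref{lem:denseincodespace}). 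You never synchronize: you prove the two inclusions separately, obtaining $\omega((x_n))\subseteq A$ from the Hausdorff attractivity of $\mathcal{F}^n(\{x_0\})$ and $A\subseteq\omega((x_n))$ from the point-fibred shrinking $w_{\alpha_{\vert k}}(C)\to\{\pi(\alpha)\}$ combined with the occurrence of the reversed block in $\sigma$. Both halves of your argument use only the TIFS structure supplied by Corollary \ref{cor:tifs} and the attractivity of $A$, not the metric contraction estimate itself, so your proof is essentially topological and would establish Theorem \ref{th:ChaosGame4tifs} verbatim (reading ``neighbourhood'' for ``ball''), whereas the paper needs a separate, more delicate synchronization argument in that case; what the paper's route buys in exchange is a one-formula computation of the $\omega$-limit once the conjugation is in place. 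A further point in your favour: you make explicit the upgrade from ``each word occurs'' to ``each word occurs infinitely often'' via $M$-fold repetitions, which the paper only gestures at in a parenthetical remark, and you handle the order reversal between \eqref{eq:orbit} and \eqref{eq:fibercomposition} correctly.
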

\begin{proof}
For a given driver $\sigma$ and two starting 
points $x_0\in X$ and $a_0\in A$, 
the orbits $x_n= w_{\sigma_n}(x_{n-1})$ and 
$a_n= w_{\sigma_n}(a_{n-1})$ are getting 
closer to each other, i.e.,
\[
d(x_n,a_n)\leq {{\varphi}}^n(d(x_0,a_0)) \to 0,
\]
where ${\varphi}$ is a module of continuity common
for all maps $w_i$,
${\varphi}^n$ stands for the $n$-fold composition of 
${\varphi}$ and $\lim_{n\to\infty} {\varphi}^n(t)=0$
due to \cite{Granas} chap.I $\S$1.6 (B.2) p.19
(see also Remark \ref{remmatkowski}). 
In consequence
\begin{equation}\label{eq:XreducetoA}
\omega((x_n))=\omega((a_n)).
\end{equation}

By Theorem \ref{HB2} and Corollary \ref{cor:tifs}
the dynamics of 
${\mathcal{F}}|A = \{w_{1}{|}_A,...,w_{N}{|}_A\}$ 
on $A$ is conjugated to the dynamics of 
$\{\tau_{1},...,\tau_{N}\}$ on $I^{\infty}$, 
where $I=\{1,...,N\}$, 
$\tau_i:I^{\infty}\to I^{\infty}$,
$\tau_i(\alpha)= i\hat{ }\alpha$ for $i\in I$,
$\alpha\in I^{\infty}$.
The conjugation is established by the coding map
$\pi:(I^{\infty},d_{B})\to (A,d)$ via commutations
\begin{equation}\label{eq:commute}  
\pi\circ \tau_i= w_i\circ \pi,\; i\in I,
\end{equation}
where $d_{B}$ is the Baire metric 
and $d$ is a metric in $A$ induced from $X$.

\begin{figure}
\[
\xymatrix{
{\varsigma_{n-1} = (\sigma_{n-1},\dots,\sigma_{1}) \widehat{\;} \alpha} 
   \ar@{|->}[r]^-{\pi}    
   \ar@{|->}[d]_{{\tau}_{\sigma_n}} 
& 
{a_{n-1}} 
    \ar@{|->}[d]^{w_{\sigma_n}}
\\
{\varsigma_n = (\sigma_{n},\dots,\sigma_{1}) \widehat{\;} \alpha} 
    \ar@{|->}[r]^-{\pi} 
& 
{a_n}
}
\]
\caption{Action of the chaos game on 
the attractor $A$ (on the right) and 
on the code space $I^{\infty}$ (on the left)
conjugated via the coding map $\pi$.}\label{fig:diagram}
\end{figure}

In view of \eqref{eq:XreducetoA}, 
it is enough to show that 
if $a_0\in A$ and the orbit $a_n$ is driven by 
a disjunctive sequence 
$\sigma=(\sigma_n)_{n=1}^{\infty}$, 
then $\omega((a_n))=A$.
Let us fix a starting point $a_0$ 
together with its address
$\alpha= (\alpha_n)_{n=1}^{\infty}\in I^{\infty}$, i.e., $a_0=\pi(\alpha)$. 
Thanks to \eqref{eq:commute} we have
\begin{eqnarray*}
a_n= w_{\sigma_n}\circ...\circ w_{\sigma_1}(a_0)=
w_{\sigma_n}\circ...\circ w_{\sigma_1}(\pi(\alpha))= 
w_{\sigma_n}\circ...\circ \pi\circ \tau_{\sigma_1}(\alpha)= \\
= \pi \circ \tau_{\sigma_n}\circ... 
\circ\tau_{\sigma_1}(\alpha_{1},\alpha_{2},\dots)=
\pi(\sigma_{n},\dots,\sigma_{1},\, \alpha_{1},\alpha_{2}\dots);
\end{eqnarray*}
see also Figure \ref{fig:diagram}.
Since $\sigma$ is disjunctive, 
for any given $m\geq 1$, 
the prefixes of addresses
\[
\varsigma_n := (\sigma_{n},\dots,\sigma_{1}) \widehat{\;\;} \alpha,\; n\geq m,
\]
exhaust all possible finite words. 
(A desired prefix appears as a reversed sequence 
on the far enough position in $\sigma$.) 
Hence, on calling Lemma \ref{lem:denseincodespace}  
each set $\{\varsigma_n: n\geq m\}$, $m\geq 1$, 
is dense in $(I^{\infty},d_{B})$. Finally
\[
\omega((a_n))=\bigcap_{m=0}^{\infty} \overline{\{a_n:n\geq m\}} =
\bigcap_{m=0}^{\infty} 
\pi(\overline{\{\varsigma_n : n\geq m\}}) = \pi(I^{\infty})=A,
\]
where the second equality follows from the closedness
of the continuous map $\pi$ on a compact set. 
\end{proof}

Deterministic chaos game is also valid for topologically contractive IFSs.

\begin{theorem}\label{th:ChaosGame4tifs}
Let ${\mathcal{F}}= \{w_1,...,w_N\}$ be a TIFS on 
a Hausdorff topological space $X$. 
Let $A$ be an attractor of ${\mathcal{F}}$. 
Then for every $x_0\in X$ and 
any disjunctive driver 
$\sigma\in\{1,...N\}^{\infty}$
the orbit $x_n$ starting at $x_0$ and 
driven by $\sigma$ 
(according to \eqref{eq:orbit}) 
recovers $A$, i.e., $\omega((x_n))=A$.  
\end{theorem}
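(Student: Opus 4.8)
The plan is to follow the two-stage strategy of Theorem \ref{th:ChaosGame}, but to replace its metric ingredients by purely topological ones, since here $X$ is only assumed Hausdorff. The single obstacle is that the estimate $d(x_n,a_n)\leq\varphi^n(d(x_0,a_0))\to 0$, which in the Browder case let us reduce an arbitrary orbit to one started on $A$, has no counterpart without a metric. I would circumvent it by never comparing two orbits at all: instead I prove the two inclusions $\omega((x_n))\subseteq A$ and $A\subseteq\omega((x_n))$ directly, using Vietoris attractivity and the coding map $\pi$ supplied by Theorem \ref{HB2}. First I would fix, via compact dominance (Definition \ref{def:TIFS}(i) and Remark \ref{remark:compactdominacy}), a compact set $C$ with $x_0\in C$ and $\mathcal{F}(C)\subseteq C$; then $w_i(C)\subseteq C$ for each $i$, so the whole orbit stays in $C$ and in fact $x_n\in w_{(\sigma_n,\dots,\sigma_1)}(C)\subseteq\mathcal{F}^n(C)$.

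For the inclusion $\omega((x_n))\subseteq A$ I would use that $A$ is the contractive fixed point of $\mathcal{F}$ (Theorem \ref{HB2}(ii)), so $\mathcal{F}^n(C)\to A$ in the Vietoris topology. Given any point $p\notin A$, Hausdorffness together with compactness of $A$ yields disjoint open sets $U\supseteq A$ and $V\ni p$; Vietoris convergence then forces $\mathcal{F}^n(C)\subseteq U$ for all large $n$, hence $x_n\in U$ and $x_n\notin V$ eventually, so $p\notin\omega((x_n))$.

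For the reverse inclusion I would fix $a\in A$, write $a=\pi(\eta)$ for some $\eta\in I^\infty$, and take any open $V\ni a$. By Theorem \ref{HB2}(iv) the decreasing compacta $w_{\eta_{\vert k}}(C)$ converge to $\{\pi(\eta)\}$, so $w_{\eta_{\vert k}}(C)\subseteq V$ for some $k$. The key observation is that whenever the driver block satisfies $(\sigma_{n-k+1},\dots,\sigma_n)=(\eta_k,\dots,\eta_1)$ one has $x_n=w_{\eta_{\vert k}}(x_{n-k})\in w_{\eta_{\vert k}}(C)\subseteq V$, because $x_{n-k}\in C$. Since $\sigma$ is disjunctive, the finite word $(\eta_k,\dots,\eta_1)$ occurs infinitely often (this is exactly the density exploited through Lemma \ref{lem:denseincodespace} in the proof of Theorem \ref{th:ChaosGame}), so such indices $n$ exist beyond every $m$; hence $a\in\overline{\{x_n:n\geq m\}}$ for all $m$, i.e. $a\in\omega((x_n))$.

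Combining the two inclusions gives $\omega((x_n))=A$. I expect the hardest point to be precisely the absence of the metric reduction; the remedy above is to feed the trapping compactum $C$ into Theorem \ref{HB2}(iv), which converts the disjunctive reappearance of the reversed prefix $(\eta_k,\dots,\eta_1)$ into the orbit landing inside the prescribed neighbourhood $w_{\eta_{\vert k}}(C)$ of $\pi(\eta)$. In this way neither a conjugacy diagram nor any comparison between two orbits is needed.
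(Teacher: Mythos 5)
Your proof is correct, but it follows a genuinely different route from the paper's. The paper proves the equality $\omega((x_n))=\omega((a_n))$ for a companion orbit $a_n$ started at a point $a_0\in A$ with the same driver: it traps both orbits in a compact $C$ with $\mathcal{F}(C)\subseteq C$, and uses the point-fibred property to show that for long words every set $w_{\sigma_k}\circ\cdots\circ w_{\sigma_1}(C)$ lies either inside a given neighbourhood $U$ of a limit point or outside a shrunken neighbourhood $\overline{V}$, whence the two orbits accumulate at the same points; it then quotes the conjugacy argument of Theorem \ref{th:ChaosGame} for the orbit living on $A$. You instead prove the two inclusions $\omega((x_n))\subseteq A$ and $A\subseteq\omega((x_n))$ directly, never comparing two orbits: the first from Vietoris attractivity of $\mathcal{F}^n(C)$ plus separation of a point from the compact set $A$, the second from the identity $x_n=w_{\eta_{\vert k}}(x_{n-k})$ when the reversed prefix $(\eta_k,\dots,\eta_1)$ occurs in the driver, combined with $w_{\eta_{\vert k}}(C)\to\{\pi(\eta)\}$ from Theorem \ref{HB2}. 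Both arguments rest on the same three pillars (compact dominance, the point-fibred/coding-map machinery of Theorem \ref{HB2}, and the fact that a disjunctive sequence contains every finite word arbitrarily late), and both isolate disjunctivity in the lower inclusion only. What the paper's route buys is the orbit-comparison statement \eqref{eq:XreducetoA}, of independent interest, and direct reuse of the conjugacy diagram from Theorem \ref{th:ChaosGame}; what yours buys is a shorter, self-contained argument that dispenses with the $U$-versus-$X\setminus\overline{V}$ dichotomy and with the conjugation of the chaos game to the shift. One small point you gloss over, as does the paper: that every finite word occurs in a disjunctive sequence at arbitrarily late positions follows, e.g., by locating occurrences of the padded words $1^j\,(\eta_k,\dots,\eta_1)$ for $j\to\infty$; it is worth a sentence.
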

\begin{proof}
It is sufficient to prove \eqref{eq:XreducetoA}.
Since TIFS admits a compact attractor and 
a conjugation with a canonical IFS
via coding map (Theorem \ref{HB2}), 
the rest follows as in the proof of 
Theorem \ref{th:ChaosGame}. 

Let $x_n, a_n$ be two orbits 
with the same driver $(\sigma_n)_{n=1}^{\infty}$,
one starting at $x_0$, and the other at 
$a_0\in A$.
We are going to show that 
$\omega((x_n))\subseteq \omega((a_n))$. 
The reverse inclusion will follow by symmetry.
By Definition \ref{def:TIFS} (i): 
\begin{equation}\label{eq:nakladkaC}
\mbox{there exists a compact }\; 
C\supseteq \{x_0\}\cup A  \;\mbox{ s.t. }\; 
\mathcal{F}(C)\subseteq C.
\end{equation}
In particular, 
$\omega((x_n))\cup \omega((a_n)) \subseteq C$.
Therefore, by restricting
$\mathcal{F}$ to $C$ if necessary, we can assume further 
that $X$ is compact.
(The $\omega$-limit set of an orbit of $\mathcal{F}$ 
which lies in $C$ coincides with 
the $\omega$-limit set of that orbit understood
as an orbit of the restricted system
$\mathcal{F}\vert C=\{w_1\vert C,..., w_N\vert C\}$.)

Let $y\in \omega((x_n))$. Fix open $U\ni y$ and
arbitrarily large number $m\in{\mathbb{N}}$. Shrink $U$
to open $V\ni y$, $\overline{V}\subseteq U$.
Let $C$ be chosen according to \eqref{eq:nakladkaC}. 
By Definition \ref{def:TIFS} (ii) 
there exists $k_0$ s.t. for all 
$(\alpha_1,...,\alpha_k)\in\{1,...,N\}^k$, 
$k\geq k_0$, it holds
\begin{equation}\label{eq:UValternative}
w_{\alpha_1}\circ ... \circ w_{\alpha_k}(C)
\subseteq U \mbox{ or }
w_{\alpha_1}\circ ... \circ w_{\alpha_k}(C)
\subseteq X\setminus \overline{V}.
\end{equation}
(Inward composed maps yield a singleton.)
Since $y\in\omega((x_n))$, we can pick 
$k\geq \max\{k_0,m\}$ so that $x_k\in V$. 
Recalling that 
$x_k= w_{\sigma_k}\circ ... \circ w_{\sigma_1}(x_0)$,
$x_0\in C$, this implies
\[
V\cap w_{\sigma_k}\circ ... \circ w_{\sigma_1}(C) 
\neq \emptyset.
\]
Therefore the second alternative in 
\eqref{eq:UValternative} is false, and we have
$w_{\sigma_k}\circ ... \circ w_{\sigma_1}(C)\subseteq U$.
Recalling that 
$a_k= w_{\sigma_k}\circ ... \circ w_{\sigma_1}(a_0)$,
$a_0\in C$, shows that
\[
a_k\in w_{\sigma_k}\circ ... \circ w_{\sigma_1}(C)\subseteq U
\mbox{ for some } k\geq m.
\]
Therefore, $y\in\omega((a_n))$, 
because $m$ and $U$ were arbitrary.
\end{proof}

\begin{remark}\label{rem:chgameRemetrize}
If $X$ is metrizable, then the proof of 
Theorem \ref{th:ChaosGame4tifs} can 
be simplified on the basis of 
Theorem \ref{metrizability} on remetrizability.
One just takes a suitable compact $C\supseteq A$ 
with $\mathcal{F}(C)\subseteq C$ and 
considers the restriction of $\mathcal{F}$ to $C$, 
$\mathcal{F}\vert{C}=\{w_{1}\vert{C},...,w_{N}\vert{C}\}$. 
Then there is a complete metric $d$ so making 
$w_{i\vert C}$ Rakotch contractive all at once.
\end{remark}

\begin{remark}
The generation of the random orbit \eqref{eq:orbit}
can be viewed as an action of a skew-product system
(a.k.a. cocycle a.k.a. random dynamical system) 
over the Bernoulli shift 
$\vartheta: I^{\infty}\to I^{\infty}$ with fiber $X$;
cf. \cite[chap.14]{Cheban}, \cite[chap.2.1]{Arnold} 
(see also \cite{Barrientos} and 
\cite[Remark 4.1.5(b)]{Kieninger} 
with the footnote on p.84).
\end{remark}

Let us comment upon the essence and history
of the chaos game.
The chaos game algorithm works in such a way 
that to recover the attractor
\[
A=\bigcap_{m=0}^{\infty} 
\overline{\bigcup_{n=m}^{\infty} {\mathcal{F}}^n(\{x_0\})},
\]
instead of building the full tree
$T = \bigcup_{n=0}^{\infty} {\mathcal{F}}^n(\{x_0\})$
of iterations of the Hutchinson operator ${\mathcal{F}}$,
\[
{\mathcal{F}}^n(\{x_0\})=  \{w_{\sigma_n}\circ ... \circ w_{\sigma_1}(x_0): 
(\sigma_1,...,\sigma_n)\in I^n\},
\] 
see Figure \ref{fig:tree}, it is enough 
to climb along a single, 
yet sufficiently complex branch of 
the tree $T$; namely, it is enough 
to follow a disjunctive orbit
$x_n=w_{\sigma_n}\circ ... \circ w_{\sigma_1}(x_0)$.
(Note the order of composition along 
a word, which is different from that 
in \eqref{eq:fibercomposition}.)

\begin{figure}
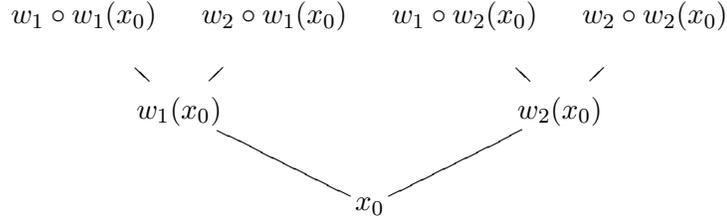

\[
\xygraph{
[]*+[o]{x_0}
  (-[ull] *+=[o]{w_{1}(x_0)}
      (-[ul] *+=[o]{w_{1}\circ w_{1}(x_0)}
      ,-[ur] *+=[o]{w_{2}\circ w_{1}(x_0)})
   ,-[urr] *+=[o]{w_{2}(x_0)}
      (-[ul] *+=[o]{w_{1}\circ w_{2}(x_0)}
      ,-[ur] *+=[o]{w_{2}\circ w_{2}(x_0)})
   )
}
\]
\caption{Part of a full tree of iterations 
of $\mathcal{F}=\{w_1,w_2\}$ up to a second level: 
$\bigcup\limits_{n=0}^{2} \mathcal{F}^n(\{x_0\})$.}\label{fig:tree}
\end{figure}

The proof of the deterministic chaos game 
which we have provided above was not 
presented anywhere so far. Nevertheless, 
it is much in the spirit of 
\cite{Edgar} Theorem 5.1.3; see also \cite{Mar}. 
The ideas behind it are buried in some papers 
from 1990s, cf. \cite{Goodman} and \cite{Hoggar}.
Formally, the deterministic chaos game was stated
for the first time in \cite{BV-Developments}
(without a proof, for strongly-fibred systems).

Recently it was shown that the probabilistic 
chaos game algorithm (i.e., when the orbit 
is driven by a stochastic process) works 
for very general noncontractive IFSs, 
cf. \cite{BLesR2016chaos}. Since many 
stochastic processes generate disjunctive 
sequences, the probabilistic version
of the chaos game readily follows from 
its deterministic version. 
One could hope that it is always the case and 
the probabilistic chaos game is just a corollary 
to the deterministic chaos game.
Notably, the deterministic chaos game algorithm 
is valid for two large classes of systems: 
IFSs with a strongly-fibred attractor (e.g., 
\cite{BV-Developments}) and IFSs comprising
nonexpansive maps (cf. \cite{Les2015chaos}).
Unfortunately, there exist IFSs for which the 
probabilistic chaos game works while 
deterministic --- fails, cf. \cite{Barrientos}.

\section{Between contractive and non-contractive realm}

We are going to exhibit some types of
contractive iterated function systems (IFS) 
which suffer various deficiencies 
making them less flexible proposals 
for generalized contractive IFSs 
than topologically contractive IFSs (TIFS).

\subsection{Eventual contractions}\label{sec:EC}
Given a complete metric space $X$, we say 
that a continuous map $w:X\to X$ is 
an \emph{eventual contraction} if 
for some $p\in{\mathbb{N}}$, the $p$-fold composition 
$w^p$ is a Banach contraction (e.g.,
\cite{Granas} chap.I $\S$1.6 (A.1)).
This concept has proven to be handy 
in some circumstances,
e.g. in the theory of integral equations 
\cite{KolmogorovFomin} chap.II $\S$15.V.

Clearly, if $X$ is compact, then 
an eventual contraction is a 
\emph{topological contraction in the sense 
of Tarafdar}, i.e., the intersection 
$\bigcap_{k\in{\mathbb{N}}}w^k(X)$ is a singleton; 
cf. \cite{Tarafdar} Definition 2.6 and 
Theorem 2.36 on p.88. Noting that in 
such cases $\mathcal{F}=\{w\}$ is a topologically
contractive IFS, it is natural to ask 
whether all IFSs consisting of Tarafdar's
contractions or eventual contractions are 
TIFS or, at least, they generate unique 
invariant sets. The answer is negative as 
the following example shows.

\begin{example}[Non-TIFS of topological contractions]\label{ex:NonTIFSofTarafdar}
Let $w_i:[0,1]\to[0,1]$, $i=1,2$, be defined by 
$w_1(x)=\max\left\{\frac{1}{2},1-x\right\}$ and 
$w_2(x)=\min\left\{\frac{1}{2},1-x\right\}$.
Then $w_1,w_2$ are continuous and 
$w_{1}^2([0,1])=w_{2}^2([0,1])=
\left\{\frac{1}{2}\right\}$, 
so they are continuous Tarafdar's contractions
(even eventual contractions) 
acting on a compact space. 
On the other hand, for every 
$t\in\left[0,\frac{1}{2}\right]$, we have
\begin{equation*}
w_1([t,1-t])\cup w_2([t,1-t])=
\left[\frac{1}{2},1-t\right]\cup
\left[t,\frac{1}{2}\right] =[t,1-t].
\end{equation*}
Hence the IFS $\mathcal{F}:=\{w_1,w_2\}$ does not 
generate a unique nonempty compact invariant set 
and, in particular, $\mathcal{F}$ is not topologically 
contractive.
\end{example}
\begin{remark}
Observe that in Example \ref{ex:NonTIFSofTarafdar}
we have $|w_1(x_1)-w_1(x_2)| = |x_1-x_2|$
for some $x_1\neq x_2$ (namely 
$\frac{1}{2} \leq x_1< x_2\leq 1$).
This means that a topologically contractive map 
need not be a weak contraction under any 
good comparison function ${\varphi}$ for the original metric 
despite it is a ${\varphi}$-contraction after suitable
change of a metric (by either the Miculescu-Mihail 
or the Meyer remetrization theorem; 
cf. Theorems \ref{theorem:meyers} and \ref{metrizability}).
\end{remark}

Whether an IFS comprising eventually contractive
maps yields a topologically contractive IFS
depends on the interaction between the maps,
not only upon the individual maps. 
We look closer at this phenomenon below.

A map $u:X\to X$ on a complete metric space 
$(X,d)$ is called \emph{bi-Lipschitz} provided
there exist two coefficients 
$\lambda(u)\geq \kappa(u)>0$ s.t.
\begin{equation*}
\kappa(u)\cdot d(x_1,x_2) \leq 
d(u(x_1),u(x_2)) \leq 
\lambda(u)\cdot d(x_1,x_2)
\mbox{ for } x_1,x_2\in X.
\end{equation*}
In other words $\lambda(u)$ is a 
(not necessarily minimal) Lipschitz 
constant of $u$, and $1/\kappa(u)$ is 
a Lipschitz constant of $u^{-1}:u(X)\to X$, 
the inverse of $u$. Obviously 
$\lambda(u_1\circ u_2)\leq 
\lambda(u_1) \cdot \lambda(u_2)$
and $\kappa(u_1\circ u_2)\geq 
\kappa(u_1) \cdot \kappa(u_2)$ for
bi-Lipschitz $u_1,u_2:X\to X$.

Let us define for $t_1,t_2\geq 0$,
$\ell^p(t_1,t_2)=(t_1^p+t_2^p)^{\frac{1}{p}}$
when $1\leq p<\infty$ and 
$\ell^p(t_1,t_2)=\max\{t_1,t_2\}$
when $p=\infty$. The function 
$\ell^p:{\mathbb{R}}_{+}\times{\mathbb{R}}_{+}\to{\mathbb{R}}_{+}$ is 
continuous, monotone in each variable, and 
obeys many other properties; for instance
$\ell^p(t_1,t_2)=\ell^p(t_2,t_1) \geq
\ell^{\infty}(t_1,t_2)$.

\begin{example}[Eventually contractive IFS]\label{ex:EC}
Let $(X,d)$ be a complete metric space 
containing at least two points.
We endow $X\times X$ with
the $\ell^p$-product metric 
\begin{equation*}
d_{X\times X}((x_1,y_1), (x_2,y_2)) =
\ell^p(d(x_1,x_2), d(y_1,y_2))
\mbox{ for } x_1,y_1, x_2,y_2\in X.
\end{equation*}
(See also \cite{Les2007Lifsh} for more 
general product metrics.) So metrized 
product $X\times X$ is a complete space.

Let $v_i,u_i:X\to X$, $i=1,2$, 
be bi-Lipschitz maps.
Let $r:X\times X\to X\times X$, 
$r(x,y)=(y,x)$ for $x,y\in X$.
We define on $X\times X$ an IFS 
$\mathcal{F}=\{w_1,w_2\}$ via 
$w_i:= r\circ (v_i,u_i)$, $i=1,2$.
Note that 
\[
w_j\circ w_i = (u_j\circ v_i,v_j\circ u_i)
\mbox{ for } i,j\in\{1,2\}.
\]
By induction
\begin{equation}\label{eq:w1w2n}
(w_2\circ w_1)^n = ((u_2\circ v_1)^n,
(v_2\circ u_1)^n), n\geq 1.
\end{equation}

By regulating $\kappa$'s and $\lambda$'s
we can observe two characteristic scenarios.

(i) $\kappa(u_i)\geq 1$ for $i=1,2$, 
yet
\[
\operatorname{Lip}(w_j\circ w_i) \leq 
\max_{i,j\in\{1,2\}}
\{\lambda(v_j) \cdot \lambda (u_i)\} < 1.
\] 
This means that although the IFS $\mathcal{F}$ 
is not contractive, its second iterate 
$\mathcal{F}^2=\{w_j\circ w_i: i,j\in\{1,2\}\}$
is contractive. In particular
$\mathcal{F}^2$ admits an attractor, so by
Remark \ref{remark:CFP} $\mathcal{F}$ also 
has an attractor.

(ii) $\kappa(u_i)\geq 1$, 
$\kappa(u_i)\cdot \kappa(v_{3-i}) >1$,
$\lambda(u_i)\cdot \lambda(v_i) < 1$ for
all $i=1,2$. This means that $w_i$'s are
eventually contractive ($w_i\circ w_i$ 
are contractive), but $\mathcal{F}$ does not admit
an attractor. Suppose, a~contrario, that
$A\subseteq X\times X$ is a compact attractor 
of $\mathcal{F}$. Pick $(a_x,a_y)\in A$, 
$(x,y)\neq (a_x,a_y)$. Then, using 
\eqref{eq:w1w2n}, we obtain
\begin{eqnarray*}
d_{X\times X}((w_2\circ w_1)^n(x,y),
(w_2\circ w_1)^n(a_x,a_y)) \geq \\ 
\max\{(\kappa(v_1)\kappa(u_2))^n \cdot d(x,a_x),
(\kappa(v_2)\kappa(u_1))^n \cdot d(y,a_y)\} 
\to\infty.
\end{eqnarray*}
\end{example}

\begin{remark}
Putting $X={\mathbb{R}}$, $u_1=u_2= \operatorname{id}$, 
$v_1(x)=\frac{x}{2}$, and 
$v_2(x) = \frac{x+1}{2}$ in Example \ref{ex:EC} 
yields \cite{Mendivil} Exercise 2.28.
\end{remark}

\begin{remark}
For $X={\mathbb{R}}$ Example \ref{ex:EC} takes 
particularly nice form. 
One can plug various values of 
$\tilde{v_i}$, $\hat{v_i}$, $\tilde{u_i}$
$\hat{u_i}$, $i=1,2$, and play with matrices
\[
w_i(x,y)= \begin{bmatrix}0&1\\
1&0\\\end{bmatrix} \cdot
\left(\begin{bmatrix}\tilde{v_i}&0\\
0&\tilde{u_i}\\\end{bmatrix}
\begin{bmatrix}x\\y\\\end{bmatrix} +
\begin{bmatrix}\hat{v_i}\\
\hat{u_i}\\\end{bmatrix}\right),
\; x,y\in {\mathbb{R}}.
\]
Then $v_i(x)=\tilde{v_i}\cdot x+\hat{v_i}$,
$\kappa(v_i)=\lambda(v_i)=\tilde{v_i}$,
and similarly
$u_i(x)=\tilde{u_i}\cdot x+\hat{u_i}$,
$\kappa(u_i)=\lambda(u_i)=\tilde{u_i}$.
\end{remark}

\subsection{Average contractive IFSs}

A probabilistic iterated function system $(\mathcal{F},\vec{p})$ 
comprising Lipschitz maps $w_1,...,w_N$ 
is said to be \emph{average contractive}, 
provided $\sum_{i=1}^{N} p_i\cdot\operatorname{Lip}(w_i)<1$, 
where $\operatorname{Lip}(w_i)$ stands for 
the Lipschitz constant of $w_i$.

\begin{remark}\label{rem:average}
Let $\mathcal{F}=\{w_1,...,w_N\}$ be an IFS
consisting of Lipschitz maps.
There exists a vector of positive
weigths $\vec{p}$ such that $(\mathcal{F},\vec{p})$
is average contractive if and only if
$\min\{\operatorname{Lip}(w_i):i=1,...,N\}<1$. 
The class of such weights can be easily determined. 
For each $i=1,...,N$, 
set $c_i:=\operatorname{Lip}(w_i)$ 
and assume for simplicity that 
$c_1=\min\{c_1,...,c_N\}$. 
Observe that $p_1+...+p_N=1$ 
is equivalent to $p_1=1-p_2-...-p_N$. 
Hence our aim is to find 
all positive values $p_2,...,p_N$ 
so that $p_2+...+p_N<1$ and 
$(1-p_2-...-p_N)c_1+p_2 c_2+...+p_N c_N<1$. 
All in all, the family of all desired 
vectors consists of all $N$-tuples
$(p_1,p_2,...,p_N)$ so that
\[
\begin{array}{ccc}
p_2\in(0,1),
\;\;p_2(c_2-c_1)<1-c_1,\\
p_3\in(0,1-p_2),
\;\;p_3(c_3-c_1)<1-c_1-p_2(c_2-c_1),\\
..................................................\\
p_N\in(0,1-p_2-...-p_{N-1}),
\;\;p_N(c_N-c_1)<1-c_1-p_2(c_2-c_1)-...-p_{N-1}(c_{N-1}-c_1),\\
p_1=1-p_2-...-p_N.
\end{array}
\]
\end{remark}

The interest in this kind of systems steams 
from image processing. The basis for 
the applications is the observation
that the Markov operator $M$ associated with 
an average contractive IFS on a complete
separable metric space has CFP (in other 
words, $M$ is \emph{asymptotically stable}); 
cf. \cite{MyjakSzarek} Fact 3.2. 
Even more, under suitable assumptions, 
$M$ is contractive with respect to 
the Monge-Kantorovitch metric $d_{MK}$;
cf. \cite{Mendivil} Theorem 2.60.
Having this in mind and recalling that $M$ 
arising from the Rakotch contractive IFS 
is weakly contractive with respect to $d_{MK}$, one can
look for a hybrid generalization of an
average contractive and a weakly contractive IFS.
We realize this idea below.

\begin{definition}\label{def:averageRakotch}
We say that a probabilistic IFS $(\mathcal{F},\vec{p})$ 
is \emph{average Rakotch contractive}, 
if there exist positive numbers $c_1,...,c_N$ 
such that
\begin{itemize}
\item[(i)] $\sum_{i=1}^Np_ic_i\leq 1$;
\item[(ii)] each $w_i$ is $\varphi_i$-contraction
for some comparison function of the form
$\varphi_i(t)=\lambda_i(t)t$, where 
$\lambda_i:{\mathbb{R}}_+\to{\mathbb{R}}_+$ is nonincreasing 
and $\lambda_i(t)<c_i$ for $t>0$.
\end{itemize}
\end{definition}

Let us exhibit a probabilistic IFS 
$(\mathcal{F},\vec{p})$ which is neither average contractive
nor Rakotch contractive, yet 
it is average Rakotch contractive.

\begin{example}\label{ex:averageRakotch}
Let $w_1,w_2:\left[0,\frac{\pi}{2}\right]
\to[0,\frac{\pi}{2}]$ be defined by 
\begin{equation*}
w_1(x):=2\sin(x),\;\;\;
w_2(x):=\frac{1}{2}\sin(x).
\end{equation*}
For every $0\leq x<y\leq\frac{\pi}{2}$, 
we have
\begin{equation*}
\sin(y)-\sin(x)\leq \sin(y-x)-\sin(0)=
\sin(y-x)=\frac{\sin(y-x)}{y-x}(y-x).
\end{equation*}
From this we see that $w_1$ is
a $\varphi_1$-contraction for 
$\varphi_1(t)=\lambda_1(t)t$, where 
$\lambda_1(t)=\frac{2\sin(t)}{t}$, 
and $w_2$ is a~$\varphi_2$-contraction for
$\varphi_2(t)=\lambda_2(t)t$, where 
$\lambda_2(t)=\frac{\sin(t)}{2t}$. 
In particular, $\operatorname{Lip}(w_1)=2$ and 
$\operatorname{Lip}(w_2)=\frac{1}{2}$. 
Hence the IFS $\mathcal{F}=\{w_1,w_2\}$ is 
not weakly contractive and 
the probabilistic IFS $(\mathcal{F},\vec{p})$ 
is not average contractive for
$\vec{p}=\left(\frac{1}{3},\frac{2}{3}\right)$.
Nevertheless, $(\mathcal{F},\vec{p})$ is 
average Rakotch contractive.
\end{example}

Careful examination of the IFS $\mathcal{F}$ from 
Example \ref{ex:averageRakotch} reveals
that although $(\mathcal{F},\vec{p})$ is 
not average contractive for 
$\vec{p}=\left(\frac{1}{3},\frac{2}{3}\right)$,
it is average contractive for many other 
vectors of weights (precisely for those 
$\vec{p}=(p_1,p_2)$ which satisfy 
$2\cdot p_{1} + \frac{1}{2}\cdot p_{2} <1$.)
This is not a coincidence. 

\begin{remark}
Let $\mathcal{F}=\{w_1,...,w_N\}$ be an IFS consisting 
of Lipschitz maps. 
The following conditions are equivalent:
\begin{itemize}
\item[(i)] $(\mathcal{F},\vec{p})$ is average 
Rakotch contractive for some $\vec{p}$;
\item[(ii)] either $\mathcal{F}$ is Rakotch contractive, or
$(\mathcal{F},\vec{p})$ is average contractive for some $\vec{p}$.
\end{itemize}
Indeed, Definition \ref{def:averageRakotch} (i)
implies that either $c_i < 1$ for some $i$ or  
$c_i= 1$ for all $i$. The first alternative 
is related to average contractivity 
due to Remark \ref{rem:average}. 
The second alternative is related to 
Rakotch contractivity.
\end{remark}

The following result is an extension of 
Theorem \ref{th:weakcontrMarkov} 
for average Rakotch contractive IFSs.

\begin{theorem}
Let $(\mathcal{F},\vec{p})$ be an average Rakotch
contractive IFS on a complete metric space $X$. 
Then the Markov operator 
$M:\mathcal{P}_1(X)\to\mathcal{P}_1(X)$
induced by $(\mathcal{F},\vec{p})$
is a Rakotch contraction with respect to the 
Monge-Kantorovich metric on the space 
$\mathcal{P}_1(X)$ of Radon 
probability measures with integrable distance. 
In particular, $M$ admits 
a unique invariant measure 
$\mu_{*}\in\mathcal{P}_1(X)$ which is 
the CFP of $M$, i.e., the iterations
$M^n(\mu)\to \mu_{*}= M(\mu_{*})$ 
converge weakly.
\end{theorem}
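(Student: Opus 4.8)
The plan is to mimic the proof of Theorem~\ref{th:weakcontrMarkov} almost line by line, the only genuinely new issue being that the individual comparison functions $\varphi_i$ need no longer satisfy $\varphi_i(t)<t$ (here $\lambda_i(t)$ may exceed $1$), so they cannot be concavified directly through Lemma~\ref{lemma:rakotch}. First I would dispose of the routine part. Each $w_i$ is Lipschitz with $\operatorname{Lip}(w_i)\leq c_i$, since $d(w_i(x),w_i(y))\leq\lambda_i(d(x,y))\,d(x,y)\leq c_i\,d(x,y)$; hence, exactly as in Theorem~\ref{th:weakcontrMarkov}(a), using \eqref{feller} and Proposition~\ref{th:wtransport}, one checks $M(\mathcal{P}_1(X))\subseteq\mathcal{P}_1(X)$, so that $M$ restricts to a selfmap of $\mathcal{P}_1(X)$.

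The heart of the argument is to manufacture a single concave Rakotch comparison function for $M$. For each $i$ set $\psi_i(t):=\varphi_i(t)/c_i=(\lambda_i(t)/c_i)\,t$. Since $\lambda_i$ is nonincreasing with $\lambda_i(t)<c_i$, the factor $t\mapsto\lambda_i(t)/c_i$ is nonincreasing and takes values in $[0,1)$, so $\psi_i$ is a genuine Rakotch comparison function and Lemma~\ref{lemma:rakotch} supplies a concave strictly increasing $\tilde\psi_i$ with $\psi_i(t)\leq\tilde\psi_i(t)<t$ for $t>0$. Rescaling back, $\hat\varphi_i:=c_i\tilde\psi_i$ is concave and strictly increasing and satisfies $\varphi_i(t)\leq\hat\varphi_i(t)<c_i\,t$ for $t>0$. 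Then I would repeat the coupling computation of Theorem~\ref{th:weakcontrMarkov}(b) verbatim: taking an optimal coupling $\lambda\in\Lambda(\mu,\eta)$ (via the description in Lemma~\ref{lemma:measuresonmetric}) and pushing it forward to $\tilde\lambda=\sum_i p_i\,\lambda\circ(w_i,w_i)^{-1}\in\Lambda(M\mu,M\eta)$, one obtains
\begin{equation*}
d_{MK}(M\mu,M\eta)\leq\int_{X\times X}d\,d\tilde\lambda
=\sum_{i=1}^N p_i\int_{X\times X}d(w_i(x),w_i(y))\,d\lambda
\leq\sum_{i=1}^N p_i\int_{X\times X}\hat\varphi_i(d(x,y))\,d\lambda,
\end{equation*}
and Jensen's inequality applied to each concave $\hat\varphi_i$ converts the last term into $\sum_i p_i\,\hat\varphi_i\bigl(\int d\,d\lambda\bigr)=\Phi\bigl(d_{MK}(\mu,\eta)\bigr)$, where $\Phi(t):=\sum_{i=1}^N p_i\hat\varphi_i(t)$.

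It remains to verify that $\Phi$ is a legitimate Rakotch comparison function. As a positively weighted sum of concave strictly increasing functions, $\Phi$ is itself concave and strictly increasing; and crucially $\Phi(t)=\sum_i p_i\hat\varphi_i(t)<\sum_i p_i c_i\,t=\bigl(\sum_i p_i c_i\bigr)t\leq t$ for $t>0$, where strictness uses $p_i>0$ together with $\hat\varphi_i(t)<c_i t$, and the final inequality is condition~(i) of Definition~\ref{def:averageRakotch}. By the implication (ii)$\Rightarrow$(i) of Lemma~\ref{prop1}, $M$ is therefore a Rakotch contraction on $(\mathcal{P}_1(X),d_{MK})$, which is complete by Lemma~\ref{lemma:measuresonmetric}. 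Consequently $M$ has the CFP $\mu_{*}$ in $\mathcal{P}_1(X)$, and since $d_{MK}$-convergence implies weak convergence, the iterates $M^n(\mu)$ converge weakly to the unique invariant measure $\mu_{*}$. The only delicate step is the rescaling $\psi_i=\varphi_i/c_i$ that turns each average-contractive factor into an honest Rakotch factor eligible for concavification; once the averaging constraint $\sum_i p_ic_i\leq1$ is invoked to recover $\Phi(t)<t$, everything else is a transcription of Theorem~\ref{th:weakcontrMarkov}.
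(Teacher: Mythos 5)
Your proposal is correct and follows essentially the same route as the paper: both rescale each $\varphi_i$ by $c_i$ so that Lemma \ref{lemma:rakotch} applies, rescale the resulting concave majorants back by $c_i$, and then run the coupling/Jensen computation of Theorem \ref{th:weakcontrMarkov} to show $M$ is a $\Phi$-contraction for $\Phi(t)=\sum_i p_i\,\hat\varphi_i(t)<\bigl(\sum_i p_ic_i\bigr)t\leq t$. The only (shared, harmless) technicality is that $\lambda_i(0)/c_i$ need not lie in $[0,1]$, but since $\varphi_i(0)=0$ one may adjust the value at $0$ before invoking Lemma \ref{lemma:rakotch}.
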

\begin{proof}
The proof is similar to the proof of 
Theorem \ref{th:weakcontrMarkov}.  
Hence we will only outline the additional steps 
required to complete the proof.

First we need to find strictly 
increasing, concave functions 
$\psi_1,...,\psi_N:{\mathbb{R}}_+\to{\mathbb{R}}_+$ 
such that each $w_i$ is a 
$\psi_i$-contraction and for every $t>0$, $\sum_{i=1}^Np_i\psi_i(t)<t$. 
This can be done with the aid of 
Lemma \ref{lemma:rakotch}. 
Indeed, we apply this lemma to 
$\tilde{\varphi}_i(t)=\frac{\varphi_i(t)}{c_i}$, 
obtaining maps $\tilde{\psi}_i(t)$ 
which satisfy $\frac{\varphi_i(t)}{c_i}\leq \tilde{\psi}_i(t)<t$ 
for $t>0$. Finally, we set 
$\psi_i(t):=c_i\tilde{\psi}_i(t)$.

Then we proceed as in the proof of 
Theorem \ref{th:weakcontrMarkov} and show that 
that $M$ is a $\varphi$-contraction for
$\varphi(t):=\sum_{i=1}^{N} p_i\psi_i(t)$. 
As $\varphi$ is strictly increasing, 
concave and $\varphi(t)<t$, we infer 
that $M$ is a Rakotch contraction. 
The result follows.
\end{proof}

Average contractive IFSs are amenable for 
techniques of ergodic theory of Markov processes,
yet allow for non-contractive maps to be employed.
This makes them a good proposal for the marriage
of theory and applications.
We will see that there are some clouds
on the horizon in this picturesque landscape.

It is a common belief that average 
contractivity explains behaviour of several
non-contractive IFSs experimented with
in computer graphics, e.g., 
\cite{LasotaMyjak1996}, \cite{Mendivil}. 
However, it should be stressed that average
contractive IFSs may lack attractors and
that running the random iteration may 
produce artifacts which can be confirmed
theoretically, so they do not occur simply 
because of poor numerics. The example below
offers some insight into this phenomenon.

\begin{example}[Lasota--Myjak semiattractor]\label{ex:linear2andHalf}
Let $\mathcal{F}= \{w_1,w_2\}$ be an IFS 
on the real line ${\mathbb{R}}$, 
$w_1(x) = \frac{x}{2}$, $w_2(x) = 2x$. 
The IFS $\mathcal{F}$ 
\begin{enumerate}
\item[(i)] does not have an attractor;
\item[(ii)] is average contractive;
\item[(iii)] induces for the vector of 
weigths $\vec{p}=(p_1,p_2)$, 
$0<p_2<\frac{1}{3}$, $p_1=1-p_2$,
a Markov operator 
$M=M_{(\mathcal{F},\vec{p})}:{\mathcal{P}}({\mathbb{R}})\to{\mathcal{P}}({\mathbb{R}})$ 
which has CFP and the attracting 
invariant probability measure is 
$\mu_{*}=\delta_{0}$;
\item[(iv)] has a unique compact 
$\mathcal{F}$-invariant set 
$A_{*}=\operatorname{supp} \mu_{*}=\{0\}$, but 
$A_{*}$ cannot be recovered by running 
the chaos game algorithm.
\end{enumerate}

To see (i) one simply notes that
\begin{equation}\label{eq:blowup}
\mathcal{F}^n(\{x\})\ni w_2^n(x) = 2^n \cdot x 
\to \pm\infty  \mbox{ for } x\neq 0. 
\end{equation}

Item (ii) is obvious by Remark \ref{rem:average}
($\operatorname{Lip}(w_1) = \frac{1}{2}$).

Item (iii). 
Denote by $s\cdot B  = \{s\cdot b: b\in B\}$ 
the scaling of $B\subseteq {\mathbb{R}}$ by a factor $s>0$. 
Then the Markov operator $M$ takes the following form
\[
M\mu(B) = p_1 \cdot \mu(2\cdot B)
+ p_2 \cdot \mu(\frac{1}{2}\cdot B) 
\mbox{ for } B\in{\mathcal{B}}({\mathbb{R}}), \mu\in{\mathcal{P}}({\mathbb{R}}).
\]
Since $0\in B$ iff $0\in s\cdot B$,
we have $M(\delta_{0}) = \delta_{0}$.
(Actually it is true for all weigths $p_i$.)
The weak convergence 
$M^n(\mu)\to \delta_{0}$
for any initial probability measure 
$\mu\in{\mathcal{P}}({\mathbb{R}})$ is ensured by Fact 3.2 
in \cite{MyjakSzarek}. 
Indeed, $p_1\cdot \frac{1}{2}+p_2\cdot 2<1$.

Finally we address (iv). That $A_{*}$
is the unique compact invariant set 
is obvious from \eqref{eq:blowup}.

Let $x_0\neq 0$. Define
$Z=\{2^{q}\cdot x_0: q\in \mathbb{Z}\}$;
$\mathbb{Z}$ -- integer numbers.
By regulating the number of repetitions 
of a given symbol, it is not hard 
to construct a disjunctive sequence 
$\sigma\in\{1,2\}^{\infty}$ so that the 
orbit $x_n$ starting at $x_0$ and 
driven by $\sigma$ has a ``diffused''
omega-limit
\begin{equation}\label{eq:latticeZ}
\omega((x_n))=Z\cup \{0\}\neq \{0\}.
\end{equation} 
Therefore the deterministic chaos game algorithm
fails to recover $A_{*}$.

Further, from the elementary 
theory of random walk (on the lattice $Z$; 
e.g. \cite{Knill} chap. 3.10),
it follows that the equation 
\eqref{eq:latticeZ} holds almost surely
for any sequence driving an orbit $x_n$,
which is generated by a Bernoulli scheme 
(see also Theorem \ref{th:DisjunctiveProcessViaMinorant}). 
Therefore random orbits starting nearby 
$A_{*}$ do not recover it.
\end{example}
\begin{remark}
The unique invariant set in the example 
above is called a semiattractor 
in the Lasota--Myjak sense; 
e.g. \cite{MyjakSzarek}.
\end{remark}

\section{Beyond contractivity}

We overview some possible extensions of 
the theory of iterated function systems to systems
comprising maps which are far from contractive. 
Our choice is very subjective 
and should be treated as an element of a larger landscape. 
For instance, 
we omit the Lasota-Myjak theory of semiattractors (e.g., \cite{MyjakSzarek}), 
the Conley theory for IFSs (e.g., \cite{BV-Developments}),
limit sets of Kleinian groups (e.g., \cite{IndrasPearls}),
limit sets of parabolic IFSs (e.g., \cite{MauldinUrbanski}), 
quantum IFSs (e.g., \cite{Jadczyk}) 
and non-conformal IFSs (e.g., \cite{KarolySimon}).

In the present section we will show that the existence of invariant sets 
for IFSs on compact spaces or, more generally, for condensing IFSs, 
can be inferred rather easily from general principles of set theory 
and nonlinear functional analysis. 
Very similar approach makes possible to mimic the theory
of global maximal attractors for semiflows on Banach spaces
in the case of iterated function systems. 
Finally, general principles of functional analysis are also suitable 
to prove the existence of invariant measures for Markov operators 
induced by IFSs.

In the course of our further presentation we will use the tool from 
nonlinear functional analysis, called the measure of noncompactness, 
e.g., \cite{AKPRS}. We will also employ the language
of set-valued analysis, e.g. \cite{Beer}. 
We find it more natural than then traditional language of relations
employed in topological dynamics, e.g., \cite{Akin}.

Before we move further, let us address shortly the limitations 
to the naive approach to extend the Hutchinson-Barnsley theory of IFSs 
by trying to apply the metric fixed point theory 
to the Hutchinson operator. The extension of the Banach fixed theorem 
to nonexpansive maps on Banach spaces employs the geometry of balls. 
It turns out that given a metric space, its natural hyperspaces of subsets,
metrized with the Hausdorff ($\ell^{\infty}$-type), 
Pompeiu ($\ell^{p}$-type) or the Borsuk metric of continuity, 
are very ``pokey" (as expressed by Ch. Bandt). For instance, 
two closed balls $B_1, B_2$ in a hyperspace have wide intersection,
like the balls in a Banach space with the $\ell^\infty$-norm; 
that is $\operatorname{diam} (B_1\cap B_2) = \operatorname{diam}(B_1)=2r$, when $B_1$ and $B_2$ 
have equal radii $r$ and have their centers at the distance $r$.
Formally, the so-called Lifshitz constant is equal $1$ 
for any practically interesting hyperspace; cf. \cite{Les2007Lifsh}.

Another imaginable trick to harness the Hutchinson operator could 
be an isometric embedding of the hyperspace into a Banach space 
and then to apply the metric fixed point theory on Banach spaces. 
It turns out that classic embedding theorems for hyperspaces are of no use. 
The Radstr\"{o}m-H\"{o}rmander embedding (\cite{Beer} Theorem 3.2.9) 
works for the hyperspace of convex sets which is against the philosophy 
of fractal geometry where jugged sets are privalent.
A less restrictive Kuratowski-Wojdys{\l}awski 
embedding (\cite{engelking} Problem 4.5.23(f))
allows for jugged sets. It works for 
the hyperspace of closed bounded subsets 
of any metric space. Unfortunately, 
the obstacle to apply the embedding theorems 
is of purely geometric nature. 
The hyperspace is always embedded into 
a metric space with the geometry 
of $\ell^{\infty}$-space. 

Overall, there is no obvious way to apply to 
the Hutchinson operator, fixed point theorems 
for nonexpansive maps like 
the Browder-Goehde-Kirk theorem, e.g.,
\cite{Granas} chap.I $\S$4.6 (C.1) p.76 and (C.5) p.77. 
In a sense, however, some sort of useful 
embedding of a hyperspace into a linear space 
is possible. Namely, viewing sets as supports 
of measures turns out to be a powerful technique 
of turning the nonlinear problem of finding 
fixed points of the Hutchinson operator into 
a linear problem of finding invariant measures 
of the Markov operator;
see section \ref{sec:KrylovBogolubov}.

\subsection{Multivalued IFSs}\label{sec:multivaluedIFS}

Let $X$ be a Hausdorff topological space. 
A mapping $W:X\to 2^X$ is called 
a \emph{multifunction}, or a multivalued map, 
or a set-valued map. The set $W(x)$ is called 
a \emph{value} of $W$ at $x\in X$. The set 
\[
W(S) := \bigcup_{x\in S} W(x)
\]
is called an \emph{image} of $S\subseteq X$ via $W$. 
Thus, nomen omen, $W(x)=W(\{x\})$.

A multifunction $W:X\to 2^{X}$ with nonempty 
values defines a \emph{multivalued IFS}. 
The \emph{Hutchinson operator} 
${\mathcal{F}}:2^{X}\to 2^{X}$ induced by $W$ 
is given by the formula
\[
{\mathcal{F}}(S) := \overline{\bigcup_{x\in S} W(x)} = \overline{W(S)} 
\mbox{ for } S\subseteq X;
\]
cf. \cite{Kieninger} Definition 3.1.1 p.64.

Let ${\mathcal{F}} = \{w_1,...,w_N\}$ be an IFS comprising functions 
$w_i:X\to X$, $i=1,...,N$. Define the multifunction $W:X\to 2^{X}$ by
\begin{equation}\label{eq:MultiFromIFS}
W(x) = \{w_i(x): i=1,...,N\} \mbox{ for } x\in X.
\end{equation}
Then the Hutchinson operator induced by 
the IFS ${\mathcal{F}}$ coincides with the Hutchinson 
operator induced by $W$. We can replace IFSs 
with multifunctions and speak within 
the realm of multifunctions about all concepts 
already defined for IFSs in terms of 
the Hutchinson operator, like for example an
$\mathcal{F}$-invariant set, e.g., \cite{BV-Developments},
\cite{Kieninger}, \cite{Mendivil}.
However, restricting the study of IFSs merely 
to study of the dynamics of the Hutchinson 
operator has its drawbacks. For instance, 
symbolic dynamics for fractals relies on 
the existence of the coding map, which is 
enabled by the representation of multifunction 
via single-valued contractive maps as given 
in equation \eqref{eq:MultiFromIFS}.

It should be underlined that no continuity 
assumption about a multifunction $W$ is 
made unless stated explicitly. In particular, 
Birkhoff type theorems on invariant sets
in sections \ref{sec:Birkhoff4Compact} and 
\ref{sec:Birkhoff4Condensing} hold for general IFSs,
regardless of whether the maps comprising an IFS 
are continuous or discontinuous. That is possible 
because the definition of the Hutchinson operator 
involves the closure operator and the Hutchinson 
operator has good order-theoretic properties.

\begin{lemma}
The Hutchinson operator $\mathcal{F}:2^{X}\to 2^{X}$ 
induced by a multifunction $W:X\to 2^{X}$ is 
\begin{enumerate}
\item[(a)] order-monotone with respect to the inclusion, i.e.,
\[
\mbox{if } S\subseteq S' \mbox{ then } 
\mathcal{F}(S)\subseteq \mathcal{F}(S'), 
\mbox{ for all } S,S'\subseteq X;
\]
\item[(b)] set-additive
\[
\mathcal{F}(S\cup S')= \mathcal{F}(S) \cup \mathcal{F}(S') 
\mbox{ for } S,S'\subseteq X.
\]
\end{enumerate}
\end{lemma}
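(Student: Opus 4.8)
The plan is to reduce both claims to two elementary properties of the topological closure operator---that it is monotone ($A\subseteq B$ implies $\overline{A}\subseteq\overline{B}$) and that it commutes with the union of two sets ($\overline{A\cup B}=\overline{A}\cup\overline{B}$)---combined with the observation that the point-image map $W(\cdot)$ itself distributes over unions. Since $\mathcal{F}(S)=\overline{W(S)}$ by definition, everything then falls out formally. A useful economy is that (a) need not be argued independently: it will drop out as a one-line consequence of (b).

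First I would establish (b). Directly from the definition $W(S)=\bigcup_{x\in S}W(x)$, one splits the index set:
\[
W(S\cup S')=\bigcup_{x\in S\cup S'}W(x)=\Big(\bigcup_{x\in S}W(x)\Big)\cup\Big(\bigcup_{x\in S'}W(x)\Big)=W(S)\cup W(S'),
\]
the middle equality being nothing more than the fact that a union indexed over $S\cup S'$ decomposes into the union over $S$ and the union over $S'$. Applying the closure operator and using that it commutes with the (finite) union then gives
\[
\mathcal{F}(S\cup S')=\overline{W(S\cup S')}=\overline{W(S)\cup W(S')}=\overline{W(S)}\cup\overline{W(S')}=\mathcal{F}(S)\cup\mathcal{F}(S'),
\]
which is exactly the asserted set-additivity.

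For (a) I would simply specialize (b): if $S\subseteq S'$ then $S'=S\cup S'$, whence
\[
\mathcal{F}(S')=\mathcal{F}(S\cup S')=\mathcal{F}(S)\cup\mathcal{F}(S')\supseteq\mathcal{F}(S),
\]
giving monotonicity. (Equivalently, one could note $W(S)\subseteq W(S')$ directly and invoke monotonicity of the closure, reaching the same conclusion in one step.)

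There is essentially no obstacle here; the only point meriting a word of care is that both closure identities invoked are available in an \emph{arbitrary} topological space, so no metric structure or separation axiom beyond what is already assumed on $X$ is required. I would, however, be careful not to overstate the additivity: it holds for the union of two sets---and hence, by induction, for any finite union---but it genuinely fails for arbitrary infinite unions, where only the inclusion $\overline{\bigcup_i A_i}\supseteq\bigcup_i\overline{A_i}$ survives. Since the lemma asserts only the two-set (finite) version, this caveat does not affect the proof.
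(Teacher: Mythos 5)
Your proof is correct; the paper in fact states this lemma without any proof, treating it as routine, and your argument (distribute $W(\cdot)$ over the union of the index sets, use that closure commutes with finite unions, and deduce monotonicity from additivity via $S'=S\cup S'$) is exactly the standard verification the authors intend. Your caveat that additivity holds only for finite unions is apt and consistent with how the lemma is used later in the paper.
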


Since some crucial theorems about IFSs involve 
continuity in their assumptions 
(e.g., the general statement of the chaos 
game algorithm, or invariance of the attractor), 
we note the following.

\begin{proposition}[Continuity of $\mathcal{F}$; \cite{BLesContinuity}]\label{th:ContinuityF}
Let $X$ be a normal topological space. 
Let ${\mathcal{F}} = \{w_1,...,w_N\}$ be an IFS comprising 
continuous functions $w_i:X\to X$, $i=1,..,N$.
The associated Hutchinson operator 
$\mathcal{F}:{\mathcal{K}}(X)\to{\mathcal{K}}(X)$ acting on the hyperspace 
${\mathcal{K}}(X)$ of nonempty compact subsets of $X$ 
is continuous with respect to the Vietoris topology. 
In particular, if $X$ is a metric space, then 
$\mathcal{F}:{\mathcal{K}}(X)\to{\mathcal{K}}(X)$ is continuous with respect to 
the Hausdorff distance.
\end{proposition}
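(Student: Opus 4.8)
The plan is to verify continuity directly from the definition of the Vietoris topology, by checking that $\mathcal{F}$ pulls back each subbasic open set to an open set. Recall that the Vietoris topology on $\mathcal{K}(X)$ is generated by the subbasis consisting of the sets $U^{+}:=\{K\in\mathcal{K}(X):K\subseteq U\}$ and $U^{-}:=\{K\in\mathcal{K}(X):K\cap U\neq\emptyset\}$, where $U$ ranges over the open subsets of $X$. Since $X$ is normal, and in particular Hausdorff, each continuous image $w_i(K)$ of a compact set is compact and therefore closed, so the finite union $\bigcup_{i=1}^{N}w_i(K)$ is already compact; consequently the closure in the definition of the Hutchinson operator is superfluous and $\mathcal{F}(K)=\bigcup_{i=1}^{N}w_i(K)$ genuinely lands in $\mathcal{K}(X)$, exactly as recorded just after the definition of the Hutchinson operator. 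This identity is the only structural fact I need; everything else is bookkeeping with preimages.

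For the upper subbasic sets I would compute, for open $U\subseteq X$,
\[
\mathcal{F}^{-1}(U^{+})=\Bigl\{K:\bigcup_{i=1}^{N}w_i(K)\subseteq U\Bigr\}
=\Bigl\{K:K\subseteq\bigcap_{i=1}^{N}w_i^{-1}(U)\Bigr\}=V^{+},
\]
where $V:=\bigcap_{i=1}^{N}w_i^{-1}(U)$ is open because each $w_i$ is continuous and the intersection is finite. Thus $\mathcal{F}^{-1}(U^{+})$ is itself a subbasic Vietoris-open set. For the lower subbasic sets I would compute
\[
\mathcal{F}^{-1}(U^{-})=\bigl\{K:\exists\, i,\ w_i(K)\cap U\neq\emptyset\bigr\}
=\bigcup_{i=1}^{N}\bigl\{K:K\cap w_i^{-1}(U)\neq\emptyset\bigr\}
=\bigcup_{i=1}^{N}\bigl(w_i^{-1}(U)\bigr)^{-},
\]
a finite union of subbasic open sets, hence open. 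Since preimages of all subbasic open sets are open, $\mathcal{F}:\mathcal{K}(X)\to\mathcal{K}(X)$ is continuous for the Vietoris topology.

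The metric assertion then follows for free: a metric space is normal, and on $\mathcal{K}(X)$ the Vietoris topology coincides with the topology of the Hausdorff distance (Appendix, Theorem \ref{th:HyperspaceCompleteCompact}), so Vietoris-continuity of $\mathcal{F}$ is exactly continuity with respect to $d_{H}$. I expect no genuinely hard analytic step here; the proof is pure topological bookkeeping. The one point that must be handled with care, and the only place the hypotheses really bite, is the upper condition: the equivalence $\mathcal{F}(K)\subseteq U\iff K\subseteq\bigcap_{i}w_i^{-1}(U)$ keeps $V$ open only because the system is finite, so $N<\infty$ is essential there (for an infinite IFS this is precisely where continuity in the upper Vietoris sense can break down). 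The Hausdorffness supplied by normality is what guarantees $\mathcal{F}(K)=\bigcup_{i}w_i(K)$ with no stray closure, which is what makes both preimage computations exact identities rather than mere inclusions.
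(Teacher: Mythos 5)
Your proof is correct. Note that the paper itself gives no argument for this proposition --- it simply cites \cite{BLesContinuity} --- so there is nothing internal to compare against; your direct verification on the Vietoris subbasis is the standard and complete way to fill that gap. The two preimage identities $\mathcal{F}^{-1}(U^{+})=\bigl(\bigcap_{i}w_i^{-1}(U)\bigr)^{+}$ and $\mathcal{F}^{-1}(U^{-})=\bigcup_{i}\bigl(w_i^{-1}(U)\bigr)^{-}$ are exact, checking continuity on a subbasis suffices, and you correctly isolate the two places the hypotheses matter: finiteness of the IFS for the upper condition, and closedness of compacta for dropping the closure in the Hutchinson operator. One small point to make explicit: the inference ``normal, in particular Hausdorff'' depends on the convention (used in \cite{engelking}, the paper's standing reference, and implicitly in the paper's definition of the Vietoris topology) that normal spaces are $T_1$; under that convention your argument in fact only uses the Hausdorff property, so it proves a slightly more general statement than the one asserted.
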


\begin{remark}
The Hutchinson operator $\mathcal{F}$ is Vietoris continuous 
on the hyperspace of closed sets.
When $X$ is a metric space and $\mathcal{F}$ is considered 
on the hyperspace of nonempty closed bounded 
subsets of $X$, $\mathcal{F}$ typically fails 
to be continuous with respect to the Hausdorff distance; 
cf. \cite{BLesContinuity}. 
\end{remark}

\subsection{The Birkhoff theorem on invariant set for compact IFSs}\label{sec:Birkhoff4Compact}

The material in the present section is based on 
\cite{Kieninger, Ok, LesCEJM}; see also
\cite{Tarafdar} Theorem 2.41 p.92.

Let $X$ be a Hausdorff topological space. 
We say that a multifunction $W:X\to 2^X$ 
is \emph{compact}, provided the closure 
of the image $\overline{W(X)}\subseteq X$ 
is a compact set.

\begin{theorem}[Birkhoff theorem for compact IFSs]\label{th:Birkhoff4Compact}
Let $W:X\to 2^X$ be a compact multifunction 
with nonempty values, which acts on a Hausdorff
topological space $X$. Then $W$ admits:
\begin{enumerate}
\item[(a)] the greatest invariant set, and
\item[(b)] a minimal invariant set,
\end{enumerate}
which are compact.
\end{theorem}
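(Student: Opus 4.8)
The plan is to reduce everything to the order structure, exploiting that the Hutchinson operator $\mathcal{F}:2^X\to 2^X$ is order-monotone (the preceding Lemma) on the complete lattice $(2^X,\subseteq)$, so that Knaster--Tarski/Zorn fixed-point technology applies, together with a single compactness input. First I would record the key containment: writing $K_0:=\mathcal{F}(X)=\overline{W(X)}$, which is compact by hypothesis, every invariant set $A$ satisfies $A=\mathcal{F}(A)\subseteq\mathcal{F}(X)=K_0$ by monotonicity, while $A=\mathcal{F}(A)$ is closed because it is a value of $\mathcal{F}$. Hence every invariant set is a closed subset of the compact set $K_0$, so it is compact; this already settles the ``which are compact'' clause in both (a) and (b) and confines the search to subsets of $K_0$.

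The main tool is a downward transfinite iteration. Given a nonempty compact $S$ with $\mathcal{F}(S)\subseteq S$, I would set $B_0:=S$, $B_{\alpha+1}:=\mathcal{F}(B_\alpha)$, and $B_\lambda:=\bigcap_{\alpha<\lambda}B_\alpha$ at limit stages $\lambda$. By transfinite induction this chain is decreasing: at a limit $\lambda$ one checks $\mathcal{F}(B_\lambda)\subseteq\mathcal{F}(B_\alpha)=B_{\alpha+1}\subseteq B_\alpha$ for every $\alpha<\lambda$, so $\mathcal{F}(B_\lambda)\subseteq B_\lambda$. Every $B_\alpha$ is nonempty (successors because $W$ has nonempty values, limits by the finite intersection property for decreasing nets of nonempty compacta) and compact. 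Since a strictly decreasing transfinite chain of subsets of $X$ cannot persist past an ordinal of cardinality exceeding that of $2^X$, the chain stabilizes at some $B_\theta$ with $\mathcal{F}(B_\theta)=B_\theta$, a nonempty compact invariant set contained in $S$.

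For (a), I would apply the tool to $S=K_0$, noting $\mathcal{F}(K_0)\subseteq\mathcal{F}(X)=K_0$; the resulting fixed point $A_*$ is the \emph{greatest} invariant set, since for any invariant $A$ one has $A\subseteq K_0=B_0$ and a parallel transfinite induction using monotonicity and $\mathcal{F}(A)=A$ gives $A=\mathcal{F}^\alpha(A)\subseteq B_\alpha$ for all $\alpha$, whence $A\subseteq A_*$. For (b), I would run Zorn's lemma on the nonempty poset of nonempty invariant sets ordered by reverse inclusion, where a maximal element is precisely a minimal invariant set. Given a chain $\mathcal{C}$, put $A_\infty:=\bigcap\mathcal{C}$, nonempty and compact by the finite intersection property; monotonicity yields $\mathcal{F}(A_\infty)\subseteq\mathcal{F}(A)=A$ for every $A\in\mathcal{C}$, hence $\mathcal{F}(A_\infty)\subseteq A_\infty$. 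Applying the tool to $S=A_\infty$ produces a nonempty invariant $B\subseteq A_\infty\subseteq A$ for all $A\in\mathcal{C}$, a lower bound of the chain inside the poset, so Zorn delivers a minimal invariant set.

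The step I expect to be the main obstacle is exactly why the naive version of (b) fails: one would like $A_\infty=\bigcap\mathcal{C}$ itself to be invariant, but without continuity of $W$ the reverse inclusion $A_\infty\subseteq\mathcal{F}(A_\infty)$ breaks down, since $\mathcal{F}$ need not commute with decreasing intersections of compacta, so $A_\infty$ is only super-invariant. The device that rescues the argument is to iterate $\mathcal{F}$ downward from $A_\infty$ until a genuine fixed point is reached, and the delicate bookkeeping is maintaining simultaneously the monotone decrease and the nonemptiness through limit ordinals, both of which rest on compactness rather than on any continuity of $W$.
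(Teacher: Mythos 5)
Your proof is correct and follows essentially the paper's second (``transfinite iteration plus Zorn'') argument: the same reduction to the compact set $\mathcal{F}(X)$, the same downward chain $B_{\alpha+1}=\mathcal{F}(B_\alpha)$, $B_\lambda=\bigcap_{\alpha<\lambda}B_\alpha$ stabilizing by a cardinality bound, yielding (a) and the compactness clause. The only divergence is in (b): the paper applies Zorn to the larger poset of nonempty closed sets $S$ with $\mathcal{F}(S)\subseteq S$, where chain intersections are already lower bounds and minimality forces $\mathcal{F}(S_*)=S_*$ in one step, whereas you apply Zorn to genuinely invariant sets and reuse your iteration tool to upgrade the merely sub-invariant chain intersection to an invariant lower bound --- both devices correctly dispose of the obstacle you identify, namely that $\mathcal{F}$ need not commute with decreasing intersections absent continuity of $W$.
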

\begin{proof}
Let $\mathcal{F}:\mathcal{K}(X)\to \mathcal{K}(X)$ be 
the Hutchinson operator induced by $W$. 
Since $\mathcal{F}(X)$ is compact, without loss of generality 
we may assume that $X$ is compact. 
(Any set $C\subset X$ with $\mathcal{F}(C)=C$ is a~subset of $\mathcal{F}(X)$.)

\emph{Proof via order-theoretic fixed point principles}.
It is enough to recall the following: 
(i) any chain in $(\mathcal{K}(X), \subseteq)$ 
admits an infimum,
(ii) the Hutchinson operator $\mathcal{F}$ is order-monotone,
(iii) $\mathcal{F}(X)\subseteq X$,
and then apply the Kleene principle to get (a) 
and the Knaster-Tarski principle to get (b).

\emph{Proof via order-theoretic consequences of 
the axiom of choice}. Let 
\[
\mathcal{S} = \{S\subseteq X: \mathcal{F}(S)\subseteq S =\overline{S} \neq\emptyset\}.
\]
The poset $(\mathcal{S},\subseteq)$ has the greatest element 
$X\in \mathcal{S}$.   
Moreover, any chain $\mathcal{C}$ in $(\mathcal{S},\subseteq)$ 
admits an infimum. 
Namely, $M=\bigcap \mathcal{C}\in \mathcal{S}$ is 
the greatest lower bound for $\mathcal{C}$. 
(Indeed, for all $C\in\mathcal{C}$: $M\subseteq C$
and $\mathcal{F}(M)\subseteq \mathcal{F}(C) \subseteq C$. 
Hence $\mathcal{F}(M)\subseteq M$.)
Therefore, we can define a transfinite sequence
\begin{equation*}
\left\{\begin{array}{ll}
S_{0} = X, & \mbox{} \\
S_{\alpha +1} = \mathcal{F}(S_{\alpha}) & 
\mbox{ for an isolated ordinal number } \alpha, \\
S_{\beta} = \bigcap_{\alpha<\beta} S_{\alpha} &
\mbox{ for a limit ordinal number } \beta, \\
\end{array}\right.
\end{equation*}
where $\alpha$ runs through all ordinals less than $\chi$,
the first ordinal of cardinality $(\operatorname{card} X)^{+}$ 
(the successor of the cardinal number of $X$). Obviously
$S_{\alpha_2}\subseteq S_{\alpha_1}$ 
for $\alpha_1<\alpha_2$.
It is impossible that $S_{\alpha +1}\neq S_{\alpha}$ 
for all $\alpha<\chi$.
Thus $\mathcal{F}(S_{\alpha^{*}}) = S_{\alpha^{*} +1} = S_{\alpha^{*}}$ 
for some $\alpha^{*}<\chi$.
By the construction, $S_{\alpha^{*}}$ is nonempty and compact, 
and it is the greatest $\mathcal{F}$-invariant set. We have established (a).

For (b) it is enough to note that any chain in $\mathcal{S}$ admits
a lower bound (namely its intersection), so the Zorn lemma gives the existence
of a minimal element in $\mathcal{S}$, denote it $S_{*}$. 
The definition of $\mathcal{S}$ says that $\mathcal{F}(S_{*})\subseteq S_{*}$.
Since $\mathcal{F}(\mathcal{F}(S_{*}))\subseteq \mathcal{F}(S_{*})$, due to order monotonicity of $\mathcal{F}$, 
and $S_{*}\neq \emptyset$, we also have that 
$\mathcal{F}(S_{*}) \in \mathcal{S}$.
Finally, minimality of $S_{*}$, enforces that 
$\mathcal{F}(S_{*}) = S_{*}$.
That is, $S_{*}$ is nonempty and compact and 
it is the minimal $\mathcal{F}$-invariant set. 
We have established (b).
\end{proof}

\begin{remark}
One could try to employ the Kantorovitch 
fixed point theorem instead of the 
Knaster--Tarski theorem, but it is 
a more demanding approach to the question 
of existence of invariant sets. In such 
a case one has to ensure order-continuity 
in addition to order-monotonicity of 
the Hutchinson operator, cf. 
\cite{Tarafdar} chap.3.6 
(and the references therein).
\end{remark}

\begin{example}
Let $X=[-1,1]\subseteq {\mathbb{R}}$ and $W:X\to 2^{X}$, 
$W(x)=\{-x\}$ for $x\in X$. 
Every set of the form $\{a,-a\}$, $a\in X$, is a minimal invariant set.
The whole space $X$ is the greatest invariant set.
\end{example}

\subsection{The Birkhoff theorem on invariant set for condensing IFSs}\label{sec:Birkhoff4Condensing}

The material in the present section is based on \cite{Ok}, \cite{AFGL}, \cite{LesCEJM}.

\begin{definition}\label{def:MNC}
An extended-valued nonnegative functional 
$\gamma:2^X\to[0,\infty]$, defined 
on subsets of a Hausdorff topological space $X$, 
is called a \emph{measure of noncompactness} 
(shortly MNC), if it satisfies the following axioms:
\begin{enumerate}
\item[($\gamma$-0)] $\gamma(\emptyset)=0$;
\item[($\gamma$-1)] $\gamma(\overline{S}) = \gamma(S)$;
\item[($\gamma$-2)] \emph{regularity}: 
if $\gamma(S) = 0$, then $\overline{S}$ is a compact set;
\item[($\gamma$-3)] \emph{monotonicity}:
if $S\subseteq S'$, then $\gamma(S)\leq \gamma(S')$;
\item[($\gamma$-4)] \emph{nonsingularity}:
$\gamma(S\cup\{x\}) = \gamma(S)$;
\end{enumerate}
where $S, S'\subseteq X$, $x\in X$.
\end{definition}

Monotonicity ($\gamma$-3) explains why 
an abstract functional $\gamma$ deserves the name of a measure 
(in the spirit of Choquet's capacities), 
while regularity ($\gamma$-2) explains why 
$\gamma$ measures noncompactness. 

\begin{remark}\label{eq:KuratowskiIntersection}
The set of axioms ($\gamma$-0)---($\gamma$-4)
is strong enough to yield 
the Kuratowski intersection property for $\gamma$ 
(\cite{LesCEJM} Theorem 3.4): 
if $S_n$, $n\geq 1$, is a decreasing sequence 
of nonempty closed subsets of a metric space $X$ 
such that $\gamma(S_n)\to 0$ as $n\to\infty$, 
then the intersection 
$S_{\infty}= \bigcap_{n=1}^{\infty} S_n$ 
is a nonempty compact set, and 
$\lim_{n\to\infty} d_{H}(S_n,S_{\infty})=0$.
\end{remark}

\begin{example}\label{ex:trivialMNC}(Trivial MNC).
Let $X$ be a Hausdorff topological space. 
Fix any $e\in(0,\infty]$.
For $S\subseteq X$ 
put $\gamma(S)= 0$ if $\overline{S}$ is a compact set, 
and $\gamma(S)=e$ otherwise. 
Then $\gamma: 2^{X}\to[0,\infty]$ is an MNC.
\end{example}

\begin{example}(Hausdorff MNC).
Let $(X,d)$ be a complete metric space. The functional
\[
\gamma(S) := \inf \left\{r>0: S\subseteq \bigcup_{j=1}^{k} B(x_j,r)
\mbox{ for some } x_j\in X, k\in{\mathbb{N}}\right\}
= \inf_{K\in{\mathcal{K}}(X)} d_{H}(K,S),
\]
is called the \emph{Hausdorff MNC}. 
If $X$ is not complete, then 
$\gamma$ is not an MNC in the sense of Definition \ref{def:MNC}.
Indeed, if $x_n$ is a Cauchy sequence which is not convergent,
then for $S=\{x_n\}_{n=1}^{\infty}$ we have that $\gamma(S)=0$
and $\overline{S}$ is not compact. 
Note that $\gamma(S)<\infty$ if and only if 
$S\subseteq X$ is bounded.
\end{example}

\begin{remark}\label{rem:Darbo}
The most important application of MNCs is 
the common generalization of two fixed point principles
on Banach spaces: the Banach and Schauder theorem, due 
to Darbo and Sadovski\u{\i}; 
cf. \cite{AKPRS} Theorem 1.5.11 or 
\cite{Granas} chap.II $\S$6.9.C p.133. 
This however involves an additional property of 
an MNC $\gamma$,
\begin{enumerate}
\item[($\gamma$-D)] \emph{Darbo formula}:
$\gamma(\overline{\operatorname{conv} S}) = \gamma(S)$ 
for $S\subseteq X$.
\end{enumerate}
The Darbo formula may be viewed as 
a quantitative generalization of the Mazur theorem 
on compactness of the convex hull.
The Hausdorff MNC in a Banach space obeys ($\gamma$-D). 
\end{remark}

\begin{definition}(Condensing multifunction).
Let $X$ be a Hausdorff space and $\gamma$ an MNC in it.
A~multifunction $W:X\to 2^{X}$ is said to be 
\emph{condensing} with respect to $\gamma$, if 
\begin{equation*}
\gamma(W(S)) \left\{\begin{array}{ll}
< \gamma(S), & \mbox{ when } 0<\gamma(S)<\infty \\
= 0, & \mbox{ when } \gamma(S)=0 
\end{array}\right.
\end{equation*}
for $S\subseteq X$.
\end{definition}

\begin{remark}
If $W:X\to 2^{X}$ is condensing with respect to an MNC 
$\gamma$ satisfying ($\gamma$-0)---($\gamma$-4), 
then its values $W(x)$, $x\in X$, are relatively compact. 
Indeed,
\[
\gamma(\{x\})=\gamma(\emptyset\cup \{x\})= 
\gamma(\emptyset)=0,
\]
so $\gamma(W(x))=0$.
\end{remark}

\begin{remark}\label{rem:ultraadditive}
Sometimes it is assumed that the MNC $\gamma$ 
is additive in the sense of max-plus algebra.
Formally
\begin{enumerate}
\item[($\gamma$-3')] \emph{ultra-additivity}:
$\gamma(S\cup S') = \max\{\gamma(S),\gamma(S')\}$
for $S, S'\subseteq X$.
\end{enumerate}
Ultra-additivity ($\gamma$-3') is a stronger property 
than monotonicity ($\gamma$-3).
The Hausdorff MNC is an example of an MNC 
satisfying ($\gamma$-3'). 
If $W_1, W_2:X\to 2^{X}$ are 
two multifunctions condensing with respect to $\gamma$ which 
is ultra-additive, then their set-theoretic union 
$W_1\cup W_2:X \to 2^{X}$,
$(W_1\cup W_2)(x) := W_1(x) \cup W_2(x)$, $x\in X$, 
is also condensing with respect to $\gamma$.
\end{remark}

The following is a Birkhoff theorem on minimal invariant set for IFSs.

\begin{theorem}[Birkhoff theorem for condensing IFSs]\label{th:Birkhoff4Condensing}
Let $X$ be a Hausdorff topological space and 
$\gamma$ an MNC in it.
Let $W:X\to 2^{X}$ be a multifunction with nonempty values, 
condensing with respect to $\gamma$. Assume that there exists 
a nonempty closed set $B\subset X$ such that 
$W(B)\subset B$ and $\gamma(W(B))<\infty$.
Then $W$ admits a nonempty minimal invariant set 
which is compact.
\end{theorem}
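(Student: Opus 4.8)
The plan is to reduce the statement to the already–established compact case, Theorem~\ref{th:Birkhoff4Compact}, by manufacturing a nonempty \emph{compact} set $C\subseteq B$ with $W(C)\subseteq C$. Granting such a $C$, the restricted multifunction $W|_C$ satisfies $\overline{W(C)}\subseteq\overline{C}=C$, so it is a compact multifunction on the Hausdorff space $C$, and Theorem~\ref{th:Birkhoff4Compact}(b) supplies a minimal $W|_C$-invariant compact set $S_{*}$. Finally I would verify that $S_{*}$ is minimal for $W$ itself: any nonempty $\mathcal{F}$-invariant $T\subsetneq S_{*}$ lies in $C$, and since $\overline{W(T)}=\mathcal{F}(T)=T\subseteq C$ the closures of $W(T)$ taken in $X$ and in $C$ coincide, so $T$ is $W|_C$-invariant, contradicting minimality of $S_{*}$.

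The construction of $C$ proceeds through a trapping iteration of the Hutchinson operator $\mathcal{F}(S)=\overline{W(S)}$. Because $B$ is closed with $W(B)\subseteq B$, we get $\mathcal{F}(B)=\overline{W(B)}\subseteq B$, and by order-monotonicity of $\mathcal{F}$ the sequence $B=S_0\supseteq S_1\supseteq\cdots$, $S_{n+1}:=\mathcal{F}(S_n)$, is a decreasing chain of \emph{nonempty} closed sets (nonempty since $W$ has nonempty values). Axiom ($\gamma$-1) gives $\gamma(S_1)=\gamma(\overline{W(B)})=\gamma(W(B))<\infty$, and monotonicity ($\gamma$-3) then yields $\gamma(S_n)\leq\gamma(S_1)<\infty$ for all $n\geq 1$. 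Prolonging this transfinitely, with $S_\beta:=\bigcap_{\alpha<\beta}S_\alpha$ at limit ordinals, produces a decreasing transfinite chain of closed sets which, for cardinality reasons, must stabilize: $\mathcal{F}(C)=C$ for $C:=S_{\alpha^{*}}$. On such a fixed set ($\gamma$-1) gives $\gamma(C)=\gamma(\overline{W(C)})=\gamma(W(C))$, so the condensing dichotomy forces $\gamma(C)\in\{0,\infty\}$; since $\gamma(C)\leq\gamma(S_1)<\infty$, we get $\gamma(C)=0$, and regularity ($\gamma$-2) makes $C$ compact.

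The main obstacle is to guarantee that the stabilized set $C$ is \emph{nonempty}: in a merely Hausdorff space a decreasing family of nonempty closed sets can intersect to the empty set, and this can occur even when $\gamma$ stays bounded away from $0$. I would control this by bookkeeping the $\gamma$-values. On the ``positive phase'' where $\gamma(S_\alpha)>0$, the condensing inequality forces a strict drop at every successor, so $\alpha\mapsto\gamma(S_\alpha)$ is strictly decreasing there and the positive phase has countable length; moreover any stage with $\gamma(S_\alpha)=0$ is already compact, whence the finite intersection property keeps all later stages nonempty. The delicate point is the cofinal behaviour at the end of the positive phase: along a countable cofinal subsequence one invokes the Kuratowski intersection property (Remark~\ref{eq:KuratowskiIntersection}) to see that once $\gamma(S_\alpha)\to 0$ the intersection is nonempty and compact, while a strictly positive limit is excluded because it would otherwise make $\gamma$ of the (empty) intersection equal to that positive value, contradicting $\gamma(\bigcap)=\lim\gamma$. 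Reconciling the transfinite stabilization with this nonemptiness accounting — ensuring the chain reaches a compact ($\gamma=0$) stage while still nonempty — is the crux; the remainder is routine monotonicity together with the MNC axioms of Definition~\ref{def:MNC}.
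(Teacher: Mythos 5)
Your overall strategy (manufacture a nonempty compact $C$ with $W(C)\subseteq C$, then invoke Theorem~\ref{th:Birkhoff4Compact}) matches the paper's, and your transfer of minimality from $W|_C$ back to $W$ is fine. But the construction of $C$ has a genuine gap, and it is exactly at the point you flag as ``the crux.'' In the transfinite iteration $S_{\alpha+1}=\mathcal{F}(S_\alpha)$, the condensing hypothesis only gives the qualitative strict inequality $\gamma(S_{\alpha+1})=\gamma(W(S_\alpha))<\gamma(S_\alpha)$; nothing forces $\gamma(S_\alpha)\to 0$ along the positive phase, so you may arrive at the limit stage $\omega$ with $\gamma(S_n)\downarrow\ell>0$. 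At that stage your two rescue devices both fail: Remark~\ref{eq:KuratowskiIntersection} is stated (and true) only for \emph{metric} spaces and only under the hypothesis $\gamma(S_n)\to 0$, whereas the theorem is set in an arbitrary Hausdorff space; and the identity $\gamma\bigl(\bigcap_n S_n\bigr)=\lim_n\gamma(S_n)$ is not among the axioms ($\gamma$-0)--($\gamma$-4) and is false in general --- monotonicity only gives $\gamma\bigl(\bigcap_n S_n\bigr)\leq\inf_n\gamma(S_n)$, so an \emph{empty} intersection (with $\gamma(\emptyset)=0$) is perfectly consistent with $\ell>0$ and produces no contradiction. A decreasing sequence of nonempty closed sets with $\gamma$ bounded away from $0$ can indeed have empty intersection already in a Banach space (e.g.\ tails of an orthonormal basis), so the nonemptiness of the stabilized set cannot be extracted from the axioms along your route.

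The paper sidesteps the limit-stage problem entirely with an anchoring trick that your argument never uses and that explains why the nonsingularity axiom ($\gamma$-4) is in Definition~\ref{def:MNC} at all. Fix $b_0\in B$ and intersect the family $\mathcal{S}=\{S: \mathcal{F}(S)\subseteq S=\overline{S},\ b_0\in S\subseteq B\}$; the intersection $S_{*}$ contains $b_0$, hence is nonempty by fiat, and is again in $\mathcal{S}$. One then checks that $S_0:=\mathcal{F}(S_{*})\cup\{b_0\}$ also lies in $\mathcal{S}$, so $S_{*}\subseteq S_0$, whence
\[
\gamma(S_{*})\leq\gamma\bigl(\mathcal{F}(S_{*})\cup\{b_0\}\bigr)=\gamma(\mathcal{F}(S_{*}))=\gamma(W(S_{*}))\leq\gamma(S_{*})<\infty ,
\]
using ($\gamma$-4) and ($\gamma$-1); the condensing dichotomy then forces $\gamma(S_{*})=0$, so $S_{*}$ is compact, and the compact Birkhoff theorem finishes the proof. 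If you want to keep an iterative construction, you would need to replace your limit-stage argument by something equivalent to this anchoring (e.g.\ iterate $S\mapsto\mathcal{F}(S)\cup\{b_0\}$ so that every stage contains $b_0$); as written, your proof does not close.
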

\begin{proof}
Without loss of generality we may assume that 
$\gamma(B)<\infty$. For, if not, then we can replace 
$B$ with $\overline{W(B)}$. 

Denote by $\mathcal{F}:2^{X}\to 2^{X}$ the Hutchinson operator
induced by $W$. Pick anyhow $b_0\in B$.
Consider the following family of sets
\[
\mathcal{S} = \{S\subseteq X: 
\mathcal{F}(S)\subseteq S=\overline{S}, 
b_0\in S\subseteq B\}.
\]
Obviously $B\in \mathcal{S}$, so 
$\mathcal{S}\neq \emptyset$.

Let $S_{*}=\bigcap \mathcal{S}$. It turns out that: 
(i) $S_{*}\in\mathcal{S}$, 
(ii) $S_{*}$ is the least element 
of $(\mathcal{S},\subseteq)$, and
(iii) $S_{*}$ is compact.

Let us verify (i). It is evident that 
\[
b_0\in S_{*}=\overline{S_{*}} \subseteq B.
\]
Moreover, for all $S\in \mathcal{S}$, we have
\[
\mathcal{F}(S_{*})\subseteq F(S)\subseteq S,
\]
so $\mathcal{F}(S_{*})\subseteq \bigcap\mathcal{S} =S_{*}$.

Property (ii) is obvious from the definition of the  intersection.

It left to verify (iii). 
Put $S_0 = \mathcal{F}(S_{*})\cup \{b_0\}$. Then
\[
\mathcal{F}(S_0)=\mathcal{F}(\mathcal{F}(S_{*}))\cup\mathcal{F}(\{b_0\}) \subseteq 
\mathcal{F}(S_{*})\cup\mathcal{F}(S_{*}) \subseteq 
\mathcal{F}(S_{*}) \cup \{b_0\} = S_0,
\]
because 
$b_0\in S_{*}$ and $\mathcal{F}(S_{*})\subseteq S_{*}$
(thanks to (i)).
Thus $S_0\in \mathcal{S}$.
Since $S_{*}$ is the least element of $\mathcal{S}$
(due to (ii)), we get that 
$S_{0}=\mathcal{F}(S_{*}) \cup \{b_0\}\supseteq S_{*}$.
Therefore we have
\[
\gamma(S_{*})\leq \gamma(\mathcal{F}(S_{*}) \cup \{b_0\}) =
\gamma(\mathcal{F}(S_{*}))\leq \gamma(S_{*})\leq \gamma(B) <\infty.
\]
From the assumption that $W$ is condensing with respect to $\gamma$ 
it follows that $\gamma(S_{*})=0$, so
$S_{*}=\overline{S_{*}}$ is compact.

Summarizing, $\mathcal{F}(S_{*})\subseteq S_{*}$ and 
$S_{*}$ is a nonempty compact subset of $X$.
We are in position to restrict the action of $\mathcal{F}$ 
from $2^{X}$ to ${\mathcal{K}}(S_{*})$. Application
of the Birkhoff theorem for compact IFSs finishes 
the proof.
\end{proof}

\subsection{Condensing maps vs compact maps, IFSs with condensation and weak contractions}
We explain in this section that IFSs of condensing maps 
embrace IFSs on compact spaces and weakly contractive IFSs, 
as well as a mix of both: IFSs with condensation; see also 
Remark \ref{rem:Darbo}. While the case of compact 
maps is readily embraced by condensing maps, 
the case of weakly contractive maps needs more elaboration.
Then the case of IFSs with condensation follows smoothly.

\begin{proposition}
If $W:X\to 2^{X}$ is a compact multifunction, then
it is condensing both 
with respect to the trivial MNC from Example \ref{ex:trivialMNC} 
and with respect to the Hausdorff MNC.
\end{proposition}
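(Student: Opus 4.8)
The plan is to verify each case separately — first the trivial MNC, then the Hausdorff MNC — checking the two-line condensing inequality in each. The key observation is that a compact multifunction $W$ has $\overline{W(X)}$ compact, hence $\overline{W(S)}\subseteq\overline{W(X)}$ is compact for every $S\subseteq X$ by the monotonicity and closedness of the closure operator. This single fact drives both cases.

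First I would handle the trivial MNC $\gamma$ from Example \ref{ex:trivialMNC}. Since $\overline{W(S)}$ is compact for all $S$, we have $\gamma(W(S))=0$ by definition of the trivial MNC. Now the condensing condition requires two things: when $\gamma(S)=0$ we need $\gamma(W(S))=0$, which holds trivially; and when $0<\gamma(S)<\infty$ we need $\gamma(W(S))<\gamma(S)$, which also holds since the left side is $0$ and the right side is positive. So $W$ is condensing with respect to the trivial MNC.

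Next I would treat the Hausdorff MNC $\gamma$. Again the crux is that $\overline{W(S)}$ is compact, so $\gamma(W(S))=\inf_{K\in{\mathcal{K}}(X)}d_H(K,W(S))=0$ because one may take $K=\overline{W(S)}$ itself (a compact set at Hausdorff distance zero from $W(S)$, using axiom $(\gamma\text{-}1)$). With $\gamma(W(S))=0$ established for every $S$, the two clauses of the condensing definition are verified exactly as above: the equality clause is immediate, and the strict-inequality clause holds because $0<\gamma(S)$ forces $\gamma(W(S))=0<\gamma(S)$.

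I do not anticipate a genuine obstacle here, since the statement is essentially a direct unwinding of definitions; the only point requiring a moment's care is making sure the inequality clause ($0<\gamma(S)<\infty$) is handled correctly, namely that having $\gamma(W(S))=0$ is enough to conclude strict inequality rather than needing any quantitative estimate. The compactness of $\overline{W(X)}$ does all the work, so the proof is short and clean in both cases.
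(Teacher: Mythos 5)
Your proof is correct; the paper in fact states this proposition without any proof, and your argument is precisely the routine verification the authors evidently had in mind: since $\overline{W(S)}\subseteq\overline{W(X)}$ is compact for every $S\subseteq X$, one gets $\gamma(W(S))=0$ under either MNC (for the Hausdorff MNC because a compact set admits finite $r$-nets for every $r>0$, equivalently $d_{H}(\overline{W(S)},W(S))=0$ with $\overline{W(S)}$ compact), and both clauses of the condensing definition follow at once. The only cosmetic remark is that the vanishing of the Hausdorff distance between a set and its closure is an elementary fact about $d_{H}$ rather than an instance of axiom ($\gamma$-1), but this does not affect the argument.
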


It left to discuss weakly contractive multifunctions.

\begin{definition}
A multifunction $W: X\to 2^{X}$ is a 
\emph{multivalued Browder contraction}, if 
\begin{equation*}
d_{H}(W(x_1),W(x_2)) \leq {\varphi}(d(x_1,x_2))
\mbox{ for } x_1,x_2\in X,
\end{equation*}
where $d_{H}$ stands for the Hausdorff distance 
and ${\varphi}$ is a modulus of continuity; 
cf. Definition \ref{def:contractions}.
\end{definition}
A multivalued Banach contraction (i.e.,
${\varphi}(t)=\lambda\cdot t$, $t\in{\mathbb{R}}_{+}$)
is often called the Nadler contraction.

\begin{lemma}\label{lem:WofD}
Let $W:X\to 2^{X}$ be a Browder contraction with
modulus of continuity ${\varphi}$. Then for every $r>0$,
$x\in X$, ${\varepsilon}>0$
\[
W(D(x,r)) \subseteq B(W(x),{\varphi}(r)+{\varepsilon}).
\]
\end{lemma}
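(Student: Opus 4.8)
The plan is to unwind the definitions directly: I would fix an arbitrary point $y\in W(D(x,r))$ and show that it lies within distance $\varphi(r)+\varepsilon$ of the set $W(x)$. By the definition of the image of a set under a multifunction, $y\in W(z)$ for some $z\in D(x,r)$, that is, for some $z$ with $d(x,z)\leq r$ (here $D(x,r)$ denotes the closed ball).

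Next I would invoke the Browder contractivity of $W$ together with the monotonicity of the comparison function. Since $d(x,z)\leq r$ and $\varphi$ is nondecreasing, the defining inequality of a multivalued Browder contraction yields
\[
d_{H}(W(x),W(z)) \leq {\varphi}(d(x,z)) \leq {\varphi}(r).
\]
The final step extracts a pointwise estimate from this bound on the Hausdorff distance. Writing $d_{H}(A,B)=\max\{e(A,B),e(B,A)\}$, where $e(A,B)=\sup_{a\in A}\inf_{b\in B}d(a,b)$ is the excess, and using that $y\in W(z)$, I obtain
\[
d(y,W(x)) \leq e(W(z),W(x)) \leq d_{H}(W(x),W(z)) \leq {\varphi}(r) < {\varphi}(r)+{\varepsilon}.
\]
Hence $y\in B(W(x),{\varphi}(r)+{\varepsilon})$, and since $y$ was arbitrary the desired inclusion follows.

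I expect no serious obstacle here; the lemma is an immediate consequence of the definitions of the Hausdorff distance and of a Browder contraction. The only point requiring minor care is the role of ${\varepsilon}$: the contractivity estimate delivers only the non-strict bound $d(y,W(x))\leq{\varphi}(r)$, whereas $B(W(x),\cdot)$ is an \emph{open} neighborhood demanding a strict inequality, and adding ${\varepsilon}>0$ bridges precisely this gap (in fact $W(D(x,r))$ already lies in the closed neighborhood of radius ${\varphi}(r)$). I would also note that the contractive condition ${\varphi}(t)<t$ plays no part beyond monotonicity; only the nondecreasing character of ${\varphi}$ is actually used.
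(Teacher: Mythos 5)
Your proof is correct, and it is the natural unwinding of the definitions: pick $y\in W(z)$ with $z\in D(x,r)$, use monotonicity of ${\varphi}$ to get $d_H(W(x),W(z))\leq{\varphi}(r)$, pass from the excess to the pointwise distance, and spend the ${\varepsilon}$ to turn the non-strict bound $\inf_{w\in W(x)}d(y,w)\leq{\varphi}(r)$ into membership in the open neighbourhood $B(W(x),{\varphi}(r)+{\varepsilon})$ (this is exactly where ${\varepsilon}$ is needed, since the infimum need not be attained when $W(x)$ is not compact). The paper states this lemma without proof, so there is nothing to compare against; your remark that only the nondecreasing character of ${\varphi}$ is used, not ${\varphi}(t)<t$, is also accurate.
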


\begin{proposition}[\cite{AndresFiser}, 
\cite{AFGL} Proposition 2] 
Let $X$ be a complete metric space.
\begin{enumerate}
\item[(a)] Let $\mathcal{F}=\{w_1,...,w_N\}$ be an IFS 
comprising Browder weak contractions on $X$. 
Then the multifunction $W:X\to 2^{X}$, 
associated with $\mathcal{F}$ according 
to formula \eqref{eq:MultiFromIFS}, 
is a multivalued Browder contraction.
\item[(b)] If a multifunction $W:X\to 2^{X}$ is 
a Browder contraction with compact values,
then it is condensing with respect to the Hausdorff MNC.
\end{enumerate}
\end{proposition}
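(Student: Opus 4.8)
The plan is to treat the two parts separately: (a) is a short computation in the spirit of Lemma \ref{ficont} and Theorem \ref{th1}, while (b) rests on the measure-of-noncompactness estimate of Lemma \ref{lem:WofD}. For (a) I would first set $\varphi := \max\{\varphi_1,\dots,\varphi_N\}$, where each $\varphi_i$ is a Browder comparison function for $w_i$. As in the proof of Theorem \ref{th1}, a finite maximum of nondecreasing, right continuous functions with $\varphi_i(t)<t$ for $t>0$ is again nondecreasing, right continuous and satisfies $\varphi(t)<t$ for $t>0$, so it is a legitimate Browder modulus. It then remains to bound the Hausdorff distance between the finite sets $W(x_1)=\{w_1(x_1),\dots,w_N(x_1)\}$ and $W(x_2)=\{w_1(x_2),\dots,w_N(x_2)\}$. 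For the one-sided excess $e(\cdot,\cdot)$ I would use, for each $i$,
\[
\inf_{j} d(w_i(x_1),w_j(x_2)) \le d(w_i(x_1),w_i(x_2)) \le \varphi_i(d(x_1,x_2)) \le \varphi(d(x_1,x_2)),
\]
so that $e(W(x_1),W(x_2))\le\varphi(d(x_1,x_2))$; by symmetry the reverse excess obeys the same bound, whence $d_H(W(x_1),W(x_2))\le\varphi(d(x_1,x_2))$. This finishes (a).

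For (b), write $\gamma$ for the Hausdorff MNC and let $\varphi$ be the Browder modulus of $W$. The engine is the covering characterization of $\gamma$ combined with Lemma \ref{lem:WofD}. Given $S$ with $\gamma(S)<\infty$, I would fix $\varepsilon>0$ and a radius $\rho>\gamma(S)$ and cover $S$ by finitely many balls $B(x_1,\rho),\dots,B(x_k,\rho)\subseteq D(x_1,\rho),\dots,D(x_k,\rho)$. Lemma \ref{lem:WofD} then gives $W(S)\subseteq\bigcup_{j}W(D(x_j,\rho))\subseteq\bigcup_{j}B(W(x_j),\varphi(\rho)+\varepsilon)$. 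Since each value $W(x_j)$ is compact, hence totally bounded, it is covered by finitely many $\varepsilon$-balls, so each neighbourhood $B(W(x_j),\varphi(\rho)+\varepsilon)$ is covered by finitely many balls of radius $\varphi(\rho)+2\varepsilon$. Consequently $W(S)$ is covered by finitely many such balls, and therefore $\gamma(W(S))\le\varphi(\rho)+2\varepsilon$.

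The case $\gamma(S)=0$ is then easy: $\overline{S}$ is totally bounded, so the above holds for every $\rho>0$, and letting $\rho\to0^{+}$ (using $0\le\varphi(\rho)<\rho$, hence $\varphi(\rho)\to0$) and then $\varepsilon\to0$ yields $\gamma(W(S))=0$, so $\overline{W(S)}$ is compact. The crux, and the only delicate point, is the strict inequality $\gamma(W(S))<\gamma(S)$ when $r:=\gamma(S)\in(0,\infty)$; here the bound $\varphi(\rho)<\rho$ is useless because $\rho>r$. I would instead exploit the right continuity of the Browder modulus: since $\varphi(r)<r$, set $\eta:=\tfrac13(r-\varphi(r))>0$, choose $\rho>r$ with $\varphi(\rho)<\varphi(r)+\eta$, and take $\varepsilon$ with $2\varepsilon<\eta$. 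This gives
\[
\gamma(W(S))\le\varphi(\rho)+2\varepsilon<\varphi(r)+2\eta=\tfrac13\bigl(\varphi(r)+2r\bigr)<r=\gamma(S),
\]
completing the condensing estimate. The main obstacle is precisely this step: manufacturing a strict gap below $\gamma(S)$ out of the Browder inequality, for which right continuity of $\varphi$ at $r$ is the indispensable ingredient.
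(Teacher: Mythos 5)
Your proof is correct and follows essentially the same route as the paper: part (a) is the excess computation underlying Lemma \ref{ficont}/Theorem \ref{th1} restricted to the finite sets $W(x)$, and part (b) combines Lemma \ref{lem:WofD} with the covering characterization of the Hausdorff MNC and total boundedness of the compact values, with the strict inequality obtained from right continuity of $\varphi$ at $\gamma(S)$ --- exactly the paper's passage $\gamma(W(S))\leq\lim_{r\to\gamma(S)^{+}}\varphi(r)=\varphi(\gamma(S))<\gamma(S)$, which your $\eta$--$\varepsilon$ bookkeeping merely unpacks. The only cosmetic differences are that the paper dispatches the case $\gamma(S)=0$ via continuity of $W$ and compactness of values rather than by letting $\rho\to 0^{+}$ in the covering estimate, and cites Theorem \ref{th1} for (a) instead of redoing the computation.
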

\begin{proof}
Part (a) is just a particular case of Theorem \ref{th1}. 
Indeed, the Hutchinson operator $\mathcal{F}$ agrees
with $W$ on singletons: $\mathcal{F}(\{x\}) = W(x)$ for $x\in X$.

Part (b) follows from Lemma \ref{lem:WofD} by
handling carefully neighbourhoods and
the definition of the Hausdorff MNC $\gamma$. 

If $\gamma(S)=0$, then $\overline{S}$ is compact.
Since $W$ is continuous with compact values,
we have that 
$\overline{W(S)} \subseteq 
\overline{W(\overline{S})} = W(\overline{S})$ 
are compact;
cf. \cite{Beer} Theorem 6.2.9, Proposition 6.2.11.
Hence $\gamma(W(S))=0$.

Suppose now that $0< \gamma(S) <r$. By the
definition of $\gamma$ 
there exists a finite set $\{x_j\}_{j}$ with
$\bigcup_{j} B(x_j,r)\subseteq S$.
Fix ${\varepsilon}>0$. Let $K=\bigcup_{j} W(x_j)$. 
Compactness of $K$ ensures that there exists 
a finite set $\{y_l\}_{l}$ with 
$\bigcup_{j} B(y_l,{\varepsilon})\supseteq K$.
Therefore we have 
\begin{eqnarray*}
W(S) \subseteq \bigcup_{j} W(B(x_j,r)) \subseteq
\bigcup_{j} B(W(x_j),{\varphi}(r)+{\varepsilon}) = 
\\
= B(K,{\varphi}(r)+{\varepsilon}) \subseteq 
\bigcup_{l} B(y_l,{\varphi}(r)+2{\varepsilon}).
\end{eqnarray*}
Hence $\gamma(W(S))\leq {\varphi}(r)$ because ${\varepsilon}>0$ 
was arbitrary. Further,
\[
\gamma(W(S))\leq
\lim_{r\to \gamma(S)\,+} {\varphi}(r) = 
{\varphi}(\gamma(S)) < \gamma(S).
\]
\end{proof}

The result below allows to apply the theory 
of condensig IFSs to weakly contractive IFSs. 
Namely, one has to restrict a weakly contractive
system to a sufficiently large closed ball $D(x_0,r)$. 
Then the system is condensing with respect to 
the Hausdorff MNC $\gamma$ 
and $\gamma(D(x_0,r))<\infty$.
The same observation can be used to localize an 
attractor (and may be viewed as a ``distant relative'' 
of the so-called collage theorem).

\begin{proposition}[\cite{AFGL} Proposition 3]\label{prop:WofD2} 
Let $W: X\to 2^X$ be a multivalued Browder contraction with bounded values and 
the modulus of continuity ${\varphi}$ satisfying
\begin{equation}\label{eq:firdeepunderr}
\lim_{r\to\infty} (r-{\varphi}(r)) = \infty
\end{equation}
(in particular, ${\varphi}$ can be taken as in the definition of the Rakotch contraction).
Then for each $x_0\in X$ there exists 
sufficiently large radius $r_0>0$, so that
\[
W(D(x_0,r)) \subseteq D(x_0,r)
\]
for all \(r\geq r_0\).
\end{proposition}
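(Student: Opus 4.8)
The plan is to bound the image $W(D(x_0,r))$ inside a ball centred at $x_0$ whose radius is $\varphi(r)$ plus a constant measuring the size of $W(x_0)$, and then to absorb that constant using the growth condition \eqref{eq:firdeepunderr}. The whole argument reduces a set inclusion to a single scalar inequality. First I would exploit that $W$ has bounded values: the set $W(x_0)$ is bounded, so
\[
M := \sup\{d(y,x_0): y\in W(x_0)\} < \infty,
\]
that is, $W(x_0)\subseteq D(x_0,M)$. This is the only place where the boundedness of the values is needed.

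Next I would apply Lemma \ref{lem:WofD} with the chosen point $x_0$ as the centre. For every $r>0$ and every ${\varepsilon}>0$ it yields
\[
W(D(x_0,r)) \subseteq B(W(x_0),{\varphi}(r)+{\varepsilon}).
\]
Since $W(x_0)\subseteq D(x_0,M)$, any point lying within distance ${\varphi}(r)+{\varepsilon}$ of $W(x_0)$ lies within distance $M+{\varphi}(r)+{\varepsilon}$ of $x_0$ by the triangle inequality, so
\[
W(D(x_0,r)) \subseteq D(x_0,M+{\varphi}(r)+{\varepsilon}).
\]
As this holds for every ${\varepsilon}>0$, intersecting over ${\varepsilon}$ gives the clean inclusion $W(D(x_0,r))\subseteq D(x_0,M+{\varphi}(r))$.

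Finally, the desired inclusion $W(D(x_0,r))\subseteq D(x_0,r)$ follows as soon as $M+{\varphi}(r)\leq r$, i.e. $r-{\varphi}(r)\geq M$. By hypothesis \eqref{eq:firdeepunderr} the quantity $r-{\varphi}(r)$ tends to $\infty$, so there is $r_0>0$ with $r-{\varphi}(r)\geq M$ for all $r\geq r_0$, which completes the argument. The only point requiring a little care — rather than a genuine obstacle — is the ${\varepsilon}$-slack in Lemma \ref{lem:WofD}, whose neighbourhood $B(\cdot,\cdot)$ is open; I dispose of it simply by passing to the intersection over ${\varepsilon}>0$. Everything else is the elementary reduction to the scalar inequality $r-{\varphi}(r)\geq M$.
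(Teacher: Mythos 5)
Your proof is correct and follows essentially the same route as the paper's: bound $W(x_0)$ inside a ball $D(x_0,M)$, apply Lemma \ref{lem:WofD}, use the triangle inequality, and invoke \eqref{eq:firdeepunderr} to choose $r_0$. The only cosmetic difference is that you dispose of the ${\varepsilon}$-slack by intersecting over ${\varepsilon}>0$, whereas the paper keeps a fixed ${\varepsilon}$ and absorbs it into the inequality $r-{\varphi}(r)>\rho+{\varepsilon}$; both work equally well.
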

\begin{proof}
Denote by ${\varphi}$ the modulus of continuity of $W$.
Fix \({\varepsilon}>0\). 
Thanks to the boundedness of values of $W$, 
there exists \(\rho>0\) such that 
\(W(x_0)\subseteq B(x_0,\rho)\).
Thanks to \eqref{eq:firdeepunderr}
we can find \(r_0\) so that
\begin{equation}\label{eqr-etar}
r-{\varphi}(r)> \rho+{\varepsilon} \mbox{ for all } 
r\geq r_0 .
\end{equation}
Combining Lemma \ref{lem:WofD} with 
\eqref{eqr-etar} gives:
\begin{eqnarray*}
W(D(x_0,r)) \subseteq 
B(W(x_0), {\varphi}(r)+{\varepsilon}) \subseteq 
B(B(x_0,\rho), {{\varphi}(r)+{\varepsilon}}) \subseteq 
\\ 
\subseteq B(x_0,\eta(r)+{\varepsilon}+\rho) 
\subseteq D(x_0,r).
\end{eqnarray*}
\end{proof}

Let us recall that an \emph{IFS with condensation} 
$\mathcal{F}_{K}$ is a weakly contractive IFS $\mathcal{F}=\{w_1,...,w_N\}$ 
on a complete metric space $X$ with 
a given nonempty compact subset $K\subseteq X$;
e.g., \cite{Mendivil} chap.2.6.1, \cite{OS1}. 
The Hutchinson operator for $\mathcal{F}_K$ is defined
by $\mathcal{F}_{K}:2^{X}\to 2^{X}$, 
$\mathcal{F}_{K}(S)=\bigcup_{i=1}^{N} 
\overline{w_i(S)} \cup K$.
The IFS $\mathcal{F}_{K}$ is a multivalued IFS 
induced by a multifunction $W_{K}:X\to 2^{X}$, 
$W_{K}(x) = W(x) \cup K$ for $x\in X$,
where $W$ is a multifunction associated with $\mathcal{F}$ 
according to \eqref{eq:MultiFromIFS}. Then we can 
check that $W_K$ is a multivalued weak contraction
(as a union of single-valued weak contractions and 
a constant multifunction) and we land in the realm 
of weakly contractive multivalued IFSs. 
Another approach could exploit the observation 
that weakly contractive and compact maps 
are condensing with respect to the Hausdorff MNC and 
so are their set-theoretic unions
by Remark \ref{rem:ultraadditive}.

\subsection{Global maximal attractor of the IFS}\label{sec:MaximalAttractor}

In the present subsection we are going to 
present for IFSs an adaptation of the 
classic theory of global maximal attractors 
for semigroups (e.g. \cite{Cheban}, 
\cite{Chueshov}, \cite{SellYou}). The adaptation 
of definitions is not faithful, but it is 
done so that to avoid some technicalities 
with the so-called absorbing sets and 
trapping regions. Throughout the subsection 
we assume that $X$ is a complete metric space.

\begin{definition}
Let $W:X\to 2^{X}$ be a multivalued IFS and 
$\mathcal{F}:2^{X}\to 2^{X}$ the Hutchinson operator 
induced by $W$.
We say that $A\subseteq X$ \emph{attracts}
$S\subseteq X$ under $\mathcal{F}$, provided
$\lim_{n\to\infty} e(\mathcal{F}^n(S),A)=0$; putting that
other way, for every ${\varepsilon}>0$ there exists 
$n_0\in {\mathbb{N}}$ s.t. $\mathcal{F}^n(S)\subseteq B(A,r)$ 
for all $n\geq n_0$. A~nonempty closed set
$A^{*}\subseteq X$ is called a 
\emph{global maximal attractor}, when $A^{*}$
is a minimal nonempty closed set 
attracting all subsets $S\subseteq X$.
\end{definition}

\begin{remark}
The set $A\subseteq X$ is attracting all subsets 
of $X$ if and only if $A$ attracts $X$.
\end{remark}

\begin{example}
Let $\mathcal{F}=\{w_1,...,w_N\}$ be an IFS acting on $X$.
Let $W:X\to 2^{X}$ be the multifunction associated 
with $\mathcal{F}$ according to \eqref{eq:MultiFromIFS}.
If $W(X)=X$, then $X$ is the global maximal attractor of $W$. 
This is the case, when $w_i$ are affine maps 
on the euclidean space $X$, which could make 
the theory of global attractors not interesting 
for the fractal geometry. 
However, the following is true. 
If $\mathcal{F} = \{w_1,...,w_N\}$ is a contractive IFS
on a complete metric space $X$ and 
$W(D(x_0,r))\subseteq D(x_0,r)$ for some $x_0\in X$
and $r>0$ (Proposition \ref{prop:WofD2}),
then the attractor of $\mathcal{F}$ is precisely the
global maximal attractor of $\mathcal{F}$ restricted to
$D(x_0,r)$,
$\mathcal{F}|_{D(x_0,r)} = \{w_1|_{D(x_0,r)},...,
w_N|_{D(x_0,r)}\}$.
\end{example}

Basic properties of the global attractor are 
collected below.

\begin{proposition}
Let $A^{*}$ be a global maximal attractor of 
$W:X\to 2^{X}$. Let $\mathcal{F}:2^{X}\to 2^{X}$ 
be the Hutchinson operator induced by $W$. 
Then the following hold:
\begin{enumerate}
\item[(a)] $A^{*}$ is the smallest nonempty closed 
set attracting all $S\subseteq X$;
\item[(b)] $A^{*}$ has the form 
$A^{*}= \bigcap_{n=1}^{\infty} \mathcal{F}^n(X)$;
\item[(c)] $\mathcal{F}(A^{*})\subseteq A^{*}$;
\item[(d)] $\lim_{n\to\infty}
d_{H}(\mathcal{F}^n(X),A^{*})=0$;
\item[(e)] if $\mathcal{F}(S)\supseteq S$ and 
$S\subseteq X$ is a nonempty closed set,
in particular, if $S$ is an $\mathcal{F}$-invariant set,
then $S\supseteq A^{*}$;
\item[(f)] if $A^{*}$ is compact and 
$W:X\to 2^{X}$ is an upper semicontinuous multifunction
(\cite{Beer} Definition 6.2.4 p.193), i.e., 
for each $x_0\in X$ and 
every open $V\supseteq W(x_0)$ there exists
an open $U\ni x_0$ s.t. $W(U)\subseteq V$,
then $A^{*}$ is the greatest $\mathcal{F}$-invariant set.
\end{enumerate}
\end{proposition}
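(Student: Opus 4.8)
The plan is to reduce everything to the single set $A^{\sharp}:=\bigcap_{n=1}^{\infty}\mathcal F^{n}(X)$ and then to identify it with $A^{*}$. First I would record the order-theoretic preliminaries: since $W$ has nonempty values, $\mathcal F(X)=\overline{W(X)}\subseteq X$, so by order-monotonicity of $\mathcal F$ (the earlier Lemma) the iterates $(\mathcal F^{n}(X))_{n}$ form a decreasing sequence of nonempty closed sets; hence $A^{\sharp}$ is closed and $A^{\sharp}\subseteq\mathcal F^{n}(X)$ for every $n$. The basic lemma I would isolate is that \emph{every} nonempty closed set $A$ attracting $X$ contains $A^{\sharp}$: given $\varepsilon>0$ one has $\mathcal F^{n}(X)\subseteq B(A,\varepsilon)$ for large $n$, and since $A^{\sharp}\subseteq\mathcal F^{n}(X)$ this forces $A^{\sharp}\subseteq B(A,\varepsilon)$ for all $\varepsilon$, i.e.\ $A^{\sharp}\subseteq\overline A=A$. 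Applied to $A=A^{*}$ this gives $A^{\sharp}\subseteq A^{*}$.

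For (b) the real work is the reverse inclusion $A^{*}\subseteq A^{\sharp}$, and this is where I expect the main obstacle. I would argue by contradiction using minimality: suppose $a^{*}\in A^{*}\setminus A^{\sharp}$. Then $a^{*}\notin\mathcal F^{n_{0}}(X)$ for some $n_{0}$, and since $\mathcal F^{n_{0}}(X)$ is closed and the iterates decrease, $d(a^{*},\mathcal F^{n}(X))\geq 3\delta>0$ for all $n\geq n_{0}$. Consider the proper closed subset $A':=\overline{A^{*}\setminus B(a^{*},\delta)}$, which omits $a^{*}$ and is nonempty because the nonempty sets $\mathcal F^{n}(X)$ stay $3\delta$-far from $a^{*}$. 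For $\varepsilon<\delta$ and $n$ large, every $x\in\mathcal F^{n}(X)$ lies within $\varepsilon$ of some $a_{x}\in A^{*}$ with $d(a_{x},a^{*})>2\delta$, hence $a_{x}\in A'$ and $d(x,A')<\varepsilon$; thus $A'$ still attracts $X$, contradicting minimality of $A^{*}$. Therefore $A^{*}\subseteq A^{\sharp}$ and $A^{*}=A^{\sharp}$, proving (b). Part (a) is then immediate: by the basic lemma $A^{*}=A^{\sharp}\subseteq A$ for every closed attracting $A$, so $A^{*}$ is the smallest such set.

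Parts (c), (d), (e) I would read off from the representation $A^{*}=\bigcap_{n}\mathcal F^{n}(X)$. For (c), monotonicity gives $\mathcal F(A^{*})=\mathcal F\big(\bigcap_{n}\mathcal F^{n}(X)\big)\subseteq\bigcap_{n}\mathcal F^{n+1}(X)=A^{*}$. For (d), the attracting property is exactly $e(\mathcal F^{n}(X),A^{*})\to0$, while $A^{*}\subseteq\mathcal F^{n}(X)$ gives $e(A^{*},\mathcal F^{n}(X))=0$, so $d_{H}(\mathcal F^{n}(X),A^{*})\to0$. For (e), from $\mathcal F(S)\supseteq S$ and monotonicity one gets the increasing chain $S\subseteq\mathcal F(S)\subseteq\mathcal F^{2}(S)\subseteq\cdots$, and since $\mathcal F^{n}(S)\subseteq\mathcal F^{n}(X)$ for all $n$ this yields $S\subseteq\bigcap_{n}\mathcal F^{n}(X)=A^{*}$; in particular every $\mathcal F$-invariant set lies inside $A^{*}$, which is precisely the ``greatest invariant'' half of (f).

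Finally (f) needs the converse invariance $A^{*}\subseteq\mathcal F(A^{*})$, and here the extra hypotheses enter. Given $y\in A^{*}\subseteq\mathcal F^{n+1}(X)=\overline{W(\mathcal F^{n}(X))}$, upper semicontinuity with compact values lets me pick $x_{n}\in\mathcal F^{n}(X)$ with $y\in W(x_{n})$; since $d(x_{n},A^{*})\leq e(\mathcal F^{n}(X),A^{*})\to0$ and $A^{*}$ is compact, a subsequence $x_{n_{k}}\to x\in A^{*}$, and closedness of the graph of the usc map (equivalently, continuity of the $w_{i}$ together with a pigeonhole on the chosen index) gives $y\in W(x)\subseteq\mathcal F(A^{*})$. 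Thus $\mathcal F(A^{*})=A^{*}$ by (c), and together with the containment from (e) this makes $A^{*}$ the greatest $\mathcal F$-invariant set. The delicate points are the minimality/shaving argument in (b) and the subsequence extraction in (f); these are exactly where the standing hypotheses (existence of a minimal attractor, and compactness of $A^{*}$ with upper semicontinuity of $W$) are indispensable.
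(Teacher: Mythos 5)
Your argument is essentially correct, but it reaches (a) and (b) by a genuinely different route than the paper, and your treatment of (f) has one step that needs repair. The paper proves (a) first: for an arbitrary nonempty closed attracting set $A$ it shows that $A^{*}\cap\overline{B(A,2r)}$ still attracts $X$, so minimality forces $A^{*}\subseteq\overline{B(A,2r)}$ for every $r>0$, hence $A^{*}\subseteq A$; item (b) is then deduced from (a) by observing that each $\mathcal{F}^m(X)$ is itself a closed attracting set (the iterates decrease). You instead prove (b) first, shaving a small ball around a hypothetical point $a^{*}\in A^{*}\setminus\bigcap_{n}\mathcal{F}^n(X)$ and checking that the remainder $\overline{A^{*}\setminus B(a^{*},\delta)}$ is nonempty, closed and still attracting, and then read (a) off from the representation together with your basic lemma. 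Both are ``minimality-shaving'' arguments; yours has the advantage of isolating the identity $A^{*}=\bigcap_{n}\mathcal{F}^n(X)$ at the outset, while the paper's proof of (a) does not pass through (b) at all. Parts (c)--(e) are handled the same way in both proofs.

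For (f) the paper argues globally: upper semicontinuity at each point of the compact $A^{*}$ plus a Lebesgue number give $W(B(A^{*},\delta))\subseteq B(W(A^{*}),{\varepsilon})$, whence the closed set $\mathcal{F}(A^{*})$ attracts $X$ and (a) yields $A^{*}\subseteq\mathcal{F}(A^{*})$. Your sequential argument can be made to work but, as written, overclaims at one point: from $y\in A^{*}\subseteq\overline{W(\mathcal{F}^n(X))}$ you cannot ``pick $x_n$ with $y\in W(x_n)$'' --- you only get $x_n\in\mathcal{F}^n(X)$ and $y_n\in W(x_n)$ with $d(y,y_n)<1/n$ --- and the closed-graph property you then invoke requires closed values of $W$, which is not among the hypotheses (only upper semicontinuity and compactness of $A^{*}$ are assumed; the pigeonhole remark covers only the finite single-valued case). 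The repair is short: after extracting $x_{n_k}\to x\in A^{*}$, upper semicontinuity at $x$ gives $y_{n_k}\in W(x_{n_k})\subseteq B(W(x),{\varepsilon})$ eventually, hence $d(y,W(x))\leq{\varepsilon}$ for every ${\varepsilon}>0$ and so $y\in\overline{W(A^{*})}=\mathcal{F}(A^{*})$, which is all that is needed.
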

\begin{proof}
Item (a). 
Let $A$ be a nonempty closed set attracting $X$ under $\mathcal{F}$. 
Fix $r>0$, $0<{\varepsilon}\leq r$. 
Then there exists $n_0$ s.t. 
$\mathcal{F}^n(X)\subseteq B(A^{*},{\varepsilon})\cap B(A,{\varepsilon})$ 
for all $n\geq n_0$.
Now observe that
\[
B(A^{*},{\varepsilon})\cap B(A,{\varepsilon}) \subseteq 
B(A^{*}\cap \overline{B(A,2r)},\;{\varepsilon}).
\]
Hence, as ${\varepsilon}>0$ was arbitrary, 
$A^{*}\cap \overline{B(A,2r)}$ is attracting $X$.
Since $A^{*}$ is a minimal attracting nonempty closed set, we have that 
$A^{*}\cap \overline{B(A,2r)}=A^{*}$. So
$A^{*}\subseteq \overline{B(A,2r)}$ for all $r>0$. 
Overall $A^{*}\subseteq \bigcap_{r>0} 
\overline{B(A,2r)} = \overline{A}=A$;
$A^{*}$ is contained in every nonempty closed attracting set $A$.

Item (b). 
Since $\lim_{n\to\infty} e(\mathcal{F}^n(X), A^{*})$, 
we have 
$\bigcap_{n=1}^{\infty} \mathcal{F}^n(X)\subseteq 
\bigcap_{{\varepsilon}>0} B(A^{*}, {\varepsilon})= \overline{A^{*}} = 
A^{*}$. On the other hand, each $\mathcal{F}^m(X)$, $m\in{\mathbb{N}}$,
is attracting $X$ (as the sequence $\mathcal{F}^n(X)$ is
decreasing with respect to $\subseteq$). Recalling that 
$A^{*}$ is the smallest nonempty closed set attracting $X$ (due to (a)), we arrive at 
$A^{*} \subseteq \mathcal{F}^m(X)$ for each $m$.

Item (c) follows immediately from (b):
\[
\mathcal{F}(A^{\*})\subseteq 
\mathcal{F}\left(\bigcap_{n=1}^{\infty} \mathcal{F}^n(X)\right) 
\subseteq \mathcal{F}^{n+1}(X)\subseteq \mathcal{F}^n(X) 
\mbox{ for all } n,
\]
so $\mathcal{F}(A^{*}) \subseteq 
\bigcap_{n=1}^{\infty} \mathcal{F}^n(X) = A^{*}$.

Item (d) can be seen by observing that
$e(A^{*},\mathcal{F}^n(X))=0$, because of (c).

Item (e). If $S\subseteq \mathcal{F}(S)$, then 
$S\subseteq \mathcal{F}^n(S)\subseteq \mathcal{F}^n(X)$ for all
$n$, so $S\subseteq A^{*}$ thanks to (b).

Item (f). Thanks to (c) and (e) it is enough 
to check that $\mathcal{F}(A^{*})\supseteq A^{*}$.
Fix ${\varepsilon}>0$. By the upper semicontinuity
of $W$ to every $x\in A^{*}$ there exists 
$\delta_{x}>0$ s.t. 
$W(B(x,\delta_{x}))\subseteq B(W(x),{\varepsilon})$. 
The open cover 
$\bigcup_{x\in A^{*}} B(x,\delta_{x}) 
\supseteq A^{*}$
of a compact set has a Lebesgue number $\delta>0$,
i.e., for each $a\in A^{*}$ there exists 
$x\in A^{*}$ s.t. 
$B(a,\delta)\subseteq B(x,\delta_{x})$;
cf. \cite{Beer} Theorem 2.3.1 p.54, 
\cite{engelking} Theorem 4.3.31.
Hence
\begin{equation}\label{eq:WuscOnA}
W(B(A^{*},\delta)) = \bigcup_{x\in A^{*}}
W(B(a,\delta)) \subseteq 
\bigcup_{x\in A^{*}} W(B(x,\delta_{x}))
\subseteq 
\bigcup_{x\in A^{*}} B(W(x),{\varepsilon}) = 
B(W(A^{*}),{\varepsilon}).
\end{equation}
Since $A^{*}$ attracts $X$, we have 
$\mathcal{F}^n(X)\subseteq B(A^{*},\delta)$ for large $n$.
Taking into account \eqref{eq:WuscOnA} yields
\begin{equation*}
\mathcal{F}^{n+1}(X)\subseteq \overline{W(B(A^{*},\delta))}
\subseteq \overline{B(W(A^{*}),{\varepsilon})} \subseteq
B(W(A^{*}),2{\varepsilon})
\mbox{ for large } n.
\end{equation*}
Overall $\mathcal{F}(A^{*})= W(A^{*})$ attracts $X$. 
Since $A^{*}$ is the smallest closed nonempty 
set attracting $X$, due to (a),
we finally have $\mathcal{F}(A^{*})\supseteq A^{*}$.
\end{proof}

The main criterion for the existence of global attractors in IFSs provides

\begin{theorem}
Let $\gamma$ be an MNC in a complete 
metric space $X$. 
Let $W:X \to 2^{X}$ be a multifunction. 
Assume that $\gamma(W(X))<\infty$ 
and either of the conditions holds:
\begin{enumerate}
\item[(i)] $W$ is a set-contraction with respect to
$\gamma$, i.e., there exists $\lambda<1$ s.t.
\[
\gamma(W(S))\leq \lambda\cdot\gamma(S)
\mbox{ for } S\subseteq X;
\]
\item[(ii)] $\gamma$ is the Hausdorff MNC and 
$W$ is condensing with respect to $\gamma$.
\end{enumerate}
Then $\bigcap_{n=1}^{\infty} \mathcal{F}^n(X)$ 
is a compact global maximal attractor of $W$,
where $\mathcal{F}$ is the Hutchinson operator 
associated with $W$.
\end{theorem}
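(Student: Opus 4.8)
The plan is to prove that $A^{*}:=\bigcap_{n=1}^{\infty}\mathcal{F}^{n}(X)$ is the global maximal attractor, where $\mathcal{F}(S)=\overline{W(S)}$ is the Hutchinson operator induced by $W$. First I would record the structural facts: each $\mathcal{F}^{n}(X)$ is nonempty (the values of $W$ are nonempty) and closed (the closure is built into $\mathcal{F}$), and since $\mathcal{F}(X)=\overline{W(X)}\subseteq X$, monotonicity of $\mathcal{F}$ yields $\mathcal{F}^{n+1}(X)\subseteq\mathcal{F}^{n}(X)$; thus $(\mathcal{F}^{n}(X))$ is a nonincreasing sequence of nonempty closed sets. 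By axiom ($\gamma$-1) we have $\gamma(\mathcal{F}^{n+1}(X))=\gamma(\overline{W(\mathcal{F}^{n}(X))})=\gamma(W(\mathcal{F}^{n}(X)))$ and, in particular, $\gamma(\mathcal{F}(X))=\gamma(W(X))<\infty$.

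The decisive step is to show that $a_{n}:=\gamma(\mathcal{F}^{n}(X))\to 0$. Monotonicity ($\gamma$-3) makes $(a_{n})$ nonincreasing, so $a_{n}\downarrow a\geq 0$, and it remains to see that $a=0$. Under hypothesis (i) this is immediate: the set-contraction estimate gives $a_{n+1}=\gamma(W(\mathcal{F}^{n}(X)))\leq\lambda\,\gamma(\mathcal{F}^{n}(X))=\lambda a_{n}$, whence $a_{n}\leq\lambda^{n-1}\gamma(W(X))\to 0$. Under hypothesis (ii) only the strict but nonuniform inequality $\gamma(W(S))<\gamma(S)$ is available, and this is the heart of the matter. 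Here I would argue by contradiction: if $a>0$, then, exploiting that $\gamma$ is the Hausdorff MNC on a complete space, one extracts from the orbit $(\mathcal{F}^{n}(X))$ a bounded set $L$ with $\gamma(L)=a$ that is carried into itself by $\mathcal{F}$; the condensing property then forces $\gamma(L)=\gamma(\overline{W(L)})<\gamma(L)$, a contradiction. This \emph{asymptotic compactness} of condensing maps (equivalently, $\gamma(\mathcal{F}^{n}(X))\to 0$) is exactly the delicate point separating condensing from set-contractive systems, and it is supplied by the technique of \cite{Ok, AFGL, LesCEJM}.

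Once $\gamma(\mathcal{F}^{n}(X))\to 0$ is in hand, the Kuratowski intersection property (Remark \ref{eq:KuratowskiIntersection}) applies to the nonincreasing sequence of nonempty closed sets $\mathcal{F}^{n}(X)$ and yields at one stroke that $A^{*}=\bigcap_{n}\mathcal{F}^{n}(X)$ is nonempty and compact and that $d_{H}(\mathcal{F}^{n}(X),A^{*})\to 0$. In particular $e(\mathcal{F}^{n}(X),A^{*})\to 0$, so $A^{*}$ attracts $X$; and since for an arbitrary $S\subseteq X$ monotonicity gives $\mathcal{F}^{n}(S)\subseteq\mathcal{F}^{n}(X)$, we obtain $e(\mathcal{F}^{n}(S),A^{*})\leq e(\mathcal{F}^{n}(X),A^{*})\to 0$, so $A^{*}$ attracts every subset of $X$.

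It remains to see that $A^{*}$ is the smallest nonempty closed attracting set, which identifies it as the global maximal attractor. Let $A$ be any nonempty closed set attracting $X$, so $e(\mathcal{F}^{n}(X),A)\to 0$. For $x\in A^{*}$ we have $x\in\mathcal{F}^{n}(X)$ for every $n$, hence $d(x,A)\leq e(\mathcal{F}^{n}(X),A)\to 0$, giving $d(x,A)=0$ and $x\in\overline{A}=A$; thus $A^{*}\subseteq A$. Consequently $A^{*}$ is itself attracting and is contained in every closed attracting set, i.e. it is the minimal one, which is what is required (this also reconciles with items (a) and (b) of the preceding Proposition). The main obstacle is the case (ii) asymptotic compactness; everything else is routine once $\gamma(\mathcal{F}^{n}(X))\to 0$ has been established.
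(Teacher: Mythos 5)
Your proof is correct and follows essentially the same route as the paper's: reduce everything to $\gamma(\mathcal{F}^n(X))\to 0$, obtain compactness of $A^{*}=\bigcap_n\mathcal{F}^n(X)$ and the Hausdorff convergence from the Kuratowski intersection property, settle case (i) by geometric decay, and defer the genuinely delicate condensing case (ii) to the technique of the cited references (the paper invokes Lemma 5 of \cite{AFGL}, see also \cite{AKPRS} Lemma 1.6.11, for precisely this step). Your explicit check that $A^{*}$ is contained in every nonempty closed attracting set is a welcome addition that the paper leaves implicit.
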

\begin{proof}
Denote 
$A^{*}= \bigcap_{n=1}^{\infty} \mathcal{F}^n(X)$.
It is enough to observe that 
\begin{equation}\label{eq:FnXto0}
\lim_{n\to\infty} \gamma(\mathcal{F}^n(X)) = 0.
\end{equation}
Then by the Kuratowski intersection theorem 
(Remark \ref{eq:KuratowskiIntersection}) we have 
\[
e(\mathcal{F}^n(X),A^{*})\leq d_{H}(\mathcal{F}^n(X),A^{*}) \to 0.
\]
Thus $A^{*}$ is a compact global maximal 
attractor of $W$.

It left to ensure \eqref{eq:FnXto0}. 
Under assumption (i) we have:
\[
\gamma(\mathcal{F}^{n+1}(X))\leq \lambda\cdot \gamma(\mathcal{F}^n(X))
\leq {\lambda}^n \cdot \gamma(\mathcal{F}(X))\to 0.
\]
Under assumption (ii) the property \eqref{eq:FnXto0}
is covered by technical Lemma 5 in \cite{AFGL} 
(see also \cite{AKPRS} Lemma 1.6.11).
\end{proof}

\subsection{Invariant measures. The Krylov-Bogolyubov theorem for IFSs}\label{sec:KrylovBogolubov}

Let ${\mathcal{F}}=\{w_1,...,w_{N}\}$ be an IFS 
of continuous maps acting on a Hausdorff 
topological space $X$. Given a vector 
$\vec{p}=(p_1,...,p_N)$ of positive weights $p_i>0$,
$\sum_{i=1}^{N} p_i=1$, we can form 
a probabilistic IFS $(\mathcal{F},\vec{p})$
(with constant probabilities). 
The \emph{Markov operator} induced by $(\mathcal{F},\vec{p})$, 
$M_{(\mathcal{F},\vec{p})}:{\mathcal{M}}_{\pm}(X)\to{\mathcal{M}}_{\pm}(X)$, 
acts on signed Radon measures on $X$,
according to the formula \eqref{eq:operatorM}, i.e.,
\begin{equation*}
M_{(\mathcal{F},\vec{p})} = \sum_{i=1}^{N} 
p_i\cdot (w_i)_{\sharp}, 
\end{equation*}
where $(w_i)_{\sharp}$ is the push-forward 
of measures 
(see Appendix \ref{section:measures}). 

\begin{proposition}\label{prop:ertiesM}
Let $M:{\mathcal{M}}_{\pm}(X)\to{\mathcal{M}}_{\pm}(X)$
be the Markov operator induced by 
a probabilistic IFS on 
a normal topological space $X$. Then
\begin{enumerate}
\item[(a)] $M$ is linear;
\item[(b)] $M$ is continuous 
with respect to the weak topology;
\item[(c)] $M$ sends probability measures 
to probability measures, that is 
$M({\mathcal{P}}(X))\subseteq {\mathcal{P}}(X)$.
\end{enumerate}
\end{proposition}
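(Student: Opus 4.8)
The plan is to verify the three assertions in turn, exploiting the factorization $M=\sum_{i=1}^N p_i\,(w_i)_\sharp$ of the Markov operator through push-forwards, together with the transport formula \eqref{feller} from Remark \ref{rem:federed}. Throughout I would first note that each push-forward $(w_i)_\sharp$ sends signed Radon measures to signed Radon measures by Proposition \ref{th:wtransport}, so that $M$ indeed maps ${\mathcal{M}}_\pm(X)$ into itself and the statement is meaningful.

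For part (a), I would read linearity directly off the defining formula $(w_i)_\sharp\mu(B)=\mu(w_i^{-1}(B))$: for scalars $a,b$ and measures $\mu,\eta$ one has $(w_i)_\sharp(a\mu+b\eta)=a\,(w_i)_\sharp\mu+b\,(w_i)_\sharp\eta$, so each $(w_i)_\sharp$ is linear. Since $M$ is the finite linear combination $\sum_i p_i\,(w_i)_\sharp$, linearity of $M$ is immediate.

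For part (b), the key is that the weak topology on ${\mathcal{M}}_\pm(X)$ is, by definition, the initial (coarsest) topology making every evaluation $\mu\mapsto\int_X g\,d\mu$ continuous, as $g$ ranges over the bounded continuous real functions. By the universal property of an initial topology, $M$ is weakly continuous if and only if, for each such $g$, the composite $\mu\mapsto\int_X g\,dM(\mu)$ is weakly continuous. Formula \eqref{feller} rewrites this composite as $\mu\mapsto\sum_{i=1}^N p_i\int_X g\circ w_i\,d\mu$; since each $w_i$ is continuous and $g$ is bounded continuous, each $g\circ w_i$ is again bounded continuous, so every map $\mu\mapsto\int_X g\circ w_i\,d\mu$ is weakly continuous by the very definition of the weak topology, and a finite real-linear combination of continuous real-valued maps is continuous. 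The point to be careful about — and what I regard as the only genuine subtlety in the whole proposition — is that the weak topology need not be metrizable, so this must be argued through the universal property rather than through sequential convergence.

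For part (c), I would check positivity, total mass, and the Radon property separately. Positivity is immediate: if $\mu\geq 0$ then $(w_i)_\sharp\mu(B)=\mu(w_i^{-1}(B))\geq 0$, and with $p_i>0$ the sum $M(\mu)$ is again nonnegative. For the total mass, using $w_i^{-1}(X)=X$ (as $w_i\colon X\to X$) together with $\sum_{i=1}^N p_i=1$ gives
\[
M(\mu)(X)=\sum_{i=1}^N p_i\,\mu(w_i^{-1}(X))=\sum_{i=1}^N p_i\,\mu(X)=\mu(X)=1.
\]
Finally, that each $(w_i)_\sharp\mu$, hence the finite combination $M(\mu)$, is a Radon measure is exactly Proposition \ref{th:wtransport}; this appendix result is the only nonelementary input and is where the structural hypotheses on $X$ enter. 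Combining positivity, unit total mass, and Radonness yields $M(\mu)\in{\mathcal{P}}(X)$, establishing (c).
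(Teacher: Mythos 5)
Your proposal is correct and follows essentially the same route as the paper, whose proof is a one-line appeal to Proposition \ref{th:wtransport}: you simply fill in the routine verifications of (a) and (c) and unwind the weak continuity in (b) via the transport formula \eqref{feller}, which is exactly the computation underlying Proposition \ref{th:wtransport}(c). Your remark that the argument must go through the defining subbasis of the weak topology rather than through sequences is well taken and consistent with the paper's treatment.
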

\begin{proof}
Items (a) and (c) are readily verified. 
Item (b) follows from Proposition \ref{th:wtransport}.
\end{proof}

\begin{theorem}[Krylov-Bogolyubov theorem for IFSs]\label{th:KrylovBogolubov}
Let $X$ be a compact topological space. 
Let $M$ be a Markov operator associated 
with a probabilistic IFS comprising 
continuous maps on $X$.
Then there exists an invariant probability 
measure $\mu_{*}=M(\mu_{*})$.
\end{theorem}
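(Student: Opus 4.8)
The plan is to use the classical Krylov--Bogolyubov averaging trick adapted to the Markov operator $M$. Since $X$ is compact, the space ${\mathcal{P}}(X)$ of Radon probability measures is weak-$*$ compact (this is the measure-theoretic analogue of the Banach--Alaoglu theorem; on a compact space the unit sphere of positive linear functionals of mass one is compact in the weak topology). This compactness is the crucial ingredient that replaces the contractivity hypotheses used earlier in Theorems~\ref{th:weakcontrMarkov} and \ref{theorem:invariant}: here we extract a fixed point purely from compactness and continuity, with no contraction present.

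First I would fix an arbitrary starting measure $\nu\in{\mathcal{P}}(X)$ (for instance a Dirac mass $\delta_{x}$) and form the sequence of Ces\`aro averages
\begin{equation*}
\mu_{n} := \frac{1}{n}\sum_{k=0}^{n-1} M^{k}(\nu), \quad n\geq 1.
\end{equation*}
By Proposition~\ref{prop:ertiesM}(c), each $M^{k}(\nu)\in{\mathcal{P}}(X)$, and since ${\mathcal{P}}(X)$ is convex, every $\mu_{n}$ is again a probability measure. By weak-$*$ compactness of ${\mathcal{P}}(X)$, the sequence $(\mu_{n})$ has a convergent subnet (or, if $X$ is also metrizable so that ${\mathcal{P}}(X)$ is metrizable, a convergent subsequence) with limit $\mu_{*}\in{\mathcal{P}}(X)$.

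Next I would verify that $\mu_{*}$ is invariant. Applying the linear, weakly continuous operator $M$ (Proposition~\ref{prop:ertiesM}(a),(b)) to $\mu_{n}$ gives the telescoping identity
\begin{equation*}
M(\mu_{n}) - \mu_{n} = \frac{1}{n}\bigl(M^{n}(\nu) - \nu\bigr).
\end{equation*}
The right-hand side is a difference of two probability measures scaled by $1/n$, so for any bounded continuous $g:X\to{\mathbb{R}}$ one has $\bigl|\int g\,dM(\mu_{n}) - \int g\,d\mu_{n}\bigr|\leq \frac{2}{n}\sup|g|\to 0$. Passing to the convergent subnet and using weak continuity of $M$ together with continuity of integration against $g$, both $\int g\,dM(\mu_{n})$ and $\int g\,d\mu_{n}$ converge to $\int g\,dM(\mu_{*})$ and $\int g\,d\mu_{*}$ respectively, forcing $\int g\,dM(\mu_{*}) = \int g\,d\mu_{*}$ for all such $g$. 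Since bounded continuous functions separate Radon measures on a compact (hence normal) space, we conclude $M(\mu_{*})=\mu_{*}$.

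The main obstacle I anticipate is purely a matter of rigor in the compactness and continuity step, not a conceptual one: one must justify that ${\mathcal{P}}(X)$ is weak-$*$ compact on a general compact topological space (not just a metric one) and that the passage to the limit along a subnet interchanges correctly with $M$. For nonmetrizable $X$ one cannot use sequences and must argue with nets; the weak continuity of $M$ recorded in Proposition~\ref{prop:ertiesM}(b) is exactly what is needed to pass $M$ through the limit. Everything else---convexity of ${\mathcal{P}}(X)$, the telescoping estimate, and separation of measures by $C_{b}(X)$ on a normal space---is routine.
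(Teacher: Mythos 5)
Your proposal is correct and coincides with the paper's third proof (the ``Proof via Mann iteration''): the same Ces\`aro averages, extraction of a weakly convergent subnet from the compactness of ${\mathcal{P}}(X)$ (the Alexandrov--Prokhorov theorem), the telescoping identity, and weak continuity of $M$ to pass to the limit. Your direct bound $\bigl|\int g\,dM(\mu_n)-\int g\,d\mu_n\bigr|\leq \frac{2}{n}\sup|g|$ is a slightly more elementary substitute for the paper's appeal to Lemma~\ref{lem:1nwcompact}, but the argument is otherwise identical.
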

\begin{proof}
The Markov operator 
$M:{\mathcal{M}}_{\pm}(X)\to {\mathcal{M}}_{\pm}(X)$ 
is linear and weakly continuous, 
the simplex of probability measures 
${\mathcal{P}}(X)\subseteq {\mathcal{M}}_{\pm}(X)$ is a nonempty 
convex weakly compact set, 
and $M({\mathcal{P}}(X))\subseteq {\mathcal{P}}(X)$.
Hence there are several ways to 
establish the theorem.

\emph{Proof via Schauder--Tikhonov principle}. 
The set up allows for a direct application 
of the Schauder-Tikhonov fixed point principle 
on topological vector spaces; e.g., \cite{Granas}
chap.II $\S$7.1.c Theorem (1.13) p.148.

\emph{Proof via Markov--Kakutani theorem}. 
The set up allows for a direct application 
of the Markov-Kakutani fixed point theorem 
on topological vector spaces; e.g., \cite{Granas}
chap.I $\S$3.3 Theorem (3.2) p.43.

\emph{Proof via Mann iteration}. 
Fix $\mu_0\in{\mathcal{P}}(X)$. Consider the orbit 
$M^n(\mu_0)$ and its averages
\[
\nu_n = \frac{1}{n}\cdot \sum_{k=0}^{n-1} M^k(\mu_0),
n\geq 1.
\]
The sequence $\nu_n\in{\mathcal{P}}(X)$ admits 
a weakly convergent subnet 
(not necessarily a subsequence unless 
${\mathcal{P}}(X)$ is a Fr\'{e}chet sequential space, 
cf. \cite{engelking} Exercise 1.6.D):
\begin{equation}\label{eq:accumul-nu}
\nu_{n_k} \to \mu_{*}\in {\mathcal{P}}(X).
\end{equation}
Observe now that due to Lemma \ref{lem:1nwcompact}
the sequence
\begin{equation*}
M(\nu_n) -\nu_n = \frac{1}{n}\cdot 
\left(\sum_{k=1}^{n} M^k(\mu_0)
- \sum_{k=0}^{n-1} M^k(\mu_0)\right) 
= \frac{1}{n} (M^n(\mu_0)-\mu_0) \to 0
\end{equation*}
is weakly convergent to the null measure 
$0\in{\mathcal{M}}_{\pm}(X)$. In particular,
$M(\nu_{n_k}) -\nu_{n_k}\to 0$.
Combining this with \eqref{eq:accumul-nu}
yields
\[
M(\nu_{n_k})\to M(\mu_{*}), M(\nu_{n_k})\to \mu_{*}.
\]
Thus $M(\mu_{*})=\mu_{*}$ (because weak limit is unique).
\end{proof}

\begin{remark}
The Mann iteration is a general iterative scheme 
for finding fixed points; cf. \cite{Berinde} chap.4.
The method of proof via Mann iteration is 
employed for instance in \cite{MyjakSzarek},
Theorem 4.3.
\end{remark}

\begin{remark}
Another way of proving Theorem \ref{th:KrylovBogolubov}
could be an application of the Hahn--Banach theorem
along the lines of the proof of 
the Markov--Kakutani theorem in \cite{Granas} 
(chap.I $\S$3.3 Theorem (3.1) p.43).
\end{remark}

\begin{remark}
Theorem \ref{th:KrylovBogolubov} can be viewed as 
a measure counterpart of the Birkhoff theorem 
for compact IFSs (Theorem \ref{th:Birkhoff4Compact}).
One could ask for a measure counterpart of 
the Birkhoff theorem for condensing IFSs 
(Theorem \ref{th:Birkhoff4Condensing}).
Theorems of this kind take some sort of 
compactness of orbits $\{M^n(\mu)\}_{n=1}^{\infty}$ 
as an assumption, instead of the condensation 
with respect to an MNC, e.g.,
\cite{MyjakSzarek} Theorems 4.3 and 4.11,
\cite{Wisnicki}, \cite{Foguel}, \cite{Skorohod}. 
For comparison, note that 
if $w:X\to X$ is a single-valued map 
condensing with respect to some MNC $\gamma$, 
then every orbit
$S=\{w^n(x)\}_{n=0}^{\infty}$ with $\gamma(S)<\infty$
is relatively compact. 
\end{remark}

The Krylov-Bogolyubov theorem can be used 
to establish the existence of invariant sets, 
because supports of invariant measures 
are invariant sets. Thus, instead of 
studying sets one can study invariant measures,
which is often a preferred approach,
as the theory of Markov processes is 
very rich and well-developed.

\begin{theorem}[\cite{Mendivil} Exercise 2.64, \cite {KleptsynNalskii} Proposition 5]\label{th:invariantsupp}
Let ${\mathcal{F}} = \{w_1,...,w_{N}\}$ be an IFS 
of continuous maps on a Hausdorff topological 
space $X$. Let $\vec{p}=(p_1,...,p_N)$ be 
a fixed vector of positive weights $p_i> 0$. 
Let $M:{\mathcal{P}}(X)\to{\mathcal{P}}(X)$ be the Markov 
operator corresponding to the 
probabilistic IFS $(\mathcal{F},\vec{p})$.
If $\mu_{*}=M\mu_{*}$ is an invariant 
Radon probability measure, then 
$A_{*}=\operatorname{supp} \mu_{*}$ is a closed 
invariant set for ${\mathcal{F}}$, i.e., 
$A_{*}={\mathcal{F}}(A_{*})$.
\end{theorem}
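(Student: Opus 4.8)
The plan is to compute $A_{*}=\operatorname{supp}\mu_{*}$ directly from the fixed-point equation and then read off $\mathcal{F}(A_{*})=A_{*}$ as a consequence of two support identities. Unfolding the invariance $\mu_{*}=M\mu_{*}$ via \eqref{eq:operatorM} gives
\[
\mu_{*}=\sum_{i=1}^{N}p_i\,(\mu_{*}\circ w_i^{-1}),
\]
so it suffices to understand the support of a finite positive combination of push-forwards and then the support of each push-forward individually.

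First I would establish that for nonnegative Radon measures $\nu_1,\dots,\nu_N$ and weights $p_i>0$ one has $\operatorname{supp}\big(\sum_{i}p_i\nu_i\big)=\overline{\bigcup_{i}\operatorname{supp}\nu_i}$. The point is that, for an open set $U$, nonnegativity together with $p_i>0$ forces $\sum_i p_i\nu_i(U)=0$ to hold precisely when $\nu_i(U)=0$ for every $i$; and, by the Radon property, an open set has $\nu_i$-measure zero exactly when it is disjoint from $\operatorname{supp}\nu_i$ (this is the content of Lemma \ref{lem:Radonsupport} and the description of the support as the complement of the largest open null set). Hence a point fails to lie in $\operatorname{supp}\big(\sum_i p_i\nu_i\big)$ iff it has a neighbourhood missing every $\operatorname{supp}\nu_i$, which is exactly the complement of $\overline{\bigcup_i\operatorname{supp}\nu_i}$.

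Next I would invoke Proposition \ref{prop:support}(c), namely $\operatorname{supp}(\mu\circ w^{-1})=\overline{w(\operatorname{supp}\mu)}$ for continuous $w$ and Radon $\mu$ --- the very identity already used in Step~2 of the proof of Theorem \ref{theorem:invariant}. Applying it with $w=w_i$, $\mu=\mu_{*}$, and combining with the formula from the previous paragraph, I obtain
\[
A_{*}=\operatorname{supp}\mu_{*}
=\overline{\bigcup_{i=1}^{N}\operatorname{supp}(\mu_{*}\circ w_i^{-1})}
=\overline{\bigcup_{i=1}^{N}\overline{w_i(A_{*})}}
=\overline{\bigcup_{i=1}^{N}w_i(A_{*})}
=\mathcal{F}(A_{*}),
\]
where the penultimate equality uses that the closure of a finite union equals the closure of the union of the closures. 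This yields $\mathcal{F}(A_{*})=A_{*}$; closedness of $A_{*}$ is automatic since supports are closed (compactness is not claimed, and indeed $X$ need not be compact here).

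The only genuinely non-formal step is the equivalence, for an open set $U$, between $\nu(U)=0$ and $U\cap\operatorname{supp}\nu=\varnothing$; the implication $U\cap\operatorname{supp}\nu=\varnothing\Rightarrow\nu(U)=0$ rests on the complement of the support being a null set, which is where Radonness (inner regularity) is essential and can fail for general Borel measures. I expect this to be the main obstacle, and I would dispatch it by quoting the appendix results on supports of Radon measures (Lemma \ref{lem:Radonsupport} and Proposition \ref{prop:support}) rather than reproving them; once that equivalence is available, both support identities, and hence the theorem, follow by purely point-set manipulation.
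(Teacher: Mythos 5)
Your proposal is correct and follows essentially the same route as the paper: the paper's proof is exactly the chain of equalities you write, run in the reverse direction (starting from $\mathcal{F}(A_{*})$ and ending at $A_{*}$), resting on the same two ingredients, namely Proposition \ref{prop:support}(c) for supports of push-forwards and the additivity of supports under finite positive combinations (Proposition \ref{prop:support}(a),(b), which you reprove via Lemma \ref{lem:Radonsupport}). Your explicit remark that inner regularity is what makes ``$U\cap\operatorname{supp}\nu=\varnothing\Rightarrow\nu(U)=0$'' work matches the paper's reliance on the Radon hypothesis in those appendix results.
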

\begin{proof}
Using Proposition \ref{prop:support} we have
\begin{gather*}
{\mathcal{F}}(A_{*}) = \overline{\bigcup_{i=1}^{N} 
w_i(\operatorname{supp}\mu_{*})} =
\\
= \bigcup_{i=1}^{N} 
\overline{w_i(\operatorname{supp}\mu_{*})} =
\bigcup_{i=1}^{N} 
\operatorname{supp} ((w_i)_{\sharp}\mu_{*}) =
\operatorname{supp}\left( \sum_{i=1}^{N} p_i 
\cdot (w_i)_{\sharp}\mu_{*} \right) =
\\
= \operatorname{supp} M\mu_{*} = \operatorname{supp}\mu_{*} = A_{*}.
\end{gather*}
The assumption that $p_i>0$ is needed to have 
$\operatorname{supp} ((w_i)_{\sharp}\mu_{*}) =
\operatorname{supp} (p_i \cdot (w_i)_{\sharp}\mu_{*}).$
\end{proof}

\subsection{Attractors of non-contractive IFSs}

In Section \ref{sec:MaximalAttractor} we have 
presented the theory of global maximal attractors.
These are minimal limits with respect to the upper Hausdorff
metric topology (described by the excess functional, 
cf. \cite{Beer} chap.4.2 p.114) of the Hutchinson iterates.
Still one can ask about attractors of non-contractive 
IFSs understood as CFP's of the Hutchinson operator, 
see Definition \ref{def:Attractor}. Recall that
a nonempty compact set $A_{\mathcal{F}}\subseteq X$ is an attractor
of the IFS $\mathcal{F}$ comprising continuous maps acting 
on a topological space $X$, provided
$\mathcal{F}^n(S)\to A_{\mathcal{F}}$ with respect to the Vietoris topology 
for all nonempty compact $S\subseteq X$. 
(As usual, $\mathcal{F}$ stands for both --- the IFS and 
the induced Hutchinson operator.) Let us remark
that there exists a more refined notion of the 
attractor, called strict attractor, which was proposed
by Barnsley and Vince, and could be roughly described
as a local attractor. More precisely, a nonempty
compact subset $A\subseteq X$ is a 
\emph{strict attractor} of $\mathcal{F}$, if there exists
an open neighbourhood $U\supseteq A$ with 
$\mathcal{F}(U)\subseteq U$ s.t. $A=A_{\mathcal{F}\vert U}$ 
is an attractor of the IFS $\mathcal{F}\vert U$, i.e., 
$\mathcal{F}$ restricted to $U$. (If $\mathcal{F}=\{w_1,...w_N\}$,
$w_i:X\to X$, then 
$\mathcal{F}\vert U= \{w_{1}\vert U,...w_{N}\vert U\}$,
$w_{i}\vert U(x)=w_i(x)$ for $x\in U$.)

The theory of (strict) attractors of non-contractive 
IFSs is rather general and not many results can 
be lifted from the contractive case. For instance
the following theorem has no counterpart 
in the realm of non-contractive IFSs.

\begin{theorem}[Hata's connectedness principle;
\cite{BLesR2018fastbasin} Theorem 1]\label{th:HataConnectedness}
If $\mathcal{F}$ is a TIFS and its attractor $A_{\mathcal{F}}$ is 
connected, then $A_{\mathcal{F}}$ is necessarily locally
connected and arcwise connected.
\end{theorem}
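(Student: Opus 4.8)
The plan is to transfer Hata's classical argument to the TIFS setting, working entirely inside the metrizable continuum $A := A_{\mathcal{F}} = \pi(I^\infty)$, whose metrizability is guaranteed by Theorem \ref{HB2}(vi). Fix an admissible metric $d$ on $A$. The heart of the matter is to verify that $A$ is \emph{uniformly locally connected}: for every $\varepsilon > 0$ there is $\delta > 0$ such that any $x, y \in A$ with $d(x,y) < \delta$ lie in a connected subset of $A$ of diameter $< \varepsilon$. For a compact connected metric space this property is equivalent to local connectedness, so once it is established the first assertion follows; arcwise connectedness is then obtained for free by invoking the Hahn--Mazurkiewicz theorem (a compact, connected, locally connected metric space is a Peano continuum, and Peano continua are arcwise connected).

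The two ingredients I would prepare are the following. First, the \emph{pieces} $A_\alpha := w_\alpha(A)$ for finite words $\alpha \in I^{k}$. Since $A = \mathcal{F}(A) = \bigcup_{i} w_i(A)$, one has $A = \bigcup_{\alpha \in I^{k}} A_\alpha$ for every $k$, and each $A_\alpha$ is connected, being the continuous image $w_\alpha(A)$ of the connected set $A$. Second, a uniform shrinking estimate: for every $\varepsilon > 0$ there is $k_0$ with $\operatorname{diam}(A_\alpha) < \varepsilon$ for all $\alpha \in I^{k}$, $k \geq k_0$. This is exactly the compactness-of-$I^\infty$ argument already used in the proofs of Theorem \ref{theorem:invariant} and Theorem \ref{metrizability}: the sets $\{\alpha \in I^\infty : \operatorname{diam}(w_{\alpha_{\vert k}}(A)) < \varepsilon\}$ are open, increase with $k$ (the compacta $w_{\alpha_{\vert k}}(A)$ being decreasing), and cover $I^\infty$ because $\bigcap_k w_{\alpha_{\vert k}}(A) = \{\pi(\alpha)\}$ is a singleton by Theorem \ref{HB2}(iii); hence they stabilize to all of $I^\infty$.

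Next I would extract the combinatorics of connectedness. For each level $k$ form the \emph{intersection graph} $G_k$ on vertex set $I^{k}$, joining $\alpha$ and $\beta$ by an edge when $A_\alpha \cap A_\beta \neq \emptyset$. Connectedness of $A$ forces $G_k$ to be connected: a partition $I^{k} = P \sqcup Q$ with no edges across would split $A$ into the two disjoint, nonempty, compact (hence closed) sets $\bigcup_{\alpha \in P} A_\alpha$ and $\bigcup_{\beta \in Q} A_\beta$, contradicting connectedness. The key point --- and the place where one must resist the temptation to chain through many pieces --- is that uniform local connectedness needs only \emph{two} adjacent pieces. Fix $\varepsilon$, choose $k$ so that every $A_\alpha$ has diameter $< \varepsilon/2$, and set
\begin{equation*}
\delta := \min\bigl\{ d(A_\alpha, A_\beta) : \alpha, \beta \in I^{k},\ A_\alpha \cap A_\beta = \emptyset \bigr\} > 0,
\end{equation*}
a minimum of finitely many strictly positive distances between disjoint compacta (with the convention $\delta = +\infty$ if no disjoint pairs occur). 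If $d(x,y) < \delta$, pick $\alpha, \beta$ with $x \in A_\alpha$, $y \in A_\beta$; then $d(A_\alpha, A_\beta) \leq d(x,y) < \delta$ rules out disjointness, so $A_\alpha \cap A_\beta \neq \emptyset$ and $A_\alpha \cup A_\beta$ is a connected subset of $A$ of diameter $< \varepsilon$ containing both points.

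This delivers uniform local connectedness, hence local connectedness, and the Hahn--Mazurkiewicz theorem upgrades it to arcwise connectedness. The main obstacle is not the topology at the end but the two preparatory estimates: securing the uniform diameter control $\operatorname{diam}(A_\alpha) \to 0$ from the merely fibrewise point-fibred condition of a TIFS (which is where the compactness of the code space is indispensable), and realizing that the metric parameter $\delta$ can be read off the finitely many \emph{non}-adjacent pairs at a single level, so that the passage from the combinatorial connectedness of $G_k$ to a genuine small connected set joining two nearby points requires only an edge, never a long path. If $X$ is metrizable one could alternatively remetrize via Theorem \ref{metrizability} to make $\mathcal{F}$ Rakotch contractive on a trapping compactum and run the classical contractive Hata argument verbatim, but the code-space approach above avoids any metrizability assumption on the ambient $X$.
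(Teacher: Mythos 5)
Your argument is correct and essentially self-contained. Note that the paper itself offers no proof of this theorem --- it is quoted from \cite{BLesR2018fastbasin} --- so there is nothing internal to compare against; but your route is the natural one, and every nonstandard ingredient you need is already established or rehearsed elsewhere in the paper. Specifically: the restriction to the compact metrizable continuum $A=\pi(I^\infty)$ is licensed by Theorem \ref{HB2}(vi) together with the fact that $\mathcal{F}(A)=A$ forces $w_i(A)\subseteq A$; the uniform diameter decay $\operatorname{diam}(w_\alpha(A))\to 0$ over $\alpha\in I^k$ is exactly the open-increasing-cover compactness argument on $I^\infty$ that the paper uses in the proofs of Theorem \ref{HB2}(ii), Theorem \ref{theorem:invariant} (Step 3) and Theorem \ref{metrizability}; and the passage from uniform local connectedness to local connectedness, and then via Hahn--Mazurkiewicz to arcwise connectedness, is classical. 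Your key observation --- that only \emph{two} overlapping pieces are ever needed, with $\delta$ taken as the minimum of the finitely many positive distances between disjoint pieces at a fixed level --- is what makes the estimate clean; the intersection graph $G_k$ and its connectedness are, as you say yourself, decorative and could be deleted without loss. One could quibble that the covering claim for the sets $\{\alpha:\operatorname{diam}(w_{\alpha\vert k}(A))<\varepsilon\}$ silently uses the standard fact that a decreasing sequence of compacta with singleton intersection has diameters tending to zero, but the paper itself uses this fact in the same unremarked way, so no gap results.
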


Despite these kind of phenomena met outside 
the contractive realm, some fundamental results
on attractors of contractive IFSs are also valid 
for attractors of non-contractive IFSs. The most
notable case seems to be the chaos game algorithm
on the representation of the attractor by 
$\omega$-limits of typical orbits. Probabilistic
version of the chaos game holds for any IFS on
a topological space; cf. \cite{BLesR2016chaos}.
Derandomization of the chaos game is more demanding;
see the comments at the end of Section 
\ref{sec:ChaosGame}.

Since the theory of (strict) attractors of 
non-contractive IFSs is still under initial
development, we present only a couple of 
characteristic examples.

First example explains why Hata's principle does not
work outside contractive realm.

\begin{example}[Connected non-arcwise connected attractor;
\cite{BLesR2018fastbasin} Example 2]
Let $X=\{0\}\times [-1,1] \cup 
\{(t,\sin\left(\frac{1}{t}\right)): t\in(0,1]\}
\subset \mathbb{R}^2$ 
be the Warsaw sine curve. It is a connected but not arcwise 
connected set. It turns out that there exists a continuous 
map $w_1:X\to X$ and a point $x_0\in X$ s.t. the
orbit $\{w_1^n(x_0)\}_{n=0}^{\infty}$ is dense in $X$, 
e.g., \cite{Sivak} Example 12. 
Putting $w_2:X\to X$ to be the constant map 
$w_2(x)=x_0$ for all $x\in X$, yields an IFS 
$\{w_1,w_2\}$ for which the Warsaw sine is an attractor;
cf. \cite{BLesR2016chaos} Example 4.
\end{example}

Second example shows that CFP of the Hutchinson
operator corresponding to a collection of maps 
does not enforce the CFP for individual maps.

\begin{example}[Non-contractive IFS with attractor]\label{ex:irrot}
Let $X=[0,1]/\{0,1\}$ be the circle (unit interval 
with glued ends). Let $w_1:X\to X$ be the irrational 
rotation, i.e., $w_1(x)=x+r \mod 1$ for $x\in X$ and 
a fixed irrational $r$. Let $w_2:X\to X$ be 
the identity map, i.e., $w_2(x)=x$ for $x\in X$.
Then the IFS of isometries $\{w_1,w_2\}$ induces 
the Hutchinson operator with the CFP 
being the whole circle.
\end{example}

Third example shows that unlike in the case 
of attractors of TIFS, attractors in general
IFSs need not be metrizable.

\begin{example}[Non-metrizable attractor; 
\cite{BLesR2016chaos} Example 6]
Let $X=\bigcup_{j\in\{0,1\}} ([0,1]\setminus\{j\})\times \{j\}
\subset \mathbb{R}^{2}$ be the Alexandrov two arrows space, 
e.g., \cite{Bogachev} Example 6.1.20 or 
\cite{engelking} Exercise 3.10.C p.212.
The space $X$ is a compact non-metrizable Hausdorff 
topological space.
Define continuous maps $w_i:X\to X$, $i=1,2,3$,
according to the formulas
\begin{eqnarray*}
w_{1}(t,j):= \left(\frac{t}{2},j\right),\\ 
w_{2}(t,j):= \left(\frac{t+1}{2},j\right),\\ 
w_{3}(t,j):= (1-t,1-j), 
\end{eqnarray*}
for $j=0,1$, $t\in[0,1]$, $t\neq j$. 
Then the IFS $\{w_{1},w_{2},w_{3}\}$
admits the double arrow space as an attractor.
\end{example}

It is known that the Hilbert cube $[0,1]^{{\mathbb{N}}}$ is
not homeomorphic to a topologically contractive IFS.
On the other hand, it is an open problem whether
Hilbert cubes $[0,1]^{{\mathbb{N}}}$ and $[0,1]^{{\mathbb{R}}}$ 
(infinite product of either countable or continuum
number of copies of the unit interval $[0,1]$)
are attractors of some IFS. We know only that
a Hilbert cube of weight higher than continuum
is not separable, so it cannot be an attractor
of an IFS (\cite{BLesR2016chaos} Proposition 5).

\section{Instead of The End}

Infinite iterated function systems have 
been not discussed in our survey, 
except Remark \ref{rem:generalization}. 
We would like to make some further remarks 
in connection with multivalued IFSs. We do it 
in such a~manner that, hopefully, 
a more unified view on various matters 
will be achieved by the reader.
The basic observation is that some aspects of
the dynamics of infinite IFSs can be captured
by turning an infinite IFS into a multivalued IFS.

If $\{w_i:i\in I\}$ is an infinite system of maps
(i.e., $I$ is infinite) acting on a topological
space $X$, then it induces a multifunction
$W:X\to 2^X$, $W(x):=\{w_i(x): i\in I\}$,
and we end up in the framework of multivalued 
IFSs (see Section \ref{sec:multivaluedIFS}).
But there is a price to pay for this reduction.
Often it is too bold to yield sufficient insight.
For instance, to study the structure of 
the invariant set and, in particular its dimension 
(e.g.,  \cite{Hille}, \cite{Kaenmaki},
\cite{MauldinUrbanski}, \cite{Mantica})
or the chaos game algorithm (\cite{Les2015chaos}),
one needs to access individual maps comprising 
an infinite IFS, rather than look at the 
rough description of a collective dynamical 
behaviour of the IFS encoded by a single 
multifunction. Although selection theorems allow
to decompose a multifunction into individual 
single-valued maps, these decompositions 
(whenever exist) suffer several drawbacks. 
For instance, Lipschitz constants of 
selectors depend not only on the Lipschitz 
constant of a multifunction under 
decomposition, but also upon the 
dimension of the ambient space, cf.
\cite{AubinCellina} chap.1.9
(see also \cite{Les2015chaos} Proposition 9
where the role of equicontinuity is addressed).
Furthermore, it should be stressed out that the 
so-called inverse problem of fractal 
geometry has trivial solution for infinite 
and multivalued IFSs (cf. \cite{Mendivil} 
chap. 2.6.4.1).

Let us suppose that we accept all the 
aforementioned drawbacks and we 
reduce infinite IFSs to multivalued IFSs. 
Another question arises: when an infinite 
IFS of weakly contractive or condensing 
maps gives rise to a weakly contractive 
or, respectively, condensing multivalued IFS?
For the weakly contractive case we discussed 
this in Remark \ref{rem:generalization}. 
For the condensing case the following 
formula for a measure of noncompactness 
of an infinite union of sets
addresses the raised question 
(at least partially):
\[
\sup_{t\in T} \gamma(S_t) + 
\gamma^{\sharp}(\{S_t\}_{t\in T}) \leq 
\gamma\left(\bigcup_{t\in T} S_t\right) \leq
\sup_{t\in T} \gamma(S_t) + 
2\cdot \gamma^{\sharp}(\{S_t\}_{t\in T}),
\]
where $X$ is a metric space, 
$\gamma$ is the Hausdorff MNC in $X$,
$\{S_t\}_{t\in T}$ is an arbitrary family
of subsets $S_t\subseteq X$, and 
$\gamma^{\sharp}$ is the Hausdorff MNC 
with respect to the Hausdorff distance $d_{H}$
in the power set $2^{X}$. (Be aware 
that $d_{H}$ is only an 
extended-valued semimetric in $2^{X}$.) 
Thus, in a suitable function space, a 
relatively compact collection of maps which 
are condensing with respect to the Hausdorff MNC 
gives rise to a condensing set-theoretic 
union of maps; cf. \cite{Les-InfiniteIFSs}.

Another problem is related to weakly 
contractive multifunctions with non-compact 
values. Such multifunctions may come from 
bounded infinite IFSs (see Remark
\ref{rem:generalization} (3)).
Since condensing multifunctions have relatively
compact values, weakly contractive multifunctions
are not reducable to the condensing case in general.
To override this obstacle we are led to
consider hyper-condensing multifunctions,
that is, multifunctions which are condensing 
with respect to an MNC in the hyperspace; 
cf. \cite{LesHyperCondensing2}.

Usually it is demanded that attractors are 
compact sets. Nevertheless, it is worth 
to consider non-compact attractors.
Already infinite and multivalued IFSs comprising 
weakly contractive maps lead to non-compact 
closed bounded attractors 
(e.g., \cite{AndresFiser}, \cite{Wicks}).
Quite extraordinarily, another proposal,
the Mauldin--Urba\'{n}ski limit set, can 
be non-closed invariant set with a 
complicated descriptive topology (cf.
\cite{MauldinUrbanski} Example 5.2.2 p.140).
Finally, unbounded fractal sets offer 
an interesting and fruitful excursion 
outside the realm of compacta.
These include the Lasota--Myjak semiattractors 
like the Sierpi\'{n}ski chessboard
(\cite{LasotaMyjak1996} Example 6.2), 
the Barnsley--Vince fast basins 
like the Kigami web
(\cite{BLesR2018fastbasin} Section 5 Fig.4)
and fractal tilings
(e.g., \cite{BV-Developments} Section 10).

New extensions of the framework of IFSs 
are constantly proposed, e.g., 
generalized IFSs (GIFS) which comprise 
mappings defined on a finite Cartesian 
product $X^m$ (or even infinite product) 
with values in $X$, 
cf. \cite{MicMih} and \cite{JMS}.
Numerous approaches to self-similarity 
and frameworks related to IFSs are 
explored by researchers to this day: 
cocycles and non-autonomous 
dynamical systems (e.g., \cite{Guzik},
\cite{Cheban}, \cite{Akin}),
abstract self-similarity via topology, 
category theory and algebra (e.g., \cite{Wicks}, 
\cite{Charatonik}, \cite{Leinster}, \cite{Furstenberg}, 
\cite{SanchezGranero}, \cite{Kuhlmann}),
infinite products of matrices and chains 
(e.g., \cite{Hartfiel}, \cite{Mikosch}),
which is just a small sample 
to move the imagination of the reader.
Iterated function systems constitute only
one --- though sparkling creativity --- 
view on the rich landscape of dynamics.

\section{Appendix: Topology and measure}
We collect in this section 
the rudimentary notation and terminology 
from topology and measure theory.

Let $(X,d)$ be a metric space. 
For $x\in X$, $S\subset X$ and $r>0$, 
we define
\begin{itemize}
\item an \emph{open ball}, 
$B(x,r) = \{y\in X: d(y,x)<r\}$;
\item a \emph{closed ball}, 
$D(x,r) = \{y\in X: d(y,x)\leq r\}$; 
\item an \emph{$r$-neighbourhood} of $S$, 
$B(S,r) = \bigcup_{x\in S} B(x,r)$;
\item \emph{diameter} of $S$,
$\operatorname{diam}(S) = \sup_{x,x'\in S} d(x,x')$.
\end{itemize}

The closure of a subset $S\subseteq X$ of 
a metric or topological space $X$ 
is denoted by $\overline{S}$.

A set which is both closed and open 
is shortly called \emph{clopen}.
A set is \emph{perfect} if it is closed 
and has no isolated points.

\subsection{Nets}
In topological spaces the convergence 
of countable sequences is not enough 
to describe topology. For this reason, the notion of nets 
(a.k.a. Moore--Smith sequences) was introduced; e.g. 
\cite{Granas} Appendix: Preliminaries B p.593.

A \emph{directed set} is a pair $(N,\succeq)$, 
where $N$ is a nonempty set and $\succeq$ is 
a binary relation satisfying: 
(i) (\emph{reflexivity}) 
$n\succeq n$ for all $n\in N$;
(ii) (\emph{transitivity}) 
for every $n_1, n_2, n_3\in N$, 
if $n_3\succeq n_2\succeq n_1$, 
then $n_3\succeq n_1$;
(iii) (\emph{direction}) 
for every $n_1,n_2\in N$ there exists $n_3\in N$
s.t. $n_3\succeq n_1$, $n_3\succeq n_2$.

A \emph{net} of elements from $X$ is a function
$x:(N,\succeq)\to X$, denoted $(x_n)_{n\in N}$
or simply $x_n$. The net $(x_n)_{n\in N}$ 
of elements from a topological space $X$ is
\emph{convergent} to $x\in X$, written $x_n\to x$,
if for every open neighbourhood $U\ni x$ there exists
$n_0\in N$ such that $x_n\in U$ for all $n\succeq n_0$.

Let $(N,\succeq)$ and $(K,\gg)$ be directed sets. 
Let $\vec{n}:K\to N$ satisfy: 
(i) (\emph{monotonicity}) for every $k_1,k_2\in K$,
if $k_2\gg k_1$, then 
$\vec{n}(k_2)\succeq \vec{n}(k_1)$;
(ii) (\emph{cofinality}) for every $n\in N$ 
there exists $k\in K$ such that $\vec{n}(k)\succeq n$.
Given a net $(x_n)_{n\in N}$ and 
$\vec{n}:K\to N$,
the net $(x_{n_k})_{k\in K}$, 
where $n_k:=\vec{n}(k)$, is called
a~\emph{subnet} of $x_n$.

We have the following properties:
\begin{itemize}
\item every sequence $(x_n)_{n=1}^{\infty}$
is a net with a directed set of indices $({\mathbb{N}},\geq)$;
\item every subsequence is a subnet, although
there exist on some compact spaces 
countable sequences with plenty 
of convergent subnets yet no convergent subsequence;
\item in a Hausdorff topological space a limit of 
the net is unique;
\item if a net $x_n$ converges to $x$, 
then all its subnets $x_{n_k}$ converge to $x$;
\item a subset $S\subseteq X$ is compact precisely
when every net over $S$, $x_n\in S$, 
admits a convergent subnet $x_{n_k}\to x\in S$;
\item a map $w:X\to Y$ is continuous precisely 
when from the convergence of 
the net $x_n\to x$ it follows that the net $w(x_n)\to w(x)$ converges.
\end{itemize}

\subsection{Baire category}\label{section:baire}

If $X$ is a metric (or topological) space, 
then we say that $M\subset X$ is called:
\begin{itemize}
\item \emph{nowhere dense}, 
provided $\operatorname{Int}(\overline{M})=\emptyset$,
where $\operatorname{Int}$ stands for the interior; 
equivalently, under assumption that 
$X$ is a metric space:
for every $x\in X$ and $R>0$, 
there exist $y\in X$ and $r>0$ such that
\begin{equation}\label{eq:nowheredenseviaball}
B(y,r)\subseteq B(x,R)\setminus M;
\end{equation}
\item of the \emph{first Baire category} 
or \emph{meager}, if $M$ is a countable 
union of nowhere dense sets;
\item \emph{residual}, if $X\setminus M$ 
is of the first Baire category.
\end{itemize}
A famous Baire category theorem states 
that if $X$ is a complete metric space, 
then each meager set has empty interior.
Also, there are sets which have empty 
interior but are not meager (for example 
the set of irrationals on the real line ${\mathbb{R}}$). 
Meager sets in a complete metric space 
are considered to be small, and, in turn, 
residual sets are big. 
In particular, it is common to say that 
a \emph{typical element} from a complete 
space $X$ has some property, say (P), 
if the set 
$\{x\in X:x\mbox{ has property } (P)\}$ 
is residual.

Finally, let us note that, 
at least within metric spaces,
there exist approaches to topologic smallness 
other than the Baire category.
Namely, in the ``ball characterization" of 
nowhere density \eqref{eq:nowheredenseviaball} 
one can require that the smaller ball $B(y,r)$ 
is not too small with respect to a bigger 
one $B(x,R)$. Thus we obtain sets with sufficiently
large holes, called \emph{porous}. 
This idea can be formalized in many ways, 
cf. \cite{Z2}. By substituting meager sets
with countable unions of porous sets, 
called \emph{$\sigma$-porous},
we get a~strengthened analogue 
of the Baire category (which could be 
termed the Denjoy category).

\subsection{Hyperspaces}
Let $X$ be a metric or topological space. 
We distinguish the following families of sets:
\begin{itemize}
\item the family of all subsets of $X$, 
denoted $2^{X}$;
\item the family of all nonempty compact 
subsets of $X$, denoted ${\mathcal{K}}(X)$;
\item the family of all nonempty closed bounded
subsets of $X$, denoted $\mathcal{CB}(X)$ 
(provided $X$ is a~metric space).
\end{itemize}
Once a family of sets is topologized, we call 
it a \emph{hyperspace}.

If $(X,d)$ is a metric space, then 
for $S,S'\subseteq X$ we define
\begin{itemize} 
\item $e(S,S') = \sup_{x\in S} \inf_{x'\in S'}\; d(x,x')$;
\item the \emph{Hausdorff distance}:
$d_{H}(S,S') = \max\{e(S,S'), e(S',S)\}$.
\end{itemize}
(Conveniently $\inf\emptyset =\infty$.)
The geometric sense of $e$ and $d_{H}$ exhibits
the following description.

\begin{proposition}
Let $X$ be a metric space. 
For $S,S'\subseteq X$ it holds: 
\begin{enumerate}
\item[(a)] $e(S,S')= \inf \{r>0: S\subseteq B(S',r)\}$;
\item[(a)] $d_{H}(S,S')= \inf \{r>0: S\subseteq B(S',r), 
S'\subseteq B(S,r)\}$.
\end{enumerate}
\end{proposition}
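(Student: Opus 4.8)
The plan is to prove (a) by two matching inequalities and then deduce (b) from (a) by a short order-theoretic argument, exploiting that $d_H$ is the maximum of two excesses. Throughout, the key bookkeeping identity is
\[
x\in B(S',r)\iff \inf_{x'\in S'} d(x,x')<r,
\]
valid for every $x\in X$ and $r>0$; this is immediate from $B(S',r)=\bigcup_{x'\in S'}B(x',r)$ together with the fact that the open ball $B(x',r)$ consists of points at distance strictly less than $r$. Consequently $S\subseteq B(S',r)$ holds exactly when $\inf_{x'\in S'}d(x,x')<r$ for all $x\in S$.

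For (a), write $E:=e(S,S')=\sup_{x\in S}\inf_{x'\in S'}d(x,x')$ and $A_1:=\{r>0:S\subseteq B(S',r)\}$. First I would check $E\le\inf A_1$: whenever $S\subseteq B(S',r)$ the bookkeeping identity gives $\inf_{x'}d(x,x')<r$ for each $x\in S$, so $E\le r$; taking the infimum over admissible $r$ yields $E\le\inf A_1$. For the reverse inequality, fix any $r>E$; then for every $x\in S$ we have $\inf_{x'}d(x,x')\le E<r$, so $x\in B(S',r)$ and thus $S\subseteq B(S',r)$, i.e.\ $r\in A_1$. Since every $r>E$ lies in $A_1$, we get $\inf A_1\le E$, and (a) follows. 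The empty-set and infinite-distance cases are absorbed by the convention $\inf\emptyset=\infty$.

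For the second assertion, set $A_2:=\{r>0:S'\subseteq B(S,r)\}$, so that the set defining the right-hand side is precisely $A_1\cap A_2$. Both $A_1$ and $A_2$ are upward closed in $(0,\infty)$, since enlarging $r$ enlarges neighbourhoods ($B(S',r)\subseteq B(S',r')$ for $r\le r'$). The one nontrivial point is the purely order-theoretic identity $\inf(A_1\cap A_2)=\max\{\inf A_1,\inf A_2\}$ for upward-closed subsets of $(0,\infty)$: the inclusions $A_1\cap A_2\subseteq A_i$ give ``$\ge$'', while any $r>\max\{\inf A_1,\inf A_2\}$ lies, by upward closure, in both $A_i$ and hence in $A_1\cap A_2$, giving ``$\le$''. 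Invoking (a) to identify $\inf A_1=e(S,S')$ and $\inf A_2=e(S',S)$ then gives $\inf(A_1\cap A_2)=\max\{e(S,S'),e(S',S)\}=d_H(S,S')$, as required.

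The only genuinely delicate spot is the strict-versus-nonstrict inequality built into the open neighbourhood $B(S',r)$: it is exactly the strict gap $E<r$ that forces each $x$ into the open set $B(S',r)$ in the ``$\inf A_1\le E$'' step, and keeping this strictness consistent with the non-strict estimate $E\le r$ in the opposite direction is what makes both halves of (a) close up cleanly. Everything else is routine.
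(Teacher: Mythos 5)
Your proof is correct. The paper states this proposition without proof (it is offered as a folklore description of the geometric meaning of $e$ and $d_{H}$), so there is nothing to compare against; your argument --- the two matching inequalities in (a), with the strict/non-strict bookkeeping around the open neighbourhood $B(S',r)$ handled properly, followed by the observation that the infimum of an intersection of upward-closed subsets of $(0,\infty)$ equals the maximum of the infima --- is the standard one, and the degenerate cases are absorbed by the stated conventions exactly as you say.
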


If $X$ is a Hausdorff topological space, then 
in ${\mathcal{K}}(X)$ (or more generally in the family 
of all nonempty closed subsets of $X$) 
we introduce the \emph{Vietoris topology}
by declaring that the following sets form 
its open subbase: 
\begin{equation*}
V^{+} = \{K\in{\mathcal{K}}(X): K\subseteq V\},
V^{-} = \{K\in{\mathcal{K}}(X): K\cap V\neq\emptyset\}
\end{equation*}
where $V$ runs over open subsets of $X$.
The Vietoris topology in ${\mathcal{K}}(X)$ satisfies 
the Hausdorff separation; 
cf. \cite{engelking} Problem 3.12.27(b).

\begin{theorem}[\cite{Beer} Definition 3.2.1, 
Theorem 3.2.4, Exercise 3.2.9, 
or \cite{engelking} Problem 4.5.23]\label{th:HyperspaceCompleteCompact}
Let $X$ be a metric space.
\begin{enumerate}
\item[(a)] The pair $(\mathcal{CB}(X), d_{H})$
is a metric space, while $({\mathcal{K}}(X), d_{H})$
is its closed subspace.
\item[(b)] If $X$ is a complete space, 
then $\mathcal{CB}(X)$ and ${\mathcal{K}}(X)$ are complete
with respect to $d_{H}$.
\item[(c)] If $X$ is a compact space, then 
${\mathcal{K}}(X)=\mathcal{CB}(X)$ is compact with respect to $d_{H}$. 
\item[(d)] 
The Vietoris topology in ${\mathcal{K}}(X)$ coincides with 
the Hausdorff topology (that is, induced by $d_{H}$).
In particular, topologically equivalent metrics in $X$
yield topologically equivalent Hausdorff metrics 
in ${\mathcal{K}}(X)$.
\end{enumerate}
\end{theorem}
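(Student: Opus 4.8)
The plan is to handle the four parts in order, using throughout the neighbourhood descriptions $e(S,S')=\inf\{r>0: S\subseteq B(S',r)\}$ and $d_H(S,S')=\inf\{r>0: S\subseteq B(S',r),\ S'\subseteq B(S,r)\}$ recorded in the Proposition just above. For (a), I would first verify the metric axioms on $\mathcal{CB}(X)$. Finiteness of $d_H$ is immediate from boundedness, and symmetry is built into the definition. For the identity of indiscernibles, $d_H(S,S')=0$ forces $e(S,S')=e(S',S)=0$, hence $S\subseteq\overline{S'}=S'$ and $S'\subseteq\overline{S}=S$ by closedness, so $S=S'$. The triangle inequality is cleanest through the excess: if $S\subseteq B(S',r)$ and $S'\subseteq B(S'',s)$, then $S\subseteq B(S'',r+s)$ since $B(B(S'',s),r)\subseteq B(S'',r+s)$; taking infima gives $e(S,S'')\le e(S,S')+e(S',S'')$, and symmetrizing yields the inequality for $d_H$. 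To see that ${\mathcal{K}}(X)$ sits as a closed subspace, I would show that a $d_H$-limit $S$ of compact sets $K_n$ is totally bounded: given $\varepsilon>0$, pick $n$ with $d_H(K_n,S)<\varepsilon/2$ and a finite $\varepsilon/2$-net of $K_n$, whose points form an $\varepsilon$-net of $S$; closedness of $S$ is automatic, so in the complete setting the limit is compact.

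For (b), the core is the candidate limit $S:=\bigcap_{m\ge 1}\overline{\bigcup_{n\ge m}S_n}$, equivalently the set of all limits $\lim_k x_{n_k}$ of convergent sequences with $x_{n_k}\in S_{n_k}$. I would then check, in order, that $S$ is nonempty (build a fast Cauchy subsequence $x_{n_k}\in S_{n_k}$ by choosing points realizing the excesses and invoke completeness of $X$), that $S$ is closed by construction, that $d_H(S_n,S)\to 0$ (estimate both excesses from the Cauchy condition), and finally that $S$ is compact whenever the $S_n$ lie in ${\mathcal{K}}(X)$, by the total-boundedness argument from (a) together with completeness.

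For (c), in a compact $X$ every closed set is compact and every set is bounded, so ${\mathcal{K}}(X)=\mathcal{CB}(X)$; compactness of $({\mathcal{K}}(X),d_H)$ then follows from completeness (part (b)) plus total boundedness, the latter obtained by fixing $\varepsilon>0$, taking a finite $\varepsilon$-net $F\subseteq X$, and noting that the finitely many nonempty subsets of $F$ form an $\varepsilon$-net of ${\mathcal{K}}(X)$: for $K\in{\mathcal{K}}(X)$ the set $F_K=\{a\in F: d(a,K)<\varepsilon\}$ satisfies $d_H(K,F_K)\le\varepsilon$. For (d) I would match the two topologies on (sub)basic sets. Every Vietoris set is $d_H$-open: for $V^{+}$, if $K\subseteq V$ with $K$ compact and $V$ open then $\operatorname{dist}(K,X\setminus V)=:\delta>0$ and $B_H(K,\delta)\subseteq V^{+}$; for $V^{-}$, if $x\in K\cap V$ choose $B(x,\delta)\subseteq V$ and note $B_H(K,\delta)\subseteq V^{-}$. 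Conversely, to realize a $d_H$-ball $B_H(K,\varepsilon)$ as a Vietoris neighbourhood, cover $K$ by finitely many balls $B(x_i,\varepsilon/2)$ with $x_i\in K$ and consider $\{L: L\subseteq B(K,\varepsilon),\ L\cap B(x_i,\varepsilon/2)\neq\emptyset\ \forall i\}$: membership forces $e(L,K)\le\varepsilon$ and $e(K,L)\le\varepsilon$, so this Vietoris-basic set contains $K$ and lies inside $B_H(K,\varepsilon)$. The final clause is then immediate, since the Vietoris topology depends only on the topology of $X$: equivalent metrics induce the same Vietoris topology, hence topologically equivalent Hausdorff metrics.

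The \textbf{main obstacle} I anticipate is part (b): correctly identifying the limit set $S$ and verifying $d_H(S_n,S)\to 0$, in particular producing, for a point of $S$, an entire approximating sequence drawn from the $S_n$ using only the Cauchy estimates, and conversely showing that each $S_n$ is uniformly close to $S$. Once these excess bounds are in place, the remaining items reduce to bookkeeping built on the triangle and neighbourhood inequalities from (a).
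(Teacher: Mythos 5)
The paper gives no proof of this theorem at all: it is quoted verbatim from the literature (Beer, Definition 3.2.1, Theorem 3.2.4, Exercise 3.2.9; Engelking, Problem 4.5.23), so there is nothing in the text to compare your argument against. That said, your sketch is the standard textbook proof and is essentially sound: the triangle inequality via the excess functional and the inclusion $B(B(S'',s),r)\subseteq B(S'',r+s)$, the candidate limit $S=\bigcap_{m}\overline{\bigcup_{n\geq m}S_n}$ for completeness, total boundedness of ${\mathcal{K}}(X)$ via subsets of a finite $\varepsilon$-net for (c), and the two-way comparison of subbasic Vietoris sets with Hausdorff balls for (d) are exactly the arguments one finds in Beer.

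One point deserves care. The closedness of ${\mathcal{K}}(X)$ in $\mathcal{CB}(X)$ genuinely requires completeness of $X$: a $d_H$-limit of compact sets is closed and totally bounded, but in an incomplete space a closed totally bounded set need not be compact (take $X=\mathbb{Q}\cap[0,1]$ and approximate $S=X$ by finite $\tfrac1n$-nets $K_n$, so $K_n\to S$ in $d_H$ yet $S\notin{\mathcal{K}}(X)$). Your own phrasing (``in the complete setting the limit is compact'') tacitly concedes this, and in fact part (a) as stated in the survey is slightly too generous for arbitrary metric $X$; your proof of that clause is correct only under the completeness hypothesis of part (b). Two trivial bookkeeping items: in (d) the Vietoris-basic set you build gives $d_H(L,K)\leq\varepsilon$ rather than $<\varepsilon$, so the radii should be shrunk (e.g.\ use $\varepsilon/2$ throughout) to land inside the open ball $B_H(K,\varepsilon)$; and in the $V^{+}$ case one should dispose separately of $V=X$ before invoking $\operatorname{dist}(K,X\setminus V)>0$. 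None of these affects the substance.
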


We end this section with a lemma 
which seems to be a folklore (see \cite{Myjak}).
Recall that a \emph{Cantor space} is 
a topological space homeomorphic to 
the Cantor ternary set. Equivalently 
(e.g., \cite{engelking}, Exercise 6.2.A (c) and 
Theorems 6.2.1 and 6.2.9), 
it is any compact metrizable topological space 
without isolated points
which is \emph{totally disconnected} 
in the sense that it has no nontrivial 
connected subsets.

\begin{lemma}\label{lemma:cantor}
If $X$ is a complete metric space without 
isolated points, then the set 
\[
\mathcal{C}(X) :=\{K\in{\mathcal{K}}(X): K \mbox{ is a Cantor space}\}
\]
is residual in ${\mathcal{K}}(X)$.
\end{lemma}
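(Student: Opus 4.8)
The plan is to run a Baire category argument in the hyperspace $(\mathcal{K}(X),d_H)$, which is a complete metric space by Theorem \ref{th:HyperspaceCompleteCompact}(b) because $X$ is complete. By the characterization of Cantor spaces recalled just before the statement, a member $K\in\mathcal{K}(X)$ is a Cantor space precisely when $K$ is perfect (has no isolated points) and totally disconnected. Hence $\mathcal{C}(X)=\operatorname{Perf}(X)\cap\operatorname{TD}(X)$, where $\operatorname{Perf}(X)$ and $\operatorname{TD}(X)$ denote the families of perfect, respectively totally disconnected, nonempty compacta. I would show that each factor is a dense $G_\delta$, hence residual; since a finite intersection of residual sets is residual, the lemma follows.

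For perfectness, for each $n$ I set $G_n=\{K\in\mathcal{K}(X):\text{every }x\in K\text{ has some }y\in K\setminus\{x\}\text{ with }d(x,y)<1/n\}$, so that $\operatorname{Perf}(X)=\bigcap_n G_n$. I would prove $G_n$ open by showing its complement is closed: if $K_m\to K$ and each $K_m$ carries a point $x_m$ with $B(x_m,1/n)\cap K_m=\{x_m\}$, then $d(x_m,K)\to 0$, so picking nearest points in the compact set $K$ and passing to a subsequence yields $x_m\to x\in K$; one then checks $B(x,1/n)\cap K=\{x\}$, since any competitor $z\in K$ would be approximated by some $z_m\in K_m$, contradicting the isolation of $x_m$. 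Density of $G_n$ is the one place where the hypothesis is used: given $K$ and $\varepsilon>0$, take a finite $\varepsilon/2$-net $\{x_1,\dots,x_k\}\subseteq K$ and, because $X$ has no isolated points, adjoin to each $x_j$ a twin $x_j'\neq x_j$ with $d(x_j,x_j')<\min(\varepsilon,1/n)/2$; the finite set $S=\bigcup_j\{x_j,x_j'\}$ then lies in $G_n$ and satisfies $d_H(S,K)<\varepsilon$.

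For total disconnectedness, for each $n$ I let $H_n$ be the set of $K\in\mathcal{K}(X)$ that split as a finite disjoint union $K=\bigsqcup_j K_j$ of nonempty compacta with $\operatorname{diam}(K_j)<1/n$ and pairwise positive distance. Since a compact metric space is totally disconnected exactly when it admits partitions into clopen pieces of arbitrarily small diameter, $\bigcap_n H_n=\operatorname{TD}(X)$. Density is immediate, as every finite set belongs to all $H_n$ (partition into singletons) and finite sets are dense in $\mathcal{K}(X)$. Openness exploits the robustness of positive separation: with $\rho=\min_{i\neq j}d(K_i,K_j)>0$ and slack $\eta=1/n-\max_j\operatorname{diam}(K_j)>0$, any $S$ with $d_H(S,K)<\tfrac13\min(\rho,\eta)$ partitions as $S_j=\{x\in S:d(x,K_j)<\rho/2\}$; these sets are clopen in $S$, stay $\rho/3$-separated, and have diameter below $1/n$, so $S\in H_n$.

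Assembling the pieces, $\operatorname{Perf}(X)=\bigcap_n G_n$ and $\operatorname{TD}(X)=\bigcap_n H_n$ are dense $G_\delta$ sets in the completely metrizable space $\mathcal{K}(X)$, hence residual, and therefore so is their intersection $\mathcal{C}(X)$. I expect the main obstacle to be the density step for $G_n$: it is the only point where the absence of isolated points enters, and it is precisely what makes the conclusion fail in its absence. The most computation-heavy point is the openness of $H_n$, where the diameter-and-separation bookkeeping must survive a Hausdorff perturbation; it is worth emphasizing that this argument, as well as the closedness of the complement of $G_n$, uses only compactness of the limit set and not any local compactness of $X$.
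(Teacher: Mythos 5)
Your proposal is correct, and the perfectness half (the sets $G_n$, their density via adjoining ``twin'' points to a finite net, which is the only place the hypothesis on $X$ enters) coincides with the paper's argument; only the way you certify total disconnectedness differs. The paper works with $D_n=\{K:\operatorname{diam}(P)<\tfrac1n \mbox{ for every connected } P\subseteq K\}$ and proves openness of $D_n$ by a compactness argument: given $K_k\to K$ with large connected subsets $P_k\subseteq K_k$, it extracts a Hausdorff-convergent subsequence of the $P_k$ inside the common compact set $\overline{\bigcup_k K_k}$ and checks that the limit is a large connected subset of $K$. You instead use $H_n$, the compacta admitting a finite partition into clopen pieces of diameter below $\tfrac1n$ with pairwise positive separation, and prove openness by the quantitative stability of that separation under small Hausdorff perturbations. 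Your route is more elementary at the openness step (no subsequence extraction, no verification that a Hausdorff limit of connected sets is connected), but it leans on the extra standard fact that a compact metric space is totally disconnected if and only if it admits arbitrarily fine finite clopen partitions (the equivalence with zero-dimensionality for compacta), whereas the paper's $D_n$ needs only the bare definition of total disconnectedness. Both density arguments reduce to finite sets, and in both cases the conclusion follows from the intersection of countably many dense open sets in the complete space $({\mathcal{K}}(X),d_H)$.
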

\begin{proof}
For every $n\in{\mathbb{N}}$, let 
\begin{equation*}
G_n:= \left\{K\in{\mathcal{K}}(X):
\forall_{x\in K}\;\exists_{y\in K}\;
0<d(x,y)<\frac{1}{n}\right\}.
\end{equation*}
It is easy to see that arbitrarily close to 
any $K\in{\mathcal{K}}(X)$ we can find a finite set 
which belongs to $G_n$ (multiplying, 
if needed, some points in initially 
chosen finite set). Therefore each $G_n$ 
is dense in ${\mathcal{K}}(X)$. Also, it is open. 
Indeed, choose any $K\in G_n$ and 
for any $0<\alpha<\beta<\frac{1}{n}$, set 
\[
K_{\alpha,\beta}:= \{x\in K: 
\exists_{y\in K}\;\alpha<d(x,y)<\beta\}.
\] 
Then each set $K_{\alpha,\beta}$ is 
open in $K$ and
$K=\bigcup_{\alpha,\beta} K_{\alpha,\beta}$. 
By compactness of $K$, we can find
$0<\alpha_0<\beta_0<\frac{1}{n}$ so that 
$K=K_{\alpha_0,\beta_0}$. Then $G_n$ contains 
an open ball around $K$ with radius 
$\frac{1}{2} \min\{\alpha_0,\frac{1}{n}-\beta_0\}$. 
Hence $G_n$ is open in $({\mathcal{K}}(X),d_{H})$. 

Now for every $n\in{\mathbb{N}}$, let
\begin{equation*}
D_n:=\left\{K\in{\mathcal{K}}(X):\operatorname{diam}(P)<\frac{1}{n} 
\mbox{ for any connected } P\subseteq K\right\}.
\end{equation*}
Each set $D_n$ is dense because, 
similarly as for $G_n$,
arbitrarily close to any $K\in {\mathcal{K}}(X)$ 
we can find a~finite set, which clearly 
belongs to $D_n$. We will prove that it is open 
by showing that ${\mathcal{K}}(X)\setminus D_n$ is closed. 

Choose a sequence $K_k\subseteq {\mathcal{K}}(X)\setminus D_n$ 
convergent to some $K\in {\mathcal{K}}(X)$, as $k\to\infty$. 
Then for every $k\in{\mathbb{N}}$, we can find 
a connected set $P_k\subseteq K_k$ 
such that $\operatorname{diam}(P_k)\geq \frac{1}{n}$. 
Since the set 
\[
C:=K\cup \bigcup_{k\in{\mathbb{N}}} K_k =
\overline{\bigcup_{k\in{\mathbb{N}}} K_k}
\]
is compact, the sequence $P_k\in {\mathcal{K}}(C)$, 
admits a convergent subsequence 
$P_{m_k}\to P\in{\mathcal{K}}(C)$ thanks to 
Theorem \ref{th:HyperspaceCompleteCompact} (c)). 
Now it is routine to check that its limit $P$ 
is a subset of $K$, 
$\operatorname{diam}(P)\geq \frac{1}{n}$ and 
that $P$ is connected, i.e., $K\notin D_n$. 
All in all, the set $D_n$ is open.

Taking the above onto account, we infer that the set 
$(\bigcap_{\in{\mathbb{N}}} G_n)\cap(\bigcap_{n\in{\mathbb{N}}} D_n)$ 
is residual. Obviously, it equals $\mathcal{C}(X)$ 
and the result follows.
\end{proof}

\subsection{Measures}\label{section:measures}

Let $X$ be a Hausdorff topological space. 
The \emph{Borel $\sigma$-algebra}, denoted ${\mathcal{B}}(X)$, 
is the smallest $\sigma$-algebra containing 
open sets in $X$. Its elements are called 
\emph{Borel sets}. A function $w:X\to Y$ between 
two Hausdorff topological spaces $X,Y$ is called 
\emph{Borel measurable}, if 
$w^{-1}(B)\in{\mathcal{B}}(X)$ for all $B\in{\mathcal{B}}(Y)$. 
In particular a continuous map $w$ is such.

A functional $\mu:{\mathcal{B}}(X)\to (-\infty,\infty)$, 
for which $\mu(\emptyset)=0$, is called
\begin{itemize}
\item a \emph{signed Borel measure} if it is
countably additive, i.e.,
$\mu(\bigcup_{k=1}^{\infty} B_k) = 
\sum_{k=1}^{\infty} \mu(B_k)$
and
$\sum_{k=1}^{\infty} |\mu(B_k)|<\infty$
for every countable family of disjoint 
Borel sets $B_k\in {\mathcal{B}}(X)$, 
$B_k\cap B_m = \emptyset$ for $k\neq m$, $k,m\in{\mathbb{N}}$;
\item a \emph{Borel measure} if $\mu$ is 
a signed Borel measure and
$\mu(B)\geq 0$ for all $B\in{\mathcal{B}}(X)$; 
we write then $\mu:{\mathcal{B}}(X)\to [0,\infty)$;
\item a \emph{Radon measure}
if $\mu:{\mathcal{B}}(X)\to [0,\infty)$ is a Borel measure and
\[
\mu(B) = \sup \{\mu(K): K\subseteq B, 
K\in{\mathcal{K}}(X)\cup\{\emptyset\}\}
\mbox{ for all } B\in{\mathcal{B}}(X); 
\]
equivalently, for every ${\varepsilon}>0$, $B\in{\mathcal{B}}(X)$, 
there exists a compact subset $K\subseteq B$ 
such that $\mu(B\setminus K)<{\varepsilon}$;
\item a \emph{signed Radon measure}, 
if $\mu=\mu_1-\mu_2$ for two Radon measures
$\mu_1,\mu_2:{\mathcal{B}}(X)\to[0,\infty)$;
\item a \emph{probability measure}, if 
$\mu$ is a Radon measure for which $\mu(X)=1$.
\end{itemize}
We consider only finite (signed) measures, 
so the adjective \emph{finite} is usually omitted.

Note that on a complete separable metric space, 
every (signed) Borel measure is necessarily 
a~(signed) Radon measure 
(\cite{Bogachev} Theorem 7.1.7), while 
there exists a Borel measure on a compact 
topological space which is not a Radon measure
(\cite{Bogachev} Example 7.1.3).

We denote the following collections of measures:
\begin{itemize}
\item the collection of all signed Radon 
measures on $X$, ${\mathcal{M}}_{\pm}(X)$;
\item the collection of Radon 
probability measures on $X$, ${\mathcal{P}}(X)$.
\end{itemize}
Overall we have 
$\delta_{x}\in {\mathcal{P}}(X)\subseteq {\mathcal{M}}_{\pm}(X)$, where
$\delta_{x}$ is the Dirac measure at $x\in X$.

\begin{proposition}
Signed Radon measures form a vector space 
${\mathcal{M}}_{\pm}(X)$ under the addition of measures,
multiplication of a measure by a scalar, 
and with the null measure $0\in{\mathcal{M}}_{\pm}(X)$ 
as a zero vector. 
Radon probablility measures ${\mathcal{P}}(X)$ form 
a convex subset of ${\mathcal{M}}_{\pm}(X)$.
\end{proposition}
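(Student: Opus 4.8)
The plan is to reduce everything to two closure properties of the cone of (nonnegative) Radon measures: that it is closed under addition and under multiplication by a nonnegative scalar. Once these are in hand, the vector-space structure of $\mathcal{M}_{\pm}(X)$ follows almost for free, since by definition every signed Radon measure is a difference $\mu=\mu_1-\mu_2$ of two Radon measures, so $\mathcal{M}_{\pm}(X)$ is precisely the collection of such differences.

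First I would verify the two cone properties, which carry all the genuine content of the proposition. The sum $\mu_1+\nu_1$ of two Radon measures is a nonnegative, countably additive set function vanishing on $\emptyset$, hence a Borel measure; the only point needing care is inner regularity. Given $B\in\mathcal{B}(X)$ and $\varepsilon>0$, choose compact $K_1,K_2\subseteq B$ with $\mu_1(B\setminus K_1)<\varepsilon/2$ and $\nu_1(B\setminus K_2)<\varepsilon/2$. Then $K:=K_1\cup K_2$ is compact, $K\subseteq B$, and since $B\setminus K\subseteq B\setminus K_1$ and $B\setminus K\subseteq B\setminus K_2$ we obtain $(\mu_1+\nu_1)(B\setminus K)<\varepsilon$. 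For a scalar $c\geq 0$, the measure $c\mu_1$ satisfies $(c\mu_1)(B\setminus K)=c\,\mu_1(B\setminus K)$, which is made small by choosing $K$ adapted to $\mu_1$, so $c\mu_1$ is Radon as well. This $K_1\cup K_2$ argument is really the only nontrivial step in the whole proof.

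Next I would deduce that $\mathcal{M}_{\pm}(X)$ is a vector space. Writing $\mu=\mu_1-\mu_2$ and $\eta=\eta_1-\eta_2$, one has $\mu+\eta=(\mu_1+\eta_1)-(\mu_2+\eta_2)$, a difference of Radon measures by the cone property, hence again in $\mathcal{M}_{\pm}(X)$. For scalars, $c\mu=(c\mu_1)-(c\mu_2)$ when $c\geq 0$ and $c\mu=(|c|\mu_2)-(|c|\mu_1)$ when $c<0$; either way it is a difference of Radon measures. The null measure equals $0-0$, so it lies in $\mathcal{M}_{\pm}(X)$ and serves as the zero vector. The remaining axioms (commutativity and associativity of addition, the distributive laws, $1\cdot\mu=\mu$, and so on) are inherited pointwise on $\mathcal{B}(X)$ from the corresponding identities in $\mathbb{R}$ and require no separate argument.

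Finally, for convexity of $\mathcal{P}(X)$, I would take $\mu,\eta\in\mathcal{P}(X)$ and $t\in[0,1]$ and set $\lambda:=t\mu+(1-t)\eta$. Since $t,1-t\geq 0$, the cone property guarantees that $\lambda$ is again a Radon measure, and $\lambda(X)=t\mu(X)+(1-t)\eta(X)=t+(1-t)=1$, whence $\lambda\in\mathcal{P}(X)$. I do not expect any real obstacle anywhere: the substance of the proposition is entirely concentrated in the inner-regularity bookkeeping of the first step, and everything else is routine verification.
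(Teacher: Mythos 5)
Your proof is correct and follows essentially the same route as the paper: the paper likewise reduces everything to showing that the cone of nonnegative Radon measures is closed under addition and positive scalar multiplication, using exactly the $K\cup K'$ inner-regularity argument, and dismisses the rest as routine. You merely spell out the remaining bookkeeping (decomposition into differences, the vector-space axioms, and the convexity of $\mathcal{P}(X)$) in more detail than the paper does.
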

\begin{proof}
It is enough to check that if $\mu$, $\nu$ 
are (nonnegative!) Radon measures and 
$p>0$, then $\mu+\nu$ and $p\cdot \mu$
are Radon measures. The rest is obvious.

Fix ${\varepsilon}>0$, $B\in{\mathcal{B}}(X)$ and compact subsets 
$K,K'\subseteq B$ s.t. 
$\mu(B\setminus K)<{\varepsilon}$,
$\nu(B\setminus K')<{\varepsilon}$.
Then $K\cup K'\subseteq B$ is compact, 
$(\mu+\nu)(B\setminus (K\cup K')) \leq 
\mu(B\setminus K) + \nu(B\setminus K') <2{\varepsilon}$,
$p\cdot \mu(B\setminus K)< p\cdot{\varepsilon}$.
\end{proof}

Every signed Borel measure 
$\mu:{\mathcal{B}}(X)\to(-\infty,\infty)$
is a difference $\mu=\mu_{1}-\mu_{2}$ 
of two nonnegative Borel measures 
$\mu_{1},\mu_{2}:{\mathcal{B}}(X)\to[0,\infty)$.
In particular, 
$\int_{X} f \;d\mu = 
\int_{X} f \;d\mu_{1} - \int_{X} f \;d\mu_{2}$
for a~bounded Borel measurable function $f:X\to {\mathbb{R}}$.
The \emph{Jordan-Hahn decomposition} 
$\mu=\mu^{+}-\mu^{-}$, 
$\mu^{+},\mu^{-}:{\mathcal{B}}(X)\to[0,\infty)$,
is the difference being minimal in the sense
that for any other decomposition
$\mu=\mu_{1}-\mu_{2}$ 
into nonnegative Borel measures $\mu_1,\mu_2$,
it holds $\mu_1\geq\mu^{+}$, $\mu_2\leq\mu^{-}$;
e.g., \cite{Bogachev} chap.3.1.
Given the Jordan-Hahn decomposition 
$\mu=\mu^{+}-\mu^{-}$ of a signed measure $\mu$
we can define the \emph{total variation measure}
of $\mu$, $|\mu|:{\mathcal{B}}(X)\to[0,\infty)$, 
$|\mu|:= \mu^{+}+\mu^{-}$. 
It should be remarked that 
a signed Borel measure $\mu$ is Radon if and only 
if its total variation measure $|\mu|$ is Radon.
We also have
\begin{equation}\label{eq:integraltotalvar}
\left| \int_{X} f \;d\mu \right| \leq
\sup_{x\in X} |f(x)| \cdot |\mu|
\end{equation}
for a bounded Borel function $f:X\to {\mathbb{R}}$

Let $X$ be a Hausdorff topological space.
Denote by ${\mathcal{C}}_b(X)$ the space of all bounded 
continuous real-valued functions $f:X\to{\mathbb{R}}$.
We endow the vector space ${\mathcal{M}}_{\pm}(X)$ 
of signed Radon measures on $X$ 
with the \emph{weak topology} generated
by the following subbasis of open sets 
\begin{equation*}
V_{f,{\varepsilon}}(\mu) := \left\{ \nu\in{\mathcal{M}}_{\pm}(X):
\left|\int_{X} f \;d\nu - \int_{X} f \;d\mu\right| 
<{\varepsilon} \right\},
\end{equation*}
where $f\in{\mathcal{C}}_b(X)$, ${\varepsilon}>0$, $\mu\in{\mathcal{M}}_{\pm}(X)$. 
So endowed ${\mathcal{M}}_{\pm}(X)$ constitutes 
a locally convex topological vector space:
the addition of measures and muliplication 
of a measure by a scalar are continuous 
in the weak topology and 
the subbasic neighbourhoods are convex. 

In terms of functional analysis, the weak topology 
in ${\mathcal{M}}_{\pm}(X)$ is the weak* topology 
transported from the predual space 
$(\mathcal{C}_b(X), \|\cdot\|_{\infty})$ via duality pairing 
\[\mathcal{C}_b(X)\times {\mathcal{M}}_{\pm}(X) \ni
\langle f, \mu \rangle \mapsto \int_{X} f \;d\mu.\]
It is a standard fact that the weak* topology 
in the space of continuous linear functionals
$(\mathcal{C}_b(X))^{*}$ obeys Hausdorff separation without
any assumptions on $X$. To identify measures with functionals
requires that some sort of the Riesz-Skorokhod representation
theorem holds true, which is quite restrictive. 
(If $X$ is not locally compact, then not all elements 
of $(\mathcal{C}_b(X))^{*}$ arise as integrals with respect to measures
and distinct measures may yield the same functional; 
say all functions in $\mathcal{C}_b(X)$ are constant, so 
$\int_{X} f \;d \delta_{x_1} = \int_{X} f \;d \delta_{x_2}$ 
for $x_1\neq x_2\in X$, $f\in \mathcal{C}_b(X)$.) 
Therefore we deliver a direct proof that ${\mathcal{M}}_{\pm}(X)$ 
is Hausdorff when $X$ is normal.

\begin{proposition}\label{prop:WeakTopIsHausdorff}
If $X$ is a normal topological space, 
then the space ${\mathcal{M}}_{\pm}(X)$ 
of signed Radon measures on $X$ 
equipped with the weak topology
is a Hausdorff space.
\end{proposition}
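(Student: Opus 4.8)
The plan is to exploit the observation recorded just before the statement that $({\mathcal{M}}_{\pm}(X),\text{weak})$ is a topological vector space whose topology is generated by the linear functionals $\mu\mapsto\int_X f\,d\mu$, $f\in\mathcal{C}_b(X)$. For such a vector topology, being Hausdorff is equivalent to the generating family of functionals separating points, which by linearity reduces to a single claim: for every nonzero signed Radon measure $\lambda\in{\mathcal{M}}_{\pm}(X)$ there is an $f\in\mathcal{C}_b(X)$ with $\int_X f\,d\lambda\neq 0$. First I would state this reduction, and then devote the bulk of the work to manufacturing such an $f$ out of normality.

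To build the separating function I would pass to the Jordan--Hahn decomposition $\lambda=\lambda^{+}-\lambda^{-}$, with total variation $|\lambda|=\lambda^{+}+\lambda^{-}$, and fix a Hahn set $P\in{\mathcal{B}}(X)$ on which $\lambda^{+}$ is concentrated while $\lambda^{-}(P)=0$. Since $\lambda\neq 0$ we have $|\lambda|\neq 0$, so at least one part is nonzero; replacing $\lambda$ by $-\lambda$ if necessary (which only flips the sign of the eventual integral), assume $a:=\lambda^{+}(X)=|\lambda|(P)>0$. Because $|\lambda|$ is Radon (a signed measure is Radon exactly when its total variation is), both $\lambda^{+}$ and $\lambda^{-}$ inherit the Radon property. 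Inner regularity then yields a compact $K\subseteq P$ with $\lambda^{+}(K)>\tfrac{3a}{4}$, and automatically $\lambda^{-}(K)=|\lambda|(K\setminus P)=0$.

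The next step is to trap $K$ inside an open set on which $\lambda^{-}$ is still small. Here I would use that a finite inner-regular (Radon) measure is automatically outer regular by open sets: applying inner regularity to the complement $X\setminus B$ and taking the open complement of the resulting compactum gives, for each Borel $B$ and $\varepsilon>0$, an open $U\supseteq B$ with $\mu(U)<\mu(B)+\varepsilon$. Applied to $\lambda^{-}$ and $K$ (where $\lambda^{-}(K)=0$) this produces an open $U\supseteq K$ with $\lambda^{-}(U)<\tfrac{a}{4}$. Now $K$ is compact, hence closed since $X$ is Hausdorff, and $X\setminus U$ is closed and disjoint from $K$; normality together with Urysohn's lemma supplies a continuous $f\colon X\to[0,1]$ with $f\equiv 1$ on $K$ and $f\equiv 0$ off $U$. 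Then $f\in\mathcal{C}_b(X)$ and
\[
\int_X f\,d\lambda=\int_X f\,d\lambda^{+}-\int_X f\,d\lambda^{-}\geq \lambda^{+}(K)-\lambda^{-}(U)>\tfrac{3a}{4}-\tfrac{a}{4}=\tfrac{a}{2}>0 ,
\]
as required.

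Finally I would close the argument routinely: given distinct $\mu\neq\eta$, apply the separation claim to $\lambda=\mu-\eta$ to get $f$, and set $c:=\bigl|\int_X f\,d\mu-\int_X f\,d\eta\bigr|>0$; the subbasic neighbourhoods $V_{f,c/2}(\mu)$ and $V_{f,c/2}(\eta)$ are then disjoint by the triangle inequality, proving Hausdorffness. I expect the genuine obstacle to sit in the middle step, namely the regularity bookkeeping: translating between inner regularity by compacta (the definition of Radon used throughout) and the outer regularity by open sets that Urysohn's lemma demands, and checking that the Jordan--Hahn pieces $\lambda^{\pm}$ are themselves Radon. The soft reduction to point-separation and the concluding triangle-inequality estimate should be entirely routine.
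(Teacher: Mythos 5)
Your proof is correct, and it reaches the conclusion by a genuinely different (though related) route from the paper's. You first reduce, via linearity of $\mu\mapsto\int_X f\,d\mu$, to showing that the functionals separate $0$ from any nonzero $\lambda\in{\mathcal{M}}_{\pm}(X)$; you then split $\lambda$ by the Hahn decomposition, trap a compact $K$ inside the positive set carrying most of $\lambda^{+}$, surround it by an open $U$ on which $\lambda^{-}$ is small (using that a finite inner-regular measure is automatically outer regular by open sets), and apply Urysohn's lemma to the pair $K$, $X\setminus U$. The paper instead works directly with the two measures $\mu\neq\eta$: it picks a compact $K$ with $\mu(K)\neq\eta(K)$ (an assertion that itself tacitly relies on the same Hahn-decomposition-plus-inner-regularity reasoning you make explicit), pins the separating function to $0$ on large compacta $S',S''\subseteq X\setminus K$ chosen for $|\mu|$ and $|\eta|$, invokes the Tietze--Urysohn extension theorem rather than Urysohn's lemma, and bounds the error terms by the total variation of what is left over. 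Your version buys a cleaner logical skeleton (point separation for a topology generated by linear functionals) at the price of quoting outer regularity and checking that $\lambda^{\pm}$ are Radon; both of these are fine — the latter follows from $\lambda^{\pm}(B)=|\lambda|(B\cap P)$ resp. $|\lambda|(B\setminus P)$ together with the paper's remark that $\lambda$ is Radon iff $|\lambda|$ is. The paper's version avoids outer regularity but must carry two measures and their variations through the estimate. Both proofs ultimately rest on the same two ingredients: inner regularity by compacta and normality to manufacture a continuous $[0,1]$-valued bump function.
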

\begin{proof}
Let $\mu,\eta\in {\mathcal{M}}_{\pm}(X)$, $\mu\neq\eta$.
We shall find $\delta >0$ and $f\in\mathcal{C}_b(X)$ s.t.
$V_{f,\delta}(\mu)\cap V_{f,\delta}(\eta) =\emptyset.$

Since $\mu$ and $\eta$ are Radon, 
there exists a compact set $K\subseteq X$ 
s.t. $\mu(K)\neq \eta(K)$. 
Set ${\varepsilon}:=|\mu(K)-\eta(K)|>0$ and 
let $\delta:=\frac{1}{4}{\varepsilon}$. 
By the definition of a Radon measure, we can find 
compact sets $S',S''\subseteq X\setminus K$ 
such that $|\mu|((X\setminus K)\setminus S')<\delta$, 
and $|\eta|((X\setminus K)\setminus S'')<\delta$.  

Define $f(x):=0$ for $x\in S'\cup S''$, 
and $f(x):=1$ for $x\in K$. 
Then $f:S'\cup S''\cup K\to[0,1]$ is continuous. 
By the Tietze-Urysohn theorem 
(\cite{engelking} Theorem 2.1.8), 
$f$ admits a continuous extension $f:X\to[0,1]$. 
Let us observe that using standard properties 
of the integral, in particular inequality 
\eqref{eq:integraltotalvar}, we have
\begin{gather*}
{\varepsilon}=|\mu(K)-\eta(K)|=
\left|\int_K{f}\;d\mu-\int_K{f}\;d\eta\right| =
\\
= \left|\int_X{f}\;d\mu-\int_X{f}\;d\eta -
\int_{(X\setminus K)}{f}\;d\mu +
\int_{(X\setminus K)}{f}\;d\eta\right| \leq 
\\
\leq \left|\int_X{f}\;d\mu-\int_X{f}\;d\eta\right|+
\left|\int_{(X\setminus K)}{f}\;d\mu\right|+
\left|\int_{(X\setminus K)}{f}\;d\eta\right| =
\\
= \left|\int_X{f}\;d\mu-\int_X{f}\;d\eta\right|+
\left|\int_{(X\setminus K)\setminus S'}{f}\;d\mu\right|+
\left|\int_{(X\setminus K)\setminus S''}{f}\;d\eta\right| \leq
\\
\leq \left|\int_X{f}\;d\mu-\int_X{f}\;d\eta\right| +
\left|\mu\right|((X\setminus K)\setminus S')
+\left|\eta\right|((X\setminus K)\setminus S'') <
\\
< \left|\int_X{f}\;d\mu-\int_X{f}\;d\eta\right|+2\delta.
\end{gather*}
Hence
\[
\left|\int_X{f}\;d\mu-\int_X{f}\;d\eta\right|\geq {\varepsilon}-2\delta.
\]

Suppose that there is 
${\nu}\in V_{f,\delta}(\mu)\cap V_{f,\delta}(\eta)$. 
Then 
\[
{\varepsilon}-2\delta\leq \left|\int_X{f}\;d\mu-\int_X{f}\;d\eta\right|
\leq \left|\int_X{f}\;d\mu-\int_X{f}\;d{\nu}\right|+
\left|\int_X{f}\;d{\nu}-\int_X{f}\;d\eta\right|<2\delta,
\]
which yields a contradiction with ${\varepsilon}= 4\delta$.
\end{proof}

If a net (or a sequence) $\mu_n\in{\mathcal{M}}_{\pm}(X)$ 
converges to $\mu\in{\mathcal{M}}_{\pm}(X)$ 
in the weak topology, that is 
$\int_{X} f \;d\mu_n \to \int_{X} f \;d\mu$
for all $f\in{\mathcal{C}}_b(X)$, then we say that 
$\mu_n$ \emph{weakly converges} to $\mu$.
The weak limit is unique when $X$ is normal. 
The weak convergence is usually not interesting 
when $X$ is not normal. For instance, 
if $\mathcal{C}_b(X)$ merely consists of constant 
functions (e.g., \cite{engelking} Problem 2.7.17 p.119),
then any given sequence of probability measures 
weakly converges to all probability measures at once.

\begin{lemma}\label{lem:1nwcompact}
Let $\Delta\subseteq {\mathcal{M}}_{\pm}(X)$ be a set of
signed Radon measures which is compact with respect to 
the weak topology. Then
\begin{enumerate}
\item[(a)] $\Delta'= \{\int_{X} f \;d\mu: 
\mu\in\Delta\}\subseteq {\mathbb{R}}$ is compact 
(hence bounded) for each $f\in{\mathcal{C}}_b(X)$;
\item[(b)] every sequence $\mu_n\in\Delta$ has 
the property that $\frac{1}{n}\cdot\mu_n\to 0$
in the weak topology.
\end{enumerate}
\end{lemma}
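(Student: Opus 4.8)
The plan is to exploit the very definition of the weak topology on ${\mathcal{M}}_{\pm}(X)$, which is tailored precisely so that each integration functional is continuous; once that is recognized, both parts fall out immediately.

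First I would fix $f\in{\mathcal{C}}_b(X)$ and consider the evaluation map $\Phi_f:{\mathcal{M}}_{\pm}(X)\to{\mathbb{R}}$ given by $\Phi_f(\mu)=\int_X f\,d\mu$. Its continuity with respect to the weak topology is built in: the subbasic neighbourhoods $V_{f,{\varepsilon}}(\mu)$ are exactly the $\Phi_f$-preimages of open intervals of radius ${\varepsilon}$, so $\Phi_f$ is weakly continuous by construction. Since $\Delta$ is weakly compact and $\Phi_f$ is continuous, the image $\Delta'=\Phi_f(\Delta)$ is a compact subset of ${\mathbb{R}}$, hence closed and bounded. This establishes (a).

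For (b) I would extract from (a) only the boundedness. Fix any $f\in{\mathcal{C}}_b(X)$; by (a) there is a finite constant $M_f$ with $\left|\int_X f\,d\mu\right|\leq M_f$ for every $\mu\in\Delta$. Given a sequence $\mu_n\in\Delta$, each $\tfrac{1}{n}\mu_n$ again lies in the vector space ${\mathcal{M}}_{\pm}(X)$, and by linearity of the integral
\[
\left|\int_X f\,d\!\left(\tfrac{1}{n}\mu_n\right)\right| =
\frac{1}{n}\left|\int_X f\,d\mu_n\right|\leq \frac{M_f}{n}\to 0
\mbox{ as } n\to\infty .
\]
As $f\in{\mathcal{C}}_b(X)$ was arbitrary, this is precisely the statement that $\tfrac{1}{n}\mu_n\to 0$ in the weak topology, proving (b).

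There is essentially no genuine obstacle here: the content of the lemma is simply that weak compactness of $\Delta$ forces, for each individual test function $f$, a uniform bound on the integrals $\int_X f\,d\mu$ over $\mu\in\Delta$, after which the $\tfrac{1}{n}$ scaling annihilates them. The one point deserving a word of care is that one should \emph{not} seek a single bound valid simultaneously for all $f$ (no uniform boundedness principle is invoked, nor is it needed): the verification of weak convergence to $0$ is carried out test-function by test-function, which is exactly what the definition of the weak topology asks for.
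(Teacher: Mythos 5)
Your proof is correct and follows essentially the same route as the paper: continuity of the evaluation functional $\mu\mapsto\int_X f\,d\mu$ in the weak topology gives compactness (hence boundedness) of $\Delta'$ for each fixed $f$, and then the $\tfrac{1}{n}$ scaling kills the integrals test-function by test-function. Your closing remark that no uniform bound over all $f$ is needed is exactly the right observation and matches the paper's use of $\sup_{\delta\in\Delta'}|\delta|$ for each fixed $f$ separately.
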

\begin{proof}
For (a) observe that for each fixed $f\in{\mathcal{C}}_b(X)$ 
the functional
\[
{\mathcal{M}}_{\pm}(X)\ni \mu \stackrel{T}{\mapsto} 
\int_{X} f \;d\mu \in{\mathbb{R}}
\]
is continuous in the weak topology. Hence
$T(\Delta)=\Delta'$ is compact.

Item (b) follows from (a), because for every 
$f\in{\mathcal{C}}_b(X)$ one has
\[
\left|\int_{X} f 
\;d\left(\frac{1}{n}\cdot\mu_n\right)\right| =
\frac{1}{n}\cdot \left|\int_{X} f 
\;d\mu_n\right| \leq\frac{1}{n}\cdot 
\sup_{\delta\in\Delta'} |\delta|\to 0.
\]
\end{proof}

\begin{theorem}[Alexandrov--Prokhorov; \cite{Bogachev} Theorem 8.9.3 (i)]
If $X$ is a compact topological space, then
the set ${\mathcal{P}}(X)$ of probability measures on $X$
is a compact subset of ${\mathcal{M}}_{\pm}(X)$ with respect to
the weak topology.
\end{theorem}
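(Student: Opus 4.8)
The plan is to realize $\mathcal{P}(X)$ as a closed subset of a compact product space, exploiting that compactness of $X$ forces $\mathcal{C}_b(X)=\mathcal{C}(X)$ and that every $f\in\mathcal{C}(X)$ is bounded with $\min_{X}f\le \int_X f\,d\mu\le \max_{X}f$ for each $\mu\in\mathcal{P}(X)$. First I would introduce the evaluation embedding
\[
\Phi:\mathcal{P}(X)\to \prod_{f\in\mathcal{C}(X)} \bigl[\min_{X} f,\ \max_{X} f\bigr],\qquad
\Phi(\mu):=\Bigl(\int_X f\,d\mu\Bigr)_{f\in\mathcal{C}(X)}.
\]
Since the weak topology on $\mathcal{M}_{\pm}(X)$ is by definition the initial (coarsest) topology making each map $\mu\mapsto\int_X f\,d\mu$ continuous, $\Phi$ is a homeomorphism of $\mathcal{P}(X)$ onto its image once the product carries the Tikhonov topology. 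By the Tikhonov theorem the codomain is compact, so it suffices to prove that $\Phi(\mathcal{P}(X))$ is closed.

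To that end I would take an arbitrary net $\mu_n\in\mathcal{P}(X)$ whose image $\Phi(\mu_n)$ converges in the product, i.e. $\int_X f\,d\mu_n\to L(f)$ for every $f\in\mathcal{C}(X)$, for some function $L:\mathcal{C}(X)\to\mathbb{R}$. Passing to the limit in the defining identities, $L$ inherits linearity, positivity ($L(f)\ge 0$ whenever $f\ge 0$, since each $\int_X f\,d\mu_n\ge 0$), and normalization $L(\mathbf{1})=1$ (the constant function $\mathbf{1}$ yields $\int_X \mathbf{1}\,d\mu_n=\mu_n(X)=1$). Thus $L$ is a positive, unital linear functional on $\mathcal{C}(X)$.

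The crux is then to recognize $L$ as integration against a genuine Radon probability measure. Here I would invoke the Riesz representation theorem for the compact Hausdorff space $X$ (exactly the regime in which the representation is unproblematic, in contrast with the non-locally-compact caveats noted before Proposition~\ref{prop:WeakTopIsHausdorff}): there is a unique Radon measure $\mu_{*}$ with $L(f)=\int_X f\,d\mu_{*}$ for all $f\in\mathcal{C}(X)$, and $\mu_{*}(X)=L(\mathbf{1})=1$, so $\mu_{*}\in\mathcal{P}(X)$ and $\Phi(\mu_{*})=\lim_n \Phi(\mu_n)$. Hence $\Phi(\mathcal{P}(X))$ is closed in the compact product and therefore compact, and $\Phi$ being a homeomorphism onto its image gives compactness of $\mathcal{P}(X)$. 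Equivalently, the whole argument can be packaged as a Banach--Alaoglu statement: identify $\mathcal{M}_{\pm}(X)$ with $\mathcal{C}(X)^{*}$ via Riesz, note that the weak topology becomes the weak$^*$ topology, observe that $\mathcal{P}(X)$ lies in the unit ball (since $\|\mu\|=|\mu|(X)=1$) and is weak$^*$-closed as the intersection of the closed slices $\{L:L(f)\ge 0\}$ over $f\ge 0$ with $\{L:L(\mathbf{1})=1\}$, and conclude via Banach--Alaoglu. In either route the main obstacle is precisely the Riesz representation step, i.e. guaranteeing that the limit functional $L$ is carried by an actual Radon measure rather than a merely finitely additive or non-representable functional; compactness of $X$ is exactly what makes this step clean.
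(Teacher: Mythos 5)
Your argument is correct, but there is nothing in the paper to compare it against: the paper states the Alexandrov--Prokhorov theorem as a citation to \cite{Bogachev} (Theorem 8.9.3(i)) and supplies no proof of its own. What you give is the standard proof --- embed ${\mathcal{P}}(X)$ into the compact product $\prod_{f}[\min f,\max f]$ (equivalently, into the weak$^*$-compact unit ball of $\mathcal{C}(X)^{*}$ via Banach--Alaoglu), and use the Riesz representation theorem to show the image is closed --- and it works exactly as you describe. Two small points are worth making explicit. First, both the Riesz step and the claim that $\Phi$ is a homeomorphism \emph{onto its image} (i.e.\ that $\Phi$ is injective, which is what lets you pull compactness of the image back to ${\mathcal{P}}(X)$ as a Hausdorff space) require $X$ to be compact \emph{Hausdorff}; this is the standing convention of the paper's measure-theoretic appendix, and injectivity is essentially the compact case of Proposition~\ref{prop:WeakTopIsHausdorff}, but it deserves a sentence. (Even without injectivity, compactness of ${\mathcal{P}}(X)$ in the initial topology would still follow from compactness of $\Phi({\mathcal{P}}(X))$, just not the Hausdorff property.) Second, on a compact Hausdorff space the regular Borel measure produced by Riesz is automatically Radon in the paper's sense (inner regular by compact sets), which is what closes the loop and returns you to ${\mathcal{P}}(X)$ rather than to some larger class of functionals; your final remark correctly identifies this as the crux.
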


Given a Borel measurable $w:X\to Y$ 
and a signed Borel measure 
$\mu:{\mathcal{B}}(X)\to(-\infty,\infty)$,
we define the \emph{push forward measure} through $w$
to be $w_{\sharp}\mu: {\mathcal{B}}(Y)\to(-\infty,\infty)$,
\[
w_{\sharp}\mu(B) := \mu(w^{-1}(B)) 
\mbox{ for all } B\in{\mathcal{B}}(Y).
\]
We also write $\mu\circ w^{-1}$ for $w_{\sharp}\mu$.
Obviously the operator $w_{\sharp}$ is linear:
\begin{equation*}
w_{\sharp}(p_1\cdot \mu_1+ p_2\cdot \mu_2) = 
p_1\cdot w_{\sharp}\mu_1+ p_2\cdot w_{\sharp}\mu_2,
\end{equation*}
for all measures $\mu_i$ and scalars $p_i$, $i=1,2$.

\begin{proposition}[\cite{Bogachev} Theorem 9.1.1(i), Theorem 3.6.1, chap.8.10(v)]\label{th:wtransport}
Let $w:X\to Y$ be a continuous map between 
Hausdorff topological spaces $X, Y$. Then 
\begin{enumerate}
\item[(a)] $w_{\sharp}\mu$ is a 
(signed or nonnegative) Radon measure in $Y$ 
whenever $\mu$ is a 
(signed or respectively, nonnegative) 
Radon measure in $X$;
\item[(b)] for all bounded Borel measurable 
functions $g:Y\to{\mathbb{R}}$ 
\[
\int_{X} g \;d(w_{\sharp}\mu) = 
\int_{X} g\circ w \;d\mu,
\]
where $\mu$ is a signed Borel measure in $X$;
\item[(c)]
the transport of measure
$w_{\sharp}:{\mathcal{M}}_{\pm}(X)\to {\mathcal{M}}_{\pm}(Y)$,
$w_{\sharp}(\mu) = \mu\circ w^{-1}$ 
for $\mu\in{\mathcal{M}}_{\pm}(X)$,
is continuous in the weak topology.
\end{enumerate}
\end{proposition}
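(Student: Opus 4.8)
The plan is to treat the three parts in the order (a), (b), (c), since (b) carries the analytic content that makes (c) essentially immediate. Throughout, recall that we work only with finite measures, so in particular $w_\sharp\mu(Y)=\mu(w^{-1}(Y))=\mu(X)<\infty$.

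For (a), I would first dispose of the formal points. Continuity of $w$ makes it Borel measurable, so $w^{-1}(B)\in\mathcal{B}(X)$ for every $B\in\mathcal{B}(Y)$ and $w_\sharp\mu$ is well defined on $\mathcal{B}(Y)$. Since taking preimages commutes with countable disjoint unions, $w_\sharp\mu$ inherits countable additivity and $w_\sharp\mu(\emptyset)=0$ from $\mu$; thus it is a (nonnegative, resp.\ signed) Borel measure. The one substantive point is inner regularity. Assuming first $\mu\geq 0$, I would fix $B\in\mathcal{B}(Y)$ and $\varepsilon>0$, use the Radon property of $\mu$ to pick a compact $C\subseteq w^{-1}(B)$ with $\mu(w^{-1}(B)\setminus C)<\varepsilon$, and set $K:=w(C)$. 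Then $K$ is compact as a continuous image of a compact set, and $K\subseteq B$. Since $C\subseteq w^{-1}(w(C))=w^{-1}(K)$, one has $w^{-1}(B)\setminus w^{-1}(K)\subseteq w^{-1}(B)\setminus C$, whence $w_\sharp\mu(B\setminus K)=\mu\bigl(w^{-1}(B)\setminus w^{-1}(K)\bigr)\leq\mu(w^{-1}(B)\setminus C)<\varepsilon$. This is exactly the inner-regularity condition recorded in the definition of a Radon measure. For signed $\mu=\mu_1-\mu_2$ with $\mu_1,\mu_2$ Radon, linearity of $w_\sharp$ gives $w_\sharp\mu=w_\sharp\mu_1-w_\sharp\mu_2$, a difference of Radon measures, hence signed Radon.

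For (b) (where the left-hand integral should of course read $\int_Y$), I would run the standard four-layer approximation. On indicators it is a tautology: $\int_Y \mathbf{1}_B\,d(w_\sharp\mu)=w_\sharp\mu(B)=\mu(w^{-1}(B))=\int_X \mathbf{1}_B\circ w\,d\mu$, using $\mathbf{1}_B\circ w=\mathbf{1}_{w^{-1}(B)}$. Linearity extends this to Borel simple functions; monotone convergence (approximating a bounded nonnegative Borel $g$ from below by simple functions, so that $g\circ w$ is approximated by the corresponding composed simple functions) extends it to bounded nonnegative $g$; and splitting $g=g^{+}-g^{-}$ extends it to all bounded Borel $g$. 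For signed $\mu$ one passes to the Jordan--Hahn decomposition $\mu=\mu^{+}-\mu^{-}$ and uses linearity of the integral in the measure argument.

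For (c), I would reduce continuity of $w_\sharp$ to (b). The weak topology on $\mathcal{M}_{\pm}(Y)$ is the one generated by the functionals $\nu\mapsto\int_Y g\,d\nu$, $g\in\mathcal{C}_b(Y)$, so $w_\sharp$ is continuous precisely when each composite $\mu\mapsto\int_Y g\,d(w_\sharp\mu)$ is continuous on $\mathcal{M}_{\pm}(X)$. By (b) this composite equals $\mu\mapsto\int_X (g\circ w)\,d\mu$, and $g\circ w\in\mathcal{C}_b(X)$ since it is the composition of a bounded continuous function with a continuous map; hence it is continuous by the very definition of the weak topology on $\mathcal{M}_{\pm}(X)$ (it is constant on each subbasic neighbourhood $V_{g\circ w,\varepsilon}$). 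In net language: if $\mu_n\to\mu$ weakly in $\mathcal{M}_{\pm}(X)$, then for every $g\in\mathcal{C}_b(Y)$ we have $\int_Y g\,d(w_\sharp\mu_n)=\int_X (g\circ w)\,d\mu_n\to\int_X (g\circ w)\,d\mu=\int_Y g\,d(w_\sharp\mu)$, so $w_\sharp\mu_n\to w_\sharp\mu$ weakly. The only genuinely non-formal step is the inner-regularity argument in (a); its crux is to approximate inside $X$ and then push the approximating compact set forward by $w$, exploiting that $w(C)$ is compact and that $C\subseteq w^{-1}(w(C))$. Parts (b) and (c) are then routine.
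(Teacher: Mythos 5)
Your proposal is correct and follows essentially the same route as the paper's proof: the same push-forward-of-a-compact-set argument for inner regularity in (a), the same indicator--simple--monotone-limit--difference ladder for (b), and the same reduction of (c) to (b) via $f=g\circ w$ (the paper phrases it as the inclusion $w_{\sharp}(V_{g\circ w,\varepsilon}(\mu))\subseteq V_{g,\varepsilon}(w_{\sharp}\mu)$, which is your net argument in neighbourhood form). The only cosmetic difference is that for signed measures in (a) the paper invokes the Jordan--Hahn decomposition while you use the defining decomposition $\mu=\mu_1-\mu_2$ into Radon measures directly; both work.
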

\begin{proof}
For (a) fix ${\varepsilon}>0$, $B\in{\mathcal{B}}(Y)$ and 
a nonnegative Radon measure $\mu$. 
Since $\mu$ is Radon there exists a compact subset 
$K\subseteq w^{-1}(B)$ s.t. 
$\mu(w^{-1}(B)\setminus K)<{\varepsilon}$.
Simple set-algebra shows that
\[
w^{-1}(B\setminus w(K))\subseteq 
w^{-1}(B)\setminus K.
\]
Hence $w_{\sharp}\mu(B\setminus K') <{\varepsilon}$
for the compact set $K'=w(K)\subseteq B$.
For the case of signed $\mu$ it is enough 
to use the Jordan-Hahn decomposition 
$w_{\sharp}\mu= w_{\sharp}\mu^{+} - w_{\sharp}\mu^{-}$. 

We check (b) for $g=\chi_{B}$, 
the characteristic function of the set $B\in{\mathcal{B}}(Y)$.
Recall that $\chi_B(y)=1$ if $y\in B$, 
and $0$ otherwise. Observe that 
$\chi_{B}\circ w=\chi_{w^{-1}(B)}$.
Thus we have
\[
\int_{X} \chi_{B} \;d(w_{\sharp}\mu) =
w_{\sharp}\mu(B) = \mu(w^{-1}(B)) = 
\int_{X} \chi_{w^{-1}(B)} \;d\mu =
\int_{X} \chi_{B}\circ w \;d\mu.
\]
One readily extends (b) to simple functions $g$, 
their monotone limits and differences of these limits.

For (c) it is enough to check that for every 
$\mu\in M_{\pm}(X)$, ${\varepsilon}>0$ and $g\in {\mathcal{C}}_b(Y)$
there exists $f\in {\mathcal{C}}_b(X)$ s.t. 
\begin{equation}\label{eq:wtransport}
w_{\sharp} (V_{f,{\varepsilon}}(\mu)) \subseteq 
V_{g,{\varepsilon}}(w_{\sharp}\mu)
\end{equation}
where $V$ (with parameters) stand for neighbourhoods
generating the weak topologies in ${\mathcal{M}}_{\pm}(Y)$
and ${\mathcal{M}}_{\pm}(X)$.
As shown below, $f=g\circ w$ verifies 
the condition \eqref{eq:wtransport}. Indeed,
for $\nu\in V_{f,{\varepsilon}}(\mu)$ we have
\begin{eqnarray*}
\left| \int_{X} g \;d(w_{\sharp}\nu) - 
\int_{X} g \;d(w_{\sharp}\mu) \right| =
\left| \int_{X} g\circ w \;d\nu - 
\int_{X} g\circ w \;d\mu \right| < {\varepsilon}.
\end{eqnarray*}
\end{proof}

\begin{definition}
For a nonnegative Borel measure 
$\mu:{\mathcal{B}}(X)\to[0,\infty)$ on 
a Hausdorff topological space $X$ 
we define the \emph{support} of $\mu$ to be
\[
\operatorname{supp} \mu = \{x\in X: \mu(U) >0 
\mbox{\;for all open neighbourhoods \;} 
U\ni x\}.
\]
Alternatively, $C=\operatorname{supp} \mu$ is the largest 
closed set $C$ s.t. $\mu(U)>0$ for every 
open $U$ with $U\cap C\neq \emptyset$. 
\end{definition}

Supports, especially of Radon measures, 
obey some nice properties.

\begin{lemma}\label{lem:Radonsupport}
Let $\mu:{\mathcal{B}}(X)\to[0,\infty)$ be a nonnegative Radon 
measure on a Hausdorff topological space $X$. 
\begin{enumerate}
\item[(i)] If $B\in{\mathcal{B}}(X)$ is such that 
$B\subseteq X\setminus \operatorname{supp}\mu$,  
then $\mu(B) = 0$. Conversely,
if $\mu(B)>0$, then 
$B\cap \operatorname{supp}\mu\neq\emptyset$.
\item[(ii)] If $\mu$ is nontrivial ($\mu\neq 0$),
then $\operatorname{supp}\mu\neq\emptyset$.
\end{enumerate}
\end{lemma}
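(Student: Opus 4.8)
The plan is to reduce everything to a single observation: the complement of the support is an open set of $\mu$-measure zero, and the Radon (inner regularity) property is exactly what lets me establish this despite the complement being a union of possibly uncountably many null open sets.

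First I would record the set-theoretic description of the support's complement. By the definition of $\operatorname{supp}\mu$, a point $x$ lies outside $\operatorname{supp}\mu$ precisely when $x$ admits an open neighbourhood of $\mu$-measure zero; hence
\[
G := X \setminus \operatorname{supp}\mu = \bigcup\{U \subseteq X : U \text{ open},\ \mu(U) = 0\}
\]
is open. The goal for part (i) is to show that $\mu(G) = 0$. Once this is known, any Borel $B \subseteq G$ satisfies $\mu(B) \leq \mu(G) = 0$ by monotonicity, and the second (``conversely'') assertion is simply the contrapositive of the first.

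The key step---and the only place where the Radon hypothesis is genuinely used---is taming the uncountable union, since countable additivity alone cannot control $\mu(G)$. Here I would invoke inner regularity: $\mu(G) = \sup\{\mu(K) : K \in {\mathcal{K}}(X),\ K \subseteq G\}$, so it suffices to prove $\mu(K) = 0$ for every compact $K \subseteq G$. Fixing such a $K$, each of its points lies in an open neighbourhood of $\mu$-measure zero, and these neighbourhoods form an open cover of $K$; extracting a finite subcover $U_1, \dots, U_n$ by compactness yields $\mu(K) \leq \sum_{j=1}^{n} \mu(U_j) = 0$. Taking the supremum over all compact $K \subseteq G$ gives $\mu(G) = 0$, which completes part (i).

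Part (ii) then follows immediately by contraposition: if $\operatorname{supp}\mu = \emptyset$, then $G = X$, so $\mu(X) = \mu(G) = 0$ by part (i), forcing $\mu = 0$. I expect no serious obstacle anywhere; the one conceptual point to get right is that the passage from ``each open piece is null'' to ``the whole open complement is null'' must go through a compact exhaustion rather than a naive countable decomposition, since $X$ carries no second-countability or metrizability assumption and the covering family may well be uncountable.
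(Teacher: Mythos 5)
Your proposal is correct and follows essentially the same route as the paper: both use inner regularity to reduce the claim to compact subsets of the complement of the support, then cover such a compact set by finitely many null open neighbourhoods. The only cosmetic difference is that you argue directly that $\mu(X\setminus\operatorname{supp}\mu)=0$ while the paper argues by contradiction from a Borel $B$ with $\mu(B)>0$; the content is identical.
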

\begin{proof}
Item (ii) is immediate from (i). 
For item (i) let $B\in{\mathcal{B}}(X)$ be such that 
$B\subseteq X\setminus \operatorname{supp}\mu$ and suppose,
a contrario, that $\mu(B)>0$. Since $\mu$ is Radon,
there exists a compact $K\subseteq B$ s.t. 
$\mu(K)>0$. Since $K\subseteq X\setminus \operatorname{supp}\mu$,
for each $y\in K$ there exists an open $U_y\ni y$
s.t. $\mu(U_y)=0$. On taking a finite subcover
$\bigcup_{j} U_{y_j}\supseteq K$ we 
arrive at contradiction:
\[
\mu(K)\leq 
\mu\left(\bigcup_{j} U_{y_j}\right)\leq
\sum_{j} \mu(U_{y_j})=0.
\]
\end{proof}

\begin{proposition}\label{prop:support}
Let $X,Y$ be Hausdorff topological spaces. 
Let $\mu,\nu$ be finite nonnegative Borel measures  
on $X$ and $p>0$. 
For (c) assume additionally that $\mu$ is 
a Radon measure and $w:X\to Y$ is continuous.
\begin{enumerate}
\item[(a)] $\operatorname{supp} (p\cdot \mu) = \operatorname{supp} \mu$,
\item[(b)] $\operatorname{supp} (\mu + \nu) = \operatorname{supp}\mu \cup \operatorname{supp}\nu$,
\item[(c)] $\operatorname{supp} w_{\sharp} \mu = 
\overline{w(\operatorname{supp}\mu)}$.
\end{enumerate}
\end{proposition}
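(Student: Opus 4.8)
The plan is to treat the three parts separately, since (a) and (b) are purely point-set arguments about which open sets carry positive mass, while (c) is where the Radon hypothesis does the real work.

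For (a), I would simply observe that for every open $U\subseteq X$ one has $(p\cdot\mu)(U)=p\,\mu(U)$, and since $p>0$ this vanishes precisely when $\mu(U)$ does. Hence the defining condition ``$\mu(U)>0$ for every open $U\ni x$'' is verbatim the same for $\mu$ and for $p\cdot\mu$, giving $\operatorname{supp}(p\cdot\mu)=\operatorname{supp}\mu$ immediately. For (b) I would prove the two inclusions. The inclusion $\operatorname{supp}\mu\cup\operatorname{supp}\nu\subseteq\operatorname{supp}(\mu+\nu)$ follows from monotonicity: if $x\in\operatorname{supp}\mu$, then every open $U\ni x$ satisfies $(\mu+\nu)(U)\geq\mu(U)>0$, and symmetrically for $\operatorname{supp}\nu$. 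For the reverse inclusion I would argue by contraposition: if $x\notin\operatorname{supp}\mu\cup\operatorname{supp}\nu$, pick open neighbourhoods $U,V\ni x$ with $\mu(U)=0$ and $\nu(V)=0$; then $W=U\cap V$ is an open neighbourhood of $x$ with $(\mu+\nu)(W)\leq\mu(U)+\nu(V)=0$, so $x\notin\operatorname{supp}(\mu+\nu)$. No Radon-ness enters here, as promised.

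For (c), the easy inclusion $\overline{w(\operatorname{supp}\mu)}\subseteq\operatorname{supp} w_{\sharp}\mu$ uses only continuity of $w$ and the fact that the support is always closed: given $x\in\operatorname{supp}\mu$ and any open $V\ni w(x)$, the preimage $w^{-1}(V)$ is an open neighbourhood of $x$, so $w_{\sharp}\mu(V)=\mu(w^{-1}(V))>0$; thus $w(\operatorname{supp}\mu)\subseteq\operatorname{supp} w_{\sharp}\mu$, and passing to closures yields the inclusion. The hard part, and the only place the Radon assumption is essential, is the reverse inclusion $\operatorname{supp} w_{\sharp}\mu\subseteq\overline{w(\operatorname{supp}\mu)}$. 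Here I would argue contrapositively: if $y\notin\overline{w(\operatorname{supp}\mu)}$, choose an open $V\ni y$ disjoint from $w(\operatorname{supp}\mu)$; then $w^{-1}(V)$ is a (Borel, indeed open) subset of $X\setminus\operatorname{supp}\mu$, so Lemma \ref{lem:Radonsupport}(i)---which requires $\mu$ to be Radon---gives $\mu(w^{-1}(V))=0$, that is $w_{\sharp}\mu(V)=0$, whence $y\notin\operatorname{supp} w_{\sharp}\mu$.

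I expect the main obstacle to be exactly this last step. The passage from ``$w^{-1}(V)$ avoids the support'' to ``$w^{-1}(V)$ is $\mu$-null'' is not formal: it is precisely the content of the regularity built into Lemma \ref{lem:Radonsupport}(i) (compact inner approximation followed by a finite subcover of points at which $\mu$ is locally null), and it can fail for a general nonnegative Borel measure. Thus the whole weight of the Radon hypothesis in (c) is concentrated in invoking that lemma, and once it is available the argument closes cleanly.
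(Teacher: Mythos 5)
Your proof is correct and follows essentially the same route as the paper: parts (a) and (b) by direct inspection of which open sets carry positive mass, the easy inclusion of (c) via continuity of $w$, and the reverse inclusion of (c) by separating a putative point of $\operatorname{supp} w_{\sharp}\mu$ from $\overline{w(\operatorname{supp}\mu)}$ with an open set whose preimage avoids $\operatorname{supp}\mu$ and then invoking Lemma \ref{lem:Radonsupport}(i). You have also correctly located the one place where the Radon hypothesis is genuinely needed, exactly as in the paper's proof.
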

\begin{proof}
Parts (a) and (b) are easy to check. 
For (c) put $C=\operatorname{supp} \mu$. We shall verify that 
$\operatorname{supp} (w_{\sharp}\mu) = \overline{w(C)}$. 

First we check that  
$\operatorname{supp} (w_{\sharp}\mu) \supseteq w(C)$. 
Take an open $V\subseteq Y$ s.t. 
$V\cap w(C)\neq\emptyset$. 
Then $w^{-1}(V)$ is open and 
$w^{-1}(V)\cap C\neq\emptyset$. 
Hence $w_{\sharp}\mu(V) = \mu(w^{-1}(V)) >0$ 
by definition of $\operatorname{supp} \mu$.   

Now, by contradiction, we suppose that there exists 
$y\in\operatorname{supp} w_{\sharp}\mu \setminus\overline{w(C)}$. 
We can separate $y$ from $\overline{w(C)}$ by some 
open neighbourhood $V\ni y$, 
$V\cap \overline{w(C)}=\emptyset$. 
(Namely $V:= X\setminus \overline{w(C)}$.)
Put $U:=w^{-1}(V)$. Then $U$ is open and 
$U\cap C =\emptyset$. 
(Indeed, $x\in w^{-1}(V)\cap C$ implies 
$w(x) \in V\cap w(C) =\emptyset$.)
Hence $U\subseteq X\setminus \operatorname{supp}\mu$. 
Since $\mu$ is Radon we can apply 
Lemma \ref{lem:Radonsupport}.
Finally, we get 
$w_{\sharp}\mu(V) = \mu(w^{-1}(V)) = \mu(U) = 0$. 
As $V\ni y$, this means that 
$y\not\in\operatorname{supp} w_{\sharp}\mu$. 
\end{proof}

Now we discuss the special case when 
$(X,d)$ is a complete metric space. 
By $\mathcal{P}_1(X)$ we denote 
the collection of Radon probability 
measures with integrable distance, 
i.e., measures $\mu\in{\mathcal{P}}(X)$ such 
that for some (equivalently -- for all) 
$x_0\in X$, the integral 
$\int_{X} d(x,x_0)\;d\mu(x)<\infty$. 
We endow the space $\mathcal{P}_1(X)$ with
\begin{itemize}
\item
the \emph{Monge-Kantorovich metric} 
(cf. \cite{Mendivil} chap. B.5.1)
\begin{equation*}
d_{MK}(\mu,\nu):= \sup\left\{\int_{X} f\;d\mu 
- \int_{X} f\;d\nu:\, 
f:X\to{\mathbb{R}}, \operatorname{Lip}(f)\leq 1\right\};
\end{equation*}
\item
the \emph{Fortet-Mourier metric} (cf. \cite{MyjakSzarek})
\begin{equation*}
d_{FM}(\mu,\nu):= \sup\left\{\left|\int_{X} f\;d\mu 
- \int_{X} f\;d\nu\right|:\, 
f:X\to{\mathbb{R}}, \operatorname{Lip}(f)\leq 1, 
\sup_{x\in X} |f(x)|\leq 1\right\}.
\end{equation*}
\end{itemize}
In the above, $\mu,\nu\in{\mathcal{P}}_1(X)$, and
\[
\operatorname{Lip}(f) = \sup_{x_1\neq x_2} 
\frac{\,|f(x_1)-f(x_2)|\,}{d(x_1,x_2)}
\]
is the minimal Lipschitz constant of $f:X\to{\mathbb{R}}$.

\begin{remark}
(i) If $X$ is bounded, then
$\mathcal{P}_1(X)=\mathcal{P}(X)$.

(ii)
If $X$ is a separable complete metric space, 
then $\mathcal{P}_1(X)$ constitutes 
the space of all Borel probability measures 
$\mu$ on $X$ which satisfy 
$\int_{X} d(x,x_0)\;d\mu<\infty$ 
for some $x_0\in X$ (see \cite{Mendivil}). 
\end{remark}

\begin{lemma}\label{lemma:measuresonmetric}
Let $\mathcal{P}_1(X)$ be the set of 
Radon probability measures with integrable distance 
on a~complete metric space $X$. 
\begin{itemize}
\item[(i)] The Monge-Kantorovich metric $d_{MK}$ 
is a complete metric on $\mathcal{P}_1(X)$.
\item[(ii)] The convergence with respect to 
$d_{MK}$ implies weak convergence. 
If additionally $X$ is bounded, then 
the metric $d_{MK}$ generates the weak topology.
\item[(iii)] For every $\mu,\eta\in \mathcal{P}_1(X)$,
\begin{equation*}
d_{MK}(\mu,\eta)=\min\left\{\int_{X\times X} d(x,y)
\;d\lambda(x,y):\lambda\in \Lambda(\mu,\eta)\right\},
\end{equation*}
where $\Lambda(\mu,\eta)$ is the set of 
all couplings of $\mu$ and $\eta$, i.e.,
Radon probability measures 
$\lambda$ on $X\times X$
with marginals $\lambda(B\times X)=\mu(B)$ 
and $\lambda(X\times B)=\eta(B)$ 
for $B\in {\mathcal{B}}(X)$.
\end{itemize}
\end{lemma}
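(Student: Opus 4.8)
The plan is to prove the three items in the order (iii), then (ii), then (i), since the Kantorovich–Rubinstein duality in (iii) is both the hardest statement and the one that streamlines the rest. Before that, I would record that $d_{MK}$ is a well-defined finite metric on $\mathcal{P}_1(X)$. Finiteness is the crucial point: for $\operatorname{Lip}(f)\leq 1$ we have $|f(x)-f(x_0)|\leq d(x,x_0)$, so subtracting the constant $f(x_0)$ (which cancels because $\mu(X)=\eta(X)=1$) gives
\[
\int_X f\,d\mu-\int_X f\,d\eta\leq\int_X d(x,x_0)\,d\mu+\int_X d(x,x_0)\,d\eta<\infty
\]
by integrability of the distance. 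Symmetry holds because the admissible class is closed under $f\mapsto -f$, the triangle inequality is immediate from additivity of the integral, and $d_{MK}(\mu,\eta)=0$ forces $\mu=\eta$ since bounded Lipschitz functions separate Radon measures on a metric space.

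The heart of the proof is (iii). One inequality is elementary: for any coupling $\lambda\in\Lambda(\mu,\eta)$ and any $1$-Lipschitz $f$,
\[
\int_X f\,d\mu-\int_X f\,d\eta=\int_{X\times X}\bigl(f(x)-f(y)\bigr)\,d\lambda\leq\int_{X\times X} d(x,y)\,d\lambda,
\]
so that $d_{MK}(\mu,\eta)\leq\inf_{\lambda\in\Lambda(\mu,\eta)}\int d\,d\lambda$. The reverse inequality together with attainment of the infimum is the Kantorovich–Rubinstein theorem, and this is where the real work lies. I would proceed in two stages: first establish the general Kantorovich duality for the lower semicontinuous cost $c(x,y)=d(x,y)$, namely $\inf_\lambda\int c\,d\lambda=\sup\{\int\varphi\,d\mu+\int\psi\,d\eta:\varphi,\psi\in\mathcal{C}_b(X),\ \varphi(x)+\psi(y)\leq d(x,y)\}$, via a Hahn–Banach / minimax argument; then reduce the dual by the $c$-transform, observing that for the metric cost an optimal potential pair may be taken as $\psi=-\varphi$ with $\varphi$ a $1$-Lipschitz function, which collapses the dual supremum to exactly $d_{MK}(\mu,\eta)$. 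Attainment of the minimum over couplings follows from tightness of $\Lambda(\mu,\eta)$ — its marginals are fixed, hence tight, so $\Lambda(\mu,\eta)$ is weakly compact by Prokhorov — together with weak lower semicontinuity of $\lambda\mapsto\int d\,d\lambda$.

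With (iii) available, item (ii) becomes tractable. If $\mu_n\to\mu$ in $d_{MK}$, then $\int f\,d\mu_n\to\int f\,d\mu$ for every bounded Lipschitz $f$ straight from the definition; to extend this to all $f\in\mathcal{C}_b(X)$ I would use that the coupling bound from (iii) renders $(\mu_n)$ tight, approximate $f$ uniformly on a large compact set by a Lipschitz function, and control the tails by tightness, thereby obtaining weak convergence. When $X$ is bounded we have $\mathcal{P}_1(X)=\mathcal{P}(X)$ and $d_{MK}$ is comparable to the Fortet–Mourier metric $d_{FM}$ of the Appendix: clearly $d_{FM}\leq d_{MK}$, while subtracting a constant and rescaling a $1$-Lipschitz function by $\max\{1,\operatorname{diam} X\}$ gives $d_{MK}\leq\max\{1,\operatorname{diam} X\}\,d_{FM}$. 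Since $d_{FM}$ metrizes the weak topology on a bounded space, so does $d_{MK}$.

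Finally, for completeness in (i), let $(\mu_n)$ be $d_{MK}$-Cauchy. Boundedness of the sequence yields a uniform first-moment bound $\sup_n\int d(x,x_0)\,d\mu_n<\infty$, and the metric structure forbids mass from escaping to infinity, so $(\mu_n)$ is tight; by Prokhorov a subsequence converges weakly, say $\mu_{n_k}\to\mu$. Lower semicontinuity under weak convergence of the integral of the nonnegative continuous function $d(\cdot,x_0)$ shows $\mu\in\mathcal{P}_1(X)$, and combining weak lower semicontinuity of $d_{MK}$ with the Cauchy property upgrades subsequential convergence to convergence of the full sequence in $d_{MK}$. The main obstacle throughout is the duality of (iii); once it is secured, items (i) and (ii) are comparatively routine, modulo the tightness and Prokhorov machinery.
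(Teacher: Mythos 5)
Your outline is essentially correct, but it takes a genuinely different route from the paper. The paper does not prove any of (i)--(iii) from first principles: it identifies $d_{MK}$ and $d_{FM}$ with the modified and ordinary Kantorovich--Rubinshtein norms $\|\cdot\|_{0}^{*}$ and $\|\cdot\|_{0}$ from Bogachev's treatise, invokes the fact that Radon measures are $\tau$-additive so that $d_{FM}$ generates the weak topology, uses the sandwich $d_{FM}\leq d_{MK}\leq\max\{1,\operatorname{diam}(X)\}\,d_{FM}$ for (ii), deduces (i) from the cited completeness of $d_{MK}$ on the space of $\tau$-additive probability measures with integrable distance together with closedness of $\mathcal{P}_1(X)$ in that space, and obtains (iii) as a direct citation of the duality theorem (Theorem 8.10.45 there). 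You instead rebuild the machinery: Kantorovich duality via Hahn--Banach/minimax and the $c$-transform for the metric cost, attainment of the optimal coupling via Prokhorov, tightness of $d_{MK}$-Cauchy sequences, and weak lower semicontinuity to close the completeness argument. Your approach is self-contained and more instructive; the paper's is shorter and delegates the hard analysis to the literature, which is the appropriate choice for a survey. The one place where your sketch is thinnest is exactly the step you flag as the main obstacle: the duality in (iii) is only gestured at, and since $X$ is not assumed separable you should say a word about reducing to a separable (indeed $\sigma$-compact) closed subspace carrying both Radon measures before running the classical Polish-space duality and Prokhorov arguments; similarly, ``the metric structure forbids mass from escaping to infinity'' in (i) should be replaced by the standard coupling estimate showing that a $W_1$-Cauchy sequence inherits tightness from finitely many of its terms. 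With those two points made precise, your proof is complete and independent of the paper's references.
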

\begin{proof}
We only explain how the items (i)-(iii) follow 
from much more general results that can 
be found in \cite{Bogachev}. We note a specific notation
and terminology used in \cite{Bogachev}.
$d_{MK}(\mu,\eta)=\|\mu-\eta\|_{0}^{*}$
(modified Kantorovich-Rubinshtein metric),
$d_{FM}(\mu,\eta)=\|\mu-\eta\|_{0}$
(Kantorovich-Rubinshtein metric).

By \cite{Bogachev} Theorem 8.3.2,
the metric $d_{FM}$ on the space of all nonnegative, 
so called \emph{$\tau$-additive} measures generates 
the weak topology. However, 
by \cite{Bogachev} Proposition 7.2.2, 
each Radon measure is $\tau$-additive. 
Hence the metric $d_{FM}$ restricted 
to $\mathcal{P}_1(X)$ generates 
the weak topology on $\mathcal{P}_1(X)$.
As stated in \cite{Bogachev} chap.8.10(viii) on p. 234, 
for Borel measures $\mu$ on $X$ with 
integrable distance, 
the following inequalities hold:
\begin{equation}\label{ineq:Monge}
d_{FM}\leq d_{MK}\leq\max\{1,\operatorname{diam}(X)\}d_{FM}.
\end{equation}
This gives us (ii). 

On the same page we find a statement 
that the metric $d_{MK}$ is complete 
on the set $\mathcal{A}(X)$ of all Borel 
probability $\tau$-additive measures on $X$ 
with integrable distance. 
Since $\mathcal{P}_1(X)\subseteq \mathcal{A}(X)$, 
to see (i) it remains to show that 
$\mathcal{P}_1(X)$ is a closed subset of $\mathcal{A}(X)$.
By \cite{Bogachev} Theorem 8.10.43, 
the space $\mathcal{P}(X)$ of all Radon 
probability measures is complete with respect to $d_{FM}$,
so $\mathcal{P}(X)$ is a closed subset of 
the family of all nonnegative $\tau$-additive measures 
with resopect to $d_{FM}$. 
Assume that a~sequence $\mu_n\in\mathcal{P}_1(X)$ 
converges to some $\mu\in\mathcal{A}(X)$ w.r.t. $d_{MK}$. 
By inequalities in \eqref{ineq:Monge}, we also have $d_{FM}(\mu_n,\mu)\to 0$, 
so $\mu\in \mathcal{P}(X)$. Therefore, 
$\mu\in\mathcal{P}(X)\cap\mathcal{A}(X) =\mathcal{P}_1(X)$ 
and we are done.

Finally, (iii) is a straight consequence 
of \cite{Bogachev} Theorem 8.10.45.
\end{proof}

\subsection{Code space}\label{sec:code space}
Let $N\in {\mathbb{N}}$. A finite set of symbols $I=\{1,..,N\}$ 
is often called an \emph{alphabet}, 
while its elements --- \emph{symbols}, 
in the context of string operations on 
finite and infinite sequences of symbols, 
called \emph{words} on such occasion. 
The empty word is denoted by $\varnothing$.

We distinguish the following sets:
\begin{itemize}
\item the set of finite words of length $k\in{\mathbb{N}}$, which 
is simply $I^k$, the Cartesian product of 
$k$ copies of $I$; conveniently 
$I^0 =\{\varnothing\}$;
\item the set of all finite words
$I^{<\infty} =\bigcup_{k\in{\mathbb{N}}\cup\{0\}} I^k$;
\item the set of all infinite words $I^{\infty}$,
which is a countable Cartesian product of $I$.
\end{itemize}

Let $k\in{\mathbb{N}}$,
let $\alpha=(\alpha_1,...,\alpha_k) \in I^{k}$,
be a finite word and 
$\beta=(\beta_1,\beta_2,...)\in I^{<\infty}\cup I^{\infty}$
be a finite or an infinite word over $I$.
We define:
\begin{itemize}
\item the \emph{concatenation} of $\alpha$ and $\beta$ by
$\alpha\widehat{\;\;}\beta := 
(\alpha_1,...,\alpha_k,\beta_1,\beta_2,...)$;
\item the \emph{$k$-prefix} of $\beta$ to be 
$\beta_{\vert k} = (\beta_1,...,\beta_k)\in I^{k}$;
conveniently $\beta_{\vert 0} = \varnothing$.
\end{itemize}

The \emph{Baire metric} in $I^{\infty}$ 
is defined as follows:
\begin{equation*}
d_{B}(\alpha,\beta):=2^{-m},
\end{equation*}
where $\alpha=(\alpha_n)_{n=1}^{\infty}$, 
$\beta=(\beta_n)_{n=1}^{\infty}\in I^\infty$ and 
$m:=\inf\{n\in{\mathbb{N}}:\alpha_n\neq\beta_n\}$, 
provided $2^{-\inf\emptyset}=0$. 
The space $(I^{\infty}, d_{B})$ is called 
the \emph{code space}. 
It is a compact ultrametric space,
it bears the topology of a~countable Tikhonov
product of discrete spaces, all equal $I$,
and it is a Cantor space, e.g., \cite{Edgar}. 

\begin{lemma}\label{lem:denseincodespace}
A subset $\Omega\subset I^{\infty}$
is dense with respect to $d_{B}$ if prefixes of elements from 
$\Omega$ exhaust all possible finite words, i.e., 
for every $k\in{\mathbb{N}}$ and $\alpha\in I^k$ 
there exists $\omega\in\Omega$ s.t.
$\omega_{\vert k}=\alpha$.
\end{lemma}

The Borel $\sigma$-algebra of $I^{\infty}$ is 
a countable product of full $\sigma$-algebras of $I$, 
$\mathcal{B}(I^{\infty}) = \bigotimes_{n\in {\mathbb{N}}} 2^{I}$.
(Recall that if $I$ is discrete, then 
$\mathcal{B}(I)= 2^{I}$.)

\begin{definition}
Let $\vec{p}=(p_1,...,p_N)$ be a pobability vector
in $I=\{1,..,N\}$, $\sum_{i=1}^{N} p_i=1$, 
$p_i = \Pr(i)>0$ for $i\in I$.
The \emph{Bernoulli measure}
is a Borel probability measure on $I^{\infty}$,
$\mu_{b}: \mathcal{B}(I^{\infty})\to [0,1]$,
uniquely identified on cylinders
\[
\mu_{b}(\{(\alpha_1,...,\alpha_k)\}\times I^\infty) = 
\prod_{j=1}^{k} p_{\alpha_{j}},
\mbox{ where } \alpha_1,...,\alpha_k\in I, k\in{\mathbb{N}}.
\]
\end{definition}

\subsection{Ordered sets}
Let $(L,\preceq)$ be an abstract poset 
(partially ordered set). Let $C\subseteq L$. 
We say that $m\in L$ is
\begin{itemize}
\item the \emph{greatest element} of $C$, if $m\in C$
and $c\preceq m$ for all $c\in C$;
\item a \emph{lower bound} of $C$, when 
$m\preceq c$ for all $c\in C$; symbolically $m\preceq C$;
\item a \emph{minimal element}, if $m\in C$ and 
there is no $c\in C$ such that $c\preceq m \neq c$;
\item  an \emph{infimum} of $C$, written $m=\inf C$, 
provided $m$ is the greatest lower bound of $C$, i.e.,
$m$ is the greatest element of the set 
$\{m'\in L: m'\preceq C\}$; 
in yet other words, $m=\inf C$ if and only if  
(i) $m\preceq C$, and 
(ii) for every $m'$, if $m'\preceq C$, then $m'\preceq m$. 
\end{itemize}
Dual concepts: the least element, an upper bound, supremum,
and a maximal element are defined similarly 
(by reversing the order).

A linearly ordered subset $C\subseteq L$, i.e., 
for every $c_1,c_2\in C$ either $c_1\preceq c_2$ or
$c_2\preceq c_1$, is called a~\emph{chain} in $L$.

A mapping $\mathcal{F}: L\to L$ is \emph{order-monotone}, provided
\begin{equation*}
\mbox{for all } z_1,z_2\in L, 
\mbox{if } z_1\preceq z_2, \mbox{ then } \mathcal{F}(z_1)\preceq \mathcal{F}(z_2). 
\end{equation*}

Although chains in abstract posets may lack infima and 
lower bounds, it is no longer the case for suitable hyperspaces.

\begin{lemma}
Every chain of sets $\mathcal{C}$ in the poset of 
nonempty compact sets $({\mathcal{K}}(X), \subseteq)$
admits an infimum, which is the intersection 
$M=\bigcap \mathcal{C}$. 
\end{lemma}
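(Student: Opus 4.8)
The plan is to show that for a chain $\mathcal{C}\subseteq {\mathcal{K}}(X)$ the intersection $M=\bigcap\mathcal{C}$ is itself a nonempty compact set and that it is the greatest lower bound. First I would dispose of the trivial case $\mathcal{C}=\emptyset$ and otherwise observe that $M$ is an intersection of closed (indeed compact) sets, hence closed, and that $M$ is contained in any fixed $C_0\in\mathcal{C}$, so $M$ is a closed subset of a compact set and therefore compact. The genuine content is \emph{nonemptiness}: I would prove that the family $\mathcal{C}$, viewed as a collection of closed subsets of the compact space $C_0$, has the finite intersection property, whence $\bigcap\mathcal{C}\neq\emptyset$ by compactness of $C_0$.

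The finite intersection property is where the chain hypothesis does its work. Given finitely many $C_1,\dots,C_n\in\mathcal{C}$, linear ordering by $\subseteq$ means that among $C_1,\dots,C_n$ there is a $\subseteq$-least one, say $C_j$; then $C_1\cap\dots\cap C_n=C_j\neq\emptyset$ (each member of $\mathcal{C}$ is a \emph{nonempty} compact set by definition of ${\mathcal{K}}(X)$). So every finite subfamily has nonempty intersection, and since each $C_i\subseteq C_0$ with $C_0$ compact, the full intersection $M=\bigcap\mathcal{C}$ is nonempty. Thus $M\in{\mathcal{K}}(X)$.

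It remains to check that $M=\inf\mathcal{C}$ in the poset $({\mathcal{K}}(X),\subseteq)$. That $M$ is a lower bound is immediate: $M=\bigcap\mathcal{C}\subseteq C$ for every $C\in\mathcal{C}$. For the greatest-lower-bound property, suppose $L\in{\mathcal{K}}(X)$ satisfies $L\subseteq C$ for all $C\in\mathcal{C}$; then $L\subseteq\bigcap\mathcal{C}=M$, so $M$ dominates every lower bound. Hence $M$ is the infimum.

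I do not expect any serious obstacle here; the only point that requires care is ensuring nonemptiness, and the main idea is that a \emph{linearly ordered} family automatically has the finite intersection property because finite subfamilies of a chain have a least element. One should state explicitly that membership in ${\mathcal{K}}(X)$ forces each $C\in\mathcal{C}$ to be nonempty (so the least element of a finite subchain is nonempty) and that compactness of a single member $C_0$ supplies the compactness envelope in which the finite intersection property yields nonemptiness of the full intersection.
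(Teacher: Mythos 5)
Your proposal is correct and follows essentially the same route as the paper: the paper's one-line appeal to ``the Riesz characterization of compactness via centered families'' is exactly the finite-intersection-property argument you spell out (a finite subfamily of a chain of nonempty compacta has a least, hence nonempty, member), and the lower-bound/greatest-lower-bound verification is identical. The only cosmetic difference is that you make the compactness envelope $C_0$ and the FIP explicit, which is a fair unpacking rather than a different proof.
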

\begin{proof}
By the Riesz characterization of compactness 
via centered families we know that $M\in{\mathcal{K}}(X)$.
Obviously $M\subseteq C$ for all $C\in \mathcal{C}$,
so $M$ is the lower bound for $\mathcal{C}$.
On the other hand, if $M'\subseteq C$ for all 
$C\in \mathcal{C}$, then $M'\subseteq M$. That is,
$M$ is the greatest lower bound for $\mathcal{C}$.
\end{proof}

\begin{theorem}[\cite{LesCEJM} Theorems 2.1 and 2.4]\label{th:Knaster}
Let $(L,\preceq)$ be a poset with the following property:
each chain in $L$ admits an infimum.
Let $\mathcal{F}: L\to L$ be an order-monotone mapping.
\begin{enumerate}
\item[(I)] (Knaster--Tarski)
If there exists $z_0\in L$ with $\mathcal{F}(z_0)\preceq z_0$, then
$\mathcal{F}$ admits a minimal fixed point $z_{*}=\mathcal{F}(z_{*})$.
\item[(II)] (Kleene)
If $L$ has the greatest element,
then $\mathcal{F}$ admits the greatest fixed point $z^{*}=\mathcal{F}(z^{*})$.
\end{enumerate}
\end{theorem}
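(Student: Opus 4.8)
The plan is to treat the two parts by the two complementary order-theoretic devices that already appear in the second proof of Theorem~\ref{th:Birkhoff4Compact}: Zorn's lemma for the minimal fixed point in (I), and a transfinite descending iteration for the greatest fixed point in (II). Throughout, the only structural facts available are order-monotonicity of $\mathcal{F}$ and the hypothesis that every chain in $L$ possesses an infimum, and the whole argument is an exercise in deploying exactly these two.

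For part (I), I would localize below $z_0$. Set $\mathcal{S} = \{z \in L : \mathcal{F}(z)\preceq z \preceq z_0\}$, which is nonempty because $z_0 \in \mathcal{S}$ by hypothesis. The key verification is that $\mathcal{S}$ is closed under infima of its chains: given a chain $\mathcal{C} \subseteq \mathcal{S}$, let $m = \inf \mathcal{C}$ (which exists in $L$). Then $m \preceq z_0$, and for each $c \in \mathcal{C}$ monotonicity gives $\mathcal{F}(m) \preceq \mathcal{F}(c) \preceq c$, so $\mathcal{F}(m)$ is a lower bound of $\mathcal{C}$ and hence $\mathcal{F}(m) \preceq \inf\mathcal{C} = m$; thus $m \in \mathcal{S}$. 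Consequently every chain in $(\mathcal{S},\preceq)$ has a lower bound inside $\mathcal{S}$, and the minimal-element form of Zorn's lemma (Zorn applied to the reverse order) yields a minimal element $z_*$ of $\mathcal{S}$. To see $z_*$ is a fixed point, apply $\mathcal{F}$ to the relation $\mathcal{F}(z_*)\preceq z_*$: monotonicity gives $\mathcal{F}(\mathcal{F}(z_*)) \preceq \mathcal{F}(z_*) \preceq z_0$, so $\mathcal{F}(z_*)\in\mathcal{S}$ with $\mathcal{F}(z_*)\preceq z_*$, and minimality forces $\mathcal{F}(z_*)=z_*$. Finally any fixed point $w \preceq z_*$ satisfies $w\in\mathcal{S}$, so $w=z_*$, confirming that $z_*$ is a minimal fixed point of $\mathcal{F}$.

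For part (II), I would not reuse (I) but instead run the transfinite iteration directly, which simultaneously produces a fixed point and certifies it as the greatest one. Let $\top$ denote the greatest element of $L$, and define by transfinite recursion $z_0=\top$, $z_{\alpha+1}=\mathcal{F}(z_\alpha)$, and $z_\beta = \inf_{\alpha<\beta} z_\alpha$ at limit ordinals $\beta$; the limit clause is legitimate because the hypothesis guarantees infima of chains, and one checks by transfinite induction (using $z_1 = \mathcal{F}(\top)\preceq\top$ and monotonicity at successors) that the family $(z_\alpha)$ is decreasing, so that $\{z_\alpha : \alpha<\beta\}$ is genuinely a chain at each limit stage. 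A cardinality argument, exactly as in the proof of Theorem~\ref{th:Birkhoff4Compact} running $\alpha$ through the ordinals below the successor cardinal of $\operatorname{card} L$, shows the sequence cannot strictly decrease forever, so $z_{\alpha^*+1}=z_{\alpha^*}$ for some $\alpha^*$, i.e.\ $z^*:=z_{\alpha^*}$ is a fixed point. That $z^*$ is greatest follows from a second transfinite induction: for any fixed point $w$ one shows $w\preceq z_\alpha$ for all $\alpha$ (base $w\preceq\top$; successor $w=\mathcal{F}(w)\preceq\mathcal{F}(z_\alpha)=z_{\alpha+1}$; limit by passing to the infimum), whence $w\preceq z^*$.

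The routine steps are the monotonicity manipulations; the step I expect to require the most care is the transfinite machinery in (II) --- ensuring the recursion is well defined (infima available, and the iterates forming a chain at limits) and making the stabilization rigorous via the cardinality bound. In (I), the only genuine subtlety is confirming that $\mathcal{S}$ is stable under chain-infima, since that is precisely what licenses the application of Zorn's lemma; once that is in place, the upgrade from a minimal element of $\mathcal{S}$ to a minimal fixed point of $\mathcal{F}$ is immediate.
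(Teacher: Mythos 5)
Your proof is correct and follows exactly the route the paper indicates for this cited result: Zorn's lemma applied to the family of pre-fixed points $\{z:\mathcal{F}(z)\preceq z\}$ for the minimal fixed point in (I), and the descending transfinite iteration with a cardinality bound for the greatest fixed point in (II), mirroring the ``proof via order-theoretic consequences of the axiom of choice'' of Theorem~\ref{th:Birkhoff4Compact}. The paper does not actually reproduce a proof of Theorem~\ref{th:Knaster}, only pointing to \cite{LesCEJM} and to that Birkhoff argument, but your write-up is a faithful and complete instantiation of precisely that strategy.
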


Theorem \ref{th:Knaster} is stated in a greatly
simplified fashion compared to 
\cite{LesCEJM} Theorems 2.1 and 2.4; 
see also \cite{Granas} chap.1 $\S$2 p.25 
or \cite{Tarafdar} chap.3.2 and chap.3.3.4.
Its proof exploits Zorn's lemma and 
transfinite induction, much in the spirit 
of a direct proof of the Birkhoff theorem 
on invariant set, see section \ref{sec:Birkhoff4Compact}.
Many problems fit naturally within the 
order-theoretic framework, for example, 
the existence of fixed points of 
the Hutchinson operator. However, 
researchers often prefer to use 
in their proofs Zorn's lemma and transfinite
induction directly, e.g., \cite{Ok}.
Finally, note that the application of 
order-theoretic techniques, say, 
to analysis and differential equations, 
except some notable cases 
(e.g. \cite{Tarafdar} chap.3.5), 
may involve rather technical 
assumptions, cf. \cite{Heikkila}.

\section*{Acknowledgements}
We would like to thank the referee 
for her/his careful reading of this long paper 
and giving us valuable suggestions 
which improved the presentation.

\section*{Publishing information}
The final version of this paper will be published in Journal of Difference Equations and Applications.

\newpage

\end{document}